 \newtheorem*{corollary*}{Corollary}
 \newtheorem*{construction*}{Construction}
 \newtheorem*{definition*}{Definition}
 \newtheorem*{notation*}{Notation}
 \newtheorem*{lemma*}{Lemma}
 \newtheorem*{theorem*}{Theorem}
 \newtheorem*{remark*}{Remark}
 \newtheorem*{example*}{Example}
 \newtheorem*{conjecture*}{Conjecture}
 \newtheorem*{condition*}{Condition}
 \newtheorem*{result*}{Result}
 \newtheorem*{property*}{Property}
 \newtheorem*{cor*}{Corollary}
 \newtheorem*{const*}{Construction}
 \newtheorem*{defn*}{Definition}
 \newtheorem*{notn*}{Notation}
 \newtheorem*{lem*}{Lemma}
 \newtheorem*{thm*}{Theorem}
 \newtheorem*{rem*}{Remark}
 \newtheorem*{exm*}{Example}
 \newtheorem*{conj*}{Conjecture}
 \newtheorem{lemma}{Lemma}[subsection]
 \newtheorem{proposition}[lemma]{Proposition}
 \newtheorem{remark}[lemma]{Remark}
 \newtheorem{example}[lemma]{Example}
 \newtheorem{theorem}[lemma]{Theorem}
 \newtheorem{thm}[lemma]{Theorem}
 \newtheorem{prop}[lemma]{Proposition}
 \newtheorem{lem}[lemma]{Lemma}
 \newtheorem{defn}[lemma]{Definition}
 \newtheorem{cor}[lemma]{Corollary}
 \newtheorem{exm}[lemma]{Example}
 \newtheorem{rem}[lemma]{Remark}
 \newtheorem{introtheorem}{Theorem}
 \crefname{introtheorem}{theorem}{theorems}
 \Crefname{introtheorem}{Theorem}{Theorems}
  \newtheorem{introthm}[introtheorem]{Theorem}
   \crefname{introthm}{theorem}{theorems}
 \Crefname{introthm}{Theorem}{Theorems}
  \crefname{introcorollary}{corollary}{corollaries}
 \Crefname{introcorollary}{Corollary}{Corollaries}
 \newtheorem{introcor}[introtheorem]{Corollary}
   \crefname{introcor}{corollary}{corollaries}
 \Crefname{introcor}{Corollary}{Corollaries}
   \crefname{introconjecture}{conjectures}{conjectures}
 \Crefname{introconjecture}{Conjecture}{Conjectures}
    \crefname{introconj}{conjectures}{conjectures}
 \Crefname{introconj}{Conjecture}{Conjectures}
     \crefname{introlem}{lemma}{lemmas}
 \Crefname{introlem}{Lemma}{Lemmas}
 \crefname{introremark}{remark}{remarks}
 \Crefname{introremark}{Remark}{Remarks}
  \crefname{introrem}{remark}{remarks}
 \Crefname{introrem}{Remark}{Remarks}
 \newtheorem{introprop}[introtheorem]{Proposition}
   \crefname{introprop}{Proposition}{Propositions}
 \Crefname{introprop}{Proposition}{Propositions}
 \newtheorem{introdefn}[introtheorem]{Definition}
   \crefname{introdefn}{definition}{definitions}
 \Crefname{introdefn}{Definition}{Definitions}
   \crefname{intronotn}{notation}{notations}
 \Crefname{intronotn}{Notation}{Notations}
   \crefname{introtask}{task}{tasks}
 \Crefname{introtask}{Task}{Tasks}
  \crefname{introprob}{problem}{problems}
 \Crefname{introprob}{Problem}{Problems}
   \crefname{introquestion}{question}{questions}
 \Crefname{introquestion}{Question}{Questions}
 \newtheorem{introexm}[introtheorem]{Example}
   \crefname{introexm}{example}{example}
 \Crefname{introquestion}{Example}{Example}
 \crefname{theorem}{theorem}{theorems}
 \Crefname{theorem}{Theorem}{Theorems}
  \crefname{thm}{theorem}{theorems}
 \Crefname{thm}{Theorem}{Theorems}
  \crefname{corollary}{Corollary}{Corollaries}
 \Crefname{corollary}{Corollary}{Corollaries}
   \crefname{cor}{Corollary}{Corollaries}
 \Crefname{cor}{Corollary}{Corollaries}
   \crefname{conjecture}{conjectures}{conjectures}
 \Crefname{conjecture}{Conjecture}{Conjectures}
    \crefname{conj}{conjectures}{conjectures}
 \Crefname{conj}{Conjecture}{Conjectures}
     \crefname{lem}{lemma}{lemmas}
 \Crefname{lem}{Lemma}{Lemmas}
      \crefname{lemma}{Lemma}{Lemmas}
 \Crefname{lemma}{Lemma}{Lemmas}
 \crefname{remark}{remark}{remarks}
 \Crefname{remark}{Remark}{Remarks}
  \crefname{rem}{remark}{remarks}
 \Crefname{rem}{Remark}{Remarks}
   \crefname{rem}{remark}{remarks}
 \Crefname{rem}{Remark}{Remarks}
   \crefname{proposition}{Proposition}{Proposition}
 \Crefname{proposition}{Proposition}{Proposition}
    \crefname{prop}{Proposition}{Propositions}
 \Crefname{prop}{Proposition}{Propositions}
   \crefname{defn}{definition}{definitions}
 \Crefname{defn}{Definition}{Definitions}
   \crefname{notn}{notation}{notations}
 \Crefname{notn}{Notation}{Notations}
   \crefname{task}{task}{tasks}
 \Crefname{task}{Task}{Tasks}
  \crefname{prob}{problem}{problems}
 \Crefname{prob}{Problem}{Problems}
   \crefname{question}{question}{questions}
 \Crefname{question}{Question}{Questions}
\newcommand{\alp}{\alpha}
\newcommand{\lam}{\lambda}
\newcommand{\Ind}{\operatorname{Ind}}
\newcommand{\ind}{\operatorname{ind}}
\newcommand{\Ext}{\operatorname{Ext}}
\newcommand{\Id}{\operatorname{Id}}
\renewcommand{\Im}{\operatorname{Im}}
\newcommand{\Ker}{\operatorname{Ker}}
\newcommand{\Tor}{{\operatorname{Tor}}}
\newcommand{\Span}{{\operatorname{Span}}}
\newcommand{\GL}{\operatorname{GL}}
\newcommand{\SL}{\operatorname{SL}}
\newcommand{\Sp}{\operatorname{Sp}}
\newcommand{\oH}{\operatorname{H}}
\newcommand{\Mp}{\operatorname{Mp}}
\newcommand{\ad}{\operatorname{ad}}
\newcommand{\de}{\operatorname{def}}
\newcommand{\Hom}{\operatorname{Hom}}
\newcommand{\An}{\operatorname{An}}
\newcommand{\Ann}{\operatorname{An}}
\newcommand{\AnV}{\mathrm{An}\cV}
\newcommand{\C}{\mathbb{C}}
\newcommand{\bN}{\mathbb{N}}
\newcommand{\bfG}{\mathbf{G}}
\newcommand{\bfH}{\mathbf{H}}
\newcommand{\bfY}{\mathbf{Y}}
\newcommand{\bfX}{\mathbf{X}}
\newcommand{\bfO}{\mathbf{O}}
\newcommand{\bfS}{\mathbf{S}}
\newcommand{\bfR}{\mathbf{R}}
\newcommand{\bfQ}{\mathbf{Q}}
\newcommand{\bfP}{\mathbf{P}}
\newcommand{\bfU}{\mathbf{U}}
\newcommand{\R}{\mathbb{R}}
\newcommand{\Sc}{\cS}
\newcommand{\Rep}{\operatorname{Rep}}
\newcommand{\Fre}{Fr\'echet}
\newcommand{\onto}{\twoheadrightarrow}
\newcommand{\into}{\hookrightarrow}
\providecommand{\fb}{\mathfrak{b}}
\providecommand{\fc}{\mathfrak{c}}
\providecommand{\fg}{\mathfrak{g}}
\providecommand{\fh}{\mathfrak{h}}
\providecommand{\fk}{\mathfrak{k}}
\providecommand{\fl}{\mathfrak{l}}
\providecommand{\fm}{\mathfrak{m}}
\providecommand{\fp}{\mathfrak{p}}
\providecommand{\fq}{\mathfrak{q}}
\providecommand{\fr}{\mathfrak{r}}
\providecommand{\fu}{\mathfrak{u}}
\providecommand{\fz}{\mathfrak{z}}
\providecommand{\cD}{\mathcal{D}}
\providecommand{\cE}{\mathcal{E}}
\providecommand{\cK}{\mathcal{K}}
\providecommand{\cM}{\mathcal{M}}
\providecommand{\cN}{\mathcal{N}}
\providecommand{\cO}{\mathcal{O}}
\providecommand{\cS}{\mathcal{S}}
\providecommand{\cU}{\mathcal{U}}
\providecommand{\cV}{\mathcal{V}}
\providecommand{\Mat}{\mathrm{Mat}}
\providecommand{\fin}{\sigma}
\providecommand{\sub}{\subset}
\providecommand{\susbet}{\subset}
\providecommand{\g}{\mathfrak{g}}
\newcommand{\DimaA}[1]{{{#1}}}
\newcommand{\DimaB}[1]{{{#1}}}
\newcommand{\DimaC}[1]{{{#1}}}
\newcommand{\DimaD}[1]{{{#1}}}
\newcommand{\DimaE}[1]{{{#1}}}
\newcommand{\DimaF}[1]{{{#1}}}
\newcommand{\DimaG}[1]{{{#1}}}
\newcommand{\DimaH}[1]{{{#1}}}
\newcommand{\DimaI}[1]{{{#1}}}
\begin{document}

\title{Finite multiplicities beyond spherical spaces}
\author{Avraham Aizenbud}
\address{Avraham Aizenbud, Faculty of Mathematics and Computer Science, Weizmann
Institute of Science, POB 26, Rehovot 76100, Israel }
\email{aizenr@gmail.com}
\urladdr{http://www.aizenbud.org}

\author{Dmitry Gourevitch}
\address{Dmitry Gourevitch, Faculty of Mathematics and Computer Science, Weizmann
Institute of Science, POB 26, Rehovot 76100, Israel }
\email{dimagur@weizmann.ac.il}
\urladdr{http://www.wisdom.weizmann.ac.il/~dimagur}


\keywords{Representation, algebraic group, nilpotent orbit, spherical space, wave-front set, associated variety of the annihilator, non-commutative harmonic analysis, branching, invariant distribution, Schwartz space, holonomic D-module.}
\subjclass[2010]{20G05, 14L30, 22E46, 22E47, 22E45}
%
%
%
%
%
%
%
%
\date{\today}

\maketitle
\begin{center}
\textbf{with  appendix B by Ido Karshon}
\end{center}

\begin{abstract}
Let $G$ be a real reductive algebraic group, and  let $H\sub G$ be an algebraic subgroup. It is known that the action of $G$ on the space of functions on $G/H$ is "tame" if this space is spherical. In particular, the multiplicities of the space $\Sc(G/H)$ of Schwartz functions on $G/H$ are finite in this case.
In this paper we formulate and analyze a generalization of sphericity that implies finite multiplicities in $\Sc(G/H)$ for small enough irreducible  representations of $G$.
\end{abstract}
%
%


\section{Introduction}

Let $\bf G$ be a reductive algebraic group defined over $\R$, and $\bf X$ be a smooth algebraic $\bf G$-variety. Let $\fg$ denote the Lie algebra of $\bf G$, $\fg^*$ denote the dual space, and $\cN(\fg^*)\sub \fg^*$ denote the nilpotent cone.
Let $\mu:T^*\bfX\to \fg^*$ denote the moment map (see \S \ref{subsec:GeoPrel} for its definition).
\begin{introdefn}\label{def:Ospher}
\begin{enumerate}[(i)]
\item For a nilpotent orbit $\bfO\subset \cN(\fg^*)$ we say that $\bfX$ is $\bfO$-\emph{spherical} if $$\dim \mu^{-1}(\bfO)\leq \dim \bfX + \frac{1}{2}\dim \bfO.$$
\item For a  $\bfG$-invariant subset $\Xi\sub \cN(\fg^*)$, we say that $\bfX$ is $\Xi$-\emph{spherical} if it is $\bfO$-spherical for every orbit $\bfO\subset \Xi$.
\end{enumerate}
\end{introdefn}
\DimaI{A homogeneous $\bfG$-variety $\bf G/H$ is $\bfO$-spherical if and only if $\dim \bfO\cap \fh^{\bot}\leq \dim \bfO/2$, where $\fh^{\bot}\sub \fg^*$ denotes the space of functionals vanishing on the Lie algebra $\fh$ of $\bfH$ (see Corollary \ref{cor:invO} below). 

We prove} the following criterion for sphericity with respect to closures of Richardson orbits, {\it i.e.} orbits that intersect parabolic nilradicals by open dense subsets.

\begin{introthm}[\S \ref{subsec:crit}]\label{IntThm:Ri}
Let $\bfP\subset \bfG$ be a parabolic subgroup, let $\bfO_\bfP$  denote the corresponding Richardson orbit, and $\overline{\bfO_\bfP}$ denote its closure.
Then $\bfX$ is $\overline{\bfO_\bfP}$-spherical if and only if $\bfP$
has finitely many orbits on $\bfX$.
\end{introthm}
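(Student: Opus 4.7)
The plan is to translate both sides of the equivalence into a single dimension count on the conormal variety $\mu_\bfP^{-1}(0)\sub T^*\bfX$, where $\mu_\bfP:T^*\bfX\to\fp^*$ denotes the moment map for the $\bfP$-action. Via the Killing identification $\fn_\bfP\cong\fp^\bot\sub\fg^*$ we have $\mu^{-1}(\fn_\bfP)=\mu_\bfP^{-1}(0)$, which set-theoretically is the union of conormal bundles to the $\bfP$-orbits on $\bfX$. Using the classical equality $\overline{\bfO_\bfP}=\bfG\cdot\fn_\bfP$ (which follows from properness of the collapsing map $\bfG\times_\bfP\fn_\bfP\to\fg$ together with a dimension count), we obtain $\mu^{-1}(\overline{\bfO_\bfP})=\bfG\cdot\mu_\bfP^{-1}(0)$. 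I also invoke the standard criterion that $\bfP$ acts on $\bfX$ with finitely many orbits if and only if $\dim\mu_\bfP^{-1}(0)=\dim\bfX$: the forward direction is immediate from the stratification into conormal bundles (each of dimension $\dim\bfX$), while the converse uses that any infinite family of $\bfP$-orbits would sweep out a positive-dimensional parameter space over which the fibers of $\mu_\bfP^{-1}(0)\to\bfX$ stack to total dimension exceeding $\dim\bfX$.

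The geometric heart of the argument is the Lagrangian intersection formula: for every nilpotent $\bfG$-orbit $\bfO$ with $\bfO\cap\fn_\bfP\neq\emptyset$, one has $\dim(\bfO\cap\fn_\bfP)=\tfrac12\dim\bfO$. This is classical (Spaltenstein) and reflects that $\fp^\bot\sub\fg^*$ is Lagrangian with respect to the Kirillov--Kostant symplectic form restricted to any coadjoint orbit meeting it. Together with the two elementary fiber-dimension identities $\dim\mu^{-1}(\bfO)=\dim\bfO+\dim\mu^{-1}(\xi_0)$ for $\xi_0\in\bfO$ (by $\bfG$-equivariance of $\mu$, so fibers are constant along the orbit) and $\dim\mu^{-1}(\bfO\cap\fn_\bfP)=\dim(\bfO\cap\fn_\bfP)+\dim\mu^{-1}(\xi_0)$ (realized by the open $\bfP$-orbit in $\bfO\cap\fn_\bfP$, whose existence uses that $\bfP$ has finitely many orbits on $\fn_\bfP$), this yields the pointwise equivalence
\[
\dim\mu^{-1}(\bfO)\leq\dim\bfX+\tfrac12\dim\bfO\;\;\Longleftrightarrow\;\;\dim\mu^{-1}(\bfO\cap\fn_\bfP)\leq\dim\bfX
\]
for every nilpotent $\bfO$ meeting $\fn_\bfP$, i.e.\ every $\bfO\sub\overline{\bfO_\bfP}$.

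With these ingredients in place, both directions follow immediately. If $\bfP$ has finitely many orbits on $\bfX$, then $\dim\mu_\bfP^{-1}(0)=\dim\bfX$, so $\dim\mu^{-1}(\bfO\cap\fn_\bfP)\leq\dim\bfX$ for every $\bfO\sub\overline{\bfO_\bfP}$, and by the pointwise equivalence $\bfX$ is $\overline{\bfO_\bfP}$-spherical. Conversely, if $\bfX$ is $\overline{\bfO_\bfP}$-spherical, the pointwise equivalence gives $\dim\mu^{-1}(\bfO\cap\fn_\bfP)\leq\dim\bfX$ for every $\bfO\sub\overline{\bfO_\bfP}$; since there are only finitely many nilpotent $\bfG$-orbits in $\overline{\bfO_\bfP}$ and $\mu_\bfP^{-1}(0)=\bigsqcup_{\bfO\sub\overline{\bfO_\bfP}}\mu^{-1}(\bfO\cap\fn_\bfP)$, we obtain $\dim\mu_\bfP^{-1}(0)\leq\dim\bfX$, hence equality (the zero section forces $\geq$), and we conclude by the finite-orbit criterion. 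The main obstacle will be supplying clean proofs or precise citations for the two ``standard'' inputs---the Spaltenstein formula $\dim(\bfO\cap\fn_\bfP)=\tfrac12\dim\bfO$ and the finite-$\bfP$-orbit criterion for $\dim\mu_\bfP^{-1}(0)=\dim\bfX$---and verifying that both formulations transfer correctly to the setting of real reductive algebraic groups considered in the paper.
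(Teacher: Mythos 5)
Your reduction of both conditions to the single inequality $\dim\mu^{-1}(\fp^{\bot})\leq\dim\bfX$, and your finite-orbit criterion for the $\bfP$-action (the paper's Lemma \ref{lem:0}, via conormal bundles and Rosenlicht), are sound, and your argument for the direction (sphericity $\Rightarrow$ finitely many $\bfP$-orbits) coincides with the paper's. The gap is in the other direction, and it comes from the ``Lagrangian intersection formula'' you invoke: the claim that $\dim(\bfO\cap\fp^{\bot})=\tfrac12\dim\bfO$ for \emph{every} nilpotent orbit $\bfO$ meeting $\fp^{\bot}$ is false. What is true in general is only the isotropy inequality $\dim(\bfO\cap\fp^{\bot})\leq\tfrac12\dim\bfO$ (Proposition \ref{prop:Piso}, from [CG, Theorem 3.3.7]); equality holds for $\bfO=\bfO_{\bfP}$ and, by Corollary \ref{cor:Rich}, for Richardson orbits in $\overline{\bfO_\bfP}$, but Example \ref{ex:small} (Panyushev) gives $\bfG=\Sp_{2n}$, $\bfP$ the stabilizer of a line, and $\bfO=\bfO_{\min}\subset\overline{\bfO_{\bfP}}$ with $\dim(\bfO_{\min}\cap\fp^{\bot})=1<n=\tfrac12\dim\bfO_{\min}$. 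Writing $f$ for the common fiber dimension of $\mu$ over $\bfO$ and $d=\dim(\bfO\cap\fp^{\bot})$, your two conditions read $f\leq\dim\bfX-\tfrac12\dim\bfO$ and $f\leq\dim\bfX-d$; since only $d\leq\tfrac12\dim\bfO$ holds, your ``pointwise equivalence'' is really just the implication (sphericity at $\bfO$) $\Rightarrow$ ($\dim\mu^{-1}(\bfO\cap\fp^{\bot})\leq\dim\bfX$), and the converse — which is exactly what your proof of (finitely many $\bfP$-orbits $\Rightarrow$ sphericity) rests on — fails whenever $d<\tfrac12\dim\bfO$.

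This is precisely why the paper does not prove that direction by a dimension count on $\mu^{-1}(\fp^{\bot})$. Instead (Theorem \ref{Thm:Ri}), it shows that finiteness of the $\bfP$-orbits makes the zero fiber of the moment map of the diagonal $\bfG$-action on $\bfX\times(\bfG/\bfP)$ Lagrangian (Lemma \ref{lem:0}), pushes this isotropy down along the moment map $\nu$ of $T^*(\bfG/\bfP)$ to conclude that $\Gamma_{\bfO}\subset T^*\bfX\times\bfO$ is isotropic for each $\bfO\subset\overline{\bfO_\bfP}$ (using the compatibility of symplectic forms in Proposition \ref{prop:symp}), and only then reads off $\dim\mu^{-1}(\bfO)\leq\tfrac12(\dim T^*\bfX+\dim\bfO)$. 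To repair your argument you would need to replace the appeal to the Spaltenstein-type equality by this symplectic step (or some substitute for it); as written, the forward implication is unproven, and in the $\Sp_{2n}$ example above your chain of inequalities genuinely does not reach the required bound. A minor additional point: your justification of the fiber-dimension identity for $\mu^{-1}(\bfO\cap\fp^{\bot})$ via ``the open $\bfP$-orbit in $\bfO\cap\fn_{\bfP}$'' is unnecessary and not available for non-Richardson $\bfO$; $\bfG$-equivariance of $\mu$ already gives constancy of the fiber dimension over $\bfO$, which suffices.
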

This implies in particular  that $\bf X$ is $\{0\}$-spherical if and only if $\bf G$ has finitely many orbits on $\bfX$ and that $\bfX$ is $\cN(\fg^*)$-spherical if and only if $\bfX$ is spherical.

\DimaF{As a byproduct of the proof of Theorem \ref{IntThm:Ri}, we obtained the following theorem.
\begin{introthm}[See Corollary \ref{cor:Rich} below]\label{IntThm:Rich}
Let $\bf P\sub G$ be a parabolic subgroup, and let $\bfO\subset \cN(\fg^*)$ be any Richardson nilpotent orbit. Then the intersection $\bfO\cap \fp^{\bot}$ is  a Lagrangian subvariety of $\bfO$, with respect to the Kirillov-Kostant-Souriau symplectic form on $\bfO$ \DimaC{(see Definition \ref{def:sympO} below).}
\end{introthm}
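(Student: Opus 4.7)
The plan is to obtain the Lagrangian property from a matching pair of bounds on $\dim(\bfO \cap \fp^{\bot})$: an upper bound coming from Theorem \ref{IntThm:Ri}, and a lower bound together with the isotropy coming from a Poisson-coisotropy argument. I may assume $\bfO \cap \fp^{\bot}$ is nonempty, for otherwise there is nothing to prove; since $\bfG \cdot \fp^{\bot} = \overline{\bfO_{\bfP}}$, the nonempty case forces $\bfO \subset \overline{\bfO_{\bfP}}$.

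For the upper bound I apply Theorem \ref{IntThm:Ri} to $\bfX := \bfG/\bfP$. The Bruhat decomposition shows $\bfP$ acts on $\bfG/\bfP$ with finitely many orbits, so $\bfG/\bfP$ is $\overline{\bfO_{\bfP}}$-spherical. Under the standard identification $T^*(\bfG/\bfP) \cong \bfG \times_{\bfP} \fp^{\bot}$ with moment map $[g, \xi] \mapsto \Ad^*(g)\xi$, a routine check gives $\mu^{-1}(\bfO) = \bfG \times_{\bfP} (\bfO \cap \fp^{\bot})$, hence $\dim \mu^{-1}(\bfO) = \dim(\bfG/\bfP) + \dim(\bfO \cap \fp^{\bot})$. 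The sphericity inequality then yields $\dim(\bfO \cap \fp^{\bot}) \le \tfrac12 \dim \bfO$ on each irreducible component.

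For the matching lower bound and the isotropy I use that $\fp^{\bot}$ is a Poisson-coisotropic subvariety of $(\fg^*, \{\cdot,\cdot\}_{\mathrm{KKS}})$: its defining ideal in $\Sym \fg = \Oc(\fg^*)$ is generated by $\fp$, and $\{\fp,\fp\} = [\fp,\fp] \subset \fp$, so the ideal is closed under the Poisson bracket. It is a standard fact in Poisson geometry that the intersection of a Poisson-coisotropic subvariety with a symplectic leaf is coisotropic in that leaf; applied to the leaf $\bfO$ this shows that at a smooth point $\xi$ of $\bfO \cap \fp^{\bot}$, the tangent space $T_{\xi}(\bfO \cap \fp^{\bot}) = \ad^*(\fg)\xi \cap \fp^{\bot}$ is coisotropic inside $(T_{\xi}\bfO, \omega_{\xi})$. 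In particular each irreducible component of $\bfO \cap \fp^{\bot}$ has dimension $\ge \tfrac12 \dim \bfO$.

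Combining the two bounds forces $\dim(\bfO \cap \fp^{\bot}) = \tfrac12 \dim \bfO$ componentwise, and equality of dimension turns the coisotropy into the statement that the tangent space equals its symplectic orthogonal, i.e.\ the intersection is Lagrangian. I expect the only technical point to be justifying the Poisson-coisotropic/symplectic-leaf principle; in our case one verifies it directly by writing the tangent space at a smooth $\xi \in \bfO \cap \fp^{\bot}$ and using that $\fp \subset \{X \in \fg : \xi([X, \fp]) = 0\}$ (which follows from $\xi \in \fp^{\bot}$ and $[\fp, \fp] \subset \fp$) to show that the KKS-orthogonal of $\ad^*(\fg)\xi \cap \fp^{\bot}$ inside $T_{\xi}\bfO = \ad^*(\fg)\xi$ lies back inside $\ad^*(\fg)\xi \cap \fp^{\bot}$.
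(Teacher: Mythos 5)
Your upper bound is fine, and so is the linear-algebra verification that the subspace $W_\xi:=\ad^*(\fg)\xi\cap\fp^{\bot}=T_\xi\bfO\cap T_\xi\fp^{\bot}$ is coisotropic in $(T_\xi\bfO,\omega_\xi)$. The gap is in passing from this to the lower bound $\dim(\bfO\cap\fp^{\bot})\geq\tfrac12\dim\bfO$. The subspace $W_\xi$ is the Zariski tangent space of the \emph{scheme-theoretic} intersection $\bfO\cap\fp^{\bot}$; at a smooth point of the \emph{reduced} intersection, the tangent space $T_\xi(\bfO\cap\fp^{\bot})_{\mathrm{red}}$ can be strictly smaller than $W_\xi$, because the intersection need not be reduced (nor clean). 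So coisotropy of $W_\xi$ bounds $\dim W_\xi$ from below but says nothing about the dimension of the variety $\bfO\cap\fp^{\bot}$. This is not a removable technicality: your argument nowhere uses that $\bfO$ is Richardson, so as written it would prove the Lagrangian property for \emph{every} orbit $\bfO\subset\overline{\bfO_{\bfP}}$, and that is false. Example \ref{ex:small} (Panyushev) gives $\bfG=\Sp_{2n}$, $\bfP$ the stabilizer of a line, and $\bfO=\bfO_{\min}$ with $\dim(\bfO_{\min}\cap\fp^{\bot})=1<n=\tfrac12\dim\bfO_{\min}$ for $n>1$; the paper remarks explicitly that there the scheme-theoretic intersection is coisotropic but non-reduced, which is exactly the phenomenon your ``standard fact'' overlooks.

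The Richardson hypothesis must therefore enter the lower bound, and the paper's route is different: isotropy of $\bfO\cap\fp^{\bot}$ is quoted from \cite[Theorem 3.3.7]{CG} (Proposition \ref{prop:Piso}), and the dimension lower bound comes from Proposition \ref{prop:RichDim}. There one chooses an auxiliary parabolic $\bfQ$ with $\bfO=\bfO_{\bfQ}$, so that $\bfO\cap\fq^{\bot}$ is \emph{open} in $\fq^{\bot}$; then $\mu^{-1}(\fq^{\bot})\subset T^*(\bfG/\bfP)$ is a union of conormal bundles to $\bfQ$-orbits, hence every irreducible component has dimension at least $\dim(\bfG/\bfP)$, and a fiber-dimension estimate over $\bfO\cap\fq^{\bot}$ converts this into the bound $\dim\mu^{-1}(\bfO)\geq\dim(\bfG/\bfP)+\tfrac12\dim\bfO$, i.e.\ $\dim(\bfO\cap\fp^{\bot})\geq\tfrac12\dim\bfO$ via Lemma \ref{lem:inO}. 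To repair your proof you would need to replace the Poisson-coisotropy step by an argument of this kind that actually exploits $\bfO$ being Richardson.
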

This theorem is well known in the special case $\bf O=O_P$.
\DimaG{Note that for $\bf O\neq O_P$ the intersection $\bf O\cap \fp^{\bot}$ may be empty.}}

%

Let $G:=\bfG(\R)$ denote the group of real points of $\bfG$ and similarly $X:=\bfX(\R)$. Let $\cM(G)$ denote the category of finitely-generated smooth admissible \Fre $\,$representations of moderate growth (see \cite[\S 11.5]{Wal}).

Our main motivation for the notion of $\bfO$-spherical variety is the following theorem.
\begin{introthm}[See Corollary \ref{cor:fin} below]\label{IntThm:fin}
Let $\pi\in \cM(G)$ and let $\Xi$ denote the associated variety\footnote{see \S \ref{subsec:not} below for the definition} of the annihilator of $\pi$. If $\bfX$ is $\Xi$-spherical then $\pi$ has  finite multiplicity in $\Sc(X)$, i.e. $$\dim \Hom_G(\Sc(X),\pi)<\infty.$$
\end{introthm}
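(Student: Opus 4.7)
The plan is to realize $\Hom_G(\Sc(X),\pi)$ as the solution space of a holonomic $\cD_X$-module on $X$, and then invoke the standard finite-dimensionality of solutions of holonomic $\cD$-modules.

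By Schwartz-space duality, $\phi \in \Hom_G(\Sc(X),\pi)$ corresponds to a continuous $G$-equivariant map $\phi^* : \pi^* \to \Sc^*(X)$. Passing to the Harish-Chandra module $\pi^*_K$ of $K$-finite vectors, each $v \in \pi^*_K$ yields a tempered distribution $T_v = \phi^*(v)$ annihilated by $\operatorname{Ann}(\pi) \subset U(\fg)$ (acting on $\Sc^*(X)$ via vector fields differentiating the $G$-action on $X$; one uses here that $\operatorname{Ann}(\pi^\vee)$ has the same associated variety as $\operatorname{Ann}(\pi)$). Let $M$ be the coherent $\cD_X$-module generated by the image of $\phi^*$. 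Since the principal symbols of the vector field action $\fg \to \mathrm{Vect}(X)$ are pulled back from $\fg \hookrightarrow \cO(\fg^*)$ via the moment map $\mu: T^*X \to \fg^*$, the singular support satisfies $\operatorname{SS}(M) \subset \mu^{-1}(\Xi)$.

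The core of the proof is to deduce holonomicity of $M$ from $\Xi$-sphericity. By Gabber's theorem, every irreducible component $Y$ of $\operatorname{SS}(M)$ is coisotropic in $T^*X$, so $\dim Y \geq \dim X$. Each $Y$ is also $G$-invariant with $\mu(Y) \subset \overline{\bfO}$ for some nilpotent orbit $\bfO \subset \Xi$. Marsden-Weinstein symplectic reduction at $\bfO$ (with its Kirillov-Kostant-Souriau form) realizes the $G$-invariant coisotropic $Y$ via an isotropic datum on the reduced space $\mu^{-1}(e)/G_e$ for $e \in \bfO$, and the sphericity bound $\dim \mu^{-1}(\bfO) \leq \dim X + \tfrac{1}{2}\dim \bfO$ is precisely the dimension count forcing $\dim Y = \dim X$. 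Hence $\operatorname{SS}(M)$ is Lagrangian, $M$ is holonomic, and its tempered distributional solution space is finite-dimensional by Kashiwara's constructibility theorem. Combined with the admissibility of $\pi$ this yields $\dim \Hom_G(\Sc(X),\pi) < \infty$.

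The main obstacle I foresee is this holonomicity step. The bound $\dim \mu^{-1}(\bfO) \leq \dim X + \tfrac{1}{2}\dim \bfO$ does \emph{not} directly imply $\dim \mu^{-1}(\Xi) \leq \dim X$, so the argument must work irreducible component by irreducible component of $\operatorname{SS}(M)$, carefully combining $G$-invariance, Gabber's coisotropicity, and symplectic reduction at each orbit. Theorem \ref{IntThm:Rich} above, asserting that $\bfO \cap \fp^\perp$ is Lagrangian in $\bfO$ for any Richardson orbit, is a concrete instance of this dimension count and should guide the general argument.
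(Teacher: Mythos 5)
There is a genuine gap at the holonomicity step, and it is not reparable along the lines you sketch. The $\cD_{\bfX}$-module cut out on $\bfX$ by $\Ann(\pi)$ alone has singular support equal (in general) to $\mu^{-1}(\Xi)$, and $\Xi$-sphericity only bounds this by $\dim \bfX+\tfrac12\dim\bfO$, which exceeds $\dim\bfX$ for every nonzero orbit. This excess is real, not an artifact: take $\bfX=\bf G/P$ and $\Xi=\overline{\bfO_\bfP}$; then $\mu^{-1}(\bfO_\bfP)$ is open dense in $T^*(\bf G/P)$, so the module $\cD_{\bfX}/\Ann(\pi)\cD_{\bfX}$ has singular support all of $T^*\bfX$ and is as far from holonomic as possible, even though the multiplicity $\dim\Hom_G(\Sc(G/P),\pi)$ is finite. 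The same example refutes your proposed rescue: $T^*(\bf G/P)$ is itself a $G$-invariant coisotropic subvariety mapping into $\overline{\bfO_\bfP}$ of dimension $\dim\bfX+\tfrac12\dim\bfO_\bfP$, so ``Gabber $+$ $G$-invariance $+$ symplectic reduction at $\bfO$'' cannot force every component of the singular support to be Lagrangian. (There is also a secondary issue: your module $M$ depends on the chosen $\phi$, so finiteness of its solution space does not by itself bound the number of linearly independent $\phi$'s; one needs a single universal module.)

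The missing ingredient is the $K$-finiteness of the vectors $v\in\pi^*_K$: each $T_v$ is annihilated not only by $\Ann(\pi)$ but also by a cofinite ideal of $\cU(\fk)$, and it is the intersection $\mu_{\fg}^{-1}(\Xi)\cap\mu_{\fk}^{-1}(\cN(\fk^*))$ that has dimension $\leq\dim\bfX$. The paper implements this by doubling: it identifies the $\rho$-isotypic part of the multiplicity space with $\Sc^*(G/K\times X,\cE''\boxtimes\cE')^{I,\Delta G}$ and proves holonomicity of the corresponding module on $\bfX\times({\bf G/K})$ (Theorem \ref{Thm:Dist}). There the diagonal constraint $\mu_X(x)=-\mu_{G/K}(y)$ saves $\dim\bfO$, exactly cancelling the two excesses of $\tfrac12\dim\bfO$ coming from the sphericity of $\bfX$ and of $\bf G/K$ (the latter because $\bf G/K$ is spherical, hence $\cV(I)$-spherical by Corollary \ref{cor:AbsSpher}). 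One then concludes $K$-type by $K$-type via Frobenius reciprocity and Casselman--Wallach. Your proposal never uses the second ($K$-side) constraint, and without it the dimension count simply does not close.
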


We actually prove a more general theorem, that allows to consider certain bundles on $X$, and replaces the reductivity assumption on $G$ by certain assumptions on $\pi$.

We deduce the theorem from the following theorem on invariant distributions.

\begin{introthm}[See Theorem \ref{Thm:Dist} below]\label{intThm:Dist}
Let $I\subset \cU(\fg)$ be a two-sided ideal, and let $\cV(I)\subset \fg^*$ denote its associated variety. Suppose that $\cV(I)$ lies in the nilpotent cone of $\fg^*$. Let $\bf X,Y$ be $\cV(I)$-spherical $\bfG$-manifolds.
Let $\Sc^*(X\times Y)^{\Delta G,I}$ denote the space of tempered distributions on $X\times Y$ that are invariant under the diagonal action of $G$, and annihilated by $I$.
Then
$$\dim \Sc^*(X\times Y)^{\Delta G,I}<\infty$$
Moreover, the space $\Sc^*(X\times Y)^{\Delta G,I}$ consists of holonomic distributions.
\end{introthm}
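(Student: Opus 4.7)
The strategy is to realize every $u\in\Sc^*(X\times Y)^{\Delta G,I}$ as a tempered distribution solution of a single coherent holonomic $\cD$-module $\cM$ on $X\times Y$ built from the group action and the ideal $I$, and then to invoke a general finiteness principle for tempered solutions of holonomic systems. Let $\rho_X,\rho_Y:\cU(\fg)\to \cD_{X\times Y}$ denote the algebra homomorphisms coming from the infinitesimal $G$-actions on the two factors. Diagonal $G$-invariance of $u$ is the assertion $(\rho_X+\rho_Y)(\fg)\cdot u=0$, and I interpret ``annihilated by $I$'' as $\rho_Y(I)\cdot u=0$; combined with diagonal invariance and the principal antiautomorphism $\sigma$ of $\cU(\fg)$ (under which $\rho_X(\sigma(a))\cdot u=\rho_Y(a)\cdot u$ for invariant $u$), this is equivalent to $\rho_X(\sigma(I))\cdot u=0$. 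Let $\cJ\subset\cD_{X\times Y}$ be the left ideal generated by $(\rho_X+\rho_Y)(\fg)$ together with $\rho_Y(I)$, and set $\cM:=\cD_{X\times Y}/\cJ$. Every $u$ in the space of interest is then a distribution solution of $\cM$.

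The key step is bounding the characteristic variety of $\cM$. Passing to associated graded, the symbols of $(\rho_X+\rho_Y)(\xi)$ for $\xi\in\fg$ are the linear forms $\langle\mu_X+\mu_Y,\xi\rangle$ on $T^*(X\times Y)$, whose common zero set is $\mu_{X\times Y}^{-1}(0)$ for the diagonal moment map $\mu_{X\times Y}=\mu_X+\mu_Y$. For $a\in I$ the symbol of $\rho_Y(a)$ is $\gr(a)\circ\mu_Y$, so the common zero set over $a\in I$ is $\mu_Y^{-1}(\cV(I))$. Therefore $\mathrm{Ch}(\cM)\subset \Lambda:=\mu_{X\times Y}^{-1}(0)\cap\mu_Y^{-1}(\cV(I))$.

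To estimate $\dim\Lambda$, stratify it by the (finitely many) $G$-orbits $\bfO\subset\cV(I)$ containing $\mu_Y(\eta)$. On such a stratum $\mu_X(\xi)=-\mu_Y(\eta)$ lies in $-\bfO$, which is again a nilpotent $G$-orbit in $\cV(I)$ (using stability of $\cV(I)$ under $\alpha\mapsto-\alpha$, which is automatic when $\sigma(I)=I$; otherwise one replaces the hypothesis on $X$ by $\cV(\sigma(I))$-sphericity). Applying $\cV(I)$-sphericity of $Y$ to $\bfO$ and of $X$ to $-\bfO$ yields
\[
\dim\mu_Y^{-1}(\alpha)\leq\dim Y-\tfrac12\dim\bfO,\qquad\dim\mu_X^{-1}(-\alpha)\leq\dim X-\tfrac12\dim\bfO
\]
for generic $\alpha\in\bfO$; summing these with the $\dim\bfO$ parameters for $\alpha$ gives total dimension at most $\dim X+\dim Y$ on each stratum. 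Hence $\dim\Lambda\leq\dim(X\times Y)$, and $\cM$ is holonomic.

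It follows that each $u\in\Sc^*(X\times Y)^{\Delta G,I}$ generates a holonomic $\cD$-submodule of $\Sc^*(X\times Y)$, establishing the holonomicity claim. Finite-dimensionality of $\Sc^*(X\times Y)^{\Delta G,I}$ then follows from the general principle, proved in Appendix B, that the space of tempered distribution solutions of a coherent holonomic $\cD$-module on a smooth algebraic manifold is finite-dimensional. The main obstacle of the argument is the stratified dimension estimate for $\Lambda$: one must carefully pair the orbit strata in $\cV(I)$ against the sphericity hypotheses on both factors, which is where the assumption on $\cV(I)$ being in the nilpotent cone (and the implicit symmetry under $\alpha\mapsto-\alpha$) is crucial. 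A subsidiary obstacle is the finiteness of tempered solutions of holonomic systems, which is not automatic on a noncompact variety and is precisely the content of Appendix B.
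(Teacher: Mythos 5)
Your proposal is correct and follows essentially the same route as the paper: realize the invariants as solutions of the $\cD$-module cut out by $\Delta\fg$ and $I$, bound its singular support orbit-by-orbit inside the nilpotent cone using $\cV(I)$-sphericity of both factors to get holonomicity, and conclude by the finiteness of tempered solutions of holonomic modules (which the paper quotes as the Bernstein--Kashiwara theorem from the literature, not an appendix). Your worry about $-\bfO$ versus $\bfO$ is moot, since every complex nilpotent coadjoint orbit is stable under $\alpha\mapsto-\alpha$.
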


We prove this theorem using the theory of modules over the ring of algebraic differential operators. Namely, we use the theorem that states that the space of solutions of every holonomic D-module in tempered distributions is finite-dimensional.

\DimaH{
\subsection{Homogeneous case }
For $X=G/H$ a version of Frobenius reciprocity (see Lemma \ref{lem:Frob} below)  states $\Hom(\Sc(G/H),\widetilde{\pi})\cong (\pi^*)^H$. 
In this case Theorem \ref{IntThm:fin} can be also deduced from the following one.

\begin{thm}[\cite{Yam}]\label{thm:Yam}
Let $M$ be a finitely generated $\fg$-module and let $\mathfrak{S}(M)$ denote the associated variety\footnote{See \S\ref{sec:Yam} below for the definition.} of $M$. Let $\fh\sub \fg$ be a Lie subalgebra. Assume that $\mathfrak{S}(M)\cap \fh^{\bot}=\{0\}$. 

Then $M$ is finitely generated over $\fh$.
\end{thm}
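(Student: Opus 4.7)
The plan is to pass to the associated graded module and then invoke a graded Nakayama argument. To start, I would fix a good $\cU(\fg)$-filtration $M=\bigcup_n M_n$ (for instance by choosing finitely many generators and setting $M_n:=\cU(\fg)_n\cdot\langle\text{generators}\rangle$); then $\gr M$ is a finitely generated graded $\Sym(\fg)$-module whose set-theoretic support in $\fg^*$ is, by definition, $\fS(M)$. Via the natural graded inclusion $\Sym(\fh)\hookrightarrow\Sym(\fg)$ induced by $\fh\hookrightarrow\fg$, I would regard $\gr M$ as a graded $\Sym(\fh)$-module and reduce the theorem to showing that $\gr M$ is finitely generated as such.

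For this reduction step, observe that $\Sym(\fg)/(\Sym(\fh)_+\cdot\Sym(\fg))$ is canonically isomorphic to $\Sym(\fg/\fh)$, i.e.\ to the coordinate ring of $\fh^{\bot}\subset\fg^*$. Consequently the graded quotient $\overline{\gr M}:=\gr M/(\Sym(\fh)_+\cdot \gr M)$ is a finitely generated graded $\Sym(\fh^{\bot})$-module whose set-theoretic support in $\fh^{\bot}$ equals $\fS(M)\cap\fh^{\bot}$. By hypothesis this intersection is $\{0\}$, so some power of the augmentation ideal of $\Sym(\fh^{\bot})$ annihilates $\overline{\gr M}$, and combined with finite generation this forces $\overline{\gr M}$ to be finite-dimensional over $\C$. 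Graded Nakayama then yields a finite set of homogeneous generators for $\gr M$ as a $\Sym(\fh)$-module.

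Finally, to go back from $\gr M$ to $M$, I would lift such generators $\bar m_1,\dots,\bar m_k$ to elements $m_1,\dots,m_k\in M$ of matching filtration degree. Since the PBW filtration on $\cU(\fg)$ restricts to the PBW filtration on $\cU(\fh)$ and $\gr\cU(\fh)=\Sym(\fh)$, a routine induction on filtration degree shows that the $m_i$ generate $M$ as a $\cU(\fh)$-module, giving the required conclusion.

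The one place calling for care is the application of graded Nakayama: it requires $\gr M$ to be bounded below as a graded $\Sym(\fh)$-module, which is automatic because $\Sym(\fh)$ sits inside $\Sym(\fg)$ as a graded subring with non-negative grading and $\gr M$ is finitely generated in bounded degrees over $\Sym(\fg)$. Everything else is standard bookkeeping with filtered modules over enveloping algebras.
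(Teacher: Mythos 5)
Your proof is correct. The paper does not prove this statement itself but cites it to Yamashita; your argument --- choosing a good filtration, observing that $\gr M/\bigl(\Sym(\fh)_+\cdot\gr M\bigr)$ is supported on $\fS(M)\cap\fh^{\bot}=\{0\}$ and hence finite-dimensional, applying graded Nakayama, and lifting generators back through the filtration via PBW --- is the standard proof and essentially the one in that reference. One notational nitpick: the quotient $\Sym(\fg)/\Sym(\fh)_+\Sym(\fg)\cong\Sym(\fg/\fh)$ is the coordinate ring of $\fh^{\bot}$, so ``$\Sym(\fh^{\bot})$'' should read $\Sym(\fg/\fh)$ (equivalently $\cO(\fh^{\bot})$); this does not affect the argument.
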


The relation between the geometric conditions of Theorem \ref{thm:Yam} and Theorem \ref{IntThm:fin}  is given by the next proposition.
Let $\cK_G$ denote the space of maximal compact subgroups of $G$. It has a natural structure of a clopen subset of the real points of a $\bfG$-symmetric variety.
Let $\bf H\sub G$ be an algebraic subgroup, and let $\fh$ denote the Lie subalgebra of $\bfH$. 

\begin{introprop}[\S {\ref{subsec:GeoYam}}]\label{intprop:GeoYam}
Let $\Xi\sub \cN(\fg^*)$ be a closed $\bfG$-invariant subvariety. Suppose that $\bf G/H$ is $\Xi$-spherical. Then there exists a Zariski open dense subset  $U\sub \cK_G$ such that for every $K \in U$ we have $$\Xi \cap \fh^{\bot}\cap\fk^{\bot}=\{0\},$$
where $\fk$ denotes the complexified Lie algebra of $K$.
\end{introprop}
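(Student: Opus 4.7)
The plan is to argue one nilpotent orbit at a time, building an incidence variety whose dimension is bounded by the two sphericity hypotheses (of $\bfG/\bfH$ and of the symmetric space $\bfG/\mathbf{K}_0$), and then invoking the fact that its nonempty fibres over $\cK_G$ are $\C^*$-invariant cones to rule out any potentially-dominant projection.

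\textbf{Step 1 (Reduction and complexification).} Since $\Xi\sub\cN(\fg^*)$ is closed and $\bfG$-invariant, it is a finite union of closures of nilpotent orbits, and it is defined over $\R$. It will suffice to show that for each nonzero nilpotent orbit $\bfO\sub\Xi$ the set
\[
\widetilde B_\bfO:=\{y\in\cK^\C:\bfO\cap\fh^\bot\cap\fk^\bot_y\neq\emptyset\}
\]
is contained in a proper Zariski-closed subvariety of the complex $\bfG$-variety $\cK^\C\cong\bfG/N_\bfG(\mathbf{K}_0)$ parametrising complexifications of maximal compact subgroups. A finite union of such subvarieties, grouped with their Galois conjugates, can be chosen defined over $\R$. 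Since $\cK_G$ sits inside $\cK^\C$ as a Zariski-dense clopen subset of its real points, intersecting the complement with $\cK_G$ produces the desired open dense $U$.

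\textbf{Step 2 (Incidence variety and dimension bound).} Form the incidence variety
\[
\mathcal{Z}_\bfO:=\{(y,x)\in\cK^\C\times\bfO:x\in\fh^\bot\cap\fk^\bot_y\}
\]
with projections $p_1:\mathcal{Z}_\bfO\to\cK^\C$ and $p_2:\mathcal{Z}_\bfO\to\bfO$. The image of $p_2$ is contained in $\bfO\cap\fh^\bot$, whose dimension is $\le\tfrac12\dim\bfO$ by the assumed $\bfO$-sphericity of $\bfG/\bfH$ (via Corollary \ref{cor:invO}). Writing $y=g\mathbf{K}_0$, the fibre $p_2^{-1}(x)$ corresponds to $\{g\in\bfG:\Ad^*(g^{-1})x\in\fk_0^\bot\}$ modulo $N_\bfG(\mathbf{K}_0)$; combining the fibration $\bfG\to\bfG\cdot x=\bfO$, whose fibres have dimension $\dim\bfG-\dim\bfO$, with the bound $\dim(\bfO\cap\fk_0^\bot)\le\tfrac12\dim\bfO$ from the $\bfO$-sphericity of the symmetric (hence spherical) space $\bfG/\mathbf{K}_0$ yields $\dim p_2^{-1}(x)\le\dim\cK^\C-\tfrac12\dim\bfO$. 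Summing these gives $\dim\mathcal{Z}_\bfO\le\dim\cK^\C$.

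\textbf{Step 3 (Conical fibres).} Each fibre $p_1^{-1}(y)=\bfO\cap\fh^\bot\cap\fk^\bot_y$ is invariant under the scaling action of $\C^*$ on $\fg^*$: the linear subspaces $\fh^\bot$ and $\fk^\bot_y$ obviously are, and so is $\bfO$, because any $x\in\bfO$ embeds in an $\sll_2$-triple $(e,h,f)$ with $x=e$, and then $\Ad^*(\exp(sh))\,x=e^{2s}x\in\bfO$ for every $s\in\C$. Since $0\notin\bfO$, every nonempty fibre of $p_1$ contains a line $\C^*\cdot x$ and therefore has dimension at least $1$.

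\textbf{Step 4 (Non-dominance and conclusion).} Should $p_1$ be dominant, its generic fibre would be nonempty of dimension $\dim\mathcal{Z}_\bfO-\dim\cK^\C\le 0$, contradicting Step 3. Hence $p_1$ is not dominant, $\overline{p_1(\mathcal{Z}_\bfO)}$ is a proper Zariski-closed subvariety of $\cK^\C$, and Step 1 completes the argument.

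\textbf{Main obstacle.} The decisive insight is Step 3: only through the $\C^*$-conicity of nilpotent orbits does the sharp dimension bound of Step 2 become incompatible with dominance of $p_1$. Without this observation, the inequality $\dim\mathcal{Z}_\bfO\le\dim\cK^\C$ could in principle be saturated by a generically-finite dominant $p_1$ --- a genuine concern precisely in the interesting case where both $\bfO\cap\fh^\bot$ and $\bfO\cap\fk_0^\bot$ attain half-dimensional (Lagrangian-in-$\bfO$) behaviour. The remaining bookkeeping --- parametrising $\cK^\C$, verifying Zariski density of $\cK_G\sub\cK^\C$, and descending from the complex variety to its real form --- is routine.
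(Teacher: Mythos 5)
Your argument is correct and is essentially the paper's own proof viewed from the other end of the double fibration: the paper fixes $\fk$ and projects the incidence set ${\bf S_O}=\mu^{-1}(\bfO\cap\fk^{\bot})\sub T^*({\bf G/H})$ to $\bf G/H$ (converting a generic coset into a generic $K$ by inversion at the end), whereas you project the same incidence correspondence to $\cK^{\C}$. The key inputs are identical in both versions --- the sphericity bound for $\bfO\cap\fh^{\bot}$, the half-dimensionality of $\bfO\cap\fk^{\bot}$ for the symmetric subgroup, and the observation that the nonempty fibres are nonzero cones and hence cannot be finite, ruling out dominance.
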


We thus obtain the following strengthening of the homogeneous case of Theorem \ref{IntThm:fin}.

\begin{introcor}[\S {\ref{sec:Yam}}]\label{cor:Yam}
Let $\Xi\sub \cN(\fg^*)$ be a closed $\bfG$-invariant subvariety. Suppose that $\bf G/H$ is $\Xi$-spherical. Then there exists a Zariski open dense $U\sub \cK_G$  such that for every $K \in U$, 
every finite cover $\widetilde{K}$ of $K$, every Harish-Chandra $(\fg,\widetilde{K})$-module $M$ such that the associated variety of the annihilator of $M$ lies in $\Xi$ is finitely generated over $\fh$. \\ In particular, for every finite-dimensional $\fh$-module $\sigma$, and every $i\geq 0$ we have  
$$\dim \Ext^i_{\fh}(M|_{\fh},\sigma)<\infty \text{ and }\dim\Tor^i_{\fh}(M|_{\fh},\sigma)<\infty$$
\end{introcor}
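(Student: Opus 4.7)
The plan is to deduce the corollary by combining Proposition~\ref{intprop:GeoYam} with Yamashita's Theorem~\ref{thm:Yam}. First I would take $U\sub\cK_G$ to be the Zariski open dense subset produced by Proposition~\ref{intprop:GeoYam}, so that $\Xi\cap\fh^\bot\cap\fk^\bot=\{0\}$ for every $K\in U$. I then fix such a $K$, a finite cover $\widetilde K\to K$ (which shares the complexified Lie algebra $\fk$), and a Harish-Chandra $(\fg,\widetilde K)$-module $M$ whose annihilator satisfies $\cV(\Ann M)\sub\Xi$.

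The key geometric input is the inclusion $\mathfrak{S}(M)\sub\Xi\cap\fk^\bot$. The containment $\mathfrak{S}(M)\sub\cV(\Ann M)\sub\Xi$ is standard, since $\Ann M$ annihilates the associated graded of $M$ for any good filtration. For $\mathfrak{S}(M)\sub\fk^\bot$ I would equip $M$ with the good filtration $F_iM:=U_i(\fg)M_0$, where $M_0\sub M$ is a finite-dimensional $\widetilde K$-stable generating subspace (available since $M$ is finitely generated as a $\fg$-module and $\widetilde K$ acts locally finitely). Because $\widetilde K$ acts on $U_i(\fg)$ via $\Ad$, this filtration is $\widetilde K$-stable; since the differential of the $\widetilde K$-action agrees with the $\fg$-action on $\fk$, every $X\in\fk$ satisfies $X\cdot F_iM\sub F_iM$. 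The induced multiplication by $X$ on $\gr M$, which a priori would land in degree $i+1$, is therefore zero, giving $\Supp(\gr M)\sub\fk^\bot$.

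With this containment in hand, $\mathfrak{S}(M)\cap\fh^\bot\sub\Xi\cap\fk^\bot\cap\fh^\bot=\{0\}$, and Theorem~\ref{thm:Yam} immediately yields that $M$ is finitely generated as a $U(\fh)$-module. For the $\Ext$ and $\Tor$ assertions I would invoke the classical fact that $U(\fh)$ is noetherian of finite global dimension $\dim\fh$, so $M$ admits a finite resolution $0\to P_d\to\cdots\to P_0\to M\to 0$ by finitely generated free $U(\fh)$-modules. Applying $\Hom_{U(\fh)}(-,\sigma)$ or $-\otimes_{U(\fh)}\sigma$ to this resolution produces a bounded complex of finite-dimensional vector spaces (as $\sigma$ is finite-dimensional and each $P_i$ is finitely generated free), whose cohomology and homology compute $\Ext^i_{\fh}(M,\sigma)$ and $\Tor^i_{\fh}(M,\sigma)$ respectively, and are therefore finite-dimensional.

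The one genuinely non-formal step is the proof of $\mathfrak{S}(M)\sub\fk^\bot$, which is essentially Vogan's theorem on associated varieties of Harish-Chandra modules; the key subtlety there is the cancellation of the degree-raising $\fg$-action against the degree-preserving $\widetilde K$-action on a $\widetilde K$-stable good filtration. The remainder of the argument is a direct assembly of Proposition~\ref{intprop:GeoYam}, Theorem~\ref{thm:Yam}, and standard homological algebra over the enveloping algebra.
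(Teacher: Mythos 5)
Your argument is correct and is exactly the route the paper takes: it combines Proposition~\ref{intprop:GeoYam} with Theorem~\ref{thm:Yam}, using the containment $\mathfrak{S}(M)\subset \cV(\Ann M)\cap\fk^{\bot}$ (which the paper simply cites as Lemma~\ref{lem:Nk} from Vogan, and which you reprove via a $\widetilde{K}$-stable good filtration), and then concludes the $\Ext$/$\Tor$ finiteness by standard homological algebra over the noetherian ring $\cU(\fh)$. The only cosmetic quibble is that the last term of your truncated resolution need only be finitely generated projective rather than free, which changes nothing in the finiteness conclusion.
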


\subsection{Branching problems}
Applying Theorem \ref{IntThm:Ri} and Corollary \ref{cor:Yam} to branching problems we obtain the following statements. 
In this subsection we let $\bf H\sub G$ is a reductive subgroup.
Let $\widetilde{G}$ be a finite cover of an open subgroup of $G$, and let $\widetilde{H}$ be an open subgroup of the preimage of the real points of $\bf H$ in $\widetilde{G}$.
For a nilpotent orbit $\bfO\sub \fg^*$ denote by $\cM_{\overline{\bfO}}(\widetilde{G})$ the subcategory of $\cM(G)$ consisting of representations with associated variety of the annihilator lying in the closure of $\bfO$.

\begin{introcor}[See Proposition \ref{prop:branch} below]\label{IntCor:branch}
Let $\bfO_1\subset \fg^*$ and $\bfO_2\subset \fh^*$ be nilpotent orbits. \DimaC{Consider the following conditions:
\begin{enumerate}[(a)]
%

\item \label{int:mRich} \DimaB{ $\bfO_1=\bf O_P$ for some parabolic subgroup $\bf P\subset G$,  $\bfO_2=\bf O_Q$ for some parabolic subgroup $\bf Q\subset H$, and the set of double cosets $\bf Q\backslash G/P$ is finite. }
\item \label{int:mO12} $\dim \bfO'_1\cap p_{\fh}^{-1}(\bfO'_2)\leq (\dim \bfO'_1+\dim\bfO'_2)/2$ for any $\bfO'_1\subset \overline{\bfO_1}$ and $\bfO'_2\subset \overline{\bfO_2}$.
\DimaH{
\item \label{int:Ext} There exist Zariski open dense subsets $U\sub \cK_G$ and $V\sub \cK_H$ such that for every $K\in U$ and $K'\in V$, and every finite cover $\widetilde{K}$ of $K$ and $\widetilde{K'}$\ of $K'$, every $M\in \cM_{\overline{\bfO_{1}}}(\fg,\widetilde{K})$
and $L\in \cM_{\overline{\bfO_{2}}}(\fh,\widetilde{K'})$, and every $i\geq 0$   we have $\dim \Tor^i_{\fh}(M|_{\fh},L)<\infty$ }
\item \label{int:FinBran}For every $\pi\in \cM_{\overline{\bfO_{1}}}(\widetilde{G})$ and $\tau\in \cM_{\overline{\bfO_2}}(\widetilde{H})$, we have $\dim \Hom_{\widetilde{H}}(\pi|_{\widetilde{H}},\tau)< \infty$.
\end{enumerate}
Then we have \eqref{int:mRich}$\Rightarrow$\eqref{int:mO12}$\Rightarrow$
\eqref{int:Ext}$\Rightarrow$
 \eqref{int:FinBran}.}
\end{introcor}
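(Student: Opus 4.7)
The plan is to reduce all three implications to the observation that $\bfG$, viewed as a $\bfG\times\bfH$-variety via $(g,h)\cdot x=gxh^{-1}$, is isomorphic to $(\bfG\times\bfH)/\Delta\bfH$, and that under this identification condition~\eqref{int:mO12} is exactly the statement that $\bfG$ is $\overline{\bfO_1}\times\overline{\bfO_2}$-spherical as a $\bfG\times\bfH$-variety. This follows from the homogeneous-space criterion for $\bfO$-sphericity stated right after Definition~\ref{def:Ospher}: one checks that $(\bfO'_1\times\bfO'_2)\cap(\Delta\fh)^{\perp}$ is precisely $\{\xi\in\bfO'_1:-\xi|_\fh\in\bfO'_2\}$, which is naturally in bijection with $\bfO'_1\cap p_{\fh}^{-1}(\bfO'_2)$ once the overall sign, absorbed by $\bfH$-invariance of nilpotent orbits, is taken into account.

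For \eqref{int:mRich}$\Rightarrow$\eqref{int:mO12}, I take the parabolic $\bfP\times\bfQ\subset\bfG\times\bfH$, whose Richardson orbit in $(\fg\oplus\fh)^*$ is $\bfO_P\times\bfO_Q$. Its orbits on $\bfG$ under the twisted action are exactly the double cosets $\bfP\backslash\bfG/\bfQ$, which is finite iff $\bfQ\backslash\bfG/\bfP$ is. Theorem~\ref{IntThm:Ri} applied to this parabolic then gives $\overline{\bfO_P\times\bfO_Q}$-sphericity of $\bfG$ as a $\bfG\times\bfH$-variety, which by the reduction above is~\eqref{int:mO12}.

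For \eqref{int:mO12}$\Rightarrow$\eqref{int:Ext}, I form the outer tensor $N:=M\boxtimes L$ as a $(\fg\oplus\fh,\widetilde K\times\widetilde{K'})$-Harish-Chandra module; its annihilator contains $\mathrm{Ann}(M)\otimes\cU(\fh)+\cU(\fg)\otimes\mathrm{Ann}(L)$, whose associated variety lies in $\overline{\bfO_1}\times\overline{\bfO_2}$. Corollary~\ref{cor:Yam} applied to the pair $(\bfG\times\bfH,\Delta\bfH)$ with $\Xi:=\overline{\bfO_1}\times\overline{\bfO_2}$, combined with the splitting $\cK_{\bfG\times\bfH}\cong\cK_G\times\cK_H$ and the product form of the transversality obstruction in Proposition~\ref{intprop:GeoYam}, supplies Zariski open dense subsets $U\subset\cK_G$ and $V\subset\cK_H$ such that for any $(K,K')\in U\times V$, the module $N$ is finitely generated over $\cU(\Delta\fh)$. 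The standard identification $\Tor^i_\fh(M|_\fh,L)\cong H_i(\Delta\fh,N)$ of Tor with Lie-algebra homology of the diagonal tensor product, together with a finite free resolution of $N$ over $\cU(\Delta\fh)$, then gives the finite-dimensionality claim.

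For \eqref{int:Ext}$\Rightarrow$\eqref{int:FinBran}, I lift the algebraic Tor-finiteness to a finiteness statement for continuous $\widetilde H$-intertwiners between smooth representations of moderate growth via Casselman--Wallach and automatic continuity. Given $\pi\in\cM_{\overline{\bfO_1}}(\widetilde G)$ and $\tau\in\cM_{\overline{\bfO_2}}(\widetilde H)$, I fix $K\in U$, $K'\in V$ and form HC-modules $M:=\pi_{\widetilde K}$, $L:=\tau_{\widetilde{K'}}$; by automatic continuity the space $\Hom_{\widetilde H}(\pi|_{\widetilde H},\tau)$ embeds into an algebraic $\Hom$ space between $M|_\fh$ and the contragredient HC-module $L^{\vee}$, and this $\Hom$ space is the linear dual of $\Tor^0_\fh(M|_\fh,L^{\vee})$. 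The involution on $\fh^*$ preserves nilpotent orbits of the reductive Lie algebra $\fh$, so $L^{\vee}\in\cM_{\overline{\bfO_2}}(\fh,\widetilde{K'})$, and~\eqref{int:Ext} applied to $(M,L^{\vee})$ forces this dual to be finite-dimensional. The main technical obstacle lies in this last step: making the embedding of $\Hom_{\widetilde H}(\pi|_{\widetilde H},\tau)$ into the algebraic $\Hom$ precise requires compatibility of the $\widetilde K$- and $\widetilde{K'}$-structures with a common maximal compact subgroup of $\widetilde H$, which is arranged by using the density of $U\times V$ in $\cK_G\times\cK_H$.
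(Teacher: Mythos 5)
Your proposal follows essentially the same route as the paper: condition \eqref{int:mO12} is identified with $\overline{\bfO_1}\times\overline{\bfO_2}$-sphericity of $\bfG\cong(\bfG\times\bfH)/\Delta\bfH$ via Corollary \ref{cor:invO}, implication \eqref{int:mRich}$\Rightarrow$\eqref{int:mO12} comes from Theorem \ref{IntThm:Ri} applied to $\bfP\times\bfQ$ acting on $\bfG$ (the paper's Corollary \ref{cor:RichBran}), implication \eqref{int:mO12}$\Rightarrow$\eqref{int:Ext} from Proposition \ref{intprop:GeoYam} plus Yamashita's theorem giving finite generation of $M\boxtimes L$ over $\Delta\fh$, and \eqref{int:Ext}$\Rightarrow$\eqref{int:FinBran} from the pairing/contragredient argument bounding $\Hom_{\widetilde H}(\pi|_{\widetilde H},\tau)$ by the dual of $(\pi|_{\widetilde H})\otimes_{\fh}\widetilde{\tau}$. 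This matches the paper's proof of Proposition \ref{prop:branch}, including its treatment of the product form of the open dense sets $U\times V$.
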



\begin{introcor}[See \S \ref{subsec:RepBranch} below]\label{IntCor:branch2}
\DimaA{Let $\bf H\sub G$ be a reductive subgroup}.
\begin{enumerate}[(i)]
\item \label{int:FinMultRes} Let $\bf P\sub G$ be a parabolic subgroup, and suppose that $\bf G/P$ is a spherical $\bf H$-variety. Then  for every $\pi\in \cM_{\overline{\bfO_{\bf P}}}({\widetilde{G}})$, the restriction $\pi|_{\widetilde{H}}$ has \DimaH{bounded} multiplicities. \DimaH{This means 
$$\sup_{\tau\in \cM({\widetilde{H}})}\dim \Hom_{\widetilde{H}}(\pi|_{\widetilde{H}},\tau)< \infty.$$
}

\item \label{int:BoundMultResGen} Let $\bfO\sub \fg^*$ be a nilpotent orbit, and $\bf B_H\sub H$ be a Borel subgroup. Suppose that $\bf G/B_H$ is an $\overline{\bfO}$- spherical $\bf G$-variety. Then  for every $\pi\in \cM_{\overline{\bfO}}({\widetilde{G}})$, the restriction $\pi|_H$ has \DimaH{bounded} multiplicities.

\item \label{int:FinMultInd}
Let $\bf Q\subset H$ be a parabolic subgroup. If $\bf Q$ is spherical as a  subgroup of $\bfG$, then for any $\tau \in \cM_{\overline{\bfO_{\bf Q}}}({\widetilde{H}})$, the Schwartz induction $\ind_{\widetilde{H}}^{\widetilde{G}}\tau$ has finite multiplicities.
\DimaH{This means that for every
$\pi\in \cM({\widetilde{H}})$ we have $\dim \Hom(\ind_{\widetilde{H}}^{\widetilde{G}}\tau,\pi)< \infty$.}
\end{enumerate}
\end{introcor}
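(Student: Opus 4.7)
The plan is to prove part~\eqref{int:FinMultInd} by Frobenius reciprocity combined with Corollary~\ref{IntCor:branch}, to reduce part~\eqref{int:FinMultRes} to part~\eqref{int:BoundMultResGen} via Theorem~\ref{IntThm:Ri}, and to establish \eqref{int:BoundMultResGen} by embedding every $\tau$ into a parametrized family of principal series of $\widetilde H$.

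For \eqref{int:FinMultInd}, I will first combine Frobenius reciprocity for Schwartz induction with smooth duality on $\cM$ to get
\[
\Hom_{\widetilde G}\bigl(\ind_{\widetilde H}^{\widetilde G}\tau,\pi\bigr)\cong\Hom_{\widetilde H}(\tau,\pi|_{\widetilde H})\cong\Hom_{\widetilde H}(\widetilde\pi|_{\widetilde H},\widetilde\tau).
\]
Then I will apply Corollary~\ref{IntCor:branch} with $\bfO_1=\bfO_{\bf B_G}$ (the regular nilpotent orbit of $\fg^{*}$, so that $\overline{\bfO_1}=\cN(\fg^{*})$ and the condition on $\widetilde\pi$ is vacuous) and $\bfO_2=\bfO_{\bf Q}$. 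Condition~\eqref{int:mRich} then reduces to the finiteness of $\bf Q\backslash G/B_G$, which is precisely the standard reformulation of the hypothesis that $\bfG/\bfQ$ is a spherical $\bfG$-variety. The implication \eqref{int:mRich}$\Rightarrow$\eqref{int:FinBran} delivers the desired finiteness.

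For \eqref{int:FinMultRes}, I will observe that $\bfG/\bfP$ being spherical as an $\bfH$-variety is equivalent to $\bf B_H\backslash G/P$ being finite. By Theorem~\ref{IntThm:Ri} applied to $\bfP$ acting on the $\bfG$-variety $\bfG/\bf B_H$, this is equivalent to $\bfG/\bf B_H$ being $\overline{\bfO_{\bf P}}$-spherical as a $\bfG$-variety. Hence \eqref{int:FinMultRes} becomes the special case $\bfO=\bfO_{\bf P}$ of \eqref{int:BoundMultResGen}.

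For \eqref{int:BoundMultResGen}, I will use Casselman's subrepresentation theorem to embed every irreducible $\tau\in\cM(\widetilde H)$ into a (possibly non-unitary) principal series $\Ind_{\widetilde{B_H}}^{\widetilde H}\chi_\tau$ for some character $\chi_\tau$ of $\widetilde{B_H}$. Combined with Frobenius reciprocity for smooth induction this yields
\[
\dim\Hom_{\widetilde H}(\pi|_{\widetilde H},\tau)\le\dim\Hom_{\widetilde{B_H}}(\pi|_{\widetilde{B_H}},\chi_\tau)=\dim\Hom_{\widetilde G}\bigl(\pi,\Ind_{\widetilde{B_H}}^{\widetilde G}\chi_\tau\bigr).
\]
The hypothesis that $\bfG/\bf B_H$ is $\overline{\bfO}$-spherical, together with a line-bundle version of Theorem~\ref{IntThm:fin}, makes the right-hand side finite for each $\chi_\tau$ individually. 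The main obstacle will be to upgrade this to a bound \emph{uniform} in $\chi$: the family $\bigl\{\Ind_{\widetilde{B_H}}^{\widetilde G}\chi\bigr\}_{\chi}$ corresponds to a flat family of twisted holonomic $\cD$-modules on $\bfG/\bf B_H$ whose characteristic variety is independent of $\chi$, and the holonomic-$\cD$-module arguments underlying Theorem~\ref{intThm:Dist}, applied in this relative form, will yield a bound on $\dim\Hom_{\widetilde G}\bigl(\pi,\Ind_{\widetilde{B_H}}^{\widetilde G}\chi\bigr)$ that depends only on $\cV(\mathrm{Ann}\,\pi)\subseteq\overline{\bfO}$ and on $\bfG/\bf B_H$, hence is uniform in $\chi$.
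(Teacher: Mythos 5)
Your treatment of parts \eqref{int:FinMultInd} and \eqref{int:FinMultRes} matches the paper: part \eqref{int:FinMultInd} is Frobenius reciprocity plus the implication \eqref{int:mRich}$\Rightarrow$\eqref{int:FinBran} of Corollary \ref{IntCor:branch} with $\bfP$ a Borel subgroup of $\bfG$ (this is exactly Corollary \ref{cor:IndSpher} with $\bf R=G$), and the reduction of \eqref{int:FinMultRes} to \eqref{int:BoundMultResGen} via Theorem \ref{IntThm:Ri} is the paper's Corollary \ref{cor:RichBran}. The problem is part \eqref{int:BoundMultResGen}, where you take a genuinely different route and where the argument has a real gap.

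The gap is precisely the uniformity in $\chi$ that you flag as ``the main obstacle'': the tools available (Theorem \ref{thm:DimSol} / Theorem \ref{intThm:Dist}) give finite-dimensionality of the solution space of each individual holonomic $\cD$-module, but no bound depending only on the characteristic variety. Your proposed fix --- a ``relative'' Bernstein--Kashiwara theorem for a flat family of twisted holonomic modules with constant characteristic variety --- is not established anywhere in the paper and is a substantial result in its own right (it is essentially the ``uniformly bounded multiplicities'' technology of \cite{Kit}, which the paper explicitly treats as a separate method). As written, your argument only proves finite, not bounded, multiplicities. A secondary imprecision: over $\R$, Casselman's subrepresentation theorem embeds $\tau$ into $\Ind_{P_0}^{\widetilde H}\sigma$ for a \emph{minimal parabolic} $P_0$ and a finite-dimensional $\sigma$, not into a principal series induced from the real points of the complex Borel $\bf B_H$ with a character; this is patchable (sphericity passes from $\fb_\fh$ to the larger $\fp_0$), but it is not what you wrote. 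The paper avoids the uniformity problem entirely by working on the $\pi$ side instead of the $\tau$ side: Corollary \ref{cor:Yam} (Yamashita's Theorem \ref{thm:Yam} combined with Proposition \ref{intprop:GeoYam}) shows that the Harish-Chandra module $\pi^{(K)}$ is \emph{finitely generated over the Borel subalgebra} $\fb_\fh\sub\fg$, and Lemma \ref{lem:BorBound} then bounds $\dim\Hom_{\fh}(\pi^{(K)},\tau)$ for \emph{every} irreducible $\tau$ simultaneously by the number of $\fb_\fh$-generators. If you want to salvage your approach you would need to either prove the uniform family version of the solution-space bound, or switch to the Yamashita-type finite-generation statement, which is what actually delivers a single constant valid for all $\tau$.
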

}

For the case when the commutant $[\fg,\fg]$ is simple, all pairs $\bf (H,P)$ such that $\bfH$ is a symmetric subgroup of $\bfG$, and $\bf G/P$ is a spherical $\bf H$-space are classified in \cite[ \S 5, Table 2]{HNOO}.
More generally, a strategy for classifying all pairs $\bf (H,P)$ of subgroups of $\bf G$ satisfying the conditions of Corollary \ref{IntCor:branch2}\eqref{int:FinMultRes} is given in \cite{AP}. This strategy is also implemented in \emph{loc. cit.} \DimaB{for $\bfG=\SL_n$, and in \cite{AP2} for all the other classical groups}.

The pairs of subgroups $\bf Q\subset H\subset G$ such that $\bf H$ is a symmetric subgroup of $\bf G$, and $\bf Q$ is a parabolic subgroup of $\bf H$ that is also a spherical subgroup of $\bf G$ are classified in \cite[ \S 6, Table 3]{HNOO}.
For some representatives of this class of  pairs, multiplicities in the restriction to $H$ of degenerate principal series representations of $G$ are studied in detail in \cite{MOO,FO}.

%

\DimaA{The main example of non-simple} $[\fg,\fg]$ is the diagonal symmetric pair: $\bf G=H\times H$, with $\bfH $ embedded diagonally. This case gives the following corollary.

\begin{introcor}\label{cor:prod}
Let $\bf H$ be a reductive group, and $\bf P,Q\subset H$ be parabolic subgroups. Suppose that $\bf H/P\times H/Q$ is a spherical $\bf H$-variety, under the diagonal action.

Then for any $\pi\in \cM_{\overline{\bfO_{\bf P}}}(H)$ and any $\tau\in \cM_{\overline{\bfO_{\bf Q}}}(H)$, the tensor product $\pi\otimes \tau$ has \DimaH{bounded} multiplicities as a representation of $H$.
\end{introcor}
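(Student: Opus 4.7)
The plan is to deduce this corollary as a direct instance of Corollary \ref{IntCor:branch2}\eqref{int:FinMultRes}, applied to the diagonal symmetric pair.

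First I would set $\bfG := \bfH \times \bfH$ and embed $\bfH$ diagonally as $\Delta\bfH \subset \bfG$. This is a reductive subgroup (it is the fixed-point group of the swap involution, so the pair is in fact symmetric), so the hypotheses of Corollary \ref{IntCor:branch2}\eqref{int:FinMultRes} are available. The product $\bfP \times \bfQ$ is a parabolic subgroup of $\bfG$ with nilradical $\fu_\bfP \times \fu_\bfQ$, and its Richardson orbit is $\bfO_{\bfP \times \bfQ} = \bfO_\bfP \times \bfO_\bfQ \subset \fg^* = \fh^* \oplus \fh^*$. Moreover, as an $\bfH$-variety under the diagonal action,
$$\bfG/(\bfP \times \bfQ) \;=\; \bfH/\bfP \times \bfH/\bfQ,$$
which is spherical by the hypothesis of the corollary.

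Next, I would verify that $\pi \otimes \tau \in \cM_{\overline{\bfO_{\bfP \times \bfQ}}}(G)$. Writing $\cU(\fg) = \cU(\fh) \otimes \cU(\fh)$, the annihilator of $\pi \otimes \tau$ contains the two-sided ideal $\Ann(\pi) \otimes \cU(\fh) + \cU(\fh) \otimes \Ann(\tau)$. Passing to associated graded rings in $\Sym(\fg) = \Sym(\fh) \otimes \Sym(\fh)$ shows that the associated variety of this ideal equals $\cV(\Ann(\pi)) \times \cV(\Ann(\tau))$, and hence
$$\cV(\Ann(\pi \otimes \tau)) \;\subset\; \cV(\Ann(\pi)) \times \cV(\Ann(\tau)) \;\subset\; \overline{\bfO_\bfP} \times \overline{\bfO_\bfQ} \;=\; \overline{\bfO_{\bfP \times \bfQ}}.$$

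Finally, I would apply Corollary \ref{IntCor:branch2}\eqref{int:FinMultRes} with $(\bfG, \bfH, \bfP, \pi)$ replaced by $(\bfH \times \bfH,\, \Delta\bfH,\, \bfP \times \bfQ,\, \pi \otimes \tau)$, choosing the trivial finite cover $\widetilde{G} = H \times H$ and $\widetilde{H} = H$. This yields boundedness of the multiplicities of $(\pi \otimes \tau)\big|_{\Delta H}$, which is precisely the tensor product $\pi \otimes \tau$ viewed as a representation of $H$.

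There is no real obstacle here; the only points requiring a brief verification are the identification of the Richardson orbit of a product parabolic as the product of Richardson orbits, and the inclusion of associated varieties for the annihilator of a tensor product. Both are standard and take only a few lines.
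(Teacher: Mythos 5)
Your proposal is correct and is precisely the route the paper intends: it introduces the diagonal symmetric pair $\bfG=\bfH\times\bfH$, $\Delta\bfH\subset\bfG$ immediately before the corollary and derives it from Corollary \ref{IntCor:branch2}\eqref{int:FinMultRes} with the parabolic $\bfP\times\bfQ$, whose Richardson orbit is $\bfO_\bfP\times\bfO_\bfQ$. Your two verification steps (the Richardson orbit of a product parabolic and the inclusion $\cV(\Ann(\pi\otimes\tau))\subset\cV(\Ann\pi)\times\cV(\Ann\tau)$) are exactly the routine checks left implicit in the paper.
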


Corollaries \ref{IntCor:branch} and \ref{IntCor:branch2} also hold in a wider generality, that allows the groups to be non-reductive, but puts restrictions on the representations (see Definition \ref{def:adm}). This allows to apply Corollary \ref{IntCor:branch2}\eqref{int:FinMultInd} to mixed models. We do so in Corollary \ref{cor:IndSpher} below. Let us give an example for this corollary, that can also be seen as a generalization of the Shalika model.
\begin{introexm}[See \S \ref{subsec:RepBranch} below]\label{Ex:HShal}
Let $\bfG=\GL_{2n}$, $\bf R\subset G$ be the standard parabolic subgroup with Levi part ${\bf L}= \GL_n\times\GL_n$ and unipotent radical $\bf U=\Mat_{n\times n}$,  ${\bf M}=
\Delta \GL_n\subset {\bf L}$, $\bf H:=MU$. Let $\bfO_{\min}\subset \fm^*$ denote the minimal nilpotent orbit (which consists of rank 1 matrices), and let $\pi\in \cM_{\overline{\bfO_{\min}}}(M)$. Extend $\pi$ to a representation of $H$ by letting $U$ act trivially. Let $\psi$ be a unitary character of $H$. Then $\ind_H^G(\pi\otimes\psi)$ has finite multiplicities.
\end{introexm}

A similar example works for the orthogonal groups $\bfG=O_{4n}$, ${\bf L}=\GL_{2n}$, ${\bf M}=\Sp_{2n}$, and the next-to-minimal orbit $\bfO_{\mathrm{ntm}}\sub \fm^*$, which consists of matrices of rank 2  in $\mathfrak{sp}_{2n}^*$.

\subsection{Background and motivation}
Harmonic analysis on spaces with a group action is a central direction of modern representation theory. So far, most of the attention was given to spherical spaces, see e.g. \cite{Del_Sym,vBS,AGRS,AMOT,SZ,GGPW,MgW,KO,Kob,KS,KS_Lect,SV,KKS_Temp,Del,GGP,DKKS,Wan}. Indeed, the spherical (or real/p-adic spherical) spaces $X$ seem to be the most natural spaces to consider if one wants to analyse the entire space of functions on $X$, because of the coherence properties this space possesses, see \cite{KO,KS,SV}, and \cite[Appendix A]{AGS_Z}.

However, if we restrict our attention to a subcategory of representations of the group, namely to representations with associated variety of the annihilator lying in a fixed subset of the nilpotent cone,  some coherence properties hold in a wider generality, as exhibited by Theorem \ref{IntThm:fin}. This serves as our main motivation for the notion of a $\Xi$-spherical space.
\subsection{Examples}

The classification of all pairs of parabolic subgroups satisfying the condition of Corollary \ref{cor:prod} is given in \cite{Stem}. In particular, this shows that the product of two small representations\footnote{{\it i.e.} representations such that the square of every matrix in the associated variety of the annihilator is zero.} of a classical group has finite multiplicities.
Also, for $\GL_n$ the product of any representation $\pi\in \cM(G)$ with any minimal representation $\tau$ has finite multiplicities. This allows to define some analogue of translation functors, by sending $\pi$ to the projection of $\pi\otimes \tau$ on the subcategory corresponding to a fixed central character.

For $\bfH\in \{\GL_n,\Sp_{2n},SO_{2n+1}\}$, all triples of parabolic subgroups such that $\bf H$ has finitely many orbits under the diagonal action on $\bf H/P_1\times H/P_2\times H/P_3$ are classified in \cite{MWZ_GL,MWZ_Sp,Matsuki}. They also show that these groups never have finitely many orbits on the quadriple product $\bf \prod_{i=1}^4 H/P_i$ unless $\bf P_i=H$ for some $i$.


Given  an orbit $\bfO \subset \mu(\bf X)\subset \fg^*$  it is natural to ask whether a strict inequality $\dim \mu^{-1}(\bfO)<\dim X + \dim \bfO/2$ is possible. In Proposition \ref{prop:RichDim} below we show that it is not possible if $\bfO$ is a Richardson orbit. However, in Appendix \ref{app:Ido} below, Ido Karshon shows that for non-Richradson $\bfO$ it is possible. In this example $\bf X=G/H$, and in fact it is computed that $\dim \bfO\cap \fh^{\bot}<\dim \bfO/2$. This is interesting since the scheme-theoretic intersection of $\bfO$ with $\fh^{\bot}$ is co-isotropic with respect to the Kirillov-Kostant-Souriau symplectic form on $\bfO$ \DimaC{(see Definition \ref{def:sympO} below)}, but obviously in this case the intersection is not reduced. \DimaB{Another example, in which $\bfH$ is a parabolic subgroup, was provided by Dmitry Panyushev - see Example \ref{ex:small} below. Under additional conditions that $\bf H$ is either symmetric, or spherical and solvable this is not possible by \cite[Proposition 3.1.1]{Gin} and \cite[Theorem 1.5.7]{CG} which show that in these cases the set-theoretic intersection $\bfO\cap \fh^{\bot}$ is Lagrangian.}

Let us now give an example of an $\overline{\bfO}$-spherical subgroup for a non-Richardson orbit $\bfO$, computed in \cite[\S 9 and Appendix A]{GS}, motivated by the local $\theta$-correspondence in type II.

\begin{example}
Let $\bfG:=\GL_n\times \GL_k\times \Sp_{2nk}$ (when we pass to real groups we will have to consider a double cover). Let $\iota$ be the composition of the two natural embeddings
$$\GL_n\times \GL_k \into \GL_{nk}\into \Sp_{2nk},$$
and let $\bfH$ be the graph of $\iota$.
Let $\bfO:=\bfO_{\mathrm{reg}}\times \bfO_{\mathrm{reg}}\times \bfO_{\min}$. Then $\bf G/H$ is  $\overline{\bfO}$-spherical.
\end{example}
\DimaI{Thus Corollary \ref{IntCor:branch2} implies that for any irreducible $\pi\in \cM_{\overline{\bfO_{\min}}}(\Mp_{2nk}(\R))$, the restriction $\pi|_{\GL_n(\R)\times \GL_k(\R)}$ has bounded multiplicities. Here, $\Mp_{2nk}(\R)$ denotes the metaplectic group. 
Actually, the theory of local $\theta$-correspondence implies that these multiplicities are bounded by one. }
\subsection{Related results}

It is proven in \cite{KO} that for spherical subgroups $\bf H\subset \bf G$, the representation $\Sc(G/H)$ has bounded 
multiplicities. The literal analogue of this result to $\overline{\bfO}$-spherical subgroups  cannot hold already for $\overline{\bfO}=\{0\}$:
every subgroup $\bf H\subset G$ is $\{0\}$-spherical, including the trivial $\bfH$,
$\cM_{\{0\}}(G)$ consists of finite-dimensional representations, and multiplicities equal the dimension.
A possible general conjecture is that it is bounded by $c_{\bfO\cap \fk^{\bot}}$- some total multiplicity (taken with some coefficients of geometric nature) of $K$-orbits lying \DimaA{in $\bfO\cap \fk^{\bot}$ in the associated cycle of $\pi$ (which is a finer invariant than the annihilator variety), where $K$ is a maximal compact subgroup of $G$}.

\DimaA{On the other hand, \cite[\S 7]{Kob2}, \cite{Kob3} give a sufficient condition for bounded multiplicities for branching problems for symmetric pairs, in terms of distinction with respect to another symmetric subgroup. By Theorem \ref{thm:GS} below and \cite{Kno} this condition is more restrictive than the condition in Corollary \ref{IntCor:branch2}\eqref{int:FinMultRes}}.

\DimaH{The recent work \cite{Kit} gives sufficient geometric conditions for bounded multiplicities of restriction and induction, also in terms of associated varieties of the annihilator. While these conditions are also related to the notions of spherical varieties, they are in general a-priory different from those in Corollary \ref{IntCor:branch} and Corollary \ref{IntCor:branch2}\eqref{int:BoundMultResGen}. In case Richardson orbits, {\it i.e.} Corollary \ref{IntCor:branch2}\eqref{int:FinMultRes},\eqref{int:FinMultInd}, our conditions are equivalent to those in \cite{Kit}. However, in case of Corollary \ref{IntCor:branch2}\eqref{int:FinMultInd} the result in \cite{Kit} is stronger since it gives bounded multiplicities for $\ind_{H}^G\tau$ rather than finite. On the other hand, in Corollary \ref{IntCor:branch} we also show finiteness of the dimension of higher $\Tor$. }

By Theorems \ref{IntThm:Ri} and \ref{IntThm:fin}, a sufficient condition for every $\pi \in \cM_{\overline{\bfO_P}}(G)$ to have finite multiplicity in $\Sc(G/H)$ is that $\bf P$ has finitely many orbits on $\bf G/H$. 

The following result provides a necessary condition in similar terms.

\begin{thm}[{\cite[Theorem 2.4]{TauchiSurvey}}]\label{thm:RiNess}
Let $P\subset G$ be a parabolic subgroup. If all degenerate principal series representations of the form $\Ind_P^G\fin$, with $\dim \fin<\infty$  have finite multiplicities \DimaG{ in $\Sc(G/H)$}, then $H$ has finitely many orientable orbits on $G/P$.
\end{thm}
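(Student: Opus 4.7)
The plan is to argue the contrapositive: assuming that $H$ admits infinitely many orientable orbits on $G/P$, I will show that $\dim\Hom_G(\Sc(G/H),\Ind_{P}^{G}\mathbf{1})=\infty$, where $\mathbf{1}$ denotes the trivial $1$-dimensional representation of $P$. Since this is a special case (with $\dim\sigma=1$) of the family of representations $\Ind_P^G\sigma$ with $\dim\sigma<\infty$, the conclusion will contradict the hypothesis.

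By the Frobenius reciprocity of Lemma \ref{lem:Frob}, combined with the identification of the smooth contragredient of a parabolic induction on the compact quotient $G/P$ (which produces a twist by the modular character of $P$, absorbed by the density bundle when one takes the continuous dual), one obtains a natural isomorphism
\begin{equation*}
\Hom_G(\Sc(G/H),\Ind_{P}^{G}\mathbf{1})\cong \Sc^*(G/P)^H,
\end{equation*}
the space of $H$-invariant distributions on the compact real analytic manifold $G/P$. It therefore suffices to exhibit an infinite-dimensional family in $\Sc^*(G/P)^H$ under the hypothesis.

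I would produce one such distribution for each orientable orbit. For an orientable $H$-orbit $\mathcal{O}\subset G/P$ with base point $x$ and stabilizer $H_x$, the fact that $H_x$ fixes the top exterior power $|\wedge^{\mathrm{top}}T_x^{*}\mathcal{O}|$ supplies a canonical (up to scalar) $H$-invariant density on $\mathcal{O}$. Integration against this density defines an $H$-invariant distribution on $\mathcal{O}$, which then extends to an element $T_\mathcal{O}\in\Sc^*(G/P)$ supported on $\overline{\mathcal{O}}$: for closed orbits the extension is automatic, and for non-closed orbits it follows from a standard Bernstein-type meromorphic continuation of the family $\int_{\mathcal{O}}f\cdot\varphi^{s}$ in a defining function $\varphi$ of the boundary, taking the constant term at $s=0$.

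Finally, the distributions $\{T_\mathcal{O}\}$ indexed by orientable orbits are linearly independent. In any nontrivial finite relation $\sum_\alpha c_\alpha T_{\mathcal{O}_\alpha}=0$, pick $\mathcal{O}_{\alpha_0}$ maximal among $\{\mathcal{O}_\alpha : c_\alpha\neq 0\}$ with respect to the closure order; restricting the relation to the open subset $G/P\setminus\bigcup_{\alpha\neq\alpha_0}\overline{\mathcal{O}_\alpha}$ kills every term except $c_{\alpha_0}T_{\mathcal{O}_{\alpha_0}}$, forcing $c_{\alpha_0}=0$, a contradiction. Hence $\dim \Sc^*(G/P)^H$ is at least the cardinality of the set of orientable $H$-orbits on $G/P$, which is infinite by assumption. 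The main obstacle is the lifting step for non-closed orbits, where naive integration against the invariant density diverges; the careful meromorphic-continuation (or normal-bundle regularisation) argument that produces $T_\mathcal{O}$ in this case is the technical heart of the proof.
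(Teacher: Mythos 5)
First, a remark on the comparison: the paper does not prove this statement at all --- it is quoted verbatim from \cite[Theorem 2.4]{TauchiSurvey} --- so your proposal can only be measured against the known argument of Tauchi (and the closely related one in \cite{KO}), not against a proof in this text.

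Your reduction via Frobenius reciprocity to a space of $H$-equivariant distributional sections on $G/P$ is sound in spirit, and your linear-independence argument by supports is fine \emph{granted} the distributions $T_{\mathcal O}$ exist. The genuine gap is exactly the step you defer: producing an $H$-invariant extension of the orbital density for a non-closed orbit. This is not a technicality that a Bernstein-type continuation repairs. If $\int_{\mathcal O}f\,\varphi^{s}$ has a pole at $s=0$, the $s^{0}$ Laurent coefficient is \emph{not} $H$-invariant (only the leading coefficient is, and it is supported on $\overline{\mathcal O}\setminus\mathcal O$, so it does not restrict to your density on $\mathcal O$); moreover a globally defined $H$-invariant boundary-defining function $\varphi$ need not exist. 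More fundamentally, an invariant distribution on an orbit need not admit \emph{any} invariant extension to the ambient space, so an argument that assigns one distribution to each individual orbit cannot work as stated. A second, independent problem is your restriction to $\sigma=\mathbf 1$: after the contragredient and modular twists, the multiplicity space is the space of $(H,\chi)$-equivariant sections of a specific line bundle on $G/P$, and an orbit carries an equivariant transverse density only when the stabilizer acts trivially on the fiber of that bundle tensored with the normal density bundle. Orientability guarantees this only for a suitable finite-dimensional $\sigma$ \emph{depending on the orbit}, which is precisely why the hypothesis quantifies over all finite-dimensional $\sigma$; with $\sigma=\mathbf 1$ fixed, most orientable orbits contribute nothing.

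The way the actual proof circumvents both issues is to exploit that infinitely many orbits (of some fixed dimension $d$, by pigeonhole) must form a positive-dimensional continuous family $\{\mathcal O_{s}\}_{s\in S}$ sweeping out a locally closed submanifold fibered over $S$. One then superposes the fiberwise invariant densities against an arbitrary $\phi\in C^{\infty}_{c}(S)$: each superposition is automatically a genuine (compactly supported, hence tempered) distribution on $G/P$ --- no extension problem arises --- it is $H$-equivariant because each fiber integral is, the twisting data can be matched by a single $\sigma$ along a connected family, and $\phi\mapsto T_{\phi}$ is injective, giving infinite multiplicity for that one $\sigma$. If you want to salvage your write-up, this replacement of ``one distribution per orbit'' by ``one distribution per compactly supported weight on the parameter space of a family of orbits'' is the essential missing idea.
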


\begin{introcor}
Let $\bf P\sub G$ be a parabolic subgroup defined over $\R$, and let $P$ be the corresponding parabolic subgroup of $G$. Suppose that for all but finitely many orbits of $\bf H$ on $\bf G/\bfP$, the set of real points is non-empty and orientable. Then the following are equivalent.
\begin{enumerate}[(i)]
\item $\bf G/H$ is $\overline{\bfO_{\bfP}}$-spherical, where $\bfO_{\bfP}$ denotes the Richardson orbit of $\bf P$.
\item Every $\pi\in \cM_{\overline{\bfO_{\bfP}}}(G)$ has finite multiplicity \DimaG{ in $\Sc(G/H)$}.
\item $H$ has finitely many orbits on $G/P$.

\item $\bf H$ has finitely many orbits on $\bf G/P$.

\end{enumerate}

\end{introcor}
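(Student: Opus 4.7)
The strategy is to prove the cycle $(i)\Leftrightarrow(iv)\Leftrightarrow(iii)$ together with $(i)\Rightarrow(ii)\Rightarrow(iv)$. The equivalence $(i)\Leftrightarrow(iv)$ will follow immediately from Theorem \ref{IntThm:Ri} applied to the $\bfG$-variety $\bfX=\bfG/\bfH$, combined with the standard bijection $\bfP\backslash\bfG/\bfH\longleftrightarrow\bfH\backslash\bfG/\bfP$ sending $\bfP g\bfH$ to $\bfH g^{-1}\bfP$. The equivalence $(iii)\Leftrightarrow(iv)$ will follow from the Galois-cohomological fact that the real points of any complex $\bfH$-orbit decompose into finitely many real $H$-orbits (their $\R$-forms being classified by the finite set $H^1(\mathrm{Gal}(\C/\R),\bfH_x)$), together with the orientability hypothesis: under $(iv)$ the space $G/P$ has only finitely many real $H$-orbits since each of the finitely many complex orbits contributes finitely many; conversely, the hypothesis forces all but finitely many complex $\bfH$-orbits to have non-empty real points, so finitely many real $H$-orbits on $G/P$ forces finitely many complex orbits with real points, and hence $|\bfH\backslash\bfG/\bfP|<\infty$.

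For $(i)\Rightarrow(ii)$, by definition $\pi\in\cM_{\overline{\bfO_\bfP}}(G)$ means $\cV(\mathrm{Ann}(\pi))\subseteq\overline{\bfO_\bfP}$, so $\overline{\bfO_\bfP}$-sphericity of $\bfG/\bfH$ gives $\cV(\mathrm{Ann}(\pi))$-sphericity, whence Theorem \ref{IntThm:fin} yields $\dim\Hom_G(\Sc(G/H),\pi)<\infty$.

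The main step is $(ii)\Rightarrow(iv)$, via Tauchi's Theorem \ref{thm:RiNess}. For every finite-dimensional representation $\sigma$ of $P$, the degenerate principal series $\Ind_P^G\sigma$ belongs to $\cM_{\overline{\bfO_\bfP}}(G)$, since the associated variety of its annihilator is contained in $\overline{\bfO_\bfP}$ (this follows from the classical fact that $\Ind_P^G\sigma$ has associated variety $\bfG\cdot\fp^{\bot}=\overline{\bfO_\bfP}$ as a $\cU(\fg)$-module). Thus $(ii)$ forces all such induced representations to have finite multiplicity in $\Sc(G/H)$, and Theorem \ref{thm:RiNess} produces finitely many orientable $H$-orbits on $G/P$. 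The orientability hypothesis promotes this to $(iv)$: every $\bfH$-orbit $\bfZ$ with $\bfZ(\R)$ non-empty and orientable contributes at least one orientable real $H$-orbit, so Tauchi's finite count bounds the number of such $\bfZ$, and combined with the finitely many exceptional complex orbits allowed by the hypothesis we obtain $|\bfH\backslash\bfG/\bfP|<\infty$. The only non-elementary input beyond the cited theorems is the verification that $\Ind_P^G\sigma$ has annihilator with associated variety inside $\overline{\bfO_\bfP}$; this is the main technical point, but it is well-known from the theory of associated varieties of parabolically induced modules, and everything else is a formal assembly of the cited results with the standard double-coset bijection and real-vs-complex orbit comparison.
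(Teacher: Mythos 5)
Your proof is correct and assembles the corollary exactly as the paper intends: (i)$\Leftrightarrow$(iv) from Theorem \ref{IntThm:Ri} via the double-coset bijection, (i)$\Rightarrow$(ii) from Theorem \ref{IntThm:fin}, (ii)$\Rightarrow$(iv) from Theorem \ref{thm:RiNess} together with the orientability hypothesis, and (iii)$\Leftrightarrow$(iv) by comparing real and complex orbits. The one genuine technical input you flag --- that $\cV(\Ann(\Ind_P^G\sigma))\subseteq\overline{\bfO_\bfP}$ --- is indeed the standard Borho--Brylinski fact the paper relies on, so nothing is missing.
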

The assumption of the corollary holds in particular if $H$ and $G$ are complex reductive groups.

\DimaA{Another necessary condition for every $\pi \in \cM_{\overline{\bfO_P}}(G)$ to have finite multiplicity in $\Sc(G/H)$ is that $H$ has an open orbit on $G/P$, see \cite[Corollary 6.8]{Kob1}.}

When the conditions of the corollary are not satisfied, the finiteness of $\bf H\backslash G/P$ is not necessary. Indeed, when $\bf P$ is a minimal parabolic subgroup defined over $\R$, the finiteness of $ H\backslash G/P$ implies that  \DimaG{$\Sc(G/H)$ has finite multiplicities}, by \cite{KO}. However, for general parabolic subgroups, the finiteness of $ H\backslash G/P$ is not sufficient, and a series of counterexamples is provided in \cite{TauchiExample}.
These are examples of non-spherical $\bf H\subset G$, and a parabolic $\bf P\subset G$ for which $ H\backslash G/P$ is finite, but the multiplicities \DimaG{in $\Sc(G/H)$} of degenerate principal series representations $\Ind_P^G\chi$ are infinite.
For a very explicit description of the basic example in these series see \cite[Outline of the proof of Theorem 2.2]{TauchiSurvey}.


The recent work \cite{GS} allows also to give a microlocal necessary condition for the non-vanishing of the multiplicity space. For this purpose we remark that by \cite{BB3,Jos} (cf. \cite[Corollary 4.7]{Vog}), for any irreducible $\pi\in \cM(G),$ the \DimaG{associated variety of the annihilator $\cV(\Ann\pi)$ is the closure of a single orbit, that we will denote $\bfO(\pi)$.}

\begin{theorem}[{\cite[\S 9]{GS}}]\label{thm:GS}
Let $\pi\in \cM(G)$ be irreducible.
Suppose that   $\pi$ is a quotient of $\Sc(X)$, for some $\overline{\bfO(\pi)}$-spherical $\bfG$-manifold $\bfX$. Then $\bfO(\pi)\subset \Im \mu(\bfX)$. Moreover,
if $\bf X=G/H$ then $$2\dim\bfO(\pi)\cap \fh^\bot=\dim\bfO(\pi).$$
\end{theorem}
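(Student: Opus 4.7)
The plan is to dualize $\Sc(X) \onto \pi$ to a $G$-equivariant embedding $\iota \colon \pi^\vee \into \Sc^*(X)$ and to study the $\cD_X$-submodule
\[ \cM := \cD_X \cdot \iota(\pi^\vee) \sub \Sc^*(X). \]
Setting $I := \Ann_{\cU(\fg)}(\pi)$, by hypothesis $\cV(I) = \overline{\bfO(\pi)}$, and $I$ also annihilates $\pi^\vee$, hence $\cM$. Presenting $\cM$ as a quotient of $\cD_X \otimes_{\cU(\fg)} \pi^\vee$ via the infinitesimal $\fg$-action, and using that the associated graded is $\cO(T^*\bfX) \otimes_{\cO(\fg^*)} \gr \pi^\vee$ with $\cO(\fg^*) \to \cO(T^*\bfX)$ the comoment $\mu^*$, one obtains $\operatorname{Ch}(\cM) \sub \mu^{-1}(\Asv(\pi^\vee)) \sub \mu^{-1}(\overline{\bfO(\pi)})$, and in particular $\mu(\operatorname{Ch}(\cM)) \sub \Asv(\pi^\vee) \sub \overline{\bfO(\pi)}$.

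The main step towards the first assertion is the reverse inclusion $\mu(\operatorname{Ch}(\cM)) \supseteq \overline{\bfO(\pi)}$. For this, fix a good filtration $F_\bullet \pi^\vee$ and induce the good filtration $F_n \cM := \sum_{k+\ell = n} F_k \cD_X \cdot \iota(F_\ell \pi^\vee)$; the resulting $\gr \cM$ contains a non-zero image of $\gr \pi^\vee$, realising the latter as an $\cO(\fg^*)$-subquotient of $\gr \cM$ via $\mu^*$. A pushforward-of-supports argument then yields $\Asv(\pi^\vee) \sub \mu(\operatorname{Ch}(\cM))$. Separately, choosing a $\bfG$-invariant good filtration on $\cM$ shows that $\operatorname{Ch}(\cM)$, and hence $\mu(\operatorname{Ch}(\cM))$, are $\bfG$-invariant, and saturating gives
\[ \mu(\operatorname{Ch}(\cM)) \supseteq \bfG \cdot \Asv(\pi^\vee) = \cV(\Ann \pi^\vee) = \overline{\bfO(\pi)}. \]
Therefore $\bfO(\pi) \sub \mu(T^*\bfX) = \Im \mu(\bfX)$, proving the first assertion.

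For the second assertion, specialise to $\bfX = \bfG/\bfH$, so $\mu^{-1}(\bfO(\pi)) = \bfG \times_\bfH (\bfO(\pi) \cap \fh^\bot)$ and the first assertion forces $\bfO(\pi) \cap \fh^\bot \neq \emptyset$. Restricting the coadjoint action to $\bfH$ makes the symplectic variety $\bfO(\pi)$ (cf.\ Definition~\ref{def:sympO}) into a $\bfH$-Hamiltonian variety whose $\bfH$-moment map is the composition $\bfO(\pi) \into \fg^* \onto \fh^*$; its zero fibre is exactly $\bfO(\pi) \cap \fh^\bot$. Zero fibres of moment maps are coisotropic, and coisotropic subvarieties of a symplectic variety of dimension $d$ have dimension at least $d/2$, so $\dim(\bfO(\pi) \cap \fh^\bot) \geq \tfrac{1}{2}\dim \bfO(\pi)$. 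The reverse inequality is the content of the $\overline{\bfO(\pi)}$-sphericity hypothesis (see Corollary~\ref{cor:invO}), so both dimensions agree.

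The most delicate step is the inclusion $\mu(\operatorname{Ch}(\cM)) \supseteq \overline{\bfO(\pi)}$: while the reverse inclusion is immediate from $\cM$ being a quotient of $\cD_X \otimes_{\cU(\fg)} \pi^\vee$, the forward one requires showing that the non-vanishing of $\iota$ survives at the level of principal symbols (so that $\gr \pi^\vee$ genuinely contributes to $\gr \cM$), together with the $\bfG$-invariance of the characteristic variety extracted from a $\bfG$-invariant filtration. This interplay between the $(\fg,K)$-module filtration on $\pi^\vee$ and the order filtration on $\cD_X$ is the technical heart of the argument in \cite[\S 9]{GS}.
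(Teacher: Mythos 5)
First, a caveat on what I am comparing against: the paper quotes this theorem from \cite[\S 9]{GS} and contains no proof of it, so your argument can only be judged on its own terms. For the first assertion, the gap you flag yourself is a genuine one and is not cosmetic: knowing that $\gr \pi^\vee$ has a \emph{non-zero} image in $\gr \cM$ says nothing about the support of that image as an $\cO(\fg^*)$-module --- the graded map could kill everything above filtration degree zero, leaving support $\{0\}$. To conclude $\mathfrak{S}(\pi^\vee)\subseteq \mu(SS(\cM))$ you would need, for instance, that the filtration induced on $\iota(\pi^\vee)$ from a good ($K$-stable, $\bfG$-compatible) filtration of $\cM$ is again a good filtration of the Harish-Chandra module; that is exactly the technical content of the argument in \cite{GS}, and your sketch does not supply it.

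The second assertion is where the proof actually breaks. You derive the lower bound $\dim(\bfO(\pi)\cap\fh^{\bot})\geq \tfrac{1}{2}\dim\bfO(\pi)$ purely from symplectic geometry (``zero fibres of moment maps are coisotropic, and coisotropic subvarieties have dimension at least half''). This is false for the reduced intersection, and the paper is explicitly built around counterexamples: in Example \ref{ex:small} and in Appendix \ref{app:Ido} one has $\bfO\cap\fh^{\bot}\neq\emptyset$ and $\bfG/\bfH$ is $\bfO$-spherical, yet $\dim(\bfO\cap\fh^{\bot})<\tfrac{1}{2}\dim\bfO$. What is true is that the \emph{scheme-theoretic} zero fibre of the $\bfH$-moment map on $\bfO(\pi)$ is cut out by an involutive ideal; when that scheme is non-reduced (as it is in these examples) this gives no lower bound on the dimension of the underlying reduced variety, which is precisely the point made in the introduction of the paper. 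Consequently the inequality $\dim(\bfO(\pi)\cap\fh^{\bot})\geq \tfrac{1}{2}\dim\bfO(\pi)$ cannot be a consequence of the geometry of $\bfO(\pi)$ and $\fh$ alone: it must use the existence of the irreducible quotient $\Sc(G/H)\onto\pi$, via the characteristic variety of the $\cD$-module generated by the corresponding $H$-invariant functional (Gabber involutivity and Bernstein's inequality enter there, together with the fibre count of Lemma \ref{lem:inO}). As written, your argument would ``prove'' $2\dim(\bfO\cap\fh^{\bot})=\dim\bfO$ for every $\overline{\bfO}$-spherical $\bfG/\bfH$ with $\bfO\subseteq\bfG\cdot\fh^{\bot}$, which Appendix \ref{app:Ido} shows is false. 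The upper bound via Corollary \ref{cor:invO} and the non-emptiness of $\bfO(\pi)\cap\fh^{\bot}$ given the first assertion are fine.
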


Theorem \ref{intThm:Dist} implies that the relative characters of $\pi\in \cM_{\Xi}(G)$ corresponding to $\Xi$-spherical subgroups are holonomic. For spherical subgroups, it is further shown in \cite{Li} that the relative characters are regular holonomic. We conjecture that this holds for $\Xi$-spherical subgroups as well.

\subsection{Open questions}

The first open question is to give a necessary and sufficient condition for finite multiplicities \DimaG{in $\Sc(X)$} for all representations in $\cM_{\Xi}(G)$. As discussed above, in some settings our sufficient condition is also necessary, but in others it is not.

Further, we can consider an ``additive character'' $\chi$ of $\fh$, i.e. a differential of a group map $\bfH\to G_a$. Then, we think that under some conditions on $\fh$ and $\chi$, in all the statements above we can replace the multiplicity spaces by $\Hom_{\fh}(\pi,\chi)$, and the set $\fh^{\bot}$ by $p_{\fh}^{-1}(\chi)$, where $p_{\fh}:\fg^*\to \fh^*$ is the standard projection.
This would imply the finiteness of certain generalized Whittaker models.
Such a twisted version of Theorem \ref{thm:GS} is proven in \cite[\S 9]{GS}. 


We would also like to find an example in which $\bfO\cap \fh^{\bot}$ has  dimension \DimaB{at most} $\dim \bfO/2$, but for some $\bfO'\subset \overline \bfO$ we have $\dim \bfO' \cap \fh^{\bot}> \dim \bfO'/2$, or prove that such examples do not exist. \DimaB{We would also like to know whether the $\overline{\bfO}$-sphericity is equivalent to a certain subvariety of $T^*\bfX\times \bfO'$ being isotropic, for every $\bfO'\subset \overline \bfO$, as we prove for  Richardson $\bfO$ in Theorem \ref{Thm:Ri} below.}

Finally, we are very much interested in the non-archimedean analogues of our results.

\subsection{Structure of the paper}
In \S \ref{sec:Geo} we prove Theorem  \ref{IntThm:Ri}, as well as some other geometric results needed for the corollaries on branching problems.

In \S \ref{sec:Dmod} we prove Theorem \ref{intThm:Dist} by showing that the system of differential equations satisfied by the distributions in question is holonomic.

In \S \ref{sec:main} we deduce Theorem \ref{IntThm:fin} from Theorem \ref{intThm:Dist}, and then deduce Corollaries \ref{IntCor:branch} and \ref{IntCor:branch} from Theorem \ref{IntThm:fin} and the geometric results in \S \ref{sec:Geo}.
In Appendix \ref{app:Pfms} we prove a geometric proposition \ref{prop:ms} on non-reductive groups.
Finally, in Appendix \ref{app:Ido} we give an example in which the inequality in Definition \ref{def:Ospher} is strict.

\subsection{Acknowledgements}
We thank Joseph Bernstein, Shachar Carmeli, \DimaH{Dor Mezer,} Eitan Sayag, \DimaC{Dmitry A. Timashev} for fruitful discussions.  \DimaC{We also thank Dmitri I. Panyushev for finding a mistake in a previous version of this paper and providing Example \ref{ex:small} below}.

\section{Geometry}\label{sec:Geo}
\subsection{Preliminaries and notation}\label{subsec:GeoPrel}
From now and till the end of the section we let $\bfG$ be a connected complex linear algebraic group. We will be mainly interested in the case of reductive $\bfG$. Let $\fg$ denote the Lie algebra of $\bfG$, and $\fg^*$ denote the dual space.

In general, we will denote algebraic groups by boldface letters, and their Lie algebras by the corresponding Gothic letters.

\DimaC{We fix an algebraic $\bfG$-manifold $\bfX$}. We start with the definition of the moment map.
For any point $x\in \bfX$, let $a_x:\bfG\to \bfX$ denote the action map, and $da_x:\fg\to T_x\bfX$ denote its differential.
The moment map $\mu:=\mu_{\bf X}:T^*\bfX\to \fg^*$ is defined by
$$\mu_{\bf X}(x,\xi)(\alp):= \xi(da_x(\alp))$$

Denote by $\bf U_G\sub G$ the unipotent radical of $\bf G$, and by $\fu_{\fg}^{\bot}\sub \fg^*$ the space of functionals on $\fg$ that vanish on $\fu_{\fg}$.
We will call an element $\varphi\in \fg^*$ \emph{nilpotent} if $\varphi\in \fu_{\fg}^{\bot}$, and the closure of the coadjoint orbit $\bfG\cdot \varphi$ includes 0.
Let $\cN(\fg^*)\subset \fg^*$ denote the nilpotent cone. By Kostant's theorem, $\cN(\fg^*)$ consists of finitely many coadjoint $\bfG$-orbits. Indeed, these orbits are in bijection with the nilpotent  coadjoint orbits of the reductive quotient $\bf R_G:=G/U$ under the identification $\fr_{\fg}^*\cong \fu_{\fg}^{\bot}\sub \fg^*$.
The requirement $\varphi\in \fu_{\fg}^{\bot}$ is motivated by the following proposition.
\begin{prop}[See Appendix \ref{app:Pfms} below]\label{prop:ms}
Let $\Xi\subset \fg^*$ be a closed conical subset that is a union of finitely many orbits. Then $\Xi\subset \fu_{\fg}^{\bot}$.
\end{prop}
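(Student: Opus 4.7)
The first step is to reduce to individual orbits: since $\Xi$ is a $\mathbb{G}_m$-invariant finite union of $\bfG$-orbits and $\mathbb{G}_m$ is connected, the scaling action permutes the finite set of orbits trivially, so each $\bfO \subset \Xi$ is itself conical. Moreover $\Xi$ being closed contains each orbit closure $\overline{\bfO}$, which is a finite union of $\bfG$-orbits. So it suffices to prove: if $\bfO$ is a conical $\bfG$-orbit whose closure is a finite union of $\bfG$-orbits, then $\bfO \subset \fu^\bot$.

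Suppose for contradiction that such a $\bfO$ exists with $\bar\varphi := \varphi|_{\fu} \neq 0$ for some $\varphi \in \bfO$. The key unipotent input is that any $u \in \bfU$ acts on the vector space $\fu^*$ by a unipotent linear map whose only eigenvalue is $1$, so no $u \in \bfU$ can send $\bar\varphi$ to $t\bar\varphi$ for $t \neq 1$. Hence the $\mathbb{G}_m$-ray through $\bar\varphi$ meets each $\bfU$-orbit in $\fu^*$ in at most one point. Since $\bfO$ is conical and the restriction $\fg^* \to \fu^*$ is $\bfG$-equivariant, this whole ray lies in the $\bfG$-orbit of $\bar\varphi$ in $\fu^*$, so this $\bfG$-orbit contains infinitely many distinct $\bfU$-orbits.

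The remaining task, and the heart of the argument, is to upgrade this abundance of $\bfU$-orbits in $\fu^*$ to an abundance of $\bfG$-orbits in $\overline{\bfO}$, contradicting the finiteness hypothesis. My plan is to use Hilbert--Mumford together with conicality to extract a one-parameter subgroup $\lambda: \mathbb{G}_m \to \bfG$ which, after conjugation, lies in the reductive Levi and satisfies $\lim_{t \to 0} \lambda(t)\varphi = 0$. Considering two-parameter families of the form $\text{Ad}^*(\exp(sX)) \cdot \lambda(t)\varphi$ inside $\bfO$ for weight vectors $X \in \fu$, with $s$ chosen as a carefully calibrated negative power of $t$ that balances the leading term of the (finite) nilpotent power series arising from $\text{ad}^*(X)$, one obtains non-zero limits in $\overline{\bfO}$. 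Since $\text{ad}(X)$ is nilpotent on $\fg$ (as $X \in \fu$ and $\fu$ is a nilpotent ideal), iterating this construction eventually produces limits inside $\overline{\bfO} \cap \fu^\bot$, and varying $X$ across sufficiently many weight spaces should yield infinitely many $\bfR$-orbits there.

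The main technical obstacle I anticipate is rigorously verifying that these limits land in distinct $\bfR$-orbits rather than collapsing to finitely many; this requires careful weight-space bookkeeping that transports the injective parametrization of $\bfU$-orbits in $\fu^*$ by the ray $\mathbb{G}_m\bar\varphi$ established in the second paragraph into a correspondingly rich family of $\bfR$-orbits in $\overline{\bfO}\cap\fu^\bot$.
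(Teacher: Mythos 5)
Your first two paragraphs are fine, but they prove nothing that conflicts with the hypothesis: the fact that the $\bfG$-orbit of $\bar\varphi=\varphi|_{\fu}$ in $\fu^*$ contains infinitely many $\bfU$-orbits is not a contradiction, since the hypothesis bounds the number of $\bfG$-orbits in $\fg^*$, not the number of $\bfU$-orbits in $\fu^*$ (already for the $ax+b$ group the unique open coadjoint $\bfG$-orbit restricts to a single $\bfG$-orbit in $\fu^*\cong\C$ consisting of infinitely many $\bfU$-fixed points). The entire burden of the proof therefore sits in your third and fourth paragraphs, and those are an unexecuted plan, not an argument: you yourself flag the decisive step --- showing the limits you construct lie in infinitely many distinct orbits of the reductive quotient rather than collapsing --- as an ``anticipated obstacle.'' In addition, the very first move of that plan is shaky: the Hilbert--Mumford criterion is a theorem about reductive groups, and the conical $\mathbb{G}_m$-action on $\fg^*$ is not the coadjoint action of a one-parameter subgroup of $\bfG$, so the existence of $\lambda:\mathbb{G}_m\to\bfG$ with $\lim_{t\to 0}\lambda(t)\varphi=0$ is not available off the shelf here. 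As it stands the proposal has a genuine gap at its central step.

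For comparison, the paper avoids counting orbits altogether. It inducts on the nilpotency depth of $\fu$, reducing to showing $\Xi\subset\mathfrak{e}^{\bot}$ for $\mathfrak{e}$ the center of $\fu$. Conicality is used not to produce a scaling ray of $\bfU$-orbits but to produce an element $\alpha\in\fg$ with $\alpha\cdot x=x$ at $x\in\bfO$ (since $x$ lies in the tangent space to $\bfO$ at $x$), hence a semisimple $s_y\in\fm$ fixing $y=x|_{\mathfrak{e}}$; from this one extracts a non-nilpotent $t_y\in\fm^*$ along which the fiber $\bfO\cap p^{-1}(y)$ is translation-invariant, forcing a non-nilpotent element into $\overline{\bfO}\cap\fm^*$. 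The contradiction then comes from the standard fact that a conical union of finitely many coadjoint orbits of the reductive quotient must lie in its nilpotent cone --- a single non-nilpotent limit point suffices, with no need for the delicate ``infinitely many distinct orbits'' bookkeeping your plan requires.
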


We will say that a subgroup $\bf P\sub G$ is \emph{parabolic} if it is the preimage of a parabolic subgroup of $\bf R_{G}$ under the projection $\bf G \onto R_{G}$.

\begin{defn}
Let $\bf P\subset G$ be a parabolic subgroup, and let $\fp^{\bot}\subset \fg^*$ denote the space of functionals vanishing on $\fp$. It is easy to see that $\fp^{\bot}\sub \cN(\fg^*)$, and thus there exists a unique nilpotent orbit that intersects $\fp^{\bot}$ by an open dense subset. It is called the Richardson orbit, and we will denote it by $\bfO_{\bfP}$, and its closure by $\overline{\bfO_{\bfP}}$.
\end{defn}
It is easy to see that \DimaB{$\overline{\bfO_{\bfP}} = \bfG \cdot \fp^{\bot}=\Im(\nu_{\bf P}),$ where $\nu_{\bf P}:T^*({\bf G/P})\to \fg^*$ denotes the moment map of $\bf G/P$.} For more information on Richardson orbits we refer the reader to \cite[\S 7.1]{CM}.

The definition of $\bfO$-sphericity and $\Xi$-sphericity for non-reductive groups is identical to the one given in Definition \ref{def:Ospher}.
Theorem \ref{IntThm:Ri} is also valid for non-reductive groups, and the proof below works in this generality.


\DimaB{
\subsubsection{Preliminaries on symplectic manifolds}
A \emph{symplectic manifold} is a manifold with a \emph{symplectic form}, {\it i.e.} a closed non-degenerate 2-form. We will call a subvariety $\bf Z$ of a symplectic \DimaC{algebraic} manifold $\bf M$ \emph{isotropic} if  the restriction of the symplectic form to the tangent space to $\bf Z$ at every  smooth point of $\bf Z$ is zero. Similarly, $\bf Z$ is called \emph{coisotropic} if at every smooth point $z$ of $\bf Z$, the tangent space $T_{z}{\bf Z}$ includes its symplectic complement $(T_{z}{\bf Z})^{\bot}\subset T_z{\bf M}$, and $\bf Z$ is called Lagrangian if it is both isotropic and coisotropic. \DimaC{We refer the reader to \cite[\S 1.3]{CG} for more details on these notions. In particular, we will need the following proposition.
}

\DimaD{

\begin{prop}[{\cite[Proposition 1.3.30 and \S 1.5.16]{CG}}]\label{prop:subIs}
If \DimaC{a subvariety} $\bf Y\sub M$ is isotropic then so is any subvariety $\bf Z\sub Y$.
\end{prop}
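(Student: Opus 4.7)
The plan is to pass to a resolution of singularities $\sigma\colon\widetilde{\mathbf{Y}}\to\mathbf{Y}$, which exists by Hironaka's theorem since we work in characteristic zero, and transfer the isotropy condition to the smooth model $\widetilde{\mathbf{Y}}$. Because $\sigma$ is an isomorphism over the dense smooth locus $\mathbf{Y}^{\mathrm{sm}}$ and $\omega$ vanishes there by hypothesis, the pullback $\sigma^*\omega$ is an algebraic $2$-form on the smooth variety $\widetilde{\mathbf{Y}}$ that vanishes on the dense open subset $\sigma^{-1}(\mathbf{Y}^{\mathrm{sm}})$, and hence vanishes identically on $\widetilde{\mathbf{Y}}$.

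To handle an arbitrary subvariety $\mathbf{Z}\subset\mathbf{Y}$, I would reduce to $\mathbf{Z}$ irreducible by working componentwise, then pick an irreducible component $\widetilde{\mathbf{Z}}$ of $\sigma^{-1}(\mathbf{Z})$ that dominates $\mathbf{Z}$ (which exists by surjectivity of $\sigma$). Then $\sigma|_{\widetilde{\mathbf{Z}}}\colon\widetilde{\mathbf{Z}}\to\mathbf{Z}$ is a dominant morphism of irreducible complex varieties, so generic smoothness in characteristic zero yields dense open subsets $\widetilde{U}\subset\widetilde{\mathbf{Z}}$ and $U\subset\mathbf{Z}^{\mathrm{sm}}$ with $\sigma(\widetilde{U})=U$ such that $\sigma|_{\widetilde{U}}\colon\widetilde{U}\to U$ is a smooth surjection. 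In particular, $\widetilde{U}$ is itself smooth as a variety and for every $\widetilde{z}\in\widetilde{U}$ the differential $d\sigma_{\widetilde{z}}\colon T_{\widetilde{z}}\widetilde{U}\to T_{\sigma(\widetilde{z})}\mathbf{Z}$ is surjective.

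Finally, given $z\in U$ and $v,w\in T_z\mathbf{Z}$, pick $\widetilde{z}\in\widetilde{U}$ with $\sigma(\widetilde{z})=z$ and lifts $\widetilde{v},\widetilde{w}\in T_{\widetilde{z}}\widetilde{U}\subset T_{\widetilde{z}}\widetilde{\mathbf{Y}}$; then $\omega(v,w)=(\sigma^*\omega)(\widetilde{v},\widetilde{w})=0$. Thus $\omega|_{T\mathbf{Z}}$ vanishes on the dense open $U\subset\mathbf{Z}^{\mathrm{sm}}$, and since $\omega|_{\mathbf{Z}^{\mathrm{sm}}}$ is an algebraic $2$-form on a smooth variety, vanishing on a dense open forces it to vanish on all of $\mathbf{Z}^{\mathrm{sm}}$, which is exactly the isotropy of $\mathbf{Z}$. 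The main subtle point of the argument, and the reason a naive inductive approach on $\dim\mathbf{Y}$ does not immediately work, is that a smooth point of $\mathbf{Z}$ need not be a smooth point of $\mathbf{Y}$; invoking the resolution together with generic smoothness is what lets us transport tangent vectors in $T_z\mathbf{Z}$ to ambient tangent vectors on which $\sigma^*\omega$ is known to vanish.
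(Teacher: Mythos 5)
Your argument is correct and is essentially the proof given in the cited reference \cite[Proposition 1.3.30]{CG} (the paper itself only cites Chriss--Ginzburg and offers no proof): resolve $\bf Y$, observe that $\sigma^*\omega$ vanishes identically on $\widetilde{\bf Y}$ because it vanishes on the dense open preimage of ${\bf Y}^{\mathrm{sm}}$, and push tangent vectors of $\bf Z$ up through a dominating component of $\sigma^{-1}({\bf Z})$ via generic smoothness. The only point to tighten is that generic smoothness requires a smooth source, while $\widetilde{\bf Z}$ may be singular; you should first replace $\widetilde{\bf Z}$ by its (dense open) smooth locus before invoking it, after which the rest of your argument goes through verbatim.
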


The statement is nontrivial because $\bf Z$ may be contained in the set of singular points of $\bf Y$.
}

Let us now give two standard examples of symplectic manifolds that will play an important role in the next section.

\begin{defn}
Let $\bf X$ be a manifold. Define a 1-form $\theta$ on $T^*\bfX$ at any point $(x,\lam)\in T^*\bfX$ by $\theta(\xi):=\lam(d_{(x,\lam)}p_X(\xi))$, where $p_X:T^*\bfX\to \bfX$ is the natural projection, and $d_{(x,\lam)}p_X$ is its differential at the point $(x,\lam)$. The natural symplectic form $\omega$ on  $T^*\bfX$ is defined to be the differential of the 1-form $\theta$.
\end{defn}

\begin{lem}\label{lem:0}
 Let $\bfS:=\mu_X^{-1}(\{0\})$.
Then the following are  equivalent:
\begin{enumerate}[(i)]
\item \label{it:Hfin} $\bfG$ has finitely many orbits on $\bfX$.
\DimaB{\item \label{it:Lag} $\bfS$ is a Lagrangian subvariety of $T^*\bfX$.}
\item \label{it:eqDim} $\bfS$ is equidimensional of dimension $\dim \bfX$.
\item \label{it:smallDim} $\dim \bfS\leq \dim \bfX$
\end{enumerate}
\end{lem}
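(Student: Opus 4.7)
The plan is to first identify $\bfS$ concretely. By the definition of $\mu_X$, a point $(x,\xi)\in T^*\bfX$ lies in $\bfS$ if and only if $\xi$ annihilates $da_x(\fg) = T_x(\bfG\cdot x)$; hence $\bfS$ is the set-theoretic union $\bigcup_{\bfO} T^*_{\bfO}\bfX$ of conormal bundles to all $\bfG$-orbits $\bfO\subset\bfX$. I would then establish the cycle of implications (i)$\Rightarrow$(ii)$\Rightarrow$(iii)$\Rightarrow$(iv)$\Rightarrow$(i).

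The implication (ii)$\Rightarrow$(iii) is immediate since $T^*\bfX$ has dimension $2\dim\bfX$ and Lagrangian subvarieties are isotropic of half-dimension, while (iii)$\Rightarrow$(iv) is trivial. For (i)$\Rightarrow$(ii), if the orbits are $\bfO_1,\dots,\bfO_k$, then $\bfS=\bigcup_i\overline{T^*_{\bfO_i}\bfX}$. Each conormal bundle $T^*_{\bfO_i}\bfX$ is a smooth Lagrangian subvariety of $T^*\bfX$ of dimension $\dim\bfX$ by the classical fact on conormal bundles of smooth subvarieties, and its closure remains isotropic because the vanishing of the symplectic form on tangent spaces at smooth points is a closed condition on the smooth locus and holds on the open dense subset $T^*_{\bfO_i}\bfX$; having dimension equal to half of $\dim T^*\bfX$, each such closure is Lagrangian. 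Thus the finite union $\bfS$ is Lagrangian.

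The substantive direction is (iv)$\Rightarrow$(i), which I would prove by a dimension count on the orbit-type stratification. Set $\bfX_d := \{x\in\bfX : \dim\bfG\cdot x = d\}$, a locally closed constructible subset of $\bfX$. The part $\bfS\cap p_X^{-1}(\bfX_d)$ is the union of conormal bundles to the orbits contained in $\bfX_d$, and as a subset of $T^*\bfX$ it fibers over $\bfX_d$ via $p_X$ with fibers of constant dimension $\dim\bfX-d$, so $\dim(\bfS\cap p_X^{-1}(\bfX_d)) = \dim\bfX_d + \dim\bfX - d$. The assumption $\dim\bfS\leq\dim\bfX$ forces $\dim\bfX_d\leq d$ whenever $\bfX_d\neq\emptyset$; combined with the obvious inequality $\dim\bfX_d\geq d$, this gives $\dim\bfX_d = d$. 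Consequently every orbit in $\bfX_d$ is open in $\bfX_d$, and since $\bfX_d$ has finitely many irreducible components as a constructible subset of an algebraic variety, it contains only finitely many orbits. Summing over the finitely many values of $d$ on which $\bfX_d$ is non-empty yields (i).

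The main technical point to verify is the fiber-dimension formula in the last step; this should follow from standard fiber-dimension arguments, using that the conormal space $(T_x\bfG\cdot x)^{\perp}$ has constant dimension $\dim\bfX - d$ over $\bfX_d$ and that $\bfX_d$ has finitely many irreducible components. I expect this to be the only place where care is needed, the remaining arguments being essentially formal.
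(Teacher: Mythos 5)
Your proposal is correct, and its overall architecture coincides with the paper's: the same cycle of implications, the same identification of $\bfS$ as the union of conormal bundles to orbits, and the same treatment of the easy implications (the paper dismisses (ii)$\Rightarrow$(iii)$\Rightarrow$(iv) as obvious and proves (i)$\Rightarrow$(ii) exactly as you do, only more tersely, leaving the closure/smooth-locus argument implicit). The one genuinely different ingredient is the hard direction (iv)$\Rightarrow$(i). The paper stratifies $\bfX$ by iterating Rosenlicht's theorem into locally closed pieces $T_i$ admitting geometric quotients, and observes that a stratum which is not a finite union of orbits forces $\dim\bigl(\bfS\cap p_\bfX^{-1}(T_i)\bigr)>\dim\bfX$. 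You instead stratify by orbit dimension, $\bfX_d=\{x:\dim\bfG\cdot x=d\}$, compute $\dim\bigl(\bfS\cap p_X^{-1}(\bfX_d)\bigr)=\dim\bfX_d+\dim\bfX-d$, deduce $\dim\bfX_d=d$, and conclude that each orbit in $\bfX_d$ is an irreducible component of $\bfX_d$ (indeed each such orbit is both open in its closure and closed in $\bfX_d$, since boundary orbits have smaller dimension and so leave $\bfX_d$). Both are the same dimension count over a stratification with constant orbit dimension; your version avoids invoking Rosenlicht's theorem, at the price of the small supplementary argument that a $d$-dimensional constructible set all of whose orbits are $d$-dimensional contains only finitely many orbits, which you supply correctly. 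The fiber-dimension formula you flag as the technical point is indeed standard for a constructible set fibered over a constructible base with fibers (here, the conormal spaces $(T_x\bfG\cdot x)^{\perp}$) of constant dimension.
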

\begin{proof}
\eqref{it:Hfin}$\Rightarrow$\eqref{it:Lag}:
The set $\bfS$ is the union of conormal bundles to orbits. The conormal bundle to each orbit is irreducible \DimaB{and Lagrangian.\\
\eqref{it:Lag}$\Rightarrow$\eqref{it:eqDim} and \eqref{it:eqDim}$\Rightarrow$\eqref{it:smallDim} are obvious.\\}
\eqref{it:smallDim}$\Rightarrow$\eqref{it:Hfin}:
Denote by $p_\bfX:T^*\bfX\to \bfX$ the natural projection.
For every $\bfG$-orbit $\bfR\subset \bfX$ we have $\bfS\cap p_\bfX^{-1}(\bfR)=CN_\bfR^\bfX$.
By Rosenlicht's theorem, there is an open non-empty subset $\bf U\subset \bfX$ that has a geometric quotient by $\bfG$. Applying this theorem again to the complement $\bf \bfX\setminus U$, and further by induction,  we obtain a stratification of $\bfX$ by such sets $T_i$. If for some $i$, $T_i$ is not a finite union of orbits, then   $\dim \bfS\cap p_\bfX^{-1}(T_{i})>\dim \bfX$, contradicting the assumption.
\end{proof}

\begin{exm}[{cf. \cite[p. 211]{Gin}}]\label{ex:T*X}
Let us compute the form on $T^*({\bf G/H})$, for an algebraic subgroup $\bf H\sub G$ at a point $\lam=(1,a)$, where $a$ is a point in the cotangent space to $\bf G/H$ at the base point $1\in {\bf G/H}$. We can write any tangent vector at $\lam$ to $ T^*({\bf G/H})$ in the form $x\cdot \lam - \alp$, where $x\in \fg$, $x\cdot \lam$ denotes the action of $x$ on $\lam$, and   is a ``vertical'' tangent vector:
$$\alp\in \fh^{\bot} \cong T_{a}\fh^{\bot} \cong T_{a}T^*_{1}({\bf G/H})\sub T_{(1,a)}T^*({\bf G/H}).$$

 Then the standard symplectic form on $T^*_{}({\bf G/H})$ is given at $\lam=(1,a)$ by
$$\omega(x\cdot \lam, y\cdot \lam)=\langle a, [x,y]\rangle, \quad \omega(x\cdot \lam, \alp)=\langle \alp, x \rangle,\quad  \omega(\alp,\beta)=0 \quad \forall  x,y \in \fg, \,\alp,\beta\in \fh^{\bot}$$
\end{exm}

\begin{defn}\label{def:sympO}
Let $\bfO\sub \fu_{\fg}^{\bot}\subset \fg^*$ be a coadjoint orbit, and let $a \in \bfO$. The Kirillov-Kostant-Souriau symplectic form on $\bfO$ is given at $a$ by $$\omega_{\bfO}(\ad^*(x)( a),\ad^*(y)(a))=\langle a, [x,y]\rangle.$$
\end{defn}

\DimaD{
\begin{prop}[{\cite[Theorem 3.3.7]{CG}}]\label{prop:Piso}
For any orbit $\bf O\sub \overline{O_P}$, the (set-theoretic) intersection  $\bfO\cap \fp^{\bot}$ is an isotropic subvariety of $\bfO$. Moreover, if $\bf P\sub G$ is a Borel subgroup then  $\bfO\cap \fp^{\bot}$ is Lagrangian in $\bfO$.
\end{prop}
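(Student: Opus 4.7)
My plan is to prove isotropy by a direct tangent-space computation on $\bfP$-orbits in $\bfO\cap\fp^\bot$, extend it to the whole intersection by a density/closedness argument, and then upgrade to the Lagrangian statement in the Borel case using the classical Springer fiber dimension formula.

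First, $\bfO\cap\fp^\bot$ is $\bfP$-stable: both $\bfO$ and $\fp^\bot$ are preserved by the coadjoint action of $\bfP$. For $a\in\bfO\cap\fp^\bot$, the tangent space to the $\bfP$-orbit of $a$ inside $T_a\bfO$ is $\ad^*(\fp)(a)$. The Kirillov--Kostant--Souriau formula of Definition \ref{def:sympO} gives, for $x,y\in\fp$,
\[
\omega_\bfO\bigl(\ad^*(x)(a),\ad^*(y)(a)\bigr)=\langle a,[x,y]\rangle=0,
\]
since $[x,y]\in\fp$ (as $\fp$ is a Lie subalgebra) and $a\in\fp^\bot$. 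Hence every $\bfP$-orbit in $\bfO\cap\fp^\bot$ is an isotropic subvariety of $\bfO$.

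To extend isotropy to all of $\bfO\cap\fp^\bot$, I use that $\bfP$ is connected (being the preimage in $\bfG$ of a connected parabolic in $\bfR_{\bfG}$), so each irreducible component $\bfC$ of $\bfO\cap\fp^\bot$ is $\bfP$-stable. By upper-semicontinuity of orbit dimension, the set of points of maximal $\bfP$-orbit dimension is open dense in $\bfC$; at such points the tangent space to $\bfC$ coincides with $\ad^*(\fp)(a)$, which is isotropic by the preceding step. Since the vanishing of $\omega_\bfO$ restricted to the tangent bundle is a closed condition on the smooth locus, isotropy propagates from this open dense subset to all smooth points of $\bfC$. (Alternatively, one may argue via Marsden--Weinstein reduction: the cotangent fiber $\fp^\bot\subset T^*(\bfG/\bfP)$ is Lagrangian, $\nu_\bfP^{-1}(\bfO)$ is coisotropic with symplectic quotient $\bfO$, and the image in such a reduction of a Lagrangian is isotropic.)

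For the Lagrangian statement with $\bfP=\bfB$ a Borel, the isotropy already yields $\dim(\bfO\cap\fb^\bot)\le\dim\bfO/2$. For the reverse inequality, I invoke the classical Spaltenstein--Steinberg dimension formula: for any $v$ in a nilpotent orbit $\bfO\subset\cN(\fg^*)$, the Springer fiber $\nu_\bfB^{-1}(v)\subset T^*(\bfG/\bfB)$ has pure dimension $\dim\bfG/\bfB-\dim\bfO/2$. Combining with the identification $\nu_\bfB^{-1}(\bfO)\cong\bfG\times^{\bfB}(\bfO\cap\fb^\bot)$ and $\bfG$-equivariance gives
\[
\dim\bfG/\bfB+\dim(\bfO\cap\fb^\bot)=\dim\bfO+\dim\bfG/\bfB-\dim\bfO/2,
\]
whence $\dim(\bfO\cap\fb^\bot)=\dim\bfO/2$ and the intersection is Lagrangian in $\bfO$. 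The main obstacle is the Spaltenstein--Steinberg fiber dimension formula, which is classical but nontrivial and would be cited rather than reproved; the delicate extension step in the isotropy part relies on standard semicontinuity plus closedness of vanishing conditions, or alternatively on the abstract reduction viewpoint noted above.
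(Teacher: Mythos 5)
The paper does not actually prove this proposition; it cites \cite[Theorem 3.3.7]{CG}, so your proposal has to stand on its own as a proof of that classical result. Your opening step is correct and is indeed where every proof starts: since $a\in\fp^{\bot}$ and $[\fp,\fp]\subset\fp$, the formula $\omega_{\bfO}(\ad^*(x)(a),\ad^*(y)(a))=\langle a,[x,y]\rangle$ shows each $\bfP$-orbit in $\bfO\cap\fp^{\bot}$ is isotropic. The Borel-case dimension count via Spaltenstein's equidimensionality of Springer fibers is also fine once isotropy is in hand.

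The gap is in the extension step. From ``the set of points of maximal $\bfP$-orbit dimension is open dense in $\mathbf{C}$'' you conclude that at such points $T_a\mathbf{C}=\ad^*(\fp)(a)$. That inference is valid only if the component $\mathbf{C}$ contains a dense $\bfP$-orbit; upper semicontinuity does not give this (the maximal orbit dimension can be strictly smaller than $\dim\mathbf{C}$), and it is not true in general --- already for $\bfP$ a Borel, whether every irreducible component of $\bfO\cap\fb^{\bot}$ (an orbital variety) has a dense $B$-orbit is a delicate question with a negative answer in general. Worse, the pointwise linear algebra pulls in the opposite direction: at a smooth point $a$ one computes that $\ad^*(u)(a)\in\fp^{\bot}$ if and only if $\omega_{\bfO}(\ad^*(u)(a),\ad^*(\fp)(a))=0$, so $T_a\bfO\cap\fp^{\bot}=(\ad^*(\fp)(a))^{\perp_{\omega_{\bfO}}}$ is \emph{coisotropic} --- the paper itself remarks in the introduction that the scheme-theoretic intersection is coisotropic. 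Hence no pointwise or closure/semicontinuity argument built on the orbit stratification can force the reduced intersection to be isotropic; some genuinely global input is required. Your parenthetical Marsden--Weinstein alternative is the morally correct route, but as stated it presumes that $\nu_{\bfP}^{-1}(\bfO)$ is a coisotropic submanifold whose reduction is $\bfO$ and that images of Lagrangians under reduction are isotropic, in a situation where $\nu_{\bfP}$ is not transverse to $\bfO$ and $\nu_{\bfP}^{-1}(\bfO)$ is singular; making that rigorous is exactly the content of the machinery in \cite[\S 1.5]{CG} on which the cited theorem rests. So the crux of the isotropy statement remains unproved in your argument.
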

}

}

\subsection{$\Xi$-sphericity criteria}\label{subsec:crit}




%
%
%

\DimaC{
Let us give a quantitative version of the notion of $\bfO$-sphericity.
\begin{defn}
For any nilpotent coadjoint orbit $\bfO\subset \fu_{\fg}^{\bot}\sub \fg^*$, we define the $\bfO$-complexity of $\bfX$ to be $$c_{\bfO}(\bfX):=\dim\mu^{-1}(\bfO)-\dim\bfX - \dim\bfO/2.$$
For a $\bfG$-invariant subset $\Xi \sub \cN(\fg^*)$ we define
$$c_{\Xi}(\bfX):=\max_{\text{orbit }\bfO\sub \Xi}c_{\bfO}(\bfX)$$
\end{defn}

\DimaE{If the orbit $\bfO$ does not intersect $\mu(\bfX)$, we say that $c_{\bfO}(\bfX)=-\infty$.
Even if $\bfO$ intersects $\mu(X)$,  $c_{\bfO}(\bfX)$ can  be negative, see Example \ref{ex:small} below. However, it is easy to see that $c_{\{0\}}(\bfX)\geq 0$, and thus for every closed $\bfG$-invariant subset $\Xi \sub \cN(\fg^*)$ we have $c_{\Xi}(\bfX)\geq 0$. Also, for Richardson orbits $\bfO$ we have }$c_{\bfO}(\bfX)\geq 0$, as we show in Proposition \ref{prop:RichDim} below.
Let us now give a more explicit formula for the $\bfO$-complexity of homogeneous varieties.
Let $\bfH\sub \bfG$ be an algebraic subgroup, and let $\fh$ be its Lie algebra. Let $\fh^{\bot}\subset \fg^*$ denote the space of functionals vanishing on $\fh$.
For any $g\bfH\in \bfG/\bfH$, identify the cotangent space to $\bfG/\bfH$ at $g\bfH$ with $g\cdot \fh^{\bot}\subset \fg^*$.
Under this identification the moment map sends $(g\bfH,a)\in T^*(\bfG/\bfH)$ to $a \in \fg^*$. Thus, the image of $\mu$ is $\bfG\cdot \fh^{\bot}$.

\DimaE{
\begin{lemma}\label{lem:inO}
For any nilpotent coadjoint orbit $\bfO\sub \fg^*$, there exists a map
$\alp:\mathbf{O}\cap \fh^{\bot}\to \mu^{-1}(\bfO)$, such that the image of $\alp$ intersects every irreducible component of $\mu^{-1}(\bfO)$, and for every $\varphi\in \mathbf{O}\cap \fh^{\bot},$ we have
$$\dim_{\varphi}(\mathbf{O}\cap \fh^{\bot})+\dim {\bf G/H}=\dim_{\alp(\varphi)}\mu^{-1}(\bfO)$$
\end{lemma}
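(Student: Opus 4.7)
The plan is to realize $\mu^{-1}(\bfO)$ as an \'etale-locally-trivial fiber bundle over $\bfG/\bfH$, and to take $\alp$ to be the natural inclusion of the fiber over the base point. Denoting by $x_0\in\bfG/\bfH$ the identity coset, I would define
\[
\alp(\varphi):=(x_0,\varphi)
\]
using the canonical identification $T^*_{x_0}(\bfG/\bfH)\cong \fh^{\bot}$. Then $\mu(\alp(\varphi))=\varphi\in\bfO$, so $\alp$ does land in $\mu^{-1}(\bfO)$.

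The central step is the standard $\bfG$-equivariant identification
\[
T^*(\bfG/\bfH)\;\cong\;\bfG\times^{\bfH}\fh^{\bot},\qquad [(g,\varphi)]\longleftrightarrow (g\bfH,\,g\cdot\varphi),
\]
under which the moment map becomes $[(g,\varphi)]\mapsto g\cdot\varphi\in\fg^*$. Since $\bfO$ is $\bfG$-invariant, restricting this isomorphism gives
\[
\mu^{-1}(\bfO)\;\cong\;\bfG\times^{\bfH}(\bfO\cap \fh^{\bot}),
\]
and the image of $\alp$ corresponds precisely to the fiber of the projection $\bfG\times^{\bfH}(\bfO\cap\fh^{\bot})\to\bfG/\bfH$ over $x_0$, namely $\{[(1,\varphi)]:\varphi\in \bfO\cap\fh^{\bot}\}$.

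Both conclusions of the lemma then follow easily from this bundle structure. Since $\bfG\to \bfG/\bfH$ is a principal $\bfH$-bundle, the associated bundle $\bfG\times^{\bfH}(\bfO\cap\fh^{\bot})\to\bfG/\bfH$ is \'etale-locally trivial with fiber $\bfO\cap\fh^{\bot}$, so local dimensions add at $\alp(\varphi)$:
\[
\dim_{\alp(\varphi)}\mu^{-1}(\bfO)\;=\;\dim \bfG/\bfH+\dim_{\varphi}(\bfO\cap\fh^{\bot}).
\]
For the component statement, every point $[(g,\varphi)]\in\bfG\times^{\bfH}(\bfO\cap\fh^{\bot})$ equals $g\cdot\alp(\varphi)$, so $\bfG\cdot \Im(\alp)=\mu^{-1}(\bfO)$. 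As $\bfG$ is connected, it preserves every irreducible component of $\mu^{-1}(\bfO)$, so each such component must meet $\Im(\alp)$.

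The only step that requires genuine care is verifying that the identification $T^*(\bfG/\bfH)\cong\bfG\times^{\bfH}\fh^{\bot}$ really intertwines the moment map of \S\ref{subsec:GeoPrel} with the map $[(g,\varphi)]\mapsto g\cdot\varphi$; this is a routine $\bfG$-equivariance argument once checked at the identity coset, but it is the key link between the geometric picture and the formula in the statement.
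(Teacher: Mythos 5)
Your proposal is correct and follows essentially the same route as the paper: the paper's (very terse) proof also takes $\alp$ to be the identification of the fiber of the projection $\mu^{-1}(\bfO)\to\bfG/\bfH$ over the base point with $\bfO\cap\fh^{\bot}$, leaving the dimension additivity and the component statement implicit. Your write-up simply fills in those details (via the associated-bundle description and the connectedness of $\bfG$) correctly.
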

\begin{proof}
$\mu^{-1}(\mathbf{O})=\{(g\bfH,\xi)\, \vert \xi \in \mathbf{O} \cap g\cdot \fh^{\bot}\}$.
Let $p:\mu^{-1}(\mathbf{O})\to {\bf G/H}$ denote the projection.
We get an identification $p^{-1}([1])\cong \mathbf{O}\cap \fh^{\bot}$ which gives the desired map $\alp$.\end{proof}

\begin{cor}\label{cor:invO}
For any nilpotent coadjoint orbit $\bfO\sub \fg^*$, we have
$$c_{\bfO}({\bf G/H})= \dim \mathbf{O}\cap \fh^{\bot}- \dim\bfO/2.$$
In particular, $\bfG/\bfH$ is $\bfO$-spherical if and only if $\dim \bfO \cap \fh^{\bot}\leq \dim \bfO/2$.
\end{cor}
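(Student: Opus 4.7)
The plan is to read off the corollary directly from Lemma \ref{lem:inO}. First, using the map $\alpha:\bfO\cap \fh^{\bot}\to \mu^{-1}(\bfO)$ furnished by that lemma, I observe that since the image of $\alpha$ meets every irreducible component of $\mu^{-1}(\bfO)$, we have
\[
\dim \mu^{-1}(\bfO)=\max_{\varphi \in \bfO\cap \fh^{\bot}}\dim_{\alpha(\varphi)}\mu^{-1}(\bfO).
\]
Applying the pointwise dimension equality from Lemma \ref{lem:inO}, namely $\dim_{\alpha(\varphi)}\mu^{-1}(\bfO)=\dim_{\varphi}(\bfO\cap \fh^{\bot})+\dim \bfG/\bfH$, and taking the maximum in $\varphi$ yields
\[
\dim \mu^{-1}(\bfO)=\dim (\bfO\cap \fh^{\bot})+\dim \bfG/\bfH.
\]

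Next, I substitute this into the definition $c_{\bfO}(\bfX)=\dim \mu^{-1}(\bfO)-\dim \bfX-\dim \bfO/2$ with $\bfX=\bfG/\bfH$. The $\dim \bfG/\bfH$ terms cancel, leaving
\[
c_{\bfO}(\bfG/\bfH)=\dim (\bfO\cap \fh^{\bot})-\dim \bfO/2,
\]
which is the first assertion. For the second assertion, I recall that by Definition \ref{def:Ospher}, $\bfG/\bfH$ is $\bfO$-spherical if and only if $c_{\bfO}(\bfG/\bfH)\leq 0$, which by the formula just established is equivalent to $\dim (\bfO\cap \fh^{\bot})\leq \dim \bfO/2$.

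There is essentially no obstacle here: the entire content of the corollary is packaged into Lemma \ref{lem:inO}, and this proof is just a two-line unpacking. The only minor point to be careful about is the convention when $\bfO\cap \fh^{\bot}=\emptyset$, in which case $\mu^{-1}(\bfO)=\emptyset$ as well, so both sides of the dimension formula are $-\infty$, consistent with the convention stated before the corollary.
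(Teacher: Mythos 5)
Your proof is correct and is exactly the intended argument: the paper states Corollary \ref{cor:invO} as an immediate consequence of Lemma \ref{lem:inO} without further proof, and your two-step unpacking (globalizing the pointwise dimension identity via the fact that the image of $\alp$ meets every component, then substituting into the definition of $c_{\bfO}$) is the right way to fill it in. The remark about the empty-intersection convention is a sensible, if minor, addition.
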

}


We will say that $\bfH$ is $\bfO$-spherical if $\bfG/\bfH$\ is $\bfO$-spherical, and similarly for any $\bfG$-invariant subset $\Xi\subset \cN(\fg^*)$.

Now we would like to prove Theorem \ref{IntThm:Ri}.}
\DimaB{Let $\bfP\subset \bfG$ be a parabolic subgroup, let $\bfO_\bfP$  denote the corresponding Richardson orbit, and $\overline{\bfO_\bfP}$ denote its closure.
Let $\mu:  T^*\bfX\to \fg^*$ and $\nu:T^*({\bf G/P})\to \fg^*$ be the moment maps.

\begin{prop}\label{prop:symp}
Let $\bf O\sub \overline{O_P}$ be an orbit. Let $\widetilde{\bfO}:=\nu^{-1}(\bfO)$, and let $\nu_{\bfO}:\widetilde{\bfO}\to \bfO$ denote the restriction of $\nu$.
\begin{enumerate}[(i)]
\item The smooth locus of the map $\nu_{\bfO}$ is open and dense in $\widetilde{\bfO}$. \label{it:SpringSmooth}
\item \label{it:SpringSymp} For every smooth point $\lam$ of the map $\nu_{\bfO}$, the pullback of the symplectic form on $\bfO$ under the differential  $d_\lam\nu_O$ equals the restriction of the symplectic form of $T_\lam T^*({\bf G/P})$ to $T_\lam\widetilde{\bfO}$.
\end{enumerate}
\end{prop}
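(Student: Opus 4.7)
Part (i) follows from generic smoothness: since $\nu$ is $\bfG$-equivariant and $\bfG$ is connected, every irreducible component $\mathbf{Z}$ of $\widetilde{\bfO}$ is $\bfG$-stable, and its image under $\nu_\bfO$ is a nonempty $\bfG$-invariant subset of the single orbit $\bfO$, hence equals $\bfO$. Thus $\nu_\bfO|_{\mathbf{Z}}$ is a dominant morphism of irreducible varieties in characteristic zero, which has a nonempty open smooth locus by generic smoothness. Taking the union over components yields (i).

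For part (ii), set $\alpha:=\omega|_{\widetilde{\bfO}}-\nu_\bfO^*\omega_\bfO$ and invoke the standard moment map identity $\omega(X^*,v)=\langle d\nu(v),X\rangle$, where $X\in\fg$ and $X^*$ denotes the corresponding fundamental vector field on $T^*(\bfG/\bfP)$. A direct check matches $\omega(X^*,Y^*)=\langle\nu(\lambda),[X,Y]\rangle$ with $\omega_\bfO(d\nu(X^*),d\nu(Y^*))$, and the mixed terms $\omega(X^*,v_0)=\langle d\nu(v_0),X\rangle$ vanish for $v_0\in\ker d\nu$, while $\nu_\bfO^*\omega_\bfO$ already annihilates any element of $\ker d\nu$. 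Using the decomposition $T_\lambda\widetilde{\bfO}=\{X^*(\lambda):X\in\fg\}+\ker d\nu$ (valid at smooth points by surjectivity of $d\nu_\bfO$), the only remaining step is to show $\omega(u_0,v_0)=0$ for $u_0,v_0\in\ker d\nu$.

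The moment map identity gives $\ker d\nu=(T_\lambda(\bfG\cdot\lambda))^\omega$, so this reduces to the inclusion $\ker d\nu\subset T_\lambda(\bfG\cdot\lambda)$. I plan to deduce it from the stronger claim that at a smooth point of $\nu_\bfO$ the orbit $\bfG\cdot\lambda$ is open in its irreducible component of $\widetilde{\bfO}$, i.e.\ $T_\lambda(\bfG\cdot\lambda)=T_\lambda\widetilde{\bfO}$. Once this is known, $\ker d\nu\subset T_\lambda\widetilde{\bfO}=T_\lambda(\bfG\cdot\lambda)$, so every $v_0\in\ker d\nu$ is of the form $Z^*(\lambda)$ with $Z\in\fg_{\nu(\lambda)}$, and $\omega(u_0,Z^*(\lambda))=-\langle d\nu(u_0),Z\rangle=0$. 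The openness itself is the main obstacle: a local computation at $\lambda=(e\bfP,\eta)$ with a splitting $\fg=\fm\oplus\fp$ yields $T_\lambda(\bfG\cdot\lambda)=\fm\oplus\ad^*(\fp)(\eta)$ and $T_\lambda\widetilde{\bfO}=\fm\oplus(T_\eta\bfO\cap\fp^\bot)$, reducing the openness to the equality $\ad^*(\fp)(\eta)=T_\eta\bfO\cap\fp^\bot$, i.e., $\bfP\cdot\eta$ being open in its component of $\bfO\cap\fp^\bot$. I expect to derive this from Proposition~\ref{prop:Piso} (isotropy of $\bfO\cap\fp^\bot$ in $\bfO$) combined with the bundle description $\widetilde{\bfO}\cong\bfG\times_\bfP(\bfO\cap\fp^\bot)$, which translates smoothness of $\nu_\bfO$ at $\lambda$ into smoothness of $\bfO\cap\fp^\bot$ at $\eta$ and forces $\eta$ into the open $\bfP$-orbit of its component.
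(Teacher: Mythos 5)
Your part (i) and the overall reduction in part (ii) are fine and run parallel to the paper's argument: decompose $T_\lam\widetilde{\bfO}$ using $\fg\cdot\lam$, check the three types of pairings via the moment map identity, and reduce to showing $\omega(u_0,v_0)=0$ for $u_0,v_0\in\ker d_\lam\nu_{\bfO}$. The gap is in how you close this last step. The inclusion $\ker d_\lam\nu_{\bfO}\sub T_\lam(\bfG\cdot\lam)$, equivalently the openness of $\bfP\cdot\eta$ in its component of $\bfO\cap\fp^{\bot}$, is false in general and cannot be derived from Proposition \ref{prop:Piso}. Isotropy only gives the upper bound $\dim(\bfO\cap\fp^{\bot})\le\dim\bfO/2$; it does not force $\bfP$-orbits to be open in components of $\bfO\cap\fp^{\bot}$ (already for $\bfP$ a Borel subgroup, orbital varieties need not contain a dense $B$-orbit), and even when a dense orbit exists, the smooth locus of the map $\nu_{\bfO}$ is a dense open $\bfG$-invariant set that in general contains points outside that orbit, whereas the proposition must hold at \emph{every} smooth point of the map. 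In the same vein, the identity $\ad^*(\fp)(\eta)=T_\eta\bfO\cap\fp^{\bot}$ you aim for is exactly the statement that the intersection is clean and the $\bfP$-orbit open; what one gets for free from Definition \ref{def:sympO} is only $\ad^*(\fp)(\eta)=(T_\eta\bfO\cap\fp^{\bot})^{\bot_{\omega_{\bfO}}}$, which points in the wrong direction.

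Fortunately, the remaining step needs no statement about orbits. From the Cartesian square $\bfG\times(\bfO\cap\fp^{\bot})\to\widetilde{\bfO}$ one gets (this is the paper's Lemma \ref{lem:pullback}) $T_\lam\widetilde{\bfO}=\fg\cdot\lam+T_a(\bfO\cap\fp^{\bot})$ at $\lam=(1,a)$, where $T_a(\bfO\cap\fp^{\bot})$ denotes the tangent space of the intersection \emph{variety}, not $T_a\bfO\cap\fp^{\bot}$. Hence any $v\in\ker d_\lam\nu_{\bfO}$ can be written as $v=x\cdot\lam-\alp$ with $\alp\in T_a(\bfO\cap\fp^{\bot})$, and $0=d\nu(v)=\ad^*(x)(a)-\alp$ shows its vertical component satisfies $\alp=\ad^*(x)(a)\in T_a(\bfO\cap\fp^{\bot})$. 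For two such vectors the formula of Example \ref{ex:T*X} gives $\omega(u_0,v_0)=-\langle a,[x,y]\rangle=-\omega_{\bfO}(\alp,\bt)$, which vanishes by Proposition \ref{prop:Piso} applied to $\alp,\bt\in T_a(\bfO\cap\fp^{\bot})$. This is precisely where the paper's computation invokes isotropy; with this replacement for your final step, the rest of your argument goes through.
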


For the proof of \eqref{it:SpringSmooth} we will need the following standard lemma.
\begin{lem}\label{lem:dom}
Let $\pi:{\bf Y\to Z}$ be a dominant map of irreducible varieties. Then the smooth locus of $\pi$ is open and dense in $\bf Y$.
\end{lem}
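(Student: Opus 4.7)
The plan is to invoke generic smoothness, which applies here because the paper works with complex algebraic groups, and thus with varieties over a field of characteristic zero. First I would replace $\bf Y$ and $\bf Z$ by their smooth loci: the smooth locus of an irreducible variety is open and dense, so $\bf Y^{sm}$ is open and dense in $\bf Y$, and similarly $\pi^{-1}(\bf Z^{sm})\cap \bf Y^{sm}$ is open in $\bf Y$. Since $\pi$ is dominant, its image meets $\bf Z^{sm}$, so this open subset is nonempty, hence dense in the irreducible $\bf Y$. This reduces the statement to the case where $\bf Y$ and $\bf Z$ are both smooth.

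Next, since we are in characteristic zero, the extension of function fields $K(\bf Y)/K(\bf Z)$ induced by the dominant map $\pi$ is automatically separable. The standard generic smoothness theorem (see, e.g., Hartshorne III.10.7) then gives a nonempty, hence dense, open subset $\bf U\subset \bf Y$ on which $\pi|_{\bf U}$ is smooth. Combined with the previous reduction, this shows that the smooth locus of $\pi$ contains a nonempty open subset of the irreducible $\bf Y$, and is therefore open and dense in $\bf Y$.

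There is no real obstacle here; the only point to watch is that smoothness of the map requires both source and target to be smooth at the relevant points, which is why the initial reduction to $\bf Y^{sm}$ and $\pi^{-1}(\bf Z^{sm})$ is needed before applying generic smoothness.
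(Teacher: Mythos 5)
Your proof is correct and is exactly the standard argument: the paper itself offers no proof, labelling this a ``standard lemma,'' and the intended justification is precisely generic smoothness in characteristic zero after restricting to the smooth loci of source and target, as you do. The one point worth making explicit (which you handle correctly) is that dominance is what guarantees the open set produced by generic smoothness is nonempty, and hence dense in the irreducible $\bf Y$.
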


\DimaG{
For the proof of \eqref{it:SpringSymp} we will need the following lemma.
\begin{lem}\label{lem:pullback}
Let $\lam=(1,a)\in\widetilde{\bfO}\sub T^*({\bf G/P}),$ where $1\in {\bf G/P}$ is the base point, and $a\in T_1^*({\bf G/P})\cong \fp^{\bot}$. Then 
\begin{enumerate}[(i)]
\item $\lam$ is a smooth point of $\widetilde{\bfO}$ if and only if $a$ is a smooth point of $\bfO\cap \fp^{\bot}$.
\item $T_{\lam}(\nu^{-1}(\bfO))=\fg\cdot {\lam} +T_a(\bfO\cap \fp^{\bot})$
\end{enumerate}
\end{lem}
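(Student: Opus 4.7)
The plan is to exploit the fact that $\nu^{-1}(\bfO)$ has a natural structure of a $G$-equivariant fiber bundle over $\bfG/\bfP$. Specifically, using the standard identification $T^*(\bfG/\bfP)\cong \bfG\times^\bfP \fp^{\bot}$ under which the moment map becomes $[g,\xi]\mapsto g\cdot \xi$, the $\bfG$-invariance of $\bfO$ gives
$$\nu^{-1}(\bfO)= \bfG\times^\bfP(\bfO\cap \fp^{\bot}).$$
Hence the projection $\pi:\nu^{-1}(\bfO)\to \bfG/\bfP$ is a fiber bundle with fiber $\bfO\cap \fp^{\bot}$. Since $\bfG\to \bfG/\bfP$ admits Zariski-local sections (for instance via the Bruhat open cell), $\pi$ is Zariski-locally trivial, so on some open neighbourhood $U\subset \bfG/\bfP$ of the base point we have an isomorphism $\pi^{-1}(U)\cong U\times (\bfO\cap \fp^{\bot})$.

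Statement (i) then follows immediately: $U$ is smooth, so the point $\lam=(1,a)$ is a smooth point of $\nu^{-1}(\bfO)$ if and only if $a$ is a smooth point of the fiber $\bfO\cap \fp^{\bot}$. In particular, the Zariski tangent space at $\lam$ decomposes as
$$T_\lam(\nu^{-1}(\bfO))\ \cong\ T_1 U \oplus T_a(\bfO\cap \fp^{\bot})\ =\ \fg/\fp\ \oplus\ T_a(\bfO\cap \fp^{\bot}),$$
which has dimension $\dim(\bfG/\bfP)+\dim T_a(\bfO\cap \fp^{\bot})$.

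For statement (ii), the inclusion $\fg\cdot \lam + T_a(\bfO\cap\fp^{\bot})\subseteq T_\lam(\nu^{-1}(\bfO))$ is immediate: the $\bfG$-orbit $\bfG\cdot \lam$ lies in $\nu^{-1}(\bfO)$, giving $\fg\cdot \lam$ inside the tangent space, and the fiber $\{1\}\times (\bfO\cap \fp^{\bot})$ gives $T_a(\bfO\cap \fp^{\bot})$. For equality, I would match dimensions. The key observation is that $\fp\cdot \lam\subseteq T_a(\bfO\cap \fp^{\bot})$: for $x\in \fp$, the vector $x\cdot \lam$ has vanishing horizontal component (since $x$ fixes the base point of $\bfG/\bfP$) and vertical component $\ad^*(x)(a)\in \fp^{\bot}$, which is tangent to the $\bfP$-orbit $\bfP\cdot a\subset \bfO\cap \fp^{\bot}$. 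Consequently,
$$\dim\bigl(\fg\cdot \lam + T_a(\bfO\cap \fp^{\bot})\bigr)= \dim(\fg/\fp)+\dim T_a(\bfO\cap\fp^{\bot}),$$
which matches the dimension computed above and forces the desired equality.

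The main technical point to be careful about is the bookkeeping between the abstract product description $U\times(\bfO\cap\fp^{\bot})$ and the natural coordinates on $T^*(\bfG/\bfP)$ at $\lam$ as in Example \ref{ex:T*X}: a tangent vector written as $x\cdot\lam+\alp$ with $x\in\fg$, $\alp\in\fp^{\bot}$, has horizontal part $\bar x\in \fg/\fp$ and vertical part $\ad^*(x)(a)+\alp$, so the choice of lift $x$ of a given $\bar x\in\fg/\fp$ affects the vertical component by an element of $\fp\cdot\lam$, which must be absorbed into $T_a(\bfO\cap\fp^{\bot})$. Once this identification is made precise, both statements follow directly from the fiber bundle picture.
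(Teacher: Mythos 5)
Your proof is correct and is essentially the paper's argument: the authors simply cite the Cartesian square relating $\bfG\times(\bfO\cap\fp^{\bot})\subset\bfG\times\fp^{\bot}$ to $\widetilde{\bfO}\subset T^*({\bf G/P})$, which is exactly your identification $\nu^{-1}(\bfO)\cong\bfG\times^{\bfP}(\bfO\cap\fp^{\bot})$ descended along the principal $\bfP$-bundle. Your local-trivialization and dimension-count details just make explicit what the paper leaves implicit.
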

\begin{proof}
Follows from the Cartesian square
$$
\xymatrix{
\bfG\times \fp^{\bot}  \ar[r] &T^*({\bf G/P})\\
\bfG\times ({\bfO \cap \fp^{\bot}})\ar@{^{(}->}[u] \ar[r] &\widetilde{\bfO}\ar@{^{(}->}[u]}
$$

\end{proof}

\begin{proof}[Proof of Proposition \ref{prop:symp}]
\eqref{it:SpringSmooth}: Since $\bfG$ is connected, any irreducible component of $\widetilde{\bfO}$ is $\bfG$-invariant. Hence $\nu_{\bfO}$ maps it onto $\bfO$. The assertion follows now from Lemma \ref{lem:dom}.  \\
\eqref{it:SpringSymp}: 
We can assume that $\lam=(1,a)\in\widetilde{\bfO}\sub T^*({\bf G/P})$ as above. By Lemma \ref{lem:pullback} we can write any tangent vector $v\in T_\lam  (\widetilde{\bfO})$ as 
$v=x\cdot \lam - \alp$, where $x\in \fg$, the expression $x\cdot \lam$ denotes the action of $x$ on $\lam$, and  $\alp\in T_{a}(\bfO \cap \fp^{\bot})$ is a ``vertical'' tangent vector. We have $a=\nu(\lam)\in \bfO$.


Let  $w\in T_\lam  (\nu^{-1}(\bfO))$ be another tangent vector. Decompose it similarly as 
$w=y\cdot \lam - \beta$. By Proposition \ref{prop:Piso}
we have $\omega_{\bfO}(\alp,\beta)=0$. By Example \ref{ex:T*X} and Definition \ref{def:sympO} we have
\begin{multline*}
\omega(x\cdot \lam - \alp, y\cdot \lam - \beta)=\langle a ,[x,y]\rangle - \langle\ad^*(y')(a) , x\rangle+\langle \ad^*(x')(a), y\rangle=
\langle a, [x-x',y-y']\rangle-\langle a, [x',y']\rangle=\\ \omega_{\bfO}(\ad^{*}(x-x')(a),\ad^*(y-y')(a))-\omega_{\bfO}(\alp,\beta)=\\
\omega_{\bfO}(d_\lam\nu_{\bfO}((x-x')\cdot \lam),d_\lam\nu_{\bfO}((y-y')\cdot \lam)-0=\omega_{\bfO}(x\cdot \lam - \alp, y\cdot \lam - \beta)
\end{multline*}
\end{proof}
}
%

Let us now prove the following theorem, that is stronger than Theorem \ref{IntThm:Ri}.

\begin{thm}\label{Thm:Ri}

The following are equivalent.
\begin{enumerate}[(i)]
\item $\bfP$ has finitely many orbits on $\bfX$. \label{it:RiFin}
\item For every orbit $\bf O \sub \overline{O_P}$, the subvariety $\Gamma_{\bf O}:=\{(x,-\mu(x))\in T^*(X)\times \fg^*\, \vert \, \mu(x)\in \bfO\} $ of $T^*(\bfX)\times \bfO$ is isotropic. \label{it:RiIsot}
\item $\bfX$ is $\overline{\bfO_\bfP}$-spherical. \label{it:RiOSpher}
\end{enumerate}
\end{thm}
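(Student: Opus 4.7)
The natural arena is the product $\bfY := \bfX \times \bfG/\bfP$, whose moment map under the diagonal action is $\mu_\bfY(\alpha,\beta) = \mu_\bfX(\alpha) + \nu_\bfP(\beta)$. The standard bijection between $\bfP$-orbits on $\bfX$ and $\bfG$-orbits on $\bfY$ lets Lemma \ref{lem:0} translate \eqref{it:RiFin} into the geometric statement $\dim\mu_\bfY^{-1}(0) \leq \dim\bfY$. I will prove the cyclic chain \eqref{it:RiOSpher} $\Rightarrow$ \eqref{it:RiFin} $\Rightarrow$ \eqref{it:RiIsot} $\Rightarrow$ \eqref{it:RiOSpher}.

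For \eqref{it:RiOSpher} $\Rightarrow$ \eqref{it:RiFin}, stratify $\mu_\bfY^{-1}(0) = \bigcup_\bfO S_\bfO$ by the nilpotent orbit $\bfO \subset \overline{\bfO_\bfP}$ containing $\nu_\bfP(\beta)$. Each $S_\bfO$ is the fibered product $\mu_\bfX^{-1}(-\bfO) \times_\bfO \nu_\bfP^{-1}(\bfO)$, and a routine generic-fiber computation gives
\[\dim S_\bfO = \dim\mu_\bfX^{-1}(\bfO) + \dim\nu_\bfP^{-1}(\bfO) - \dim\bfO,\]
where I invoke the automorphism $(x,\xi)\mapsto(x,-\xi)$ of $T^*\bfX$ to identify $\dim\mu_\bfX^{-1}(-\bfO)$ with $\dim\mu_\bfX^{-1}(\bfO)$. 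Applying \eqref{it:RiOSpher} to bound the first term by $\dim\bfX + \tfrac{1}{2}\dim\bfO$, Lemma \ref{lem:inO} (for $\bfG/\bfP$) to evaluate $\dim\nu_\bfP^{-1}(\bfO) = \dim\bfG/\bfP + \dim(\bfO\cap\fp^\bot)$, and Proposition \ref{prop:Piso} to bound the latter by $\tfrac{1}{2}\dim\bfO$ yields $\dim S_\bfO \leq \dim\bfY$; Lemma \ref{lem:0} then forces finiteness.

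For \eqref{it:RiFin} $\Rightarrow$ \eqref{it:RiIsot}, Lemma \ref{lem:0} provides that $\mu_\bfY^{-1}(0)$ is Lagrangian in $T^*\bfY$, so in particular $\omega_{T^*\bfY}$ vanishes on each $S_\bfO$. For a fixed orbit $\bfO\subset\overline{\bfO_\bfP}$, consider the map
\[F : T^*\bfX \times \nu_\bfP^{-1}(\bfO) \longrightarrow T^*\bfX \times \bfO, \qquad (\alpha,\beta)\mapsto(\alpha,\nu_\bfP(\beta)).\]
By Proposition \ref{prop:symp} together with Lemma \ref{lem:dom} (to ensure density of the smooth locus of $\nu_\bfP|_{\nu_\bfP^{-1}(\bfO)}$), on a dense open subset the pullback $F^*(\omega_{T^*\bfX}\oplus\omega_\bfO)$ agrees with the restriction of $\omega_{T^*\bfY}$. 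The restriction of $F$ to $S_\bfO$ surjects onto $\Gamma_\bfO$ (from the relation $\mu_\bfX(\alpha) = -\nu_\bfP(\beta)$ defining $S_\bfO$); since $\omega_{T^*\bfY}|_{S_\bfO}=0$, the pullback $F^*(\omega_{T^*\bfX}\oplus\omega_\bfO)$ vanishes on a dense open subset of $S_\bfO$. Choosing lifts at points where $dF|_{S_\bfO}$ is surjective (again by Lemma \ref{lem:dom}) transfers this vanishing from $S_\bfO$ to $\Gamma_\bfO$, establishing isotropy.

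Finally \eqref{it:RiIsot} $\Rightarrow$ \eqref{it:RiOSpher} is immediate from dimensions: the projection $\Gamma_\bfO \to \mu_\bfX^{-1}(\bfO)$ is an isomorphism, and isotropy in the ambient $T^*\bfX\times\bfO$ of dimension $2\dim\bfX+\dim\bfO$ forces $\dim\mu_\bfX^{-1}(\bfO) \leq \dim\bfX + \tfrac{1}{2}\dim\bfO$ for every $\bfO\subset\overline{\bfO_\bfP}$. The principal obstacle lies in the form-vanishing step of \eqref{it:RiFin} $\Rightarrow$ \eqref{it:RiIsot}: one must verify the identity $F^*(\omega_{T^*\bfX}\oplus\omega_\bfO) = \omega_{T^*\bfY}|$ componentwise using Proposition \ref{prop:symp} on the smooth locus of each irreducible component of $\nu_\bfP^{-1}(\bfO)$, and propagate pullback-vanishing from (possibly singular) $S_\bfO$ to (possibly singular) $\Gamma_\bfO$ via the surjective-submersion argument applied at generic smooth points.
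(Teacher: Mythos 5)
Your proof is correct and follows essentially the same route as the paper: the implication (i)$\Rightarrow$(ii) via the Lagrangian zero-fiber of the diagonal moment map on $\bfX\times\bfG/\bfP$ together with Proposition \ref{prop:symp} (and, implicitly, Proposition \ref{prop:subIs} to pass from the Lagrangian $\mu_\bfY^{-1}(0)$ to its possibly-singular subvariety $S_\bfO$), the dimension count for (ii)$\Rightarrow$(iii), and the orbit-wise estimate combining Lemma \ref{lem:0}, Lemma \ref{lem:inO} and Proposition \ref{prop:Piso} for (iii)$\Rightarrow$(i). The only cosmetic difference is that in (iii)$\Rightarrow$(i) the paper applies Lemma \ref{lem:0} to the $\bfP$-action on $\bfX$ via $\bfS=\mu^{-1}(\fp^{\bot})$ rather than to the diagonal $\bfG$-action on $\bfX\times\bfG/\bfP$; the two zero-fibers differ by induction from $\bfP$ to $\bfG$ and the estimate is identical.
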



\begin{proof}

\eqref{it:RiFin}$\Rightarrow$\eqref{it:RiIsot}:
Let ${\bf T_O}\subset T^*\bfX \times T^*({\bf G/P})$ denote the preimage of $\Gamma_{\bf O}$ under $\Id \times \nu$. By Proposition \ref{prop:symp}, it is enough to prove that $\bf T_{\bfO}$ is isotropic.
Note that $$T_{\bfO}\sub T:=\{(x,y)\in T^*\bfX \times T^*({\bf G/P})\, \vert\, \mu(x)=-\nu(y)\}=(\mu\times\nu)^{-1}(\Delta\fg)^{\bot},$$
where $\Delta\fg\sub \fg\times \fg$ is the diagonal, and $(\Delta\fg)^{\bot}\sub (\fg\times \fg)^*$ is its orthogonal complement. Thus $\bf T$ is also the  preimage of zero under the moment map corresponding to the diagonal action of $\bf G$ on $\bf X\times (G/P)$. Since $\bf P$ has finitely many orbits on $\bf X$, $\bf G$ has finitely many orbits on  $\bf X\times (G/P)$ and thus Lemma \ref{lem:0} implies that $\bf T$ is Lagrangian.

\eqref{it:RiIsot}$\Rightarrow$\eqref{it:RiOSpher}: $\dim \mu^{-1}(\bfO)=\dim \Gamma_{\bfO}\leq (\dim T^*\bfX+\dim \bfO)/2=\dim \bfX+\dim\bfO/2$, since $\Gamma_{\bfO}$ is isotropic by \eqref{it:RiIsot}.

\eqref{it:RiOSpher}$\Rightarrow$\eqref{it:RiFin}:
Let $\bfS:=\mu^{-1}(\fp^{\bot})$. Note that  $\bfS$ also equals the preimage of zero under the moment map for the action of $\bfP$ on $\bfX$. Thus, Lemma \ref{lem:0} implies that $\bfP$ has finitely many orbits on $\bfX$ if and only if $\dim \bfS\leq\dim \bfX$.
Further, $\dim \bfS=\max_{\bfO\sub \overline{\bfO_\bfP}} \dim \mu^{-1}(\fp^{\bot}\cap \bfO)$. Since the map $\mu$ is $\bfG$-equivariant, for any orbit $\bfO$, the fibers of all its points are isomorphic. Thus
$$\dim \mu^{-1}(\fp^{\bot}\cap \bfO)=  \dim \mu^{-1}(\bfO)+\dim\fp^{\bot}\cap \bfO-\dim \bfO$$
By Proposition \ref{prop:Piso} , for every $\bfO\sub \overline{\bfO_\bfP}$ we have
$\dim\fp^{\bot}\cap \bfO\leq \dim \bfO/2$. Thus $$\dim\fp^{\bot}\cap \bfO-\dim \bfO\leq -\dim \bfO/2,$$ and thus
$$\dim \mu^{-1}(\fp^{\bot}\cap \bfO)\leq  \dim \mu^{-1}(\bfO)-\dim \bfO/2\leq \dim \bfX,$$
since $\bf X$ is $\bfO$-spherical.  Thus $\dim \bfS\leq \dim \bfX$ and thus $\bfP$ has finitely many orbits on $\bfX$. \end{proof}
The implication \eqref{it:RiFin}$\Rightarrow$\eqref{it:RiIsot} was proven in a special case in \cite[Theorem 3.8]{Li}, and the proof we give is merely a  modification of the proof in \cite{Li}.
}

\DimaE{
Given this theorem, it is natural to ask if $c_{ \overline{\bf O_P}}(\bfX)$ equals the modality of the action of $\bf P$ on $\bfX$ (in the sense of \cite[p. 2]{Vin}). The argument in the proof of \eqref{it:RiOSpher}$\Rightarrow$\eqref{it:RiFin} allows to answer this in the case when $\bfP$ is a Borel subgroup, obtaining the following result regarding the complexity of $\bfX$ in the sense of \cite{Vin}.

\begin{prop} If $\bfG$ is reductive
then the complexity of $\bfX$  equals $c_{\cN(\fg^*)}(X)$.
\end{prop}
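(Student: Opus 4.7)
The plan is to show that both Vinberg's complexity $c(\bfX)$ and $c_{\cN(\fg^*)}(\bfX)$ equal $\dim\bfS - \dim\bfX$, where $\mathbf{B}\subset\bfG$ is a Borel subgroup and $\bfS := \mu^{-1}(\fb^{\bot})$. Since $\bfG$ is reductive, the Richardson orbit $\bfO_{\mathbf{B}}$ is the principal nilpotent orbit, whose closure equals $\cN(\fg^*)$, so the setting fits naturally into the framework of Theorem \ref{Thm:Ri}.

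First I would repeat, essentially verbatim, the computation from the implication \eqref{it:RiOSpher}$\Rightarrow$\eqref{it:RiFin} in the proof of Theorem \ref{Thm:Ri}. Decomposing $\fb^{\bot}$ into its intersections with the nilpotent coadjoint orbits, using $\bfG$-equivariance of $\mu$ to control fiber dimensions orbit-by-orbit, and invoking Proposition \ref{prop:Piso} in the Borel case (which asserts that $\dim(\bfO\cap\fb^{\bot}) = \dim\bfO/2$ for every $\bfO\subset\cN(\fg^*)$, since the intersection is Lagrangian in $\bfO$), one obtains
$$\dim \bfS = \max_{\bfO\subset \cN(\fg^*)}\bigl(\dim\mu^{-1}(\bfO) - \dim\bfO/2\bigr) = \dim\bfX + c_{\cN(\fg^*)}(\bfX).$$
This realizes $c_{\cN(\fg^*)}(\bfX)$ as $\dim\bfS-\dim\bfX$, and is the easy half.

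Next I would identify $\dim\bfS-\dim\bfX$ with the Vinberg complexity $c(\bfX)$. The key observation is that $\bfS = \mu_{\mathbf{B}}^{-1}(0)$ for the moment map $\mu_{\mathbf{B}}: T^*\bfX\to\fb^*$ of the $\mathbf{B}$-action, because $\mu_{\mathbf{B}}$ is the composition of $\mu$ with the restriction $\fg^*\to\fb^*$ whose kernel is $\fb^{\bot}$. Thus $\mu_{\mathbf{B}}^{-1}(0)$ is the union of conormal bundles to $\mathbf{B}$-orbits in $\bfX$; stratifying $\bfX$ by $\mathbf{B}$-orbit dimension as in the proof of Lemma \ref{lem:0} and applying Rosenlicht to each stratum yields $\dim\bfS = \dim\bfX + \operatorname{mod}(\mathbf{B},\bfX)$. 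Finally, by a classical theorem of Vinberg, when $\bfG$ is reductive the modality of its Borel subgroup on any $\bfG$-variety coincides with the complexity in the sense of \cite{Vin}, giving $\operatorname{mod}(\mathbf{B},\bfX)=c(\bfX)$.

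The main obstacle is the appeal to the identity $\operatorname{mod}(\mathbf{B},\bfX)=c(\bfX)$, which is a nontrivial theorem specific to reductive $\bfG$; once this is granted, the proof reduces to the same orbit-by-orbit dimension count already used for Theorem \ref{Thm:Ri}. If one preferred to avoid citing the modality-equals-complexity theorem, one could instead argue directly that the open $\mathbf{B}$-orbit stratum contributes $\dim\bfX+c(\bfX)$ to $\dim\bfS$, and show that no non-generic stratum contributes more --- which is essentially an unpacking of Vinberg's argument.
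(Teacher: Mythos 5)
Your proof is correct and follows essentially the same route as the paper: the orbit-by-orbit dimension count via $\bfG$-equivariance of $\mu$ and Proposition \ref{prop:Piso} is identical, and your second half merely unpacks the paper's single citation of \cite[Theorem 2]{Vin} into the elementary conormal-bundle identity $\dim\bfS=\dim\bfX+\operatorname{mod}(\mathbf{B},\bfX)$ plus Vinberg's modality-equals-complexity theorem. No gaps.
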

\begin{proof}
Let $\bf B\sub G$ be a Borel subgroup, $\fb$ be the Lie algebra of $\bf B$, and  $\bfS:=\mu^{-1}(\fb^{\bot})$. Then by \cite[Theorem 2]{Vin} (see also \cite{Bri}), the complexity of $\bf X$ equals $\dim \bfS-\dim \bf X$.
Since $\fb^{\bot}$ lies in the nilpotent cone, we have $$\dim \bfS=\max_{\bfO\sub \cN(\fg^*)} \dim \mu^{-1}(\fb^{\bot}\cap \bfO).$$ Since the map $\mu$ is $\bfG$-equivariant, the fibers of all points in any orbit $\bfO$ are isomorphic. Thus
$$\dim \mu^{-1}(\fb^{\bot}\cap \bfO)=\dim\fb^{\bot}\cap \bfO+  \dim \mu^{-1}(\bfO)-\dim \bfO.$$
By Proposition \ref{prop:Piso} , for every $\bfO\sub \cN(\fg^*)$ we have
$\dim\fb^{\bot}\cap \bfO= \dim \bfO/2$. Thus
$$\dim \mu^{-1}(\fb^{\bot}\cap \bfO)=  \dim \mu^{-1}(\bfO)-\dim \bfO/2=c_{\bfO}(\bfX)+ \dim \bfX.$$
Thus
$$\dim \bfS-\dim \bf X=\max_{\bfO\sub \cN(\fg^*)} \dim \mu^{-1}(\fb^{\bot}\cap \bfO)-\dim \bfX=\max_{\bfO\sub \cN(\fg^*)}c_{\bfO}(\bfX)= c_{\cN(\fg^*)}(X).$$
\end{proof}

\begin{cor}\label{cor:AbsSpher}
If $\bfG$ is reductive then the following are equivalent:
\begin{enumerate}[(a)]
\item $\bfX$ is $\bfO$-spherical for every $\bfO$.
\item
 $\bfX$ is spherical.
 \end{enumerate}
\end{cor}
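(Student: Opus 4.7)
The plan is to read this equivalence off the preceding proposition with essentially no further work. That proposition identifies $c_{\cN(\fg^*)}(\bfX)$ with the classical complexity of $\bfX$ in the sense of \cite{Vin}, so by the standard characterisation of sphericity, $\bfX$ is spherical if and only if $c_{\cN(\fg^*)}(\bfX) = 0$.

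Next I would invoke the nonnegativity discussed just before the definition of the $\bfO$-complexity: for any closed $\bfG$-invariant $\Xi \subset \cN(\fg^*)$, one has $c_\Xi(\bfX) \geq c_{\{0\}}(\bfX) \geq 0$. In particular $c_{\cN(\fg^*)}(\bfX) \geq 0$, so the two conditions $c_{\cN(\fg^*)}(\bfX) \leq 0$ and $c_{\cN(\fg^*)}(\bfX) = 0$ are equivalent. Now by the very definition of $c_{\cN(\fg^*)}(\bfX)$ as a maximum, $c_{\cN(\fg^*)}(\bfX) \leq 0$ is literally the statement that $c_\bfO(\bfX) \leq 0$ for every nilpotent orbit $\bfO$, i.e., that $\bfX$ is $\bfO$-spherical for every $\bfO$.

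Chaining these equivalences: (a) holds iff $c_\bfO(\bfX) \leq 0$ for every $\bfO \subset \cN(\fg^*)$ iff $c_{\cN(\fg^*)}(\bfX) \leq 0$ iff $c_{\cN(\fg^*)}(\bfX) = 0$ iff the complexity of $\bfX$ vanishes iff $\bfX$ is spherical, which is (b). Since the entire content of the corollary is packaged inside the preceding proposition, the only potential subtlety—hence the closest thing to a ``main obstacle''—is making sure that one is using the right sign convention and that $c_{\{0\}}(\bfX) \geq 0$ is available; both are already in place.
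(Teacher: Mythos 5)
Your argument is correct and is exactly how the corollary follows in the paper: it is a direct consequence of the preceding proposition identifying the Vinberg complexity with $c_{\cN(\fg^*)}(\bfX)$, combined with the observation $c_{\cN(\fg^*)}(\bfX)\geq c_{\{0\}}(\bfX)\geq 0$ and the definition of $c_{\Xi}$ as a maximum over orbits. The paper states the corollary without further proof, so your chain of equivalences supplies precisely the intended reasoning.
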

}
A similar argument gives the following statement for Richardson orbits.

\begin{prop}\label{prop:RichDim}
Let $\bfO\subset \fg^*$ be a Richardson nilpotent orbit that lies in the image of $\mu$. \DimaE{Then  every component of $\mu^{-1}(\bfO)$ has dimension at least $\dim \bfX+\dim \bfO/2$. In particular, $c_{\bfO}(\bfX)\geq 0$.}
\end{prop}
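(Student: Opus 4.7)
Since $\bfO$ is Richardson, write $\bfO=\bfO_{\bfP}$ for a parabolic $\bfP\subset\bfG$. I plan to bound $\dim\mu^{-1}(\bfO)$ from below by studying the incidence variety
\[
S \;:=\; \bigl\{(x, g\bfP)\in T^*\bfX\times \bfG/\bfP \;:\; \mu(x)\in g\cdot\fp^{\bot}\bigr\}\;\cong\;\bfG\times^{\bfP}\mu^{-1}(\fp^{\bot})
\]
together with its two projections $p_{1}:S\to T^{*}\bfX$ and $p_{2}:S\to \bfG/\bfP$. The second projection realises $S$ as a fibre bundle over $\bfG/\bfP$ with fibre $T^{*}_{\bfP}\bfX:=\mu^{-1}(\fp^{\bot})$; since $\bfG/\bfP$ and $\bfP$ are connected, the irreducible components of $S$ are precisely the subvarieties $\bfG\times^{\bfP}D$, one for each component $D$ of $T^{*}_{\bfP}\bfX$, and each has dimension $\dim\bfG/\bfP+\dim D$.

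The geometric core of the argument is the estimate $\dim D\geq \dim\bfX$ for every irreducible component $D$ of $T^{*}_{\bfP}\bfX$. The point is that every $(x,\xi)\in T^{*}_{\bfP}\bfX=\mu^{-1}(\fp^{\bot})$ lies in the conormal bundle $CN^{\bfX}_{\bfP\cdot x}$ (by the very definition of the moment map) and this conormal bundle is a smooth subvariety of $T^{*}\bfX$ of dimension exactly $\dim\bfX$. At a smooth point $(x_{0},\xi_{0})$ of $D$, the tangent-space inclusion $T_{(x_{0},\xi_{0})}CN^{\bfX}_{\bfP\cdot x_{0}}\subseteq T_{(x_{0},\xi_{0})}D$ forces $\dim D\geq \dim\bfX$. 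The set-theoretic decomposition $T^{*}_{\bfP}\bfX=\bigcup_{\bfR}CN^{\bfX}_{\bfR}$ does not by itself identify components with closures of individual conormal bundles (when $\bfP$ has an infinite family of orbits on $\bfX$, a component is the closure of such a family), so the tangent-space argument is the key step. Consequently every component of $S$ has dimension at least $\dim\bfG/\bfP+\dim\bfX=\dim\bfO/2+\dim\bfX$.

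Finally I transfer this estimate to $\mu^{-1}(\bfO)$ through $p_{1}$. Let $S^{\circ}:=\{(x,g\bfP)\in S:\mu(x)\in \bfO\}$, an open subvariety of $S$. The hypothesis $\bfO\subseteq\mu(T^{*}\bfX)$ combined with the Richardson identity $\bfO=\bfG\cdot(\fp^{\bot}\cap\bfO)$ gives $p_{1}(S^{\circ})=\mu^{-1}(\bfO)$, and the fibre of $p_{1}$ over $x\in \mu^{-1}(\bfO)$ is the Springer fibre $\nu^{-1}_{\bfP}(\mu(x))$, which is $0$-dimensional because $\nu_{\bfP}$ is generically finite over $\bfO_{\bfP}$; thus $p_{1}|_{S^{\circ}}$ is quasi-finite and surjective. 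Given an irreducible component $C$ of $\mu^{-1}(\bfO)$, some component $S^{\circ}_{\alpha}$ of $S^{\circ}$ (open dense in a component $S_{\alpha}$ of $S$) satisfies $p_{1}(S^{\circ}_{\alpha})\subseteq C$ with $p_1(S^{\circ}_{\alpha})$ dense in $C$, so
\[
\dim C=\dim p_{1}(S^{\circ}_{\alpha})=\dim S^{\circ}_{\alpha}=\dim S_{\alpha}\geq \dim\bfO/2+\dim\bfX,
\]
from which the main inequality follows and $c_{\bfO}(\bfX)\geq 0$ is immediate. The main obstacle is the geometric lemma of paragraph two; the bookkeeping for transferring dimensions through $p_{1}$ is routine given that the Springer fibre over a Richardson point is finite.
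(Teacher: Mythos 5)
Your proof is correct and rests on the same key lemma as the paper's -- every irreducible component of $\mu^{-1}(\fp^{\bot})$ has dimension at least $\dim\bfX$ -- but the transfer of this bound to $\mu^{-1}(\bfO)$ is packaged differently. The paper stays inside $T^*\bfX$: it restricts to the open subset $\mu^{-1}(\bfO\cap\fp^{\bot})$, applies the fibre-dimension theorem over $\bfO\cap\fp^{\bot}$, uses $\bfG$-equivariance to see that all fibres over points of $\bfO$ have the same dimension, and finally invokes Proposition \ref{prop:Piso} to get $\dim(\bfO\cap\fp^{\bot})\le\dim\bfO/2$ (which for $\bfO=\bfO_{\bfP}$ is just the equality $\dim\fp^{\bot}=\dim\bfO/2$). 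You instead sweep $\mu^{-1}(\fp^{\bot})$ around by $\bfG$ to form the incidence variety $\bfG\times^{\bfP}\mu^{-1}(\fp^{\bot})$ and use quasi-finiteness of $T^*(\bfG/\bfP)\to\overline{\bfO_{\bfP}}$ over the open orbit; this replaces the isotropicity input by the equivalent dimension identity $\dim\bfO_{\bfP}=2\dim\bfG/\bfP$, and avoids any fibre-dimension inequality -- only the fact that a dominant quasi-finite map preserves dimension is needed. Both routes are sound. One small precision to add to your key lemma: a smooth point $(x_0,\xi_0)$ of the component $D$ may still lie on another component of $\mu^{-1}(\fp^{\bot})$, in which case the irreducible set $CN^{\bfX}_{\bfP\cdot x_0}$ could sit inside that other component rather than in $D$, and the tangent-space inclusion would fail; choose $(x_0,\xi_0)$ generic in $D$ (outside the remaining components), which forces $CN^{\bfX}_{\bfP\cdot x_0}\subseteq D$ and makes the argument complete.
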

\begin{proof}
Let $\bfP\subset \bfG$ be a parabolic subgroup such that $\bfO=\bfO_\bfP$.
As in the previous theorem, let  $\bfS:=\mu^{-1}(\fp^{\bot})\sub T^*\bfX$. Then $\bfS$ is  a union of conormal bundles to $\bfP$-orbits on $\bfX$ and thus the dimension of every irreducible component of $\bfS$ is at least $\dim \bfX$.
\DimaE{
 Let $\bfS^0:=\mu^{-1}(\bfO\cap \fp^{\bot})$. Since $\bf O=O_P$, the intersection $\bfO\cap \fp^{\bot}$ is open in $\fp^{\bot}$. Thus $\bfS^0$ is open in $\bfS$ and thus the dimension of every irreducible component of $\bfS^{0}$ is at least $\dim \bfX$.
 Denote the irreducible components of $\bfS^0$ by $\bfY_i$. Let $\varphi\in \bfO\cap \fp^{\bot}$, and let $\bfY_{ij}$ be the irreducible components of $\mu^{-1}(\{\varphi\})\cap \bfY_i$. We have
$\mu^{-1}(\{\varphi\})=\bigcup_{ij}\bfY_{ij}$. By \cite[Theorem I.8.2]{Mum}, $$\dim \bfY_{ij}\geq \dim \bfY_i-\dim \bfO\cap \fp^{\bot}\geq \dim \bfX - \dim\bfO\cap \fp^{\bot}.$$
Thus every component of $\mu^{-1}(\{\varphi\})$ has dimension at least $\dim \bfX- \dim\bfO\cap \fp^{\bot}.$
Since the fibers under $\mu$ of all points in $\bfO$ are isomorphic, every irreducible component of $\mu^{-1}(\bfO)$ has dimension at least $$\dim \bfX- \dim\bfO\cap \fp^{\bot}+\dim \bfO.$$

 \DimaB{By Proposition \ref{prop:Piso} we have  $\dim \bfO\cap \fp^{\bot}\leq\dim \bfO/2.$}
Altogether, we \DimaC{get that every irreducible component of $\mu^{-1}(\bfO)$ has dimension at least
$$\dim \bfX- \dim\bfO\cap \fp^{\bot}+\dim \bfO \geq \dim \bfX+\dim \bfO/2.$$}}
\end{proof}

\DimaF{Let us now derive the following corollary, that is equivalent to Theorem \ref{IntThm:Rich}.}
\DimaB{
\begin{cor}\label{cor:Rich}
Let $\bf P\sub G$ be a parabolic subgroup, and let $\bf O\sub \overline{O_{P}}$ be a Richardson orbit. \DimaE{Then the intersection $\bfO\cap \fp^{\bot}$ is Lagrangian in $\bfO$, and in particular $c_{\bf O}(\bf G/P)=0$.}
\end{cor}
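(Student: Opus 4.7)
The plan is to combine the two directions of the dimension estimate for $\bfO \cap \fp^{\bot}$: an upper bound from isotropy, and a lower bound from the Richardson hypothesis.

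First, note that the image of the moment map $\mu: T^*(\bfG/\bfP) \to \fg^*$ equals $\overline{\bfO_\bfP}$, which contains $\bfO$ by hypothesis. In particular, picking any $\varphi \in \bfO$, writing $\varphi = g \cdot \psi$ with $\psi \in \fp^{\bot}$, and using $\bfG$-invariance of $\bfO$, we conclude $\bfO \cap \fp^{\bot} \neq \emptyset$, so $\bfO$ lies in the image of $\mu$ in the sense required by Proposition \ref{prop:RichDim}.

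Next, I would obtain the two bounds. The upper bound is immediate from Proposition \ref{prop:Piso}: since $\bfO \subset \overline{\bfO_\bfP}$, the intersection $\bfO \cap \fp^{\bot}$ is isotropic in $\bfO$ with respect to the Kirillov--Kostant--Souriau form, so
$$\dim(\bfO \cap \fp^{\bot}) \le \tfrac{1}{2} \dim \bfO.$$
For the lower bound, I would apply Proposition \ref{prop:RichDim} to the Richardson orbit $\bfO$ and the variety $\bfX = \bfG/\bfP$, which gives
$$\dim \mu^{-1}(\bfO) \ge \dim(\bfG/\bfP) + \tfrac{1}{2} \dim \bfO.$$
Using Lemma \ref{lem:inO} with $\bfH = \bfP$, the left-hand side equals $\dim(\bfG/\bfP) + \dim(\bfO \cap \fp^{\bot})$, so canceling $\dim(\bfG/\bfP)$ yields
$$\dim(\bfO \cap \fp^{\bot}) \ge \tfrac{1}{2}\dim \bfO.$$

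Combining the two inequalities gives $\dim(\bfO \cap \fp^{\bot}) = \tfrac{1}{2}\dim \bfO$. Together with the isotropy from Proposition \ref{prop:Piso}, this shows that $\bfO \cap \fp^{\bot}$ is Lagrangian in $\bfO$. The vanishing $c_{\bfO}(\bfG/\bfP) = 0$ is then immediate from Corollary \ref{cor:invO}, since $c_{\bfO}(\bfG/\bfP) = \dim(\bfO \cap \fp^{\bot}) - \tfrac{1}{2}\dim \bfO = 0$.

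There is essentially no obstacle: the corollary is a direct synthesis of the isotropy result (Proposition \ref{prop:Piso}), the fiber dimension formula for homogeneous spaces (Lemma \ref{lem:inO} / Corollary \ref{cor:invO}), and the dimension lower bound for preimages of Richardson orbits (Proposition \ref{prop:RichDim}). The only subtlety worth flagging is verifying that $\bfO$ lies in the image of $\mu$, which is automatic from the hypothesis $\bfO \subset \overline{\bfO_\bfP} = \Im(\mu)$ and the $\bfG$-invariance of $\bfO$.
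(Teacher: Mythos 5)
Your argument is essentially the paper's own proof: isotropy of $\bfO\cap\fp^{\bot}$ from Proposition \ref{prop:Piso} for the upper bound, and Proposition \ref{prop:RichDim} applied to $\bfX=\bfG/\bfP$ combined with Lemma \ref{lem:inO} for the lower bound. One point needs tightening at the end: ``isotropic and of total dimension $\dim\bfO/2$'' does not by itself imply Lagrangian in the sense defined in the paper (coisotropic at every smooth point), since $\bfO\cap\fp^{\bot}$ could a priori have an irreducible component of dimension strictly less than $\dim\bfO/2$, at whose smooth points the coisotropy condition would fail. What you actually need, and what the paper states, is that \emph{every irreducible component} of $\bfO\cap\fp^{\bot}$ has dimension at least $\dim\bfO/2$. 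This does follow from the very ingredients you cite, since Proposition \ref{prop:RichDim} bounds the dimension of every component of $\mu^{-1}(\bfO)$ from below, and Lemma \ref{lem:inO} is a statement about local dimensions at every point of $\bfO\cap\fp^{\bot}$, with the image of $\alpha$ meeting every component of $\mu^{-1}(\bfO)$; so you should run your cancellation componentwise rather than for the global dimension.
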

\begin{proof}
\DimaE{
By Proposition \ref{prop:Piso}, the intersection $\bfO\cap \fp^{\bot}$ is isotropic. Thus, it is enough to prove that the dimension of every component of $\bfO\cap \fp^{\bot}$ is at least $\dim \bfO/2$.
By Lemma \ref{lem:inO}, this statement follows from  Proposition \ref{prop:RichDim} applied to $\bf X':=G/P$.
}
\end{proof}

The corollary is especially useful for $\bfG$ of type A, since in this type all nilpotent orbits are Richardson. The assumption that $\bfO$ is Richardson cannot be omitted in the corollary, and thus also not in  Proposition \ref{prop:RichDim}, as the following example shows.
\begin{example}[D.I. Panyushev]\label{ex:small}
Let $\bf G:=\Sp_{2n}$, and let $\bf P\sub G$ be the stabilizer of a line. Let $\bfO:=\bfO_{\min}\subset \fg^*$ denote the minimal orbit, which under the trace form is identified with rank one matrices. Then $\dim \bfO_{\min}\cap \fp^{\bot}=1$, while $\dim \bfO_{\min}=2n$. \DimaC{Thus $c_{\bfO_{\min}}(\bf G/P)<0$ for $n>1$.}
\end{example}
}

\begin{lem}\label{lem:NonRed}
The subgroup $\bf H \sub G$ is $\bfO$-spherical if and only if the subgroup $\bf HU_G\sub G$ is $\bfO$-spherical.
\end{lem}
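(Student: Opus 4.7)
The plan is to reduce everything to the intrinsic criterion given by Corollary \ref{cor:invO}, which says that a homogeneous space $\bfG/\bfH'$ is $\bfO$-spherical if and only if $\dim \bfO \cap (\fh')^{\bot} \leq \dim \bfO/2$. So it suffices to show that the intersections $\bfO \cap \fh^{\bot}$ and $\bfO \cap (\fh + \fu_{\fg})^{\bot}$ coincide (in fact are equal as varieties).

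First I would verify that $\bfH\bfU_\bfG$ is indeed an algebraic subgroup of $\bfG$ with Lie algebra $\fh + \fu_{\fg}$. This is standard: since $\bfU_\bfG$ is normal in $\bfG$, the product $\bfH \bfU_\bfG$ is a subgroup, and its Lie algebra is the sum of the Lie algebras. Consequently its annihilator in $\fg^*$ equals
$$(\fh + \fu_{\fg})^{\bot} = \fh^{\bot} \cap \fu_{\fg}^{\bot}.$$

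Next, I would use the key observation that since $\bfO \subset \cN(\fg^*)$ is a nilpotent orbit, by the paper's definition of nilpotency every element of $\bfO$ lies in $\fu_{\fg}^{\bot}$. Hence
$$\bfO \cap \fh^{\bot} = \bfO \cap \fh^{\bot} \cap \fu_{\fg}^{\bot} = \bfO \cap (\fh + \fu_{\fg})^{\bot}.$$
In particular these two varieties have the same dimension, and the $\bfO$-sphericity criterion of Corollary \ref{cor:invO} for $\bfG/\bfH$ is literally the same inequality as the one for $\bfG/\bfH\bfU_\bfG$. This gives the lemma.

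There is no real obstacle here; the proof is essentially a one-line application of Corollary \ref{cor:invO} combined with the fact that nilpotent functionals annihilate the unipotent radical. The only minor point to be careful about is that one works with the Lie algebra sum $\fh + \fu_{\fg}$ rather than a direct sum (since $\fh$ may already contain part of $\fu_{\fg}$), but this does not affect the annihilator computation.
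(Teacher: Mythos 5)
Your proof is correct and follows exactly the paper's own argument: both reduce to the dimension criterion of Corollary \ref{cor:invO} and use $\bfO\sub\fu_{\fg}^{\bot}$ to identify $\bfO\cap\fh^{\bot}$ with $\bfO\cap(\fh+\fu_{\fg})^{\bot}$. Nothing to add.
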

\begin{proof}
By \DimaE{Corollary \ref{cor:invO}} and using the fact that $\bfO\sub \fu_{\fg}^{\bot}$ we have
\begin{multline*}
{\bf H\sub G} \text{ is }\bfO\text{-spherical } \iff \dim \bfO\cap \fh^{\bot}\leq \dim \bfO/2\\\iff  \dim \bfO\cap (\fh+\fu)^{\bot}\leq \dim \bfO/2\iff {\bf HU_G\sub G} \text{ is }\bfO\text{-spherical.}
\end{multline*}
\end{proof}

\begin{lem}\label{lem:indSpher}
Let $\Xi\sub \fr_{\fg}^*$ be a closed conical subset, and let $\bf X$ be a $\Xi$-spherical algebraic $\bf R_G$-manifold. Embed $\bf R_G$ into $\bf G$ and consider the induced $\bf G$-manifold $\bf G\times_{\bf R_G}X$.
Then $\bf G\times_{\bf R_G}X$ is also $\Xi$-spherical.
\end{lem}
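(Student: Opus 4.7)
My plan is to realize $\bfY := \bfG \times_{\bfR_G} \bfX$ explicitly using the Levi decomposition $\bfG = \bfU_G \rtimes \bfR_G$, compute the moment map $\mu_Y$ of $T^*\bfY$, and show that every fiber $\mu_Y^{-1}(\bfO)$ is set-theoretically a trivial $\bfU_G$-bundle over $\mu_X^{-1}(\bfO)$; the dimension estimate for $\bfX$ will then transfer to $\bfY$.

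First I would identify $\bfY \cong \bfU_G \times \bfX$ as a variety, using that $\bfG = \bfU_G \cdot \bfR_G$ as a variety, so that $\bfG/\bfR_G \cong \bfU_G$. Then, viewing $q \colon \bfG \times \bfX \to \bfY$ as a principal $\bfR_G$-bundle (with $\bfR_G$ acting by right multiplication on $\bfG$ and by the given action on $\bfX$), the space $T^*\bfY$ is obtained as the symplectic reduction of $T^*\bfG \times T^*\bfX$ by the diagonal $\bfR_G$-action. Using the slice $g \in \bfU_G$ and the left-trivialization $T^*\bfG \cong \bfG \times \fg^*$, one arrives at
\[
T^*\bfY \;\cong\; \bigl\{(u,\alpha,x,\beta) \in \bfU_G \times \fg^* \times T^*\bfX \,:\, \alpha|_{\fr_\fg} = \mu_X(x,\beta)\bigr\},
\]
with the $\bfG$-moment map given by $\mu_Y(u,\alpha,x,\beta) = \Ad^*(u)\alpha$.

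The key observation will then be that $\bfU_G$ acts trivially on $\fu_\fg^{\bot}$, and that $\Ad^*(u)\alpha$ lies in $\fu_\fg^{\bot}$ if and only if $\alpha$ does; both facts follow at once from $\Ad(\bfU_G)$ preserving $\fu_\fg$ and acting as the identity on $\fg/\fu_\fg$. Consequently, the constraint $\mu_Y(\cdot) \in \bfO \subset \fu_\fg^{\bot}$ forces $\alpha \in \fu_\fg^{\bot}$, and in that case $\mu_Y = \alpha$. Combined with the relation $\alpha|_{\fr_\fg} = \mu_X(x,\beta)$ and the canonical restriction isomorphism $\fu_\fg^{\bot} \cong \fr_\fg^*$, the element $\alpha$ is then uniquely determined by $(x,\beta)$, yielding a set-theoretic bijection $\mu_Y^{-1}(\bfO) \cong \bfU_G \times \mu_X^{-1}(\bfO)$.

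Taking dimensions and invoking the $\bfO$-sphericity of $\bfX$ gives
\[
\dim \mu_Y^{-1}(\bfO) = \dim \bfU_G + \dim \mu_X^{-1}(\bfO) \le \dim \bfU_G + \dim \bfX + \tfrac{1}{2}\dim \bfO = \dim \bfY + \tfrac{1}{2}\dim \bfO,
\]
which is exactly $\bfO$-sphericity for $\bfY$; letting $\bfO$ range over the orbits in $\Xi$ completes the proof. The main technical obstacle I foresee is the first step, namely nailing down the explicit description of $T^*\bfY$ together with the formula for $\mu_Y$, which requires care with the conventions for the left-trivialization of $T^*\bfG$ and with the signs in the symplectic reduction. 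Once those are fixed, everything else is a direct dimension count.
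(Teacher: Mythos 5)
Your proof is correct and follows essentially the same route as the paper: both arguments reduce to the observation that, over an orbit $\bfO\sub\fu_{\fg}^{\bot}$, the fiber of the moment map of $\bf G\times_{\bf R_G}X$ is a $\bfU_{\bf G}$-bundle over the corresponding fiber of $\mu_X$, giving $\dim \mu_{\bf G\times_{\bf R_G}\bfX}^{-1}(\bfO)=\dim \bfU_{\bf G}+\dim\mu_X^{-1}(\bfO)$ and hence the desired inequality. The paper states this fiber identification in one line without the explicit symplectic-reduction model of $T^*(\bf G\times_{\bf R_G}X)$ that you work out, but the substance is identical.
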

\begin{proof}
For any $\varphi\in \fu^{\bot}_{\fg}\sub \fg^*$, the fiber of $\varphi$ under the moment map $\mu_{\bf G\times_{\bf R_G}X}$ is isomorphic to $\bf G\times_{\bf R_G}\mu_X^{-1}(\varphi)$. Thus
\DimaG{for any orbit $\bfO\sub \Xi$ we have
 $$\bf \dim \mu_{\bf G\times_{\bf R_G}X}^{-1}(\bfO)=\dim U_G + \dim \mu_{R_G}^{-1}(\bfO)\leq \dim U_G+\dim X+\dim \bfO/2=\dim G\times_{\bf R_G}X+\dim \bfO/2$$
}
\end{proof}
\DimaH{
\subsection{Symmetric spaces}\label{subsec:GeoYam}
We have $\cK_G\cong G/K$, where $K\sub G$ is a maximal compact subgroup. Furthermore, $K$ is the group of real points of some symmetric subgroup $\bf K\sub G$ defined over $\R$. Thus $\cK_G$ is a connected component of the set of real points of $\bf Y:=G/K$.

\begin{proof}[Proof of Proposition \ref{intprop:GeoYam}]
If $\Xi=\{0\}$ the statement is obvious. Otherwise let $\bfO\sub \Xi$ be a non-zero $\bfG$-orbit. Let $ \fk$ denote the Lie algebra of $\bf K$. 
Let ${\bf S_O}\sub T^*({\bf G/H})$ be the preimage $${\bf S_O}:=\mu^{-1}(\bfO\cap \fk^{\bot}).$$ 
Let $p_{\bf G/H}:T^*{\bf G/H}\to {\bf G/H}$ denote the natural projection. Let us first show that $p_{\bf G/H}({\bf S_O})$ is not dense in $\bf G/H$. Indeed, by the assumption we have
$$\dim \mu^{-1}(\bfO)\leq \dim {\bf G/H}+\dim \bfO/2$$
Since $\mu$ is a $\bfG$-equivariant map, the fibers of all the points in $\bfO$ are isomorphic, and thus have the same dimension. This dimension is thus
$$\dim \mu^{-1}(\bfO)-\dim \bfO\leq \dim {\bf G/H}+\dim \bfO/2-\dim \bfO=\dim {\bf G/H}-\dim \bfO/2$$
By \cite[Proposition 3.1.1]{Gin}, $\bfO \cap \fk^{\bot}$ is either empty, or has dimension $\dim \bfO/2$.
Thus, $\bf S_O$ is either empty, or has dimension 
$$\dim {\bf S_O}=\dim \mu^{-1}(\bfO\cap \fk^{\bot})=\dim \mu^{-1}(\bfO)-\dim \bfO+\dim \bfO/2\leq \dim {\bf G/H}.$$
Thus, if the restriction of projection $p_{\bf G/H}$ to $\bf S_O$ is dominant then its generic fiber is finite. However, the fibers are conical and non-zero, and thus cannot be finite. Thus, the projection is not dominant and $p_{\bf G/H}({\bf S_O})$ is not dense, for any non-zero orbit $\bfO\sub \Xi$.

Now, for every $g\in \bfG$, we have $\mu(p_{\bf G/H}^{-1}(g\bfH))=g\cdot\fh^{\bot}$. Thus, if $g\bfH$ does not lie in the image of $p_{\bf G/H}(\bf S_O)$ then $g\cdot\fh^{\bot}\cap\bfO\cap \fk^{\bot}=\emptyset$, and thus $\fh^{\bot}\cap\bfO\cap g^{-1}\cdot\fk^{\bot}=\emptyset$. Let $\bf U'\sub G/H$ denote the complement to the union of the closures of all $p_{\bf G/H}({\bf S_O})$ for all non-zero orbits $\bfO\sub \Xi$, and let $\bf U''\sub G$ be the image of the preimage of $\bf U'$ in $\bfG$ under the inversion map $g\mapsto g^{-1}$. Then for every $g\in {\bf U''}$ and every non-zero $\bfO\sub \Xi$ we have $\fh^{\bot}\cap\bfO\cap g\cdot\fk^{\bot}=\emptyset$.
Thus for every $g\in {\bf U''}$ we have $\fh^{\bot}\cap\Xi\cap g\cdot\fk^{\bot}=\{0\}$.
Thus we take $\bf U:=U''/K\sub Y$, and $U$ to be the intersection of $\cK_G$\ with the set of real points of $\bf U$.
\end{proof}
}
\subsection{Branching problems}
Let $\bf H\subset G$ be an algebraic  subgroup, and consider the subgroup $\bf \Delta H \subset G\times H$. Let $\bfO_1\subset \fg^*$ and $\bfO_2\subset \fh^*$ be nilpotent orbits.
In this subsection we \DimaC{study the $\overline{\bfO_1}\times\overline{\bfO_2}$- complexity of the  $\bf G\times H$-space
$\bf G\times H/\Delta H \cong G$.

Theorem \ref{IntThm:Ri} gives the following immediate corollary.
\begin{cor}\label{cor:RichBran}
If $\bfO_1=\bfO_\bfP$ for some parabolic subgroup $\bf P\subset G$  and $\bfO_2=\bfO_{\bfQ}$ for some parabolic subgroup $\bf Q\subset H$ then the following are equivalent
\begin{enumerate}[(i)]
\item $\bf G$ is $\overline{\bfO_1}\times\overline{\bfO_2}$-spherical.
\item The set of double cosets $\bf P\backslash G /Q$ is finite.
\item $\bf G/P$ is $\overline{\bfO_2}$-spherical as an $\bf H$-space.
\item $\bf G/Q$ is $\overline{\bfO_1}$-spherical as a $\bf G$-space.
\end{enumerate}
\end{cor}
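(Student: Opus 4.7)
The plan is to deduce the corollary from Theorem \ref{IntThm:Ri} by three separate applications, one for each equivalence.

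For (i)$\Leftrightarrow$(ii), I view $\bfG$ as a $(\bfG\times \bfH)$-variety via the action $(g,h)\cdot x := g x h^{-1}$, which identifies $\bfG$ with $(\bfG\times \bfH)/\Delta\bfH$. The subgroup $\bfP\times\bfQ\subset \bfG\times \bfH$ is parabolic (its image in the reductive quotient $\bfR_\bfG\times \bfR_\bfH$ is a product of parabolics), and $(\fp\oplus \fq)^\bot=\fp^\bot\oplus \fq^\bot$; combined with $\overline{\bfO_\bfP}=\bfG\cdot \fp^\bot$ and its analogue for $\bfQ$, this gives $(\bfG\times \bfH)\cdot(\fp^\bot\oplus \fq^\bot)=\overline{\bfO_\bfP}\times\overline{\bfO_\bfQ}$. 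Hence the Richardson orbit of $\bfP\times \bfQ$ is $\bfO_\bfP\times \bfO_\bfQ=\bfO_1\times\bfO_2$, with closure $\overline{\bfO_1}\times\overline{\bfO_2}$. Theorem \ref{IntThm:Ri} then asserts that $\bfG$ is $(\overline{\bfO_1}\times\overline{\bfO_2})$-spherical iff $\bfP\times \bfQ$ has finitely many orbits on $\bfG$; and the $(\bfP\times \bfQ)$-orbits under this action are visibly the double cosets $\bfP\backslash\bfG/\bfQ$.

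For (ii)$\Leftrightarrow$(iii), I apply Theorem \ref{IntThm:Ri} to the $\bfH$-variety $\bfG/\bfP$ with the parabolic $\bfQ\subset \bfH$, whose Richardson orbit in $\fh^*$ is $\bfO_2$. The theorem yields: $\bfG/\bfP$ is $\overline{\bfO_2}$-spherical as an $\bfH$-space iff $\bfQ$ has finitely many orbits on $\bfG/\bfP$. These orbits biject with $\bfQ\backslash\bfG/\bfP$, which under $g\mapsto g^{-1}$ biject with $\bfP\backslash\bfG/\bfQ$. The equivalence (ii)$\Leftrightarrow$(iv) follows by the symmetric argument: apply Theorem \ref{IntThm:Ri} to the $\bfG$-variety $\bfG/\bfQ$ (note that $\bfQ\subset \bfH\subset \bfG$ is an algebraic subgroup of $\bfG$) with the parabolic $\bfP\subset \bfG$ whose Richardson orbit is $\bfO_1$, and observe that $\bfP$-orbits on $\bfG/\bfQ$ are precisely $\bfP\backslash\bfG/\bfQ$.

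I do not anticipate a serious obstacle. The only verification that needs any care beyond the invocation of Theorem \ref{IntThm:Ri} is the identification of the Richardson orbit of a product parabolic $\bfP\times\bfQ$; this is however immediate from the description $\overline{\bfO_\bfP}=\bfG\cdot \fp^\bot$ recorded in \S\ref{subsec:GeoPrel}, together with the corresponding fact for $\bfQ$ inside $\bfH$.
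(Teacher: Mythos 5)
Your proposal is correct and matches the paper's intent: the paper presents this as an immediate consequence of Theorem \ref{IntThm:Ri}, and your three applications of that theorem (to $\bfG\times\bfH/\Delta\bfH$ with the parabolic $\bfP\times\bfQ$, to the $\bfH$-space $\bfG/\bfP$ with $\bfQ$, and to the $\bfG$-space $\bfG/\bfQ$ with $\bfP$) are exactly the intended details, including the correct identification of the Richardson orbit of $\bfP\times\bfQ$ with $\bfO_\bfP\times\bfO_\bfQ$.
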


We would now like to obtain a version of this corollary in the case when only one of the orbits is Richardson.

\begin{prop}\label{prop:BranchComp}$\,$
\begin{enumerate}[(i)]
\item \label{it:cO1}If $\bfO_1=\bfO_{\bf P}$ for some parabolic subgroup $\bf P\subset G$ then
$$c_{\overline{\bfO_2}}({\bf G/P})\leq c_{\overline{\bfO_1}\times\overline{\bfO_2}}(\bfG)\leq c_{\overline{\bfO_2}}({\bf G/P})-\min_{\bfO_1'\sub \overline{\bfO_1}}c_{\bfO_1'}({\bf G/P})$$
\item \label{it:cO2} If $\bfO_2=\bfO_{\bf Q}$ for some parabolic subgroup $\bf Q\subset H$ then
$$c_{\overline{\bfO_1}}({\bf G/Q})\leq c_{\overline{\bfO_1}\times\overline{\bfO_2}}(\bfG)\leq c_{\overline{\bfO_1}}({\bf G/Q})-\min_{\bfO_2'\sub \overline{\bfO_2}}c_{\bfO_2'}({\bf H/Q})$$
\end{enumerate}
\end{prop}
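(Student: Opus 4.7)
My plan is to prove part \eqref{it:cO1} and deduce part \eqref{it:cO2} by symmetry. The strategy is to express both $c_{\overline{\bfO_1}\times\overline{\bfO_2}}(\bfG)$ and $c_{\overline{\bfO_2}}(\bfG/\bfP)$ through the common quantities $\dim(\bfO_1'\cap p_\fh^{-1}(\bfO_2'))$, where $p_\fh:\fg^*\to\fh^*$ is the restriction map and $\bfO_1'\sub\overline{\bfO_1}$, $\bfO_2'\sub\overline{\bfO_2}$ range over suborbits. First I realize $\bfG\cong(\bfG\times\bfH)/\Delta\bfH$ as a $\bfG\times\bfH$-space and apply Corollary \ref{cor:invO}. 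Since $(\Delta\fh)^{\bot}=\{(\xi,\eta)\in\fg^*\oplus\fh^*\mid \xi|_\fh+\eta=0\}$, the projection on the first factor identifies $(\bfO_1'\times\bfO_2')\cap(\Delta\fh)^{\bot}$ with $\bfO_1'\cap p_\fh^{-1}(-\bfO_2')$, yielding (up to a sign convention on $\bfO_2'$ that is harmless after maximizing over $\bfO_2'\sub\overline{\bfO_2}$)
\[
c_{\bfO_1'\times\bfO_2'}(\bfG)=\dim(\bfO_1'\cap p_\fh^{-1}(\bfO_2'))-\tfrac12(\dim\bfO_1'+\dim\bfO_2').
\]

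Next I turn to $\bfG/\bfP$ viewed as an $\bfH$-space: its moment map factors as $p_\fh\circ\nu_\bfP$, so its preimage of $\bfO_2'$ is $\nu_\bfP^{-1}(p_\fh^{-1}(\bfO_2'))$. The image of $\nu_\bfP$ is $\overline{\bfO_1}$; stratify it by $\bfG$-orbits $\bfO_1'$ and use $\bfG$-equivariance to conclude that every fiber of $\nu_\bfP$ over $\bfO_1'$ has the same dimension $d(\bfO_1')$. Specializing Corollary \ref{cor:invO} to $\bfG/\bfP$ as a $\bfG$-space gives
\[
d(\bfO_1')=c^{\bfG}_{\bfO_1'}(\bfG/\bfP)+\dim\bfG/\bfP-\tfrac12\dim\bfO_1'.
\]
Combining these ingredients and maximizing over the stratification produces the key identity
\begin{equation}\label{eqn:planmain}
c_{\overline{\bfO_2}}(\bfG/\bfP)=\max_{\bfO_1'\sub\overline{\bfO_1},\,\bfO_2'\sub\overline{\bfO_2}}\bigl[c_{\bfO_1'\times\bfO_2'}(\bfG)+c^{\bfG}_{\bfO_1'}(\bfG/\bfP)\bigr].
\end{equation}

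The crucial remaining input is that $c^{\bfG}_{\bfO_1'}(\bfG/\bfP)\le 0$ for every $\bfO_1'\sub\overline{\bfO_\bfP}$: by Proposition \ref{prop:Piso}, $\bfO_1'\cap\fp^{\bot}$ is isotropic in $\bfO_1'$ and hence of dimension at most $\dim\bfO_1'/2$, which via Corollary \ref{cor:invO} forces non-positivity. Dropping this non-positive second summand in \eqref{eqn:planmain} yields the lower inequality $c_{\overline{\bfO_2}}(\bfG/\bfP)\le c_{\overline{\bfO_1}\times\overline{\bfO_2}}(\bfG)$; the upper inequality is the elementary estimate $\max(F+G)\ge\max F+\min G$ applied to $F=c_{\bfO_1'\times\bfO_2'}(\bfG)$ and $G=c^{\bfG}_{\bfO_1'}(\bfG/\bfP)$, followed by rearrangement. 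I expect the main technical challenge to be the fiber-dimension bookkeeping in the second paragraph: one must correctly handle the equidimensional (but generally not locally trivial) fibers of $\nu_\bfP$ over boundary suborbits $\bfO_1'\subsetneq\bfO_\bfP$, which are controlled by the complexities $c^{\bfG}_{\bfO_1'}(\bfG/\bfP)$ rather than by any single generic fiber dimension. Part \eqref{it:cO2} then follows by the same argument after swapping the roles of $(\bfG,\bfO_1,\bfP)$ and $(\bfH,\bfO_2,\bfQ)$.
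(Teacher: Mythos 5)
Your part (i) is correct and is essentially the paper's own argument: your key identity $c_{\overline{\bfO_2}}({\bf G/P})=\max_{\bfO_1',\bfO_2'}\bigl[c_{\bfO_1'\times\bfO_2'}(\bfG)+c_{\bfO_1'}({\bf G/P})\bigr]$ is exactly the paper's relation $\de(\bfO_1',\bfO_2')=c_{\bfO_1'\times\bfO_2'}(\bfG)+c_{\bfO_1'}({\bf G/P})$ combined with $c_{\bfO_2'}({\bf G/P})=\max_{\bfO_1'}\de(\bfO_1',\bfO_2')$; the paper derives it via an incidence variety ${\bf S_P}\sub \bfG\times\Upsilon(\bfO_1',\bfO_2')$ with two projections, while you count fiber dimensions of $\nu_{\bf P}$ over the $\bfG$-orbits of $\overline{\bfO_1}$ directly, but this is the same bookkeeping. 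The non-positivity of $c_{\bfO_1'}({\bf G/P})$ via Proposition \ref{prop:Piso} and the $\max(F+G)\geq \max F+\min G$ step also match the paper.

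The gap is part (ii): it does not follow from part (i) by ``swapping the roles of $(\bfG,\bfO_1,\bfP)$ and $(\bfH,\bfO_2,\bfQ)$.'' The situation is not symmetric, because $\bfQ$ sits inside $\bfH\sub\bfG$ and the statement of (ii) concerns $\bfG/\bfQ$ as a $\bfG$-space, not $\bfH/\bfQ$; the formal swap produces a statement about a space on which $\bfG$ does not act, and there is no restriction map $\fh^*\to\fg^*$ to play the role of $p_{\fh}$. Concretely, your mechanism for (i) --- factor the $\bfH$-moment map of ${\bf G/P}$ as $p_{\fh}\circ\nu_{\bf P}$ and stratify $\Im\nu_{\bf P}=\overline{\bfO_1}$ by finitely many $\bfG$-orbits with equidimensional fibers --- has no mirror image: the $\bfG$-moment map of $T^*({\bf G/Q})$ has image $\bfG\cdot\fq^{\bot}$, which is neither contained in $\cN(\fg^*)$ nor a finite union of orbits, and it does not factor through $\fh^*$. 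What does work (and is what the paper does) is a parallel but separate computation: since ${\bf G/Q}$ is $\bfG$-homogeneous, Corollary \ref{cor:invO} gives $c_{\bfO_1'}({\bf G/Q})=\dim(\bfO_1'\cap\fq^{\bot})-\dim\bfO_1'/2$; one then stratifies $\bfO_1'\cap\fq^{\bot}$ according to the $\bfH$-orbit $\bfO_2'$ of $\varphi|_{\fh}$ (using $\fq_{\fh}^{\bot}\sub\overline{\bfO_2}$), and identifies each stratum with the preimage of $\bfO_2'\cap\fq_{\fh}^{\bot}$ under the $\bfH$-equivariant, hence fiber-equidimensional, projection $\Upsilon(\bfO_1',\bfO_2')\to\bfO_2'$. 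This yields $c_{\bfO_1'}({\bf G/Q})=\max_{\bfO_2'}\bigl[c_{\bfO_1'\times\bfO_2'}(\bfG)+c_{\bfO_2'}({\bf H/Q})\bigr]$, after which the max/min manipulation proceeds as in (i). In short, (i) exploits the injective projection of $\Upsilon(\bfO_1',\bfO_2')$ to the first factor and (ii) the non-injective projection to the second; they require separate, though parallel, arguments.
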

\begin{proof}
For any two nilpotent orbits $\bfO'_1\susbet \overline{\bfO_1}$ and $\bfO'_2\subset \overline{\bfO_2}$, and denote $$\Upsilon(\bfO_1',\bfO_2'):=\{(\varphi,\psi)\in \bfO'_1\times \bfO'_2\, \vert \varphi|_\fh = \psi\}$$

By \DimaE{Corollary \ref{cor:invO}} applied to $\bf G'=G\times H$ and $\bf H'=\Delta H\sub G'$,
$$c_{\bf O_1'\times O_2'}(\bfG)=\dim \Upsilon(\bfO_1',\bfO_2') - \dim \bfO_1'/2- \dim \bfO_2'/2$$
\begin{enumerate}[Proof of (i)]
\item \label{it:PfBranchP} Denote by $\nu_1$ the moment map $T^*({\bf G/P})\to \fg^*$ and by $\nu_2$ the moment map  $T^*({\bf G/P})\to \fh^*$, which also equals the composition of $\nu_1$ with the restriction $\fg^*\to \fh^*$. Denote
$$\de(\bfO_1',\bfO_2'):=\dim\nu_1^{-1}(\bfO_1')\cap \nu_2^{-1}(\bfO_2')-\dim {\bf G/P}-\dim \bfO_2'/2$$
Since the image of $\nu_1$ equals $\overline{\bfO_1}$, we have
$$c_{\bfO'_2}({\bf G/P})=\max_{\bfO_1'\sub \overline{\bfO_1}}\de(\bfO_1',\bfO_2')$$
Denote
$${\bf S_P}:=\{(g,(\varphi,\psi))\in \bfG \times \Upsilon(\bfO_1',\bfO_2')\, \vert \, g^{-1}\cdot \varphi\in {\fp^{\bot}}\}.$$
Consider the natural projection ${\bf S_P}\to \Upsilon(\bfO_1',\bfO_2')$. The dimension of the fiber of any $(\varphi,\psi)\in \Upsilon(\bfO_1',\bfO_2')$ equals
$$\dim \bfG_{\varphi}+\dim \bfO_1'\cap \fp^{\bot}=\dim \bfG-\dim \bfO_1'+\dim \bfO_1'\cap \fp^{\bot}=\dim \bfG +c_{\bfO_1'}({\bf G/P})- \dim \bfO_1'/2$$
Thus we have
$$\dim {\bf S_P} = \dim \Upsilon(\bfO_1',\bfO_2') + \dim \bfG +c_{\bfO_1'}({\bf G/P})- \dim \bfO_1'/2$$

We also have a natural map $\bf S_P\to \nu_1^{-1}(\bfO_1')\cap \nu_2^{-1}(\bfO_2')$ by
$$(g,(\varphi,\psi))\mapsto (g\bfP,\varphi)$$
The dimension of every fiber is $\dim \bfP$, and thus
$$\dim {\bf S_P} = \dim \nu_1^{-1}(\bfO_1')\cap \nu_2^{-1}(\bfO_2') +\dim \bfP.$$
Altogether we get
\begin{multline*}
c_{\bf O_1'\times O_2'}(\bfG)=\dim \Upsilon(\bfO_1',\bfO_2') - \dim \bfO_1'/2- \dim \bfO_2'/2=\\ \dim {\bf S_P}-(\dim \bfG +c_{\bfO_1'}({\bf G/P})- \dim \bfO_1'/2)- \dim \bfO_1'/2-\dim \bfO_2'/2=\\
 \dim \nu_1^{-1}(\bfO_1')\cap \nu_2^{-1}(\bfO_2') +\dim \bfP -(\dim \bfG +c_{\bfO_1'}({\bf G/P})- \dim \bfO_1'/2)- \dim \bfO_1'/2-\dim \bfO_2'/2=\\
\dim \nu_1^{-1}(\bfO_1')\cap \nu_2^{-1}(\bfO_2') -\dim {\bf G/P} -c_{\bfO_1'}({\bf G/P})-\dim \bfO_2'/2=\de(\bfO_1',\bfO_2')-c_{\bfO_1'}({\bf G/P})
\end{multline*}
Thus we have
$$c_{\bf O_1'\times O_2'}(\bfG)+c_{\bfO_1'}({\bf G/P})= \de(\bfO_1',\bfO_2')$$
Thus
\begin{multline*}
\max_{\bfO_1'\sub \overline{\bfO_1},{\bfO_2'\sub \overline{\bfO_2}}} c_{\bf O_1'\times O_2'}(\bfG) + \min_{\bfO_1'\sub \overline{\bfO_1}} c_{\bfO_1'}({\bf G/P})  \leq \max_{\bfO_1'\sub \overline{\bfO_1},{\bfO_2'\sub \overline{\bfO_2}}}\de(\bfO_1',\bfO_2')\leq\\ \max_{\bfO_1'\sub \overline{\bfO_1},{\bfO_2'\sub \overline{\bfO_2}}} c_{\bf O_1'\times O_2'}(\bfG) + \max_{\bfO_1'\sub \overline{\bfO_1}} c_{\bfO_1'}({\bf G/P})
\end{multline*}
\DimaG{We have $\max_{\bfO_1'\sub \overline{\bfO_1}} c_{\bfO_1'}({\bf G/P})=0$ since $\bf G/P$ is a spherical $\bfG$-space.}
Thus
$$c_{\overline{O_1}\times \overline{O_2}}(\bfG)+ \min_{\bfO_1'\sub \overline{\bfO_1}} c_{\bfO_1'}({\bf G/P}) \leq c_{\overline{O_2}}({\bf G/P})\leq c_{\overline{O_1}\times \overline{O_2}}(\bfG)$$
This implies the assertion.

\item We proceed in a similar way. Denote by $\mu_1$ the moment map $T^*({\bf G/Q})\to \fg^*$.
\DimaD{Denote by $\fq^{\bot}$ the orthogonal complement (annihilator) to $\fq$ in $\fg^*$, and by $\fq^{\bot}_{\fh}$ the restriction of $\fq^{\bot}$ to $\fh^*$, which also equals the orthogonal complement to $\fq$ in $\fh^*$.} Consider the natural map
$$\eta:\bfG \times \fq^{\bot}\to {T^*}({\bf G/Q}) \text{ given by }\eta(g,\varphi):=(g\bfQ,g\cdot \varphi),$$
and let ${\bf T}:=\eta^{-1}(\mu^{-1}(\bfO'_1))$. Then $\dim {\bf T}=\dim \mu^{-1}(\bfO'_1)+\dim \bfQ$.  Denote also
$${\bf T}(\bfO_1',\bfO_2'):=\{(g,\varphi)\in {\bf T}\, \vert \, \varphi|_{\fh}\in \bfO'_2 \}$$
\DimaD{For any $(g,\varphi)\in {\bf T}$ we have $\varphi|_{\fh}\in \fq^{\bot}_{\fh}\sub \overline{\bfO_2},$  thus} $\bf T=\bigcup_{\bfO'_2\sub \overline{\bfO_2}} {\bf T}(\bfO_1',\bfO_2')$.  Denote
$$d(\bfO_1',\bfO_2'):=\dim {\bf T}(\bfO_1',\bfO_2')-\dim {\bf G}-\dim \bfO_1'/2$$
Since $\dim {\bf T}=\dim \mu^{-1}(\bfO'_1)+\dim \bfQ$ and $\bf T=\bigcup_{\bfO'_2\sub \overline{\bfO_2}} {\bf T}(\bfO_1',\bfO_2')$ we have
$$c_{\bfO'_1}({\bf G/Q})=\max_{\bfO_2'\sub \overline{\bfO_2}}d(\bfO_1',\bfO_2')$$
Thus, \DimaD{as in the proof of \eqref{it:PfBranchP}, }it is enough to show that
\begin{equation}\label{=cQ}
d(\bfO_1',\bfO_2')-c_{\bfO'_2}({\bf H/Q})=c_{\bfO_1'\times \bfO_2'}(\bfG)
\end{equation}
For this purpose consider the map
$$\zeta:{\bf T}(\bfO_1',\bfO_2')\to \Upsilon(\bfO_1',\bfO_2'), \quad (g,\varphi)\mapsto (\varphi,\varphi|_{\fh}) .$$
The image is $\Upsilon(\bfO_1',\bfO_2')\cap \bfO_1'\times (\bfO_2'\cap \fq_{\fh}^{\bot})$.
Since the diagonal action of $\bfH$ preserves $\Upsilon(\bfO_1',\bfO_2')$, and commutes with the projection $\Upsilon(\bfO_1',\bfO_2')\onto \bfO_2'$,
\DimaD{all the fibers of this projection are isomorphic. Thus
 we have }
\begin{multline*}
\dim \Upsilon(\bfO_1',\bfO_2')\cap \bfO_1'\times (\bfO_2'\cap \fq_{\fh}^{\bot}) =\dim \Upsilon(\bfO_1',\bfO_2')-\dim \bfO_2'+\dim \bfO_2'\cap \fq_{\fh}^{\bot}=\\
\dim \Upsilon(\bfO_1',\bfO_2')+c_{\bfO'_2}({\bf H/Q})-\dim \bfO_2'/2.
\end{multline*}

Now, every non-empty fiber $\zeta^{-1}(\varphi,\psi)$ is isomorphic to $\bfG$. Thus we have
$$\dim {\bf T}(\bfO_1',\bfO_2') = \dim \Upsilon(\bfO_1',\bfO_2') + c_{\bfO'_2}({\bf H/Q})-\dim \bfO_2'/2+\dim \bfG ,$$
and
\begin{multline*}
d(\bfO_1',\bfO_2')=\dim \Upsilon(\bfO_1',\bfO_2') + c_{\bfO'_2}({\bf H/Q})-\dim \bfO_2'/2+\dim \bfG -\dim {\bf G}-\dim \bfO_1'/2=\\ c_{\bfO_1'\times \bfO_2'}(\bfG)+c_{\bfO'_2}({\bf H/Q})
\end{multline*}

This implies \eqref{=cQ}, which in turn implies the assertion.
\end{enumerate}
\end{proof}

\begin{cor}\label{cor:BranchSpher}$\,$
\begin{enumerate}[(i)]
\item \label{it:O1} If $\bfO_1=\bfO_\bfP$ for some parabolic subgroup $\bf P\subset G$ and $\bf G$ is $\overline{\bfO_1}\times\overline{\bfO_2}$-spherical then $\bf G/P$ is an $\overline{\bfO_2}$-spherical $\bfH$-space.

\item \label{it:O1iff} If $\bfO_1=\bfO_\bfP$ for some parabolic subgroup $\bf P\subset G$ and either all orbits $\bfO_1'\sub \overline{\bfO_1}$ are Richardson, or $\bf P$ is a Borel subgroup
then\\ $\bf G$ is $\overline{\bfO_1}\times\overline{\bfO_2}$-spherical if and only if $\bf G/P$ is an $\overline{\bfO_2}$-spherical $\bfH$-space.

\item \label{it:O2} If $\bfO_2=\bfO_\bfQ$ for some parabolic subgroup $\bf Q\subset H$ and  $\bf G$ is $\overline{\bfO_1}\times\overline{\bfO_2}$-spherical then $\bf G/Q$ is an $\overline{\bfO_1}$-spherical $\bfG$-space.

\item \label{it:O2iff} If $\bfO_2=\bfO_\bfQ$ for some parabolic subgroup $\bf Q\subset H$ and either all orbits $\bfO_2'\sub \overline{\bfO_2}$ are Richardson, or $\bf Q$ is a Borel subgroup then\\ $\bf G$ is $\overline{\bfO_1}\times\overline{\bfO_2}$-spherical if and only if $\bf G/Q$ is an $\overline{\bfO_1}$-spherical $\bfG$-space.

\end{enumerate}
\end{cor}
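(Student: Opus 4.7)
The plan is to derive all four assertions directly from Proposition \ref{prop:BranchComp}, combined with the vanishing $c_{\bfO'}({\bf G/P})=0$ that holds either when $\bfO'\sub \overline{\bfO_\bfP}$ is Richardson (Corollary \ref{cor:Rich}) or, for every orbit $\bfO'$, when $\bfP$ is a Borel subgroup (Proposition \ref{prop:Piso} together with Corollary \ref{cor:invO}). Throughout I will use the reformulation that $\Xi$-sphericity of a $\bfG$-variety is just the inequality $c_{\Xi}\leq 0$.

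For (i) and (iii), I would simply read off the left-hand inequalities
$$c_{\overline{\bfO_2}}({\bf G/P})\leq c_{\overline{\bfO_1}\times\overline{\bfO_2}}(\bfG)\quad\text{and}\quad c_{\overline{\bfO_1}}({\bf G/Q})\leq c_{\overline{\bfO_1}\times\overline{\bfO_2}}(\bfG)$$
provided by Proposition \ref{prop:BranchComp}. In each case the hypothesis makes the right-hand side non-positive, yielding the asserted sphericity of $\bf G/P$ as an $\bfH$-space or of $\bf G/Q$ as a $\bfG$-space.

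For (ii) the ``only if'' direction is (i). For the converse, I would use the right-hand inequality
$$c_{\overline{\bfO_1}\times\overline{\bfO_2}}(\bfG)\leq c_{\overline{\bfO_2}}({\bf G/P})-\min_{\bfO_1'\sub \overline{\bfO_1}}c_{\bfO_1'}({\bf G/P})$$
and show that the minimum equals zero. If every $\bfO_1'\sub \overline{\bfO_1}=\overline{\bfO_\bfP}$ is Richardson, Corollary \ref{cor:Rich} gives $c_{\bfO_1'}({\bf G/P})=0$ for each such orbit (note that $\bfO_1'\sub \overline{\bfO_\bfP}=\bfG\cdot\fp^{\bot}$ ensures $\bfO_1'\cap \fp^{\bot}\neq \emptyset$, so $c_{\bfO_1'}({\bf G/P})$ is not $-\infty$). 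If instead $\bfP$ is a Borel subgroup, Proposition \ref{prop:Piso} says that $\bfO_1'\cap \fp^{\bot}$ is Lagrangian in $\bfO_1'$ for every orbit, so Corollary \ref{cor:invO} again yields $c_{\bfO_1'}({\bf G/P})=0$. In either case the right-hand inequality collapses to $c_{\overline{\bfO_1}\times\overline{\bfO_2}}(\bfG)\leq c_{\overline{\bfO_2}}({\bf G/P})\leq 0$.

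Part (iv) will follow from (iii) by exactly the same template, using Proposition \ref{prop:BranchComp}(ii) in place of (i) and applying Corollary \ref{cor:Rich} or Proposition \ref{prop:Piso} to $\bfH$ and $\bfQ$ in place of $\bfG$ and $\bfP$. I do not anticipate any substantial obstacle: the content of the corollary is fully captured by the two-sided inequalities of Proposition \ref{prop:BranchComp} together with the vanishing of $c_{\bfO'}$ on the flag-type variety in the Richardson and Borel cases.
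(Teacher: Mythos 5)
Your proposal is correct and follows essentially the same route as the paper: parts (i) and (iii) are read off from the left-hand inequalities of Proposition \ref{prop:BranchComp}, and parts (ii) and (iv) from the right-hand inequalities together with the vanishing $c_{\bfO'}({\bf G/P})=0$ supplied by Corollary \ref{cor:Rich} in the Richardson case and by Proposition \ref{prop:Piso} in the Borel case.
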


\begin{proof}
Parts \eqref{it:O1} and \eqref{it:O2} follow immediately from Proposition \ref{prop:BranchComp}.
For Part \eqref{it:O1iff} we note that for any  Richardson orbit $\bfO_1'\sub \overline{\bfO_1},\, c_{\bfO_1'}({\bf G/P})=0$ by Corollary \ref{cor:Rich}, and if $\bf P$ is a Borel subgroup of $\bf G$ then for any orbit $\bfO_1'\sub \overline{\bfO_1},\, c_{\bfO_1'}({\bf G/P})=0$ by Proposition \ref{prop:Piso}. Thus \eqref{it:O1iff}  also follows from Proposition \ref{prop:BranchComp}.  The same arguments prove part \eqref{it:O2iff}.
\end{proof}
}

\section{Invariant distributions and D-modules}\label{sec:Dmod}


In sections \ref{sec:Dmod} and \ref{sec:main} all the algebraic varieties and algebraic groups we will consider will be defined over $\R$. We will use boldface letters like $\bfX, \bfG$ to denote these algebraic varieties and groups, and the corresponding letters in regular font (like $X, G$) to denote their real points. The Gothic letters (like $\fg$) will  denote the complexified Lie algebras.

\subsection{Preliminaries and notation}

\subsubsection{D-modules}

We will use the theory of D-modules on complex algebraic manifolds.
We will now recall some facts and notions that we will use.
For a good introduction to the algebraic theory of D-modules, we refer the reader to \cite{Ber} and \cite{Bor}. For a short overview, see \cite[Appendix B]{AMOT}.
By a \emph{D-module} on a smooth algebraic variety $\bfX$, we mean a quasi-coherent sheaf of right modules over the sheaf $D_\bfX$ of algebras of algebraic differential operators.  By a \emph{finitely generated D-module} on a smooth algebraic variety $\bfX$ we mean a coherent sheaf of right modules over the sheaf $D_\bfX$.
Denote the category of $D_\bfX$-modules by $\cM(D_\bfX)$.

For a smooth affine variety $\bf V$, we denote $D({\bf V}):=D_{\bf V}({\bf V})$.
Note that the category $\cM(D_{\bf V})$ of D-modules on ${\bf V}$ is equivalent to the category of $D({\bf V})$-modules.
We will thus identify these categories.

The algebra $D({\bf V})$ is equipped with a filtration which is called the geometric filtration and defined by the degree of differential operators. The associated graded algebra with respect to this filtration is the algebra $\cO(T^*{\bf V})$ of regular functions on the total space of the cotangent bundle of ${\bf V}$. This allows us to define the \emph{singular support} of a finitely generated D-module $M$ on ${\bf V}$ in the following way. Choose a good filtration on $M$, i.e., a filtration such that the associated graded module is a finitely-generated module over $\cO(T^*{\bf V})$, and define the singular support $SS(M)$ to be the support of this module. One can show that the singular support does not depend on the choice of a good filtration on $M$. By Bernstein's inequality, $\dim SS(M)\geq \dim {\bf V}$. If for every $x\in {\bf V}$ we have $\dim_x SS(M)= \dim_x\mathbf{V}$ the module $M$ is called \emph{holonomic}.

\subsubsection{Nuclear \Fre $\,$spaces}\label{subsubsec:NF}
For nuclear \Fre $\,$spaces $V$ and $W$, $V\otimes W$ will denote the completed projective tensor product and $V^*$  will denote the continuous linear dual, endowed with the strong dual topology
(see e.g. \cite[\S 50]{Tre}).
\begin{lem}[{\cite[(50.18) and (50.19]{Tre}}]\label{lem:NF}
$(V\otimes W)^*\cong V^*\otimes W^*$ and $L(V,W)\cong V^*\otimes W$.
\end{lem}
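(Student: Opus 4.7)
The statement is attributed to Tr\`eves \cite[\S 50]{Tre}, so the cleanest route is simply to quote those two results. Nevertheless, let me sketch the underlying argument.

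The first isomorphism follows from the universal property of the projective tensor product, which identifies continuous linear functionals on $V\ctp W$ with continuous (jointly) bilinear forms on $V\times W$, i.e.\ $(V\ctp W)^*\cong B(V,W)$. The content of the statement is then the identification $B(V,W)\cong V^*\ctp W^*$ for nuclear Fr\'echet $V,W$. My plan is to obtain this by the kernel theorem: writing $V$ and $W$ as projective limits along nuclear maps, one reduces to the statement for Hilbert seminormed completions, where trace-class operators realise the identification of bilinear forms with tensors; passing to the limit and using nuclearity to ensure the two natural tensor product topologies (projective and injective) agree yields the claim. The strong dual topology enters exactly to make the resulting linear maps in both directions continuous.

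For the second isomorphism $L(V,W)\cong V^*\ctp W$, the plan is to use that the composition of evaluation pairings gives a natural continuous injection $V^*\ctp W\hookrightarrow L(V,W)$, sending a pure tensor $\xi\otimes w$ to $v\mapsto \xi(v)\cdot w$. Surjectivity and the fact that this injection is a topological isomorphism is again a consequence of nuclearity of $V$: every continuous linear map from a nuclear Fr\'echet space admits, after composition with one step of the defining family of seminorms, a decomposition as a trace-class (hence tensor) operator, and in the limit one recovers every element of $L(V,W)$. Equivalently, one can derive the second isomorphism from the first by replacing $W$ with $W^*$ (when $W$ is reflexive, which nuclear Fr\'echet spaces are) and using $L(V,W)\cong B(V,W^*)^*$ after appropriate identifications.

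The main technical obstacle in either approach is the interchange of various tensor-product topologies and the verification that the isomorphisms are bicontinuous, not merely algebraic. This is where nuclearity is essential — without it, the projective and injective tensor products diverge and the identifications above fail. Since all of this is carried out in detail in \cite[\S 50]{Tre}, I would in the end simply cite propositions (50.18) and (50.19) there rather than reproduce the argument.
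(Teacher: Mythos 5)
Your proposal matches the paper exactly: the paper states this lemma with no proof, relying entirely on the citation to Tr\`eves, (50.18) and (50.19), which is precisely what you conclude one should do. Your supplementary sketch of the underlying nuclearity/kernel-theorem argument is accurate but not needed for the paper's purposes.
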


\subsubsection{Schwartz functions and tempered distributions}
We will use the theory of Schwartz functions and tempered distributions on Nash  manifolds, see e.g., \cite{dCl,AG_Sc}.
Nash manifolds are smooth semi-algebraic manifolds, e.g. real points of algebraic manifolds defined over $\R$. This is the only type of Nash manifolds that appears in the current paper.

We denote the space of complex valued Schwartz functions on a Nash manifold $Y$ by $\Sc(Y)$. For a Nash bundle $\cE$ on $Y$, we denote its space of complexified  Schwartz sections  by $\Sc(X,\cE)$.
We denote by $\Sc^*(Y)$ the (continuous) dual space to $\Sc(Y)$, and call its elements \emph{tempered distributions}. Similarly, we denote
$\Sc^*(Y,\cE):=(\Sc(Y,\cE))^*$.
If $\cE$ is a vector bundle over an algebraic manifold $\bfY$ defined over $\R$, we will also consider it as a Nash bundle over the Nash manifold $Y$ of real points of $\bfY$. For a Nash manifold $Y$ we denote by $\Sc^*_Y$ the sheaf of tempered distributions on $Y$ defined by $\Sc^*_Y(U):=\Sc^*(U)$. For an  algebraic manifold $\bfY$ defined over $\R$, we will denote by $\Sc^*_{\bf Y}$ the quasi-coherent sheaf defined by $\Sc^*_{\bf Y}({\bf U}):=\Sc^*(U)$. This is a sheaf of $\cD_{\bfY}$-modules. A distribution in $\Sc^*(Y)$ is called holonomic if it generates a holonomic $\cD_{\bfY}$-module. Since this notion is local and is invariant with respect to linear transformations, it naturally extends to elements of $\Sc^*(Y,\cE)$.



\begin{theorem}[{\rm{Bernstein--Kashiwara}, see {\it e.g.} \cite[Theorem 3.13]{AGM}, cf. \cite[Theorems 5.1.7 and 5.1.12]{Kas}}]\label{thm:DimSol}
Let $\bfY$ be an algebraic manifold defined over $\R$, and let $M$ be a holonomic $D_{\bf Y}$-module. Then the space of solutions $\Hom_{D_{\bf Y}}(M,\Sc^*_{\bfY})$
  is finite-dimensional.
\end{theorem}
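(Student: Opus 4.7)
The plan is to reduce the statement to a local assertion on affine space and then control solutions using the structure theory of holonomic modules. First, I would reduce to the case $\bfY$ affine. Using a finite affine open cover $\{\bfU_i\}$ of $\bfY$, the restriction map $\Hom_{D_\bfY}(M,\Sc^*_\bfY)\hookrightarrow\prod_i\Hom_{D_{\bfU_i}}(M|_{\bfU_i},\Sc^*_{\bfU_i})$ is injective (by the sheaf property of $\Sc^*$), so finite dimensionality on each affine piece suffices. Further embedding a given affine $\bfY$ into some $\bfA^n$ as a closed subvariety, pushforward of $M$ is still holonomic and the solution spaces match up via Kashiwara's equivalence for closed embeddings together with compatibility of the pushforward with the sheaf of tempered distributions (extending by zero and recognizing that tempered distributions supported on a closed subvariety are precisely tempered distributional sections there). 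This reduces everything to the case $\bfY=\bfA^n$.

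Second, I would run an induction on the length of $M$ as a $D(\bfA^n)$-module, which is finite by holonomicity. If $0\to M'\to M\to M''\to 0$ is a short exact sequence of holonomic modules, then applying $\Hom(-,\Sc^*(\R^n))$ gives an exact sequence whose outer terms are finite-dimensional by induction, so it suffices to prove the statement for a simple holonomic $M$. A simple holonomic $D$-module on $\bfA^n$ is the minimal extension of an irreducible integrable connection $\cL$ on a smooth locally closed subvariety $\bfZ\subset\bfA^n$. One then identifies $\Hom_{D_{\bfA^n}}(M,\Sc^*(\R^n))$ with the space of solutions of the dual connection $\cL^\vee$ in some space of tempered distributions supported near $Z(\R)$. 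This space injects into the space of flat sections of $\cL^\vee$ on (an appropriate real-analytic thickening of) a generic real point of $\bfZ$, which is finite-dimensional with dimension bounded by $\rk\cL$.

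The hard part is controlling the behavior of solutions at infinity and at the singular locus of the D-module, so as to pass from a local analytic statement to a global tempered one. The moderate growth condition in the definition of $\Sc^*$ is essential here: while solutions of a holonomic system are always real-analytic away from the singular support and form a local system of finite rank there (by the classical Kashiwara constructibility theorem), without the tempered condition the global space of distributional solutions could be infinite-dimensional due to wild oscillation or exponential blow-up at infinity or on the singular locus. The key lemma is that since the D-module is \emph{algebraic}, the singular support is algebraic and, after passing to a suitable algebraic compactification of $\bfA^n$, the requirement of moderate growth cuts out exactly a finite-dimensional subspace of the (a priori infinite-dimensional) space of analytic solutions on the smooth locus. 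This is essentially the content of the references \cite{AGM,Kas} cited in the statement, and it is where the proof crucially uses that we work with tempered and not merely smooth distributions.
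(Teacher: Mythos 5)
First, a point of order: the paper does not prove Theorem \ref{thm:DimSol} at all --- it is quoted from \cite[Theorem 3.13]{AGM} (cf.\ \cite{Kas}) --- so there is no internal proof to compare against, only the standard argument of those references. Your formal reductions are sound and do follow that standard route: restriction to a finite affine cover is injective on solution spaces, Kashiwara's equivalence for a closed embedding into $\mathbb{A}^n$ is compatible with tempered distributions supported on a closed subvariety, and d\'evissage by the (finite) length of a holonomic module reduces to the case of a simple module.

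The core of the theorem, however, is missing, for two concrete reasons. (1) Your final paragraph explicitly defers ``the hard part'' --- control of moderate growth at infinity and along the singular locus --- to ``the content of the references cited in the statement.'' Since that is exactly the finiteness assertion to be proved, the argument is circular. The actual mechanism is: a tempered distribution on $\R^n$ is precisely one extending to a distribution on a real algebraic compactification $\overline{\mathbb{A}^n}$; the direct image of $M$ under the open embedding is again holonomic; and Kashiwara's constructibility/finiteness theorem on the \emph{compact} real analytic manifold $\overline{\mathbb{A}^n}(\R)$ then gives the bound. None of this is carried out. (2) The one local step you do spell out is false as stated: restriction of a solution to a germ at a single generic real point of $\mathbf{Z}$ is not injective, and the bound by $\rk\cL$ is wrong. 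Already for $M=D_{\mathbb{A}^1}/D_{\mathbb{A}^1}(x\partial-\lambda)$ with $\lambda\notin\bZ$, $\re\lambda>0$ --- the minimal extension of a rank-one connection on $\mathbb{G}_m$ --- the space of tempered solutions is two-dimensional, spanned by $x_+^{\lambda}$ and $x_-^{\lambda}$, and $x_-^{\lambda}$ vanishes identically near every positive real point. At a minimum one must account for all connected components of the smooth real locus, and even then the passage from ``finite-rank local system of analytic solutions off the singular support'' to global finiteness of \emph{tempered} solutions is precisely the compactification and constructibility argument that has been omitted.
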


We will also need the following version of the Schwartz Kernel theorem.
\begin{lem}[{See {\it e.g.} \cite[Corollary 2.6.3]{AG_RS}}]\label{lem:SKT}
Let $ X$ and $Y$ be Nash manifolds, and let $\cE_1$ and $\cE_2$ be Nash bundles on them. Then
$$\Sc(X\times Y, \cE_1\boxtimes \cE_2)\cong \Sc(X,\cE_1)\otimes \Sc(Y,\cE_2)$$
\end{lem}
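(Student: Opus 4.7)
The plan is to reduce the statement to the classical Schwartz kernel theorem on Euclidean space, namely the isomorphism $\Sc(\R^{n+m}) \cong \Sc(\R^n)\otimes \Sc(\R^m)$ (with the convention from \S\ref{subsubsec:NF} that $\otimes$ denotes the completed projective tensor product of nuclear \Fre spaces). The reduction proceeds in two steps: first from arbitrary Nash bundles to the trivial line bundle, and then from general Nash manifolds to open semialgebraic subsets of $\R^n$.

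First I would invoke the standard existence (see \cite{AG_Sc}) of finite Nash open covers $\{U_i\}$ of $X$ and $\{V_j\}$ of $Y$ trivializing $\cE_1$ and $\cE_2$ respectively, together with subordinate Nash partitions of unity $\{\alpha_i\}$ and $\{\beta_j\}$. The products $\{\alpha_i\beta_j\}$ form a Nash partition of unity on $X\times Y$ subordinate to $\{U_i\times V_j\}$, on each of which $\cE_1 \boxtimes \cE_2$ is trivial. Multiplication by these bump functions produces continuous decompositions of both sides of the desired isomorphism into finite sums of pieces supported in the trivializing patches; combined with continuity of restriction and extension by zero between $\Sc(U_i)$ and $\Sc(X)$, this reduces the statement to proving $\Sc(U\times V)\cong \Sc(U)\otimes \Sc(V)$ for open semialgebraic $U\subset \R^n$, $V\subset \R^m$.

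For this affine case I would use the description of $\Sc(U)$ from \cite{AG_Sc} as the quotient of $\Sc(\R^n)$ by the closed subspace of tempered functions vanishing to infinite order on the complement of $U$. Together with the classical Schwartz kernel theorem $\Sc(\R^{n+m})\cong \Sc(\R^n)\otimes \Sc(\R^m)$ (see e.g. \cite{Tre}) and the fact that the completed projective tensor product is exact on short exact sequences of nuclear \Fre spaces, this quotient description transfers through the tensor product to yield the required isomorphism on $U\times V$.

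The main obstacle I anticipate is less the construction of the isomorphism than the verification that the map assembled from local pieces is a \emph{topological} isomorphism of nuclear \Fre spaces, and that it is compatible with the sheaf structure on $\Sc^*_{\bfX}$ used in \S\ref{sec:Dmod} and \S\ref{sec:main}. Once the model case on $\R^n\times \R^m$ is in place, nuclearity renders the remaining compatibility checks essentially formal, and the full argument is carried out in \cite[Corollary 2.6.3]{AG_RS}, which we simply cite.
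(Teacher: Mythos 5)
The paper does not actually prove this lemma; it is quoted verbatim from \cite[Corollary 2.6.3]{AG_RS}, so there is no internal proof to compare against, and your decision to end by citing that reference is exactly what the paper does. Your sketch of the underlying argument is the standard one and matches the structure of the proof in the cited source: trivialize the bundles over a finite Nash open cover, use Nash partitions of unity from \cite{AG_Sc} to localize both sides over products of trivializing charts, and reduce to the classical kernel theorem $\Sc(\R^{n+m})\cong\Sc(\R^n)\otimes\Sc(\R^m)$ plus nuclearity. One detail in your affine step is backwards, however: the characterization of $\Sc(U)$ for open semialgebraic $U\subset\R^n$ in \cite{AG_Sc} is that extension by zero identifies $\Sc(U)$ with the \emph{closed subspace} of $\Sc(\R^n)$ consisting of functions vanishing to infinite order on $\R^n\setminus U$, not with a quotient of $\Sc(\R^n)$ by that subspace --- the quotient you describe is the space of Schwartz jets along the closed complement, which is a different object. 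With that corrected, the step still goes through: the completed projective tensor product of nuclear Fr\'echet spaces preserves closed embeddings, and one then checks (using the partition of unity again) that the subspace of $\Sc(\R^{n+m})$ of functions flat on the complement of $U\times V$ coincides with the tensor product of the two flat subspaces. Since the lemma is cited rather than proved in the paper, this slip has no downstream effect, but as written that one sentence of your affine reduction would not compute $\Sc(U)$.
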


\subsubsection{Notation}\label{subsec:not}
Let $X$ be the manifold of real points of an algebraic $\bfG$-manifold $\bfX$.
We will say that a $G$-equivariant bundle $\cE$ on $X$ is a \emph{twisted algebraic} $G$-bundle if there exists a finite-dimensional (smooth) representation $\sigma$ of $G$ and an algebraic $G$-bundle $\cE'$ on $X$ such that $\cE=\cE'\otimes \fin$.

For any \DimaG{left} ideal $I\subset \cU(\fg)$, denote by $\cV(I)$ its associated variety, i.e. the closed conical subset of $\fg^*$ defined to be the set of zeros of the symbols (in $S(\fg)$) of the elements of $I$. If $I$ intersects the center $\fz$ of $\cU(\fg)$ by an ideal of finite codimension then $\cV(I)$ is known to be a union of nilpotent orbits.

\subsection{Holonomicity and finite-dimensionality of spaces of invariant distributions}

\begin{thm}\label{Thm:Dist}
Let $I\subset \cU(\fg)$ be a two-sided ideal, and let $\cV(I)\subset \fg^*$ denote its associated variety. Suppose that  $\cV(I)$ is a union of finitely many $\bf G$-orbits.
Let $\bf X,Y$ be $\cV(I)$-spherical $\bfG$-manifolds.
Let $\cE_1, \cE_2$ be twisted algebraic $G$-bundles on $X$ and $Y$ respectively.

Let $\Sc^*(X\times Y,\cE_1\boxtimes \cE_2)^{\Delta \fg,I}$ denote the subspace of $\Sc^*(X\times Y,\cE_1\boxtimes \cE_2)$ consisting of elements invariant under the diagonal action of $\fg$, and annihilated by the action of $I$ on the first coordinate.
Then
$\Sc^*(X\times Y,\cE_1\boxtimes \cE_2)^{\Delta \fg,I}$ is finite-dimensional, and consists of holonomic distributions.
\end{thm}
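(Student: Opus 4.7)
The plan is to realize the space in question as the space of tempered-distribution solutions of an explicit algebraic $\cD$-module $M$ on $\bfX\times\bfY$ (twisted by $\cE_1\boxtimes\cE_2$), to use the $\cV(I)$-sphericity hypotheses to show that $M$ is holonomic, and then to apply the Bernstein--Kashiwara finiteness Theorem \ref{thm:DimSol}. First I would build $M$. Let $\cD$ denote the sheaf of algebraic differential operators on $\bfX\times\bfY$ acting on sections of $\cE_1\boxtimes\cE_2$ (the twist by a finite-dimensional representation of $G$ is absorbed by tensoring and does not affect principal symbols). The diagonal $\fg$-action on $\bfX\times\bfY$ and the $\fg$-action on the first factor yield Lie algebra maps $\fg\to\Gamma(\cD)$, the latter extending to a ring map $\cU(\fg)\to\Gamma(\cD)$. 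Let $J\subset\cD$ be the right ideal generated by the images of $\Delta\fg$ and of $I$, and set $M:=\cD/J$. Two-sidedness of $I$ guarantees that the $\Delta\fg$- and $I$-conditions are mutually compatible, so $M$ is a well-defined coherent $\cD$-module with
\begin{equation*}
\Hom_{\cD}\bigl(M,\Sc^*_{\bfX\times\bfY,\cE_1\boxtimes\cE_2}\bigr)=\Sc^*(X\times Y,\cE_1\boxtimes\cE_2)^{\Delta\fg,I}.
\end{equation*}

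Next I would bound the singular support of $M$. By the definition of $\cV(I)$, the symbols of the generators coming from $I$ vanish on $\mu_X^{-1}(\cV(I))$, while the principal symbols of elements of $\Delta\fg$ cut out the zero locus of $\mu_X+\mu_Y$, the moment map of the diagonal $\bfG$-action. Hence
\begin{equation*}
SS(M)\subset\Sigma:=\bigl\{((x,y),(\xi,\eta))\in T^*(\bfX\times\bfY):\ \mu_X(x,\xi)\in\cV(I),\ \mu_Y(y,\eta)=-\mu_X(x,\xi)\bigr\}.
\end{equation*}
Stratifying $\Sigma$ by the finitely many orbits $\bfO\subset\cV(I)$ and using the $\bfG$-equivariance of $\mu_X,\mu_Y$ (so that fibers over points of a single orbit all have the same dimension), a direct count gives
\begin{equation*}
\dim\bigl(\Sigma\cap\mu_X^{-1}(\bfO)\bigr)=\dim\mu_X^{-1}(\bfO)+\dim\mu_Y^{-1}(-\bfO)-\dim\bfO.
\end{equation*}
The fiberwise involution $(y,\eta)\mapsto(y,-\eta)$ identifies $\mu_Y^{-1}(\bfO)$ with $\mu_Y^{-1}(-\bfO)$, so $\cV(I)$-sphericity of $\bfY$ yields $\dim\mu_Y^{-1}(-\bfO)\leq\dim\bfY+\dim\bfO/2$, and combined with the analogous bound for $\bfX$ this gives $\dim\Sigma\leq\dim(\bfX\times\bfY)$. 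Therefore $M$ is holonomic.

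Applying Theorem \ref{thm:DimSol} to $M$ now yields $\dim\Hom_{\cD}(M,\Sc^*_{\bfX\times\bfY,\cE_1\boxtimes\cE_2})<\infty$, which is the finite-dimensionality statement. For the holonomicity of each individual distribution, any $\xi$ in this space generates a $\cD$-submodule of $\Sc^*_{\bfX\times\bfY,\cE_1\boxtimes\cE_2}$ which is a quotient of the holonomic module $M$, and therefore is itself holonomic. I expect the dimension bound for $\Sigma$ to be the main obstacle: it is the one place where the sphericity hypotheses enter the argument, and it relies both on the fact that $\cV(I)$-sphericity of $\bfY$ controls $\mu_Y^{-1}(-\bfO)$ (via the antipodal identification on cotangent fibers) and on the standard-but-nontrivial verification that $SS(M)$ really does sit inside $\Sigma$.
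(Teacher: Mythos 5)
Your proposal is correct and follows essentially the same route as the paper: form the coherent $\cD_{\bf X\times Y}$-module cut out by $\Delta\fg$ and by $I$ acting on the first factor, identify its tempered solutions with $\Sc^*(X\times Y,\cE_1\boxtimes\cE_2)^{\Delta\fg,I}$, bound the singular support orbit-by-orbit inside $(\mu_X\times\mu_Y)^{-1}\bigl((\cV(I)\times\cV(I))\cap(\Delta\fg)^{\bot}\bigr)$ using the two sphericity hypotheses, and conclude via Bernstein--Kashiwara plus the fact that quotients of holonomic modules are holonomic. The only (harmless) cosmetic difference is that you handle the sign via the fiberwise antipodal map on $T^*\bfY$, whereas the paper implicitly uses that nilpotent orbits satisfy $-\bfO=\bfO$.
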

For the proof we will need the following straightforward statements.
\begin{lem}\label{lem:gbundle}
Let $\bf X$ be an algebraic manifold.  Let $\cE$ be a vector bundle over $\bf X$, and let $\cO_{\bfX,\cE}$ be the locally free coherent sheaf of regular sections of $\cE$. Then we have canonical isomorphisms
$$\Hom_{\cD_{\bfX}}(\cO_{\bfX,\cE}\otimes_{\cO_{\bfX}} \cD_{\bfX}, \Sc^*_{\bfX})\cong \Hom_{\cO_{\bfX}}(\cO_{\bfX,\cE}, \Sc^*_{\bfX})\cong\Sc^*(X,\cE)$$
such that
\begin{enumerate}[(i)]
\item these isomorphisms are functorial on the groupoid of pairs $(\bfX,\cE).$
\item \label{it:fact} If a map $\varphi\in \Hom_{\cD_{\bfX}}(\cO_{\bfX,\cE}\otimes_{\cO_{\bfX}} \cD_{\bfX}, \Sc^*_{\bfX})$ factors through a holonomic $\cD_{\bfX}$-module then the corresponding distribution in $\Sc^*(X,\cE)$ is holonomic.
\end{enumerate}
\end{lem}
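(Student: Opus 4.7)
The plan is to treat the two isomorphisms in turn and then read off parts (i) and (ii) as short consequences.

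For the first isomorphism, I would invoke the standard tensor–hom adjunction for the induction functor $(-)\otimes_{\cO_\bfX}\cD_\bfX$ from right $\cO_\bfX$-modules to right $\cD_\bfX$-modules. Concretely, restriction along the canonical $\cO_\bfX$-linear inclusion $\cO_{\bfX,\cE}\hookrightarrow \cO_{\bfX,\cE}\otimes_{\cO_\bfX}\cD_\bfX$, $s\mapsto s\otimes 1$, is a bijection from $\Hom_{\cD_\bfX}(\cO_{\bfX,\cE}\otimes_{\cO_\bfX}\cD_\bfX,\Sc^*_\bfX)$ to $\Hom_{\cO_\bfX}(\cO_{\bfX,\cE},\Sc^*_\bfX)$; the inverse extends an $\cO$-linear map by $\cD$-linearity, using that $\Sc^*_\bfX$ is already a right $\cD$-module. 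Naturality in $\cE$ and in isomorphisms of pairs $(\bfX,\cE)$ is immediate from the definition.

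For the second isomorphism, I would construct the identification locally and then glue. On a Zariski affine open $\bfU\subset\bfX$ where $\cE|_\bfU$ admits an algebraic frame $e_1,\dots,e_r$, both sides are canonically identified with $\Sc^*(U)^r$: on the left by the $r$-tuple $(\phi(e_1),\dots,\phi(e_r))$, and on the right by writing a Schwartz section $s\in\Sc(U,\cE)$ as $s=\sum_i f_i e_i$ with $f_i\in\Sc(U)$ and recording the pairings $(\xi(e_1^*),\dots,\xi(e_r^*))$ appropriately. A direct calculation shows that under a change of algebraic frame $e_i\mapsto\sum_j M_{ij}e_j$, the $r$-tuples on both sides are multiplied by the same matrix $M$ of regular functions, so the identification is frame-independent. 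One then globalizes by taking a finite affine cover trivializing $\cE$ together with a Nash partition of unity on $X$ subordinate to it, producing the canonical global isomorphism.

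Parts (i) and (ii) follow cheaply. Functoriality in (i) is manifest from both constructions being natural (tensor–hom naturality and the frame-change check). For (ii), suppose $\varphi$ factors as $\cO_{\bfX,\cE}\otimes_{\cO_\bfX}\cD_\bfX\xrightarrow{\alpha} M\xrightarrow{\beta}\Sc^*_\bfX$ with $M$ holonomic. The image of $\beta$ is a quotient of $M$, hence holonomic by the standard fact that subquotients of holonomic $\cD$-modules are holonomic. This image is precisely the $\cD_\bfX$-submodule of $\Sc^*_\bfX$ generated by the distribution corresponding to $\varphi$, so by the definition recalled in \S\ref{subsec:not} the distribution is holonomic.

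The only nontrivial step is the frame-change consistency in the second isomorphism, but this is purely formal—no deep analytic input is required—so I do not anticipate a genuine obstacle.
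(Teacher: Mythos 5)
Your proposal is correct and follows essentially the same route as the paper: the first isomorphism by restriction along $\nu\mapsto\nu\otimes 1$, the second by pairing regular sections against Schwartz functions (multiplication), with everything verified by reduction to the affine case with trivial $\cE$. The only cosmetic differences are that the paper writes the second map by a single global formula and checks bijectivity locally rather than gluing local frame identifications, and in part (ii) the image of $\beta$ may be strictly larger than the submodule generated by the distribution when $\alpha$ is not surjective --- but the containment suffices, since submodules of holonomic modules are holonomic.
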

\DimaG{
\begin{proof}
Define maps $$\varphi:\Hom_{\cD_{\bfX}}(\cO_{\bfX,\cE}\otimes_{\cO_{\bfX}} \cD_{\bfX}, \Sc^*_{\bfX})\to \Hom_{\cO_{\bfX}}(\cO_{\bfX,\cE}, \Sc^*_{\bfX})$$ and 
$$\psi: \Sc^*(X,\cE)
\to \Hom_{\cO_{\bfX}}(\cO_{\bfX,\cE}, \Sc^*_{\bfX})$$
by $\varphi(\alp)(\nu):=\alp(\nu\otimes 1)$ and $\langle \psi(\xi)(\nu), \rho\rangle:=\langle \xi|_{\bfU(\R)},\nu\rho\rangle,$ where 
$$\bf U\sub X \text{ is a Zariski open set, }\nu\in \cO_{\bfX,\cE}(\bfU), \quad \alp\in \Hom_{\cD_{\bfX}}(\cO_{\bfX,\cE}\otimes_{\cO_{\bfX}} \cD_{\bfX}, \Sc^*_{\bfX}), \quad \rho \in \Sc(\bfU(\R))$$
The fact that $\varphi$ and $\psi$ are isomorphisms, as well as part \eqref{it:fact} of the lemma can be checked locally on $\bfX$. This allows to reduce to the case when $\bf X$ is affine and $\cE$ is trivial. This case is straightforward.
\end{proof}
}

\begin{cor}\label{cor:gbundle}
Let an algebraic group $\bfG$ act on an algebraic manifold $\bf X$. Let  $\cE$ be a twisted algebraic $G$-bundle on $X$. Then we have an isomorphism of $\fg$-modules
$$\Hom_{\cD_{\bfX}}(\cO_{\bfX,\cE}\otimes_{\cO_{\bfX}} \cD_{\bfX}, \Sc^*_{\bfX})\cong\Sc^*(X,\cE).$$
\end{cor}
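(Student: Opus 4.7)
The plan is to reduce the statement to Lemma \ref{lem:gbundle} and to upgrade the linear isomorphism it produces to a $\fg$-module isomorphism by exploiting the functoriality clause (i). First, write $\cE=\cE'\otimes \sigma$ with $\cE'$ an algebraic $G$-bundle on $\bfX$ and $\sigma$ a finite-dimensional smooth $G$-representation. Applying Lemma \ref{lem:gbundle} to the pair $(\bfX,\cE')$ and tensoring the resulting isomorphism with $\id_{\sigma}$ yields a canonical linear isomorphism
$$\Phi:\Hom_{\cD_{\bfX}}(\cO_{\bfX,\cE}\otimes_{\cO_{\bfX}}\cD_{\bfX},\Sc^*_{\bfX})\xrightarrow{\sim}\Sc^*(X,\cE),$$
so only the compatibility with the $\fg$-actions on both sides needs to be verified.

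Second, I would check $G$-equivariance of $\Phi$ and then differentiate. Since $\cE'$ is an algebraic $G$-bundle, for every $g\in G$ the $G$-action provides an isomorphism of pairs $(\bfX,\cE')\to (\bfX,\cE')$ in the groupoid appearing in Lemma \ref{lem:gbundle}(i). The functoriality statement of that lemma therefore forces the corresponding transports on the two sides — the one on $\Hom_{\cD_{\bfX}}(\cO_{\bfX,\cE'}\otimes\cD_{\bfX},\Sc^*_{\bfX})$ and the one on $\Sc^*(X,\cE')$ — to commute with $\Phi$ (in the untwisted case). Tensoring with $\id_{\sigma}$ and using the $G$-action on $\sigma$ tautologically produces matching $G$-actions on the $\sigma$-factor on each side. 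Differentiating the resulting $G$-equivariance at the identity yields $\fg$-equivariance. There is no analytic subtlety, because the compatibility is needed only infinitesimally: one does not have to worry about smoothness of any $G$-action on the infinite-dimensional topological vector spaces involved, since each $\alpha\in\fg$ acts through a first-order differential operator on $X$ combined with a prescribed finite-dimensional endomorphism of the fiber of $\cE$.

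The only genuinely careful step — and where I expect the main bookkeeping obstacle — is matching the two intrinsic $\fg$-actions. On $\Sc^*(X,\cE)$ one uses the fundamental vector field $\xi_{\alpha}$ on $X$ together with the infinitesimal action on the fibers of $\cE'$ and of $\sigma$. On the Hom side the $\fg$-action is assembled from the $G$-equivariant structure of $\cO_{\bfX,\cE'}$, the fact that $\Sc^*_{\bfX}$ is a sheaf of $\cD_{\bfX}$-modules (which absorbs the $\xi_{\alpha}$-action), and the action on $\sigma$. After checking these match term-by-term under $\Phi$, which is exactly what Lemma \ref{lem:gbundle}(i) gives at the group level, the corollary follows.
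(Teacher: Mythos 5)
Your proposal is correct and follows the route the paper intends: the corollary is stated without a separate proof precisely because it is meant to follow from Lemma \ref{lem:gbundle} by applying the functoriality clause (i) to the algebraic bundle $\cE'$, handling the finite-dimensional twist $\sigma$ by tensoring, and passing to the infinitesimal ($\fg$-) level. Your closing remark that the $\fg$-equivariance can be checked directly term by term from the explicit formulas for $\varphi$ and $\psi$ is exactly the right way to sidestep any differentiability issue on the infinite-dimensional spaces.
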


\begin{proof}[Proof of Theorem \ref{Thm:Dist}]

Consider $N:=\cO_{X\times Y,\cE_1\boxtimes \cE_2}\otimes_{\cO_{\bf X \times Y}}\cD_{\bf X \times Y}$ as a $\cD_{\bf X \times Y}$-module equipped with an action of $\fg\times \fg$, and thus an action of $\cU(\fg)\otimes_{\C}\cU(\fg)$.  Let $J\sub \cU(\fg)\otimes_{\C}\cU(\fg)$ be the ideal generated by $\Delta \fg$ and by $I\otimes 1$.  Let $M:=N/JN$. The right action of $\cD_{\bf X\times Y}$ on itself defines $\cD_{\bf X\times Y}$-module structures on $N$ and on $M$.

By Corollary \ref{cor:gbundle}, the linear space $\Sc^*(X\times Y,\cE_1\boxtimes\cE_2)^{\Delta \fg,I}$ is  isomorphic to the  space of solutions  $\Hom_{\cD_{\bf X\times Y}}(M,\Sc^*_{\bf X\times Y})$.  The singular support of $M$ lies in the preimage  of $$\bfR=(\cV(I)\times \cV(I))\cap (\Delta\fg)^{\bot}$$ under the moment map $\mu_{X\times Y}:T^*{\bf X}\times T^*\bfY\to \fg^*\times \fg^*$. For every point $(x,-x)\in \bfR$ with orbit $\bfO\times \bfO$ we have $$\dim \mu^{-1}(\{(x,-x)\})=\dim \mu_X^{-1}(\bfO)+\dim \mu_Y^{-1}(\bfO)-2\dim \bfO\leq \dim \bfX+\dim \bfY -\dim \bfO.$$

Thus, $\dim \mu^{-1}((\bfO\times \bfO) \cap \Delta\fg^{\bot})\leq  \dim \bfX \times  \bfY $.
Since $\bf R$ is a finite union of sets of the form $(\bfO\times \bfO) \cap \Delta\fg^{\bot}$, we get that $\dim \mu^{-1}(\bfR)\leq \dim \bfX \times  \bfY$, and thus $M$ is holonomic. By Theorem \ref{thm:DimSol}, this implies that the space of solutions is finite-dimensional. By Lemma \ref{lem:gbundle}, the space $\Sc^*(X\times Y,\cE_1\boxtimes \cE_2)^{\Delta \fg,I}$ consists of holonomic distributions.
\end{proof}

\section{Representation theory}\label{sec:main}

Throughout the section we fix a connected linear algebraic group $\bf G$ defined over $\R$.

\subsection{Preliminaries}
\DimaI{
Let $\widetilde{G}$ be a finite cover of an open subgroup of $G$.}
We denote by $\Rep^{\infty}(\widetilde{G})$ the category of smooth nuclear \Fre\  representations of $\widetilde{G}$ of moderate growth. This is essentially the same definition as in \cite[\S 1.4]{dCl} with the additional assumption that the representation spaces are nuclear (see \S \ref{subsubsec:NF}). For example, for any algebraic $\bfG$-manifold $\bfX$ and any twisted algebraic $G$-bundle $\cE$ over $X$, the representation $\Sc(X,\cE)$ lies in $\Rep^{\infty}(G)$.

For any two-sided ideal $I\subset \cU(\fg)$, and any representation $\Pi\in \Rep^{\infty}(\widetilde{G})$, denote by $\Pi_I\in \Rep^{\infty}(\widetilde{G})$ the representation $\Pi/\overline{I\Pi}$, where $\overline{I\Pi}$ denotes the closure of the action of $I$.

\DimaH{For any $\fg$-module $V$, denote by $\An V\sub \cU(\fg)$ its annihilator ideal.}

For any \Fre $\,$space $V$, \cite[\S 1.2]{dCl} defines the space of $V$-valued Schwartz functions $\Sc(Y,V)$, and shows that the natural map $\Sc(Y)\otimes V \to \Sc(Y,V)$ is an isomorphism.

\begin{defn}\label{def:ind}
For a closed semi-algebraic subgroup $H\subset \widetilde{G}$ and $\pi\in \Rep^{\infty}(H),$ we denote by $\ind_H^{\widetilde{G}}(\pi)$ the Schwartz induction as in \cite[\S 2]{dCl}. More precisely, in \cite{dCl} du Cloux considers the space  $\Sc(\widetilde{G},\pi)$ of Schwartz functions from $G$ to the underlying space of $\pi,$ and defines a map from  $\Sc(\widetilde{G},\pi)$  to the space $C^{\infty}(\widetilde{G},\pi)$ of all smooth $\pi$-valued functions on $\widetilde{G}$ by $f\mapsto \overline f$, where \begin{equation}\label{=ind}
\overline f (x)=\int_{h\in H}\pi(h)f(xh)dh,
\end{equation}
and $dh$ denotes a fixed left-invariant measure on $H$. The Schwartz induction $\ind_H^G(\pi)$ is defined to be the image of this map. Note that $\ind_H^G(\pi)\in \Rep^{\infty}(\widetilde{G})$.
\end{defn}

%

\begin{defn}
For $\pi\in \Rep^{\infty}(\widetilde{G})$, denote by $\pi_G$ the space of coinvariants, i.e. quotient of $\pi$ by the intersection of kernels of all $G$-invariant functionals. Explicitly,
$$\pi_G=\pi/\overline{\{\pi(g)v -v\, \vert \,v\in \pi, \, g\in G\}}. $$
\end{defn}
Note that if $G$ is connected then $\pi_G=\pi/\overline{\fg\pi}$ which in turn is equal to the quotient of $\oH_0(\fg,\pi)$ by the closure of zero.

We will need the following two versions of the Frobenius reciprocity for Schwartz induction.

\begin{lem}[{\cite[Lemma 2.3.4]{GGS}}]\label{lem:Frob}
Let $\tau \in \Rep^{\infty}(H)$, $\pi\in \Rep^{\infty}(\widetilde{G})$ and let $\pi^*$ denote the dual representation, endowed with the strong dual topology.  Then
$$\Hom_{G}(\ind_H^{\widetilde{G}}(\tau),\pi^*) \cong \Hom_H(\tau,\pi^*\delta_H^{-1}\delta_G),$$
where $\delta_H$ and $\delta_G$ denote the modular functions of $H$ and $G$.
\end{lem}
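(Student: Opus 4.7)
My plan is to reduce both sides to bilinear forms and then use the Schwartz kernel theorem together with du Cloux's identification $\Sc(G,\tau)\cong \Sc(G)\otimes\tau$.

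First, since $\pi^*$ carries the strong dual topology and everything in sight is nuclear Fr\'echet, a continuous $G$-map $\Phi\colon \ind_H^{\widetilde G}(\tau)\to\pi^*$ is the same data as a continuous $G$-invariant bilinear form on $\ind_H^{\widetilde G}(\tau)\otimes\pi$, where $G$ acts diagonally and $\pi$ occurs via the original action (contragredient on $\pi^*$ becomes invariance on the pairing). The same dualization rewrites $\Hom_H(\tau,\pi^*\delta_H^{-1}\delta_G)$ as the space of continuous bilinear forms $\beta\colon\tau\times\pi\to\mathbb C$ satisfying the twisted $H$-equivariance
\[
\beta(\tau(h)v,\pi(h)w)=\delta_H(h)\delta_G(h)^{-1}\beta(v,w),\qquad h\in H.
\]

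Next, pull back along the averaging surjection $\alpha\colon \Sc(\widetilde G,\tau)\twoheadrightarrow \ind_H^{\widetilde G}(\tau)$ from \eqref{=ind}. Forms on the quotient correspond to forms on $\Sc(\widetilde G,\tau)\otimes\pi$ that vanish on $\ker(\alpha)\otimes\pi$. Combining du Cloux's isomorphism $\Sc(\widetilde G,\tau)\cong\Sc(\widetilde G)\otimes\tau$ with the Schwartz Kernel Theorem (Lemma~\ref{lem:SKT}) and Lemma~\ref{lem:NF}, continuous bilinear forms on $\Sc(\widetilde G)\otimes\tau\otimes\pi$ correspond to elements of $\Sc^*(\widetilde G,(\tau\otimes\pi)^*)$. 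Imposing diagonal $G$-invariance, where $G$ acts on $\Sc(\widetilde G)$ by left translation, trivially on $\tau$, and via $\pi$ on $\pi$, forces such a distribution $D$ to satisfy $D(gx)(v\otimes w)=D(x)(v\otimes \pi(g)^{-1}w)$; evaluating at $x=e$ shows $D$ is the translate of its value at $e$, which is a bilinear form $\beta$ on $\tau\times\pi$. Moderate growth of $\pi$ ensures that the resulting kernel $g\mapsto \beta(v,\pi(g)^{-1}w)$ is indeed tempered, so this identification is an isomorphism
\[
\bigl(\Sc(\widetilde G,\tau)\otimes\pi\bigr)^{*,G}\ \cong\ \mathrm{Bil}(\tau,\pi).
\]

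Finally, I translate the descent condition through $\alpha$ into the claimed $H$-equivariance. Given $\beta\in\mathrm{Bil}(\tau,\pi)$, the corresponding $G$-invariant form on $\Sc(\widetilde G,\tau)\otimes\pi$ is $B(f\otimes w)=\int_{G}\beta(f(g),\pi(g)^{-1}w)\,dg$, using a chosen Haar measure on $G$. A short change-of-variables computation of $B(R_h f\otimes w)$ using a left Haar measure on $H$ (which picks up a factor $\delta_H(h)^{-1}$) together with a right-invariant Haar on $G$ (contributing $\delta_G(h)$) shows that $B$ descends through $\alpha$ if and only if $\beta$ satisfies exactly the twisted equivariance displayed above. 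Putting all three steps together yields the desired isomorphism.

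The main obstacle will be the bookkeeping in the last step: tracking the correct modular characters as one moves the $H$-action between integration on $G$ and integration on $H$, and then matching signs/conventions so that the twist comes out as $\delta_H^{-1}\delta_G$ rather than its inverse. A secondary technical point is the temperedness of the matrix-coefficient-type kernels in step two, which follows from the moderate-growth hypothesis on $\pi$ but needs to be invoked to legitimize the Schwartz Kernel identification with $(\tau\otimes\pi)^*$ after imposing $G$-invariance.
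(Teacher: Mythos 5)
The paper gives no proof of this lemma --- it is quoted verbatim from \cite[Lemma 2.3.4]{GGS} --- so the comparison is with the standard argument there, which runs through du Cloux's identification of $\ind_H^{\widetilde G}(\tau)$ with the (Hausdorff) twisted $H$-coinvariants of $\Sc(\widetilde G,\tau)$ and then dualizes (essentially the route of Lemma \ref{lem:Frob2}). Your three-step plan is that same argument unwound by hand, and the overall strategy is sound; but two steps are asserted where genuine theorems are needed, and as written they are gaps.

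First, in Step 2 you ``evaluate $D$ at $x=e$.'' A tempered $(\tau\otimes\pi)^*$-valued distribution has no pointwise values, so this does not literally make sense. The correct route is to untwist: the map $f\mapsto(x\mapsto\pi(x)^{-1}f(x))$ is an isomorphism of $\Sc(\widetilde G)\otimes\pi$ intertwining $L_g\otimes\pi(g)$ with $L_g\otimes 1$ (this is where moderate growth enters, to keep Schwartz sections Schwartz), after which you need that every left-translation-invariant element of $(\Sc(\widetilde G)\otimes W)^*$ factors through the Haar integral $\Sc(\widetilde G)\to\C$. That uniqueness statement is not free; it is Dixmier--Malliavin/du Cloux territory (equivalently, $\Sc(\widetilde G)_{\widetilde G}\cong\C$ with Hausdorff coinvariants). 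Second, and more seriously, in Step 3 your change-of-variables computation only shows that descent of $B$ through $\alpha$ \emph{implies} the twisted $H$-equivariance of $\beta$: the relations $R_hf-\chi(h)f$ lie in $\ker\alpha$, so any $B$ vanishing on $\ker\alpha\otimes\pi$ kills them. The converse --- that equivariance of $\beta$ forces $B$ to vanish on all of $\ker\alpha\otimes\pi$ --- requires knowing that $\ker\alpha$ equals the \emph{closed span} of those relations, i.e.\ du Cloux's theorem that $\ind_H^{\widetilde G}(\tau)$ is the Hausdorff quotient $\Sc(\widetilde G,\tau)_{H,\delta}$ \cite[\S 2]{dCl}. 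This is the actual content of the lemma and cannot be replaced by the formal computation. Finally, note that with the usual convention $\langle\pi^*(h)\lambda,\pi(h)w\rangle=\langle\lambda,w\rangle$, a map in $\Hom_H(\tau,\pi^*\delta_H^{-1}\delta_G)$ corresponds to a form with $\beta(\tau(h)v,\pi(h)w)=\delta_H(h)^{-1}\delta_G(h)\beta(v,w)$, the inverse of the twist you display; you flag this yourself, but it does need to be fixed for the statement to come out as claimed.
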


\begin{lem}[{\cite[Lemma 2.8]{GGS2}}]\label{lem:Frob2} Let $\tau \in \Rep^{\infty}(H)$, $\pi\in \Rep^{\infty}(\widetilde{G})$.
Consider the diagonal actions of $H$ on $\pi\otimes \tau$ and of $G$ on $\pi \otimes \ind_H^{\widetilde{G}}(\tau)$. Then
$(\pi\otimes \tau\delta_H\delta_G^{-1})_{H}\cong(\pi \otimes \ind_H^{\widetilde{G}}(\tau))_{\widetilde{G}}$.
\end{lem}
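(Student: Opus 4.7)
The plan is to establish the claimed isomorphism by constructing an explicit integration map built from the concrete realization of Schwartz induction in Definition \ref{def:ind}, and then identifying the kernel on each side. There are two natural routes, and I would likely try the direct one first and keep the dual one as a fallback.

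The direct route. First, I would apply Lemma \ref{lem:SKT} to identify $\pi\otimes \Sc(\widetilde{G},\tau)\cong \Sc(\widetilde{G},\pi\otimes \tau)$. Composing with the averaging surjection $\Sc(\widetilde{G},\tau)\twoheadrightarrow \ind_H^{\widetilde{G}}\tau$ of \eqref{=ind} yields a surjection $\Sc(\widetilde{G},\pi\otimes \tau)\twoheadrightarrow \pi\otimes \ind_H^{\widetilde{G}}\tau$. After the change of variable $F(g)\mapsto (\pi(g)^{-1}\otimes \id)F(g)$, the diagonal $G$-action on the right-hand side matches pure left translation on $\Sc(\widetilde{G},\pi\otimes \tau)$, whose $\widetilde{G}$-coinvariants are naturally $\pi\otimes \tau$ via $F\mapsto \int_{\widetilde{G}} F(g)\,dg$ (this is the standard computation for the left regular representation on Schwartz functions with values in a \Fre space, and uses nuclearity).

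Next, I would compute the image of the kernel of averaging inside $\pi\otimes \tau$. The kernel consists of $F$ with $\int_H (\id\otimes\tau(h))F(xh)\,dh=0$ for all $x$. Decomposing the integral $\int_{\widetilde{G}}$ as $\int_{\widetilde{G}/H}\int_H$ and carefully keeping track of the twist $F\mapsto (\pi(g)^{-1}\otimes\id)F(g)$, the image of the kernel becomes exactly the closure of the span of elements of the form $\delta_H(h)\delta_G(h)^{-1}\pi(h^{-1})v\otimes \tau(h)w - v\otimes w$, i.e.\ the subspace one quotients by to form $(\pi\otimes \tau\delta_H\delta_G^{-1})_H$. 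This gives the required isomorphism.

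The main obstacle is the bookkeeping of modular functions in the Fubini step (so that the factor $\delta_H\delta_G^{-1}$ appears correctly on the $H$-coinvariant side), together with the topological subtlety that both quotients are by the \emph{closure} of the span of coinvariance relations; one has to argue that the image of a closed subspace under the integration is again closed and equals the expected span, which typically uses the nuclear \Fre structure on all spaces involved. As a fallback, one could dualize and reduce to Lemma \ref{lem:Frob}: the continuous duals of both sides are $\Hom_G(\ind_H^{\widetilde{G}}\tau,\pi^*)$ and $\Hom_H(\tau,\pi^*\delta_H^{-1}\delta_G)$, which are identified by Lemma \ref{lem:Frob}; then reflexivity of the coinvariant spaces (assuming both are Hausdorff nuclear \Fre) would recover the isomorphism of the original spaces. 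I would use whichever route has the cleaner verification of Hausdorffness of the coinvariants.
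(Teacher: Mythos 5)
This lemma is quoted in the paper from \cite[Lemma 2.8]{GGS2} without proof, so there is no in-paper argument to compare against; your direct route is essentially the argument given in that reference (the twist $F(g)\mapsto(\pi(g)^{-1}\otimes\id)F(g)$ is the tensor identity $\pi\otimes\ind_H^{\widetilde{G}}\tau\cong\ind_H^{\widetilde{G}}(\pi|_H\otimes\tau)$, followed by the computation of $\widetilde{G}$-coinvariants of $\Sc(\widetilde{G},V)$ via integration), and your modular-function bookkeeping lands on the correct twist $\delta_H\delta_G^{-1}$. The one step you pass over quickly is the inclusion $\overline{\Phi(\ker q)}\subseteq\overline{R_H}$, where $q$ is the averaging surjection of \eqref{=ind} and $R_H$ is the span of the twisted $H$-coinvariance relations: this amounts to knowing that $\ker q$ is topologically generated by the evident relations $(\id\otimes\tau(h_0))R_{h_0}F-\delta_H(h_0)^{-1}F$, which is not formal from Definition \ref{def:ind}. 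It can be closed without that description of $\ker q$: any continuous functional vanishing on $R_H$ is an $H$-equivariant functional on $\pi\otimes\tau\delta_H\delta_G^{-1}$, hence by Lemma \ref{lem:Frob} its pullback under $\Phi$ factors through $q$ and so kills $\ker q$; Hahn--Banach then gives the inclusion. The reverse inclusion follows from surjectivity of $\Phi$ as you indicate, so the two quotients of $\Sc(\widetilde{G},\pi\otimes\tau)$ coincide. Your fallback dual route also works, since both coinvariant spaces are Hausdorff quotients of nuclear \Fre\ spaces by closed subspaces and hence reflexive, but the direct route is the cleaner and is the one used in the cited source.
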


\DimaI{
\subsubsection{Harish-Chandra modules}\label{sec:Yam}
In this subsubsection we fix an algebraic reductive group $\bf M$ defined over $\R$, a finite cover $\widetilde{M}$ of an open subgroup of ${\bf M}(\R)$, and a maximal compact subgroup $K\sub \widetilde{M}$.
Let $\fm$ denote the (complex) Lie algebra of $\bf M$, and $\fk$ denote the complexified Lie algebra of $K$.

\begin{defn}
An \emph{$(\fm, K)$-module} is an $\fm$-module with an additional structure of a representation of $K$ that is locally-finite, continuous\footnote{Note that for a locally-finite representation the notion of continuity does not require a topology on the vector space, since finite-dimensional complex vector spaces possess unique topology (that is compatible with the topology on $\C$).}, and compatible with the action of $\fm$ in the sense that the actions of $\fk$ obtained from $\fm$ (by restriction) and $K$ (by differentiation and complexification) coincide, and such that for every $k\in K$, every $\alp \in \fm$, and every $v\in M$ we have 
$$k(\alp v)=(\ad(k)\alp)kv,$$
where $\ad(k)\alp$ denotes the adjoint action of $K$ on $\fm$.
\end{defn}

\begin{defn}
An $(\fm, K)$-module $L$ is called a Harish-Chandra module if 
$L$  is finitely-generated over $\fm$, and 
for every irreducible  representation $\sigma$ of $K$ we have $$\dim\Hom_{K}(\sigma, L)<\infty.$$
We will denote the category of Harish-Chandra modules by $\cM(\fm,K)$.
\end{defn}

\begin{thm}[{\cite[\S4.2]{Wal}}]\label{thm:HCM}
Every Harish-Chandra module has  finite length. In particular, 
the intersection  $\An(L)\cap \fz(\cU(\fm))$ has finite codimension in the center $\fz(\cU(\fm))$ of $\cU(\fm)$.
\end{thm}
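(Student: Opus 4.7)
The plan is to establish finite length first, and then deduce the codimension assertion as a direct consequence via Dixmier's lemma.

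\textbf{Finite length.} The key intermediate step is to show $\fz$-finiteness of $L$, meaning that $\An(L) \cap \fz$ has finite codimension in $\fz := \fz(\cU(\fm))$. Choose a finite-dimensional $K$-stable subspace $F \sub L$ that generates $L$ over $\fm$; then $L$ is an $(\fm, K)$-equivariant quotient of the induced module $\cU(\fm) \otimes_{\cU(\fk)} F$. A foundational theorem of Harish-Chandra (via the Harish-Chandra isomorphism together with Kostant-style separation of variables in $\cU(\fm)$) asserts that $\cU(\fm)$ is finitely generated as a module over $\fz \otimes \cU(\fk)$; consequently $\Hom_\fk(F, \cU(\fm) \otimes_{\cU(\fk)} F)$ is finitely generated over $\fz$. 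Pushing through to $L$ and using $K$-admissibility to make the analogous space finite-dimensional shows that $\fz$ acts on it through a finite-dimensional quotient, so $\An(L) \cap \fz$ has finite codimension.

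Once $\fz$-finiteness is established, $L$ splits as $L = \bigoplus_\chi L_\chi$ over the finitely many generalized infinitesimal characters, and each $L_\chi$ is a finitely generated $\cU(\fm)$-module on which $\fz$ acts through $\fz/(\ker \chi)^k$ for some $k$. Taking a good filtration, the associated graded is a finitely generated $\Sym(\fm)$-module supported on a union of nilpotent $K$-orbit closures, with bounded Hilbert polynomial; combined with $K$-admissibility this bounds the total length of any chain of $(\fm, K)$-submodules, giving finite length of each $L_\chi$ and hence of $L$.

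\textbf{Codimension statement.} Suppose $L$ has composition factors $L_1, \dots, L_n$. Each $L_i$ is a finitely generated, hence at most countable-dimensional, irreducible $\cU(\fm)$-module, so Dixmier's lemma applies: $\fz$ acts on $L_i$ through a character $\chi_i : \fz \to \C$, whence $\ker \chi_i \sub \An(L_i) \cap \fz$. Therefore $\prod_i \ker \chi_i \sub \An(L) \cap \fz$. By the Harish-Chandra isomorphism $\fz \cong \Sym(\ft)^W$, the algebra $\fz$ is a polynomial ring, each $\ker \chi_i$ is a maximal ideal with residue field $\C$, and $\fz / \prod_i \ker \chi_i$ is a finitely generated $\fz$-module supported at finitely many closed points of $\Spec \fz$, hence Artinian and finite-dimensional over $\C$.

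The main obstacle is the $\fz$-finiteness step, which rests on a non-trivial structural theorem of Harish-Chandra; a complete argument would invoke the treatment in Wallach \S 4.2 rather than reprove this classical input.
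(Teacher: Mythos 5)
The paper does not prove this statement --- it is quoted verbatim from Wallach \cite[\S 4.2]{Wal} --- so there is no internal argument to compare yours against; judged on its own terms, your sketch has two genuine problems. The first is that the structural claim underlying your $\fz$-finiteness step is false: $\cU(\fm)$ is \emph{not} finitely generated as a module over $\fz\cdot\cU(\fk)$ in general. Already for $\fm=\sll_2$ and $\fk=\C H$ a Cartan, the subalgebra $\fz\cdot\cU(\fk)=\C[\Omega,H]$ has Gelfand--Kirillov dimension $2$ while $\cU(\sll_2)$ has Gelfand--Kirillov dimension $3$, so finite generation is impossible. (What Kostant's separation of variables actually gives is that each $\ad(K)$-isotypic component of $\cU(\fm)$ is finitely generated over $\fz\cdot\cU(\fk)$.) Fortunately the conclusion you want has a one-line proof bypassing all of this: if $F\sub L$ is a finite-dimensional $K$-stable generating subspace whose $K$-types lie in the finite set $S$, then $\fz\cdot F$ is contained in $\bigoplus_{\gamma\in S}L(\gamma)$ because $\fz$ commutes with $K$; this space is finite-dimensional by admissibility, so $\fz$ acts on $F$ through a finite-dimensional quotient, and since $F$ generates $L$ and $\fz$ is central, $\An(L)\cap\fz=\An(F)\cap\fz$ has finite codimension. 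Note that this already yields the ``in particular'' clause without knowing finite length.

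The second, more serious, gap is in the finite-length step, which is where the real content of the theorem lies. The leading coefficient of the Hilbert polynomial of $\gr L_\chi$ bounds only the number of composition factors whose associated variety has the \emph{same} dimension as that of $L_\chi$; factors supported on smaller orbits contribute nothing to it, and the phrase ``combined with $K$-admissibility this bounds the total length'' is precisely the assertion that needs proof --- no mechanism is offered, and an induction on the dimension of the support does not close on its own because multiplicities in lower dimension are not controlled by the ambient module. The classical proofs insert a hard input at exactly this point: either Harish-Chandra's theorem that there are only finitely many irreducible admissible $(\fm,K)$-modules with a given infinitesimal character (via the subquotient theorem), after which each such irreducible $\pi$ occurs at most $\dim L(\gamma_\pi)$ times for any $K$-type $\gamma_\pi$ of $\pi$ by exactness of $W\mapsto W(\gamma_\pi)$; or the finite generation of each isotypic component $L(\gamma)$ over $\cU(\fm)^{K}$. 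Your concluding Dixmier-lemma deduction of the codimension statement from finite length is correct (irreducible $\cU(\fm)$-modules are countable-dimensional, so $\fz$ acts by characters and $\fz/\prod_i\ker\chi_i$ is Artinian), but it is superfluous once $\fz$-finiteness is proved directly as above.
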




\begin{defn}
For any finitely-generated $\fm$-module $L$, define its \emph{associated variety} $\mathfrak{S}(L)\sub \fm^*$ to be $\cV(I)$, where $I\sub \cU(\fm)$ is the annihilator of a set of generators of $L$. It is well known that this variety does not depend on the choice of generators. We refer to \cite[\S\S 1,2]{Vog} for further background on this notion.
\end{defn}

\begin{lem}[{See {\it e.g.} \cite[(1.5)(b)]{Vog}}]\label{lem:Nk}
For every $L\in \cM(\fm,K)$ we have $$\mathfrak{S}(L)\subset \cV(\Ann L)\cap \fk^{\bot}\sub \cN(\fg^*)\cap \fk^{\bot}.$$
\end{lem}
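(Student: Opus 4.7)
The lemma is the conjunction of three inclusions: (a) $\mathfrak{S}(L)\subset \cV(\An L)$, (b) $\mathfrak{S}(L)\subset \fk^{\bot}$, and (c) $\cV(\An L)\subset \cN(\fm^*)$, from which the displayed chain follows by intersecting with $\fk^{\bot}$. My plan is to handle each separately.

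For (a), if $I\sub \cU(\fm)$ is the left annihilator of a generating set of $L$, then $\An L\subset I$, because the two-sided annihilator kills every element of $L$, and in particular the generators. Passing to the PBW symbol level yields $\gr\An L\subset \gr I$, hence $\cV(I)\subset \cV(\An L)$. For (b), I would construct a $K$-stable good filtration on $L$: starting from any finite generating set, its $K$-span is finite-dimensional by local $K$-finiteness, and I use it as $L_0$, setting $L_n := \cU_n(\fm)L_0$. This filtration is $K$-stable since both $L_0$ and $\cU_n(\fm)$ are. The $(\fm,K)$-compatibility forces every $\xi\in \fk\subset \fm$ to act on $L$ by differentiating the $K$-action, so $\xi$ preserves each $L_n$ individually rather than mapping $L_n$ into $L_{n+1}$. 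Hence the image of $\fk$ in $\gr_1\cU(\fm) = \fm \subset S(\fm)$ annihilates $\gr L$. Since elements of $\fk$ are linear functionals on $\fm^*$ whose common zero locus is $\fk^{\bot}$, we deduce $\mathfrak{S}(L) = \Supp(\gr L)\subset \fk^{\bot}$.

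For (c), I invoke Theorem \ref{thm:HCM} to obtain that $J := \An L \cap \fz(\cU(\fm))$ has finite codimension in the center. Taking associated graded of the filtered short exact sequence $0\to J\to \fz(\cU(\fm))\to \fz(\cU(\fm))/J\to 0$ (with induced filtrations) preserves exactness, producing a finite-codimension ideal $\gr J\subset \gr \fz(\cU(\fm))$, and by Chevalley's restriction theorem the latter equals $S(\fm)^{\fm}$. Thus $V(\gr J)$ is a finite subscheme of $\Spec S(\fm)^{\fm}$. Since $\gr\An L$ is a graded ideal, $\cV(\An L)$ is conical, so its image under the $\C^{\times}$-equivariant adjoint quotient $\fm^*\to \Spec S(\fm)^{\fm}$ is a finite conical subset of $V(\gr J)$ — hence supported at the image of $0$ — and by Kostant's theorem the corresponding fiber is exactly $\cN(\fm^*)$, giving $\cV(\An L)\subset \cN(\fm^*)$.

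The main obstacle is step (b), where one must carefully track the interplay between the $K$-action and the PBW filtration: the crucial input is that $(\fm,K)$-compatibility forces the $\fk\subset \fm$ action to be of filtration degree zero rather than degree one with respect to a $K$-stable good filtration, which is what collapses the image of $\fk$ to zero on $\gr L$. Steps (a) and (c) rely on more standard ingredients (inclusion of two-sided in left annihilators, the Chevalley--Kostant description of the adjoint quotient, and conicity of associated varieties).
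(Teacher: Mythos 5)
Your proof is correct; the paper itself gives no argument for this lemma (it simply cites \cite[(1.5)(b)]{Vog}), and your three-step proof --- $K$-stable good filtration forcing $\fk$ to act in degree zero on $\gr L$, the inclusion of the two-sided annihilator in the annihilator of the generators, and the finite-codimension central character argument via the adjoint quotient --- is exactly the standard argument from that reference. The only point worth flagging is that the lemma's final term should read $\cN(\fm^*)$ rather than $\cN(\fg^*)$ (the ambient algebra in this subsection is $\fm$), which is what you in fact prove.
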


Corollary \ref{cor:Yam} follows now from Theorem \ref{thm:Yam}, Proposition \ref{intprop:GeoYam}, and Lemma \ref{lem:Nk}.

\begin{lem}\label{lem:BorBound}
Let $L$ be an $\fm$-module. Suppose that $L$ is finitely generated over a Borel subalgebra $\fb\sub \fm$. Then there exists $N\in \bN$ such that for every irreducible $A\in \cM(\fm,K)$ we have $\dim \Hom_{\fm}(L,A)\leq N$.
\end{lem}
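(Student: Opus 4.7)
My plan is to combine the hypothesis that $L$ is finitely generated over $\fb$ with a uniform finiteness of $\fn$-invariants in irreducible Harish-Chandra modules, applied via Frobenius reciprocity for induction from the opposite Borel $\fb^- = \fh + \fn^-$.

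Let $v_1, \ldots, v_n$ be generators of $L$ as a $\cU(\fb)$-module. First, any $\varphi \in \Hom_\fm(L, A)$ is determined by the tuple $(\varphi(v_i)) \in A^n$, and if $\varphi \neq 0$ then the $\cU(\fb)$-span of $\{\varphi(v_i)\}$ is an $\fm$-submodule of $A$ (since $L = \cU(\fb) \cdot \{v_i\}$ is an $\fm$-module), hence equal to $A$ by irreducibility. Therefore any $A$ with $\Hom_\fm(L,A) \neq 0$ is itself finitely generated over $\cU(\fb)$ by at most $n$ elements.

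The key step in my plan is a structural claim: $L$ being f.g. over $\fb = \fh + \fn^+$ admits a finite $\fm$-equivariant filtration $0 = L_0 \subsetneq L_1 \subsetneq \cdots \subsetneq L_k = L$ whose successive quotients $L_i/L_{i-1}$ are each quotients of $\fm$-modules of the form $\cU(\fm) \otimes_{\cU(\fb^-)} W_i$, with $W_i$ a finitely generated $\cU(\fh)$-module viewed as a $\fb^-$-module via trivial $\fn^-$-action. Such a filtration should exist by Noetherian induction on the $\cU(\fb)$-module length, exploiting that the $\fn^-$-invariants $L^{\fn^-}$ of a nonzero $\fb$-f.g.\ $\fm$-module must be nonzero and finitely generated over $\cU(\fh)$ (via a Nakayama-type argument that dualizes the nonvanishing of $L/\fn^+ L$ coming from the solvability of $\fb$), so one can ``peel off'' a sub-$\fm$-module generated by part of $L^{\fn^-}$ at each step.

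Granted the filtration, Frobenius reciprocity yields
\[
\Hom_\fm\bigl(\cU(\fm) \otimes_{\cU(\fb^-)} W_i,\, A\bigr) \;\cong\; \Hom_{\fb^-}(W_i, A) \;\cong\; \Hom_\fh(W_i, A^{\fn^-}),
\]
whose dimension is at most $m_i \dim A^{\fn^-}$, where $m_i$ is the number of $\cU(\fh)$-generators of $W_i$. By Casselman's theorem on $\fn$-cohomology, $\dim A^{\fn^-} = \dim H^0(\fn^-, A)$ is uniformly bounded by a constant $C = C(\fm, K)$ over all irreducible $A \in \cM(\fm, K)$. Combining the bounds on subquotients via the long exact sequences for $\Hom_\fm(-, A)$ yields $\dim \Hom_\fm(L, A) \leq (\sum_i m_i) \cdot C =: N$, proving the lemma.

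The main obstacle is justifying the structural claim in the second step, particularly for $L$ whose sub-$\fm$-module lattice is complicated (e.g.\ non-split extensions of universal-Verma-type modules such as $\cU(\fm) \otimes_{\cU(\fb^-)} \cU(\fh)$, which are themselves f.g.\ over $\fb$ but of infinite $\fm$-length). One must combine the Noetherian property of $\cU(\fb)$ with a careful inductive selection of $\fn^-$-invariant generators modulo each step of the filtration, essentially performing a dual of the Casselman--Jacquet construction in this setting.
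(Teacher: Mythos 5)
Your reduction to a generalized-Verma filtration breaks down exactly at the point you flag as the main obstacle: the structural claim in your second step is false. You base the filtration on the assertion that a nonzero $\fm$-module finitely generated over $\fb$ has $L^{\fn^-}\neq 0$ (dually, that $L/\fn^+L\neq 0$ by a ``Nakayama-type argument''). Nakayama does not apply here: the augmentation ideal of $\cU(\fn^+)$ is not contained in the Jacobson radical, and already for $\fn^+=\C e$ the finitely generated module $\C[e]/(e-1)$ satisfies $\fn^+L=L\ne 0$. A counterexample within the hypotheses of the lemma: take $\fm=\sll_2$, $\fb=\C h\oplus\C e$, $\fn^-=\C f$, and $L=\cU(\fm)/\cU(\fm)(f-1)$, a nondegenerate Whittaker module. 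By PBW, $\cU(\fm)=\cU(\fb)\,\C[f]$ and $f\equiv 1$ modulo the left ideal, so $L$ is cyclic over $\cU(\fb)$; but $f-1$ acts locally nilpotently on $L$ (Kostant), hence $f$ is injective on $L$ and $L^{\fn^-}=0$. Such an $L$ admits no filtration of the kind you propose: the bottom term $L_1$ would be generated by the image of $1\otimes W_1$, and that image consists of $\fn^-$-invariant vectors of $L$, hence is zero. Since Whittaker-type modules are among the modules the lemma must cover, the gap cannot be closed by shrinking the class of $L$. (A secondary concern: the uniform bound on $\dim A^{\fn^-}$ over all irreducible Harish-Chandra modules, for $\fn^-$ the nilradical of a \emph{complex} Borel rather than of a real minimal parabolic, is not what Casselman's theorem provides; but this is moot given the failure of the filtration.)

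The paper's proof works on the target $A$ rather than decomposing $L$, which is why it is insensitive to how $\fn^-$ acts on $L$: by the Casselman embedding theorem, $A$ embeds into $\Ind_{P_0}^{\widetilde{M}}\sigma$ with $P_0$ a minimal parabolic and $\sigma$ irreducible finite-dimensional (so with trivial action of the unipotent radical), and Frobenius reciprocity reduces the problem to bounding homomorphisms into finite-dimensional irreducibles; dualizing, a map $A^*\to L^*$ is determined by the image of the highest weight vector of $A^*$, which lies in a $\fb$-eigenspace of $L^*$, and the dimension of any such eigenspace is at most the number of generators of $L$ over $\fb$. If you want to salvage your own line of attack, you would have to replace the induced modules $\cU(\fm)\otimes_{\cU(\fb^-)}W_i$ by modules induced from arbitrary finitely generated $\cU(\fb^-)$-modules (not just those with trivial $\fn^-$-action), at which point the Frobenius-reciprocity bound via $A^{\fn^-}$ disappears and the argument no longer closes.
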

\begin{proof}
By the Casselman embedding theorem \cite[Proposition
8.23]{CaM} there exists a minimal parabolic subgroup $P_0\sub \widetilde{M}$, and a finite-dimensional smooth representation $\sigma$ of $P_0$
such that $A$ may be imbedded into $Ind_{P_0}^{\widetilde{M}}\sigma$. 
Since $A$ is irreducible, $\sigma$ can be chosen to be irreducible, which implies that the unipotent radical of $P_0$ acts trivially.
Thus
 the statement reduces to the case $\widetilde{M}=K$, and $A$ is finite-dimensional. Then $\Hom(L,A)\cong \Hom(A^*,L^*)$. But $A^*$ is generated by its highest weight vector, so this space embeds into $(L^*)^{\fb,\chi}$ for some character $\chi$ of $\fb$. The dimension of this space is in turn bounded by the number of generators of $L$ over $\fb$.
\end{proof}


\subsubsection{Admissible representations}\label{subsubsec:adm}
Let us now define the category of representations to which our main results apply. Let $\widetilde{G}$ be a finite cover of an open subgroup of $G$.
We will call such representations \emph{admissible} and denote the category of such representations by $\cM({\widetilde{G}})$. If $G$ is reductive then we let $\cM(\widetilde{G})\sub \Rep^{\infty}({\widetilde{G}})$ be the subcategory of finitely generated admissible representations (see \cite[\S 11.5]{Wal}).

By Casselman and Wallach (\cite{CasGlob}, \cite[Ch. 11,12]{Wal}) it is equivalent to the category $\cM(\fg,K)$, where $K\sub {\widetilde{G}}$ is a maximal compact subgroup.  The equivalence functor $\cM({\widetilde{G}})\to \cM(\fg,K)$ sends any representation $\pi$ to its space $\pi^{(K)}$ of $K$-finite vectors. This space is known to be dense in $\pi$. 

For a general $G$, fix  a Levi decomposition $G=M\ltimes U$.
Then the cover $\widetilde{G}\to G$ splits over $U$ and thus defines a decomposition $\widetilde{G}=\widetilde{M}\ltimes U$.
\begin{defn}\label{def:adm}
We call a representation $\pi\in \Rep^{\infty}({\widetilde{G}})$   \emph{admissible} if $\pi|_{\widetilde{M}}$ is admissible and $\pi$ is $\cU(\fu)$-finite, i.e. $\Ann_{\cU(\fu)}\pi$ has finite codimension in $\cU(\fu)$.
\end{defn}
\begin{remark}
This definition does not depend on the Levi decomposition.
\end{remark}
}

The following proposition explains the structure of admissible representations of  ${\widetilde{G}}$.
\begin{proposition}\label{prop:AdmStr}
Any admissible $\pi\in \cM({\widetilde{G}})$ admits a finite filtration with associated graded pieces of the form $\pi_i\otimes\chi_i$ where $\pi_i\in \cM(\widetilde{M})$ (considered as a representation of  ${\widetilde{G}}$ using the projection ${\widetilde{G}}\onto \widetilde{M}$) and $\chi_i$ are unitary characters of ${\widetilde{G}}$.
\end{proposition}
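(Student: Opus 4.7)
My plan is to decompose $\pi$ according to generalized characters of $\fu$, twist each piece by an appropriate unitary character of $\widetilde{G}$, and then filter by powers of $\fu$.

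Since $\pi$ is $\cU(\fu)$-finite, the algebra $A:=\cU(\fu)/\Ann_{\cU(\fu)}(\pi)$ is finite-dimensional. Because $\fu$ is nilpotent, every simple $A$-module is one-dimensional and factors through $\fu/[\fu,\fu]$, so $\pi$ admits a finite generalized-eigenspace decomposition $\pi=\bigoplus_{\chi\in S}\pi_{(\chi)}$ indexed by a finite set $S$ of characters $\chi\colon\fu\to\C$. The moderate-growth hypothesis forces each $\chi\in S$ to take purely imaginary values on $\fu_\R$: otherwise a one-parameter subgroup of $U$ would act on $\pi_{(\chi)}$ by operators whose norms grow exponentially in the parameter, contradicting the moderate-growth bound on the seminorms of $\pi$.

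Next I would exploit $\widetilde{G}$-equivariance. The adjoint action of $\widetilde{G}$ on $\fu$ permutes the finite set $S$; since $\widetilde{G}^{\circ}$ is connected it fixes every $\chi\in S$, and only the finite quotient $\widetilde{G}/\widetilde{G}^{\circ}$ can act nontrivially. Grouping the $\pi_{(\chi)}$ into $\widetilde{G}$-orbits yields a direct sum decomposition of $\pi$ as a $\widetilde{G}$-representation, so it suffices to treat one orbit at a time. For a given orbit, the $\widetilde{G}^{\circ}$-invariance of a representative $\chi$ amounts to $\chi|_{[\widetilde{\fg},\fu]}=0$, and using the Levi decomposition $\widetilde{\fg}=\widetilde{\fm}\oplus\fu$ one checks that $[\widetilde{\fg},\widetilde{\fg}]\cap\fu=[\widetilde{\fg},\fu]$, so $\chi$ factors through the image of $\fu$ in $\widetilde{\fg}^{\mathrm{ab}}$. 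Any $\R$-linear lift taking imaginary values on $\widetilde{\fg}_\R$ exponentiates to a unitary character of $\widetilde{G}^{\circ}$. In the disconnected case I would work first in the stabilizer $\widetilde{G}_{\chi}$ (a finite-index subgroup of $\widetilde{G}$ containing $\widetilde{G}^{\circ}$), extend to $\widetilde{G}_\chi$, and recover the $\widetilde{G}$-statement at the end via the projection formula $\Ind_{\widetilde{G}_{\chi}}^{\widetilde{G}}(\sigma\otimes\widetilde{\chi})\cong(\Ind_{\widetilde{G}_{\chi}}^{\widetilde{G}}\sigma)\otimes\widetilde{\chi}$. After replacing $\pi$ by $\pi\otimes\widetilde{\chi}^{-1}$ we may assume that $\fu$ acts nilpotently on $\pi$.

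Once $\fu$ acts nilpotently, the descending filtration $\pi\supset\overline{\fu\pi}\supset\overline{\fu^{2}\pi}\supset\cdots$ is $\widetilde{G}$-stable and terminates at $0$, and each subquotient is annihilated by $\fu$, hence descends to an admissible representation of $\widetilde{G}/U=\widetilde{M}$ (admissibility as an $\widetilde{M}$-representation is inherited from $\pi|_{\widetilde{M}}$ since subquotients of finite-length Harish-Chandra modules remain of finite length). Tensoring back by $\widetilde{\chi}$ yields the filtration claimed in the proposition. The main obstacle is the character-extension step in the disconnected case, where one must control the finite-group cohomological obstruction to extending a unitary character from $\widetilde{G}^{\circ}$ to $\widetilde{G}$; the stabilizer-and-induce workaround circumvents this at the cost of some bookkeeping with the projection formula.
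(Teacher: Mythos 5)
Your argument follows the same road as the paper's proof: finitely many characters of $\fu$ occur in $\pi$, the group fixes each of them, each extends to a unitary character of $\widetilde{G}$, and after twisting the $\fu$-action becomes nilpotent so the pieces descend to $\widetilde{M}$. (The paper organizes this slightly differently: it first reduces to irreducible $\pi$ using finite length of $\pi|_{\widetilde{M}}$, whereas you keep $\pi$ whole via the generalized $\fu$-weight decomposition and the filtration by $\overline{\fu^{k}\pi}$; both work.) Two steps of your write-up, however, do not go through as stated.

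First, the extension of $\chi$ to a character of the group. It is not true that \emph{any} $\R$-linear lift of $\chi$ to $\fg_{\R}$ with imaginary values exponentiates to a character of $\widetilde{G}^{\circ}$: a Lie-algebra character integrates to a connected group only if it is trivial on $\pi_1$, and the Levi may contain compact torus factors on which an arbitrary imaginary-valued functional does not integrate. The repair is to use the semidirect product structure directly: $U$ is simply connected, so $\chi$ integrates to a character $\psi$ of $U$, and the $\widetilde{M}$-invariance of $\psi$ makes $\widetilde{\psi}(mu):=\psi(u)$ a well-defined character of $\widetilde{G}=\widetilde{M}\ltimes U$, with no integration over the Levi at all.

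Second, the disconnected case. The workaround you sketch --- extend $\chi$ only over the stabilizer $\widetilde{G}_{\chi}$ and induce back up --- cannot yield the proposition as stated, since $\Ind_{\widetilde{G}_{\chi}}^{\widetilde{G}}(\sigma\otimes\widetilde{\chi})$ is not of the form $\pi_i\otimes\chi_i$ with $\chi_i$ a character of all of $\widetilde{G}$ unless $\widetilde{G}_{\chi}=\widetilde{G}$. Fortunately the case does not arise: the standing assumption of the section is that $\bfG$ is a \emph{connected algebraic} group, so the stabilizer of $\chi$ in $\bfG$ is a Zariski-closed subgroup whose Lie algebra is all of $\fg$ (this is exactly your finiteness-of-the-orbit argument), hence equals $\bfG$; therefore every element of $\widetilde{G}$ fixes $\chi$ even though the real Lie group $\widetilde{G}$ may be disconnected. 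This is how the paper closes the point. With these two repairs your proof is complete.
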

\begin{proof}
Since $\pi|_{\widetilde{M}}$ is admissible, it has finite length. Thus $\pi$ also has finite length, and thus it is enough to show that $U$ acts by a unitary character on every irreducible $\pi\in \cM({\widetilde{G}})$.

Since $\Ann_{\cU(\fu)}\pi$ has finite codimension in $\cU(\fu)$, the action of $\fu$ on $\pi$ is locally finite.  Lie's theorem implies now that $\fu$ acts by a character on a non-zero subspace of $\pi$, and thus so does $U$. {Denote this character by $\psi$. We would like to show that $\psi$ is  ${\widetilde{G}}$-invariant. Since $\cU(\fu)/\Ann_{\cU(\fu)}\pi$ is finite-dimensional, it has only finitely many simple modules. Therefore, there are only finitely many $\psi_i$ with non-zero $(\fu,\psi_i)$-eigenspaces in $\pi$.  We conclude that the ${\widetilde{G}}$-orbit of $\psi$ is finite, and thus $\psi$ is fixed by $\fg$. Since $\bfG$ is connected, and the action of $\bfG$ on the space of characters of $\fu$ is algebraic,  we get that $\psi$ is ${\widetilde{G}}$-invariant. Hence so is the $\psi$-eigenspace of $\fu$ in $\pi$.
It is easy to see that this space} is closed, and thus it has to equal $\pi$. We can extend  $\psi$ to ${\widetilde{G}}$ and get $\pi=(\pi\otimes \psi^{-1})\otimes \psi$.
Finally, it is easy to see that all moderate growth characters of unipotent groups are unitary.
\end{proof}

The requirement that $\pi$ is $\cU(\fu)$-finite implies that $\cV(\An \pi)\subset \fu^{\bot}\cong \fm^*$. This, together with the admissibility of $\pi|_M$, implies that $\cV(\An\pi)$ is a finite union of $\bfG$-orbits. Vice versa,  the inclusion $\cV(\An\pi)\subset \fu^{\bot}$ is also necessary to have finitely many orbits, by Proposition \ref{prop:ms}.

For any $\pi\in \cM({\widetilde{G}})$, we define the contragredient representation by $\widetilde{\pi}:=\Sc({\widetilde{G}})\pi^*$.
\begin{lemma}
$\widetilde{\pi}|_{\widetilde{M}}\cong \widetilde{(\pi|_{\widetilde{M}})}$
\end{lemma}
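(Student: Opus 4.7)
The plan is to use the Levi decomposition $\widetilde{G}=\widetilde{M}\ltimes U$ fixed just before Definition~\ref{def:adm}, under which the multiplication map $\widetilde{M}\times U\to \widetilde{G}$ is a Nash diffeomorphism and induces an isomorphism of nuclear Fréchet spaces $\Sc(\widetilde{G})\cong \Sc(\widetilde{M})\,\widehat\otimes\, \Sc(U)$ (by Lemma~\ref{lem:SKT}). Combined with the factorization $\pi^*(mu)=\pi^*(m)\pi^*(u)$, the convolution action gives $\widetilde{\pi}=\Sc(\widetilde{G})\pi^*=\Sc(\widetilde{M})\bigl(\Sc(U)\pi^*\bigr)$. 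Since $\widetilde{(\pi|_{\widetilde{M}})}=\Sc(\widetilde{M})\pi^*$ by definition, it suffices to prove the one identity
\[
\Sc(U)\pi^*=\pi^*
\]
inside $\pi^*$.

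To prove this I would apply Proposition~\ref{prop:AdmStr} to obtain a finite filtration $0=\pi_0\subset\pi_1\subset\cdots\subset\pi_n=\pi$ by closed $\widetilde{G}$-subrepresentations with $\pi_i/\pi_{i-1}\cong \sigma_i\otimes \chi_i$, where $\sigma_i\in\cM(\widetilde{M})$ (pulled back via $\widetilde{G}\onto \widetilde{M}$) and $\chi_i$ is a unitary character of $\widetilde{G}$. In particular $U$ acts on each subquotient through the unitary character $\chi_i|_U$. Dualising, $\pi^*$ carries a dual filtration by $\pi_i^{\perp}$ whose successive quotients are (strong duals of) $\sigma_i\otimes\chi_i$, and on each of these $U$ acts through the unitary character $\chi_i^{-1}|_U$.

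On any representation $V$ on which $U$ acts by a unitary character $\chi$, the convolution map $\Sc(U)\otimes V\to V$ factors as $f\otimes v\mapsto \bigl(\int_U f(u)\chi(u)du\bigr)v$, which is surjective because the functional $f\mapsto \int f\chi$ on $\Sc(U)$ is nonzero. Hence on each graded piece $\Sc(U)$ acts surjectively onto the full subquotient. The identity $\Sc(U)\pi^*=\pi^*$ then follows by induction on the length of the filtration: given $v\in \pi^*$, write its image in $\pi^*/\pi_1^{\perp}$ (say) as a finite sum $\sum f_j\cdot \bar w_j$, lift the $\bar w_j$ to $w_j\in\pi^*$, and observe that $v-\sum f_j\cdot w_j\in\pi_1^{\perp}$, which by induction lies in $\Sc(U)\pi_1^{\perp}\subset \Sc(U)\pi^*$.

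The main obstacle will be the topological bookkeeping rather than the algebra: verifying that the tensor-product factorization of $\Sc(\widetilde{G})$ is compatible with the convolution actions on $\pi^*$, that the filtration on $\pi^*$ dual to the filtration on $\pi$ consists of strict morphisms of nuclear Fréchet spaces (so that the surjectivity on subquotients lifts to surjectivity on the whole), and that the equality $\Sc(U)\pi^*=\pi^*$ holds not merely set-theoretically but as topological $\widetilde{M}$-representations, so that the resulting identification $\widetilde{\pi}|_{\widetilde{M}}\cong \widetilde{(\pi|_{\widetilde{M}})}$ is an isomorphism in $\Rep^{\infty}(\widetilde{M})$. These points should be handled by standard properties of Schwartz spaces on Nash manifolds (Lemma~\ref{lem:SKT} and the results of \cite{dCl,AG_Sc}).
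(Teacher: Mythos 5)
Your proof is correct in substance but organizes the argument differently from the paper. The paper proves the statement by showing that every $\widetilde{M}$-smooth vector of $\pi^*$ is $\widetilde{G}$-smooth: in the irreducible case it writes $\pi=\pi_1\otimes\psi$ via Proposition \ref{prop:AdmStr} and invokes the Dixmier--Malliavin theorem for the full group $\widetilde{G}$ acting on $\widetilde{\pi|_{\widetilde{M}}}$, then handles the general case by induction on length with a Hahn--Banach lifting. You instead factor the algebra, $\Sc(\widetilde{G})\cong\Sc(\widetilde{M})\,\widehat{\otimes}\,\Sc(U)$, and reduce everything to the single identity $\Sc(U)\pi^*=\pi^*$, which you prove by the same filtration and the observation that on each graded piece $\Sc(U)$ acts through the scalar $\int_U f\chi$. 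The two arguments share their essential inputs (Proposition \ref{prop:AdmStr}, induction on length, Hahn--Banach), but yours replaces the application of Dixmier--Malliavin to $\widetilde{G}$ by an explicit and more elementary computation of the $U$-convolution, which makes the role of the unitarity of $\chi_i|_U$ (integrability of $f\chi$) more transparent. The one step you should make explicit is the inclusion $\Sc(\widetilde{G})\pi^*\subseteq\Sc(\widetilde{M})\bigl(\Sc(U)\pi^*\bigr)$: elements of the completed tensor product are only limits of finite sums of pure tensors, so this direction is not formal; it requires a Dixmier--Malliavin/du Cloux factorization such as $\Sc(\widetilde{G})=\Sc(\widetilde{M})*\Sc(\widetilde{G})$ (equivalently, that $\widetilde{G}$-smooth vectors are $\widetilde{M}$-smooth and then Dixmier--Malliavin for $\widetilde{M}$). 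The opposite inclusion, which is the substantive one and the one the paper's proof addresses, follows from pure tensors together with your identity $\Sc(U)\pi^*=\pi^*$, exactly as you say.
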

\begin{proof}
We need to show that any $\widetilde{M}$-smooth vector $v\in \pi^*$ is also ${\widetilde{G}}$-smooth.
 \begin{enumerate}[Step 1:]
 \item Proof for the case when $\pi$ is irreducible.\\
 By Proposition \ref{prop:AdmStr} we can write $\pi=\pi_1\otimes \psi$, where $\pi_1\in\cM(\widetilde{M})$  and $\psi$ is a character. Then $v$ is infinitely differentiable under the action of ${\widetilde{G}}$ on the \Fre \, space  $\widetilde{\pi|_{\widetilde{M}}}$. The Dixmier-Malliavin theorem implies  now that $v$ is smooth, i.e. $v\in \widetilde{\pi}=\Sc({\widetilde{G}})\pi^*$.

 \item Proof for the general case.\\
 We prove by induction on the length of $\pi$. Let $\sigma\sub \pi$ be a closed irreducible submodule and let $v_0:=v|_\sigma $. By the previous step we can write $v_0=\sum_i g_i*w_i$ where $g_i\in \Sc({\widetilde{G}})$, $w_i\in \sigma^*$ and the sum is finite. By the Hahn-Banach theorem there exist $w_i'\in \pi^*$ s.t. $w_i'|_{\sigma}=w_i$. Let $w:=v-\sum_i g_i*w'_i$. It remains to show that $w$ is ${\widetilde{G}}$-smooth. It is easy to see that $w|_{\sigma}=0$. Hence the assertion follows from the induction assumption applied to $\pi/\sigma$.
 \end{enumerate}

\end{proof}

The lemma implies that  $\widetilde{\widetilde{\pi}}\cong \pi$.
From this and Lemma \ref{lem:Frob} we obtain the following corollary.
\begin{cor}\label{cor:Frob}
For any \DimaI{semi-}algebraic subgroup $H\subset {\widetilde{G}}$ and any $\tau \in \cM(H)$ and  $\pi\in \cM({\widetilde{G}})$ we have
$$\Hom_{\widetilde{G}}(\ind_H^{\widetilde{G}}\tau,\pi)\cong
\Hom_{{\widetilde{G}}\times H}(\Sc({\widetilde{G}}),\pi\otimes
 \widetilde{\tau\delta_H}),$$
where ${\widetilde{G}}\times H$ acts on ${\widetilde{G}}$ by left and right shifts.
\end{cor}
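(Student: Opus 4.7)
The plan is to show that both sides of the claimed isomorphism are canonically identified with the ``smooth-target'' Frobenius Hom-space $\Hom_H(\tau,\pi\delta_H^{-1}\delta_G)$, using only Lemma~\ref{lem:Frob} and the reflexivity $\widetilde{\widetilde{\pi}}\cong \pi$ that was just proved.

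For the left-hand side, apply Lemma~\ref{lem:Frob} with $\widetilde{\pi}$ playing the role of $\pi$ to obtain
$$\Hom_{\widetilde{G}}(\ind_H^{\widetilde{G}}\tau,\,(\widetilde{\pi})^*)\;\cong\;\Hom_H(\tau,\,(\widetilde{\pi})^*\delta_H^{-1}\delta_G).$$
Since $\ind_H^{\widetilde{G}}\tau$ is smooth of moderate growth, any continuous $\widetilde{G}$-equivariant map from it to $(\widetilde{\pi})^*$ takes values in the $\widetilde{G}$-smooth vectors of the target, which by reflexivity coincide with $\pi$. The same observation, together with the naturality of the Frobenius bijection, upgrades the right-hand side, yielding $\Hom_{\widetilde{G}}(\ind_H^{\widetilde{G}}\tau,\pi)\cong \Hom_H(\tau,\pi\delta_H^{-1}\delta_G)$.

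For the right-hand side, the key observation is that $\Sc(\widetilde{G})$, viewed as a $(\widetilde{G}\times H)$-representation via left and right translation, is isomorphic to $\ind_{\Delta H}^{\widetilde{G}\times H}(\mathbf{1})$, where $\Delta H\subset \widetilde{G}\times H$ is the diagonal. This comes from the identification $(\widetilde{G}\times H)/\Delta H\cong \widetilde{G}$, $(g,h)\Delta H\mapsto gh^{-1}$, and is realized explicitly by $f\mapsto\bigl(x\mapsto\int_H f(xh,h)\,dh\bigr)$. Applying Lemma~\ref{lem:Frob} to this induced representation, with target $(\widetilde{\pi}\otimes\tau\delta_H)^*$ (whose $(\widetilde{G}\times H)$-smooth vectors are $\pi\otimes\widetilde{\tau\delta_H}$ by reflexivity applied to $\pi$ and to $\tau\delta_H$), and using the computation $\delta_{\widetilde{G}\times H}\,\delta_{\Delta H}^{-1}|_{\Delta H}=\delta_G|_H$, gives
$$\Hom_{\widetilde{G}\times H}\bigl(\Sc(\widetilde{G}),\,\pi\otimes\widetilde{\tau\delta_H}\bigr)\;\cong\;(\pi\otimes\widetilde{\tau\delta_H}\otimes\delta_G)^{\Delta H}.$$
Tensor-Hom adjunction together with reflexivity of $\tau\delta_H$ identifies this with $\Hom_H(\tau\delta_H,\pi\delta_G)=\Hom_H(\tau,\pi\delta_H^{-1}\delta_G)$, completing the matching.

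The main technical obstacle is twofold: first, verifying the isomorphism $\Sc(\widetilde{G})\cong\ind_{\Delta H}^{\widetilde{G}\times H}(\mathbf{1})$ with the correct normalization of modular characters (the explicit map above does the job, but one must check that no extra factor of $\delta_H$ or $\delta_G$ is hidden in du~Cloux's Schwartz-induction formula \eqref{=ind}); and second, the subtlety that continuous $H$-equivariant maps from a smooth $H$-representation into $(\widetilde{\pi})^*$ a priori land only in $H$-smooth vectors, which may strictly contain $\pi=$ the $\widetilde{G}$-smooth vectors, but this is resolved by using the naturality of the Frobenius bijection so that the smooth targets match on both sides.
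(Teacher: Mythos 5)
Your overall architecture --- two applications of Lemma \ref{lem:Frob}, the identification $\Sc(\widetilde{G})\cong\ind_{\Delta H}^{\widetilde{G}\times H}\C$, and the observation that maps out of a Schwartz-smooth source land in the smooth vectors of a dual target --- is exactly the paper's. But there is a genuine gap at the hinge of your argument: the ``smooth-target'' Frobenius reciprocity $\Hom_{\widetilde{G}}(\ind_H^{\widetilde{G}}\tau,\pi)\cong\Hom_H(\tau,\pi\delta_H^{-1}\delta_{\widetilde{G}})$ is false in general, and ``naturality'' cannot rescue it. Lemma \ref{lem:Frob} gives a bijection onto $\Hom_H(\tau,\widetilde{\pi}^*|_H\delta_H^{-1}\delta_{\widetilde{G}})$; your claim amounts to saying that every continuous $H$-map $\tau\to\widetilde{\pi}^*$ lands in the $\widetilde{G}$-smooth vectors $\pi$, whereas only $H$-smoothness of the image is automatic. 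Concretely, take $H=\{1\}$ and $\tau=\C$: then $\ind_H^{\widetilde{G}}\tau=\Sc(\widetilde{G})$ and $\Hom_{\widetilde{G}}(\Sc(\widetilde{G}),\pi)=\Hom_{\widetilde{G}}(\Sc(\widetilde{G}),\widetilde{\pi}^*)\cong\widetilde{\pi}^*$ (every $\lambda\in\widetilde{\pi}^*$ yields a map that automatically lands in $\Sc(\widetilde{G})\widetilde{\pi}^*=\pi$), whereas your formula would give $\pi$ itself. The same illegitimate substitution of smooth vectors for the full dual occurs in your right-hand-side computation, where you take $\Delta H$-invariants in $\pi\otimes\widetilde{\tau\delta_H}$ rather than in $(\widetilde{\pi}\otimes\tau\delta_H)^*$, and again in the final ``tensor-Hom'' step.

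The fix is what the paper does: never descend to a smooth target over the small group. Keep $\widetilde{\pi}^*$ as the target and match the two sides at the level of full duals using the nuclear-Fr\'echet identities of Lemma \ref{lem:NF}: $\Hom_H(\tau,\widetilde{\pi}^*|_H\delta_H^{-1}\delta_{\widetilde{G}})\cong\Hom_H(\widetilde{\pi}|_H,\tau^*\delta_H^{-1}\delta_{\widetilde{G}})\cong\Hom_{\Delta H}(\C,\widetilde{\pi}^*\otimes\tau^*\delta_H^{-1}\delta_{\widetilde{G}})$; only then apply Lemma \ref{lem:Frob} to $\Sc(\widetilde{G})\cong\ind_{\Delta H}^{\widetilde{G}\times H}\C$ and pass to the smooth vectors of $(\widetilde{\pi}\otimes\tau\delta_H)^*$ over the big group $\widetilde{G}\times H$, where the passage is legitimate because the source $\Sc(\widetilde{G})$ is smooth for $\widetilde{G}\times H$. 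Your modular-character bookkeeping ($\delta_{\widetilde{G}\times H}\delta_{\Delta H}^{-1}|_{\Delta H}=\delta_{\widetilde{G}}|_H$) and the identification $(\widetilde{G}\times H)/\Delta H\cong\widetilde{G}$ are correct.
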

\begin{proof}
Since $\Sc({\widetilde{G}})\ind_H^{\widetilde{G}}\tau=\ind_H^{\widetilde{G}}\tau$, we have
$\Hom_{\widetilde{G}}(\ind_H^{\widetilde{G}}\tau,\pi)= \Hom_{\widetilde{G}}(\ind_H^{\widetilde{G}}\tau,\widetilde{\pi}^*)$. By Lemma \ref{lem:Frob} we have  $$ \Hom_{\widetilde{G}}(\ind_H^{\widetilde{G}}\tau,\pi)= \Hom_{\widetilde{G}}(\ind_H^{\widetilde{G}}\tau,\widetilde{\pi}^*)\cong \Hom_H(\tau,\widetilde{\pi}^*|_H\delta_H^{-1}\delta_{\widetilde{G}})\cong \Hom_H(\widetilde{\pi}|_H,\tau^*\delta_H^{-1}\delta_{\widetilde{G}})$$
Now we note that $\Sc(G)\cong \ind_{\Delta H}^{{\widetilde{G}}\times H}\C$ and use Lemma \ref{lem:Frob} again.
\begin{multline*}
\Hom(\widetilde{\pi}|_H,\tau^*\delta_H^{-1}\delta_{\widetilde{G}})\cong \Hom_{\Delta H}(\C,\widetilde{\pi}^*\otimes\tau^*\delta_H^{-1}\delta_{\widetilde{G}})\cong
\Hom_{{\widetilde{G}}\times H}(\Sc({\widetilde{G}}),\widetilde{\pi}^*\otimes \tau^*\delta_H^{-1})\cong \\ \cong \Hom_{{\widetilde{G}}\times H}(\Sc({\widetilde{G}}),\pi\otimes \widetilde{\tau\delta_H})
\end{multline*}
\end{proof}


\begin{defn}
For any $\Pi,\tau\in \Rep^{\infty}({\widetilde{G}})$, we define the multiplicity of $\tau$ in $\Pi$ as $$m(\Pi,\tau):=\dim \Hom_{\widetilde{G}}(\Pi,\tau).$$ We say that $\Pi$ \emph{has finite multiplicities} if $m(\Pi,\tau)$ is finite for every $\tau\in \cM({\widetilde{G}})$. \end{defn}

For any $\fg$-module $\pi$, denote by $\AnV(\pi)\subset \fg^*$ the associated variety of the annihilator of $M$ in $\cU(\fg)$, {\it i.e.} $\AnV(\pi)=\cV(\An(\pi))$.

If $\pi \in \cM({\widetilde{G}})$, $\AnV(\pi)$ is a union of nilpotent orbits in $\fm^*$.
For any closed conical $\bfG$-invariant subset $\Xi\subset \cN(\fm^*)\sub\fg^* $, denote by $\cM_{\Xi}({\widetilde{G}})\sub \cM({\widetilde{G}})$ the subcategory consisting of representations $\pi$ with $\AnV(\pi)\subset \Xi$.

\DimaI{
Let us now define a similar notion of Harish-Chandra modules.
\begin{defn}\label{def:adm}
Let $K\sub \widetilde{G}$ be a maximal compact Lie subgroup. 
A \emph{Harish-Chandra} $(\g,K)$-module $L$ is a $\fg$-module $L$ with an action of $K$ that forms a Harish-Chandra $(\fm,K)$-module, and is $\cU(\fu)$-finite.
We will denote the category of \emph{Harish-Chandra} $(\g,K)$-modules by $\cM(\fg,K)$. For any closed conical $\bfG$-invariant subset $\Xi\subset \cN(\fm^*)\sub\fg^* $, denote by $\cM_{\Xi}(\g,K)\sub \cM(\g,K)$ the subcategory consisting of modules $L$ with $\cV(\An L)\subset \Xi$.
\end{defn}

\begin{rem}
A general Harish-Chandra $(\g,K)$-module $L$ may not be the module of $K$-finite vectors of an admissible $\pi\in \cM(\widetilde{G})$. A necessary and sufficient condition is that all the eigencharacters of $\fu$ on $L$ have imaginary values on the real points of $\fu$. The reason is that a smooth character of $U$ has moderate growth if and only if it is unitary.
\end{rem}
}


\subsection{Main results}
Fix a Levi decomposition $\bf G=MU$, and let $\bf X$ be an algebraic $\bf G$-manifold.

The natural projection $p:\fg\onto \fm$ defines an embedding $i:\fm^*\into \fg^*$. Let $\fz(\fm)$ denote the center of $\cU(\fm)$.


\begin{theorem}\label{thm:fin}
Let  $I\subset \cU(\fg)$ be a two-sided ideal such that $I\cap \fz(\fm)$ is cofinite in $\fz(\fm)$, and
$I\cap \cU(\fu)$ is cofinite in $\cU(\fu)$.
Assume that $\bf X$ is $\cV(I)$-spherical, and let $\cE$ be a twisted algebraic ${\widetilde{G}}$-equivariant vector bundle on $X$. Then $\Sc( X,\cE)_I\in \cM(G)$.
\end{theorem}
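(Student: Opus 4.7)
Set $\pi := \Sc(X,\cE)_I$. The plan is to verify the two conditions of Definition \ref{def:adm}: (i) $\cU(\fu)$-finiteness of $\pi$, and (ii) admissibility of $\pi|_{\widetilde M}$. Condition (i) is immediate: since $I$ annihilates $\pi$ and $I \cap \cU(\fu)$ is cofinite in $\cU(\fu)$ by hypothesis, the $\cU(\fu)$-action on $\pi$ factors through the finite-dimensional quotient $\cU(\fu)/(I \cap \cU(\fu))$. In particular $\cV(I)\subset \fu^{\bot}\cong\fm^{*}$ by Proposition \ref{prop:ms}, so the $\bfG$-orbits in $\cV(I)$ coincide with the $\mathbf{M}$-orbits.

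For (ii), fix a maximal compact subgroup $\widetilde K\subset\widetilde M$. I will show that the underlying Harish-Chandra $(\fm,\widetilde K)$-module $\pi^{(\widetilde K)}$ is admissible and $\fz(\fm)$-finite. The second property follows directly from the cofiniteness of $I\cap\fz(\fm)$ in $\fz(\fm)$. Once admissibility is in hand, the classical Harish-Chandra--Wallach finiteness theorem forces $\pi^{(\widetilde K)}$ to have finite length, hence to be finitely generated; Casselman--Wallach then upgrades this to admissibility of $\pi|_{\widetilde M}$ as a smooth \Fre\ representation of moderate growth, and combined with (i) this gives $\pi\in\cM(\widetilde G)$. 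The remaining task is therefore to prove that $\dim\Hom_{\widetilde K}(\sigma,\pi)<\infty$ for every irreducible $\widetilde K$-type $\sigma$; by the usual duality between $\widetilde K$-types in $\pi$ and in its contragredient, this is equivalent to $\dim\Hom_{\widetilde K}(\sigma,\pi^{*})<\infty$, where $\pi^{*}=\Sc^{*}(X,\cE^{*})^{I}$.

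For this finiteness I will invoke Theorem \ref{Thm:Dist} with a suitable second variety. Let $\mathbf{K}_{\C}\subset\mathbf{M}$ be the algebraic subgroup complexifying $\widetilde K$ (obtained as the fixed-point subgroup of a Cartan involution), and set $\mathbf{Y}:=\bfG/\mathbf{K}_{\C}$, whose real locus $Y(\R)$ contains $\widetilde G/\widetilde K$ as a union of connected components. Since $\mathbf{M}/\mathbf{K}_{\C}$ is a symmetric $\mathbf{M}$-variety it is spherical, and therefore $\cV(I)$-spherical by Corollary \ref{cor:AbsSpher}; Lemma \ref{lem:indSpher} applied to the Levi decomposition $\bfG=\mathbf{M}\ltimes\bfU$ then yields that $\mathbf{Y}=\bfG\times_{\mathbf{M}}\mathbf{M}/\mathbf{K}_{\C}$ is $\cV(I)$-spherical as a $\bfG$-variety. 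Applying Frobenius reciprocity (Lemma \ref{lem:Frob}) to $\widetilde K\subset\widetilde G$, noting that $\delta_{\widetilde K}=1$ and $\delta_{\widetilde G}|_{\widetilde K}=1$ (since all $\Ad$-determinants on a compact group have absolute value one), one obtains
$$\Hom_{\widetilde K}(\sigma,\pi^{*})\cong \Hom_{\widetilde G}\bigl(\Sc(\widetilde G/\widetilde K,\cE_{\sigma}),\pi^{*}\bigr),$$
and combining the nuclear \Fre\ duality (Lemma \ref{lem:NF}) with the Schwartz kernel theorem (Lemma \ref{lem:SKT}) exhibits the right-hand side as a subspace of
$$\Sc^{*}\bigl(X\times Y(\R),\, \cE^{*}\boxtimes \cE_{\sigma}^{*}\bigr)^{\Delta\widetilde G,\,I},$$
which is finite-dimensional by Theorem \ref{Thm:Dist}.

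The main technical obstacle will be the bookkeeping in this last step: ensuring that $\Sc(\widetilde G/\widetilde K,\cE_{\sigma})$ appears as a direct summand of the Schwartz sections of a twisted algebraic $\bfG$-equivariant bundle on $Y(\R)$, so that the hypotheses of Theorem \ref{Thm:Dist} are met. Neither this nor the potential discrepancy between $\widetilde K$ and its algebraic model $\mathbf{K}_{\C}$ (which differ only by finitely many connected components) presents a genuine obstruction, and both are routine to handle; the rest of the argument is formal, relying on the already-established Theorem \ref{Thm:Dist} together with the standard Harish-Chandra--Wallach and Casselman--Wallach machinery.
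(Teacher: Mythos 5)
Your proposal is correct and follows essentially the same route as the paper's proof: reduce to finite $\widetilde K$-multiplicities via $\fz(\fm)$-finiteness and the Casselman--Wallach/Harish-Chandra machinery, convert each multiplicity space by Frobenius reciprocity, nuclear duality and the kernel theorem into a space of $\Delta G$-invariant, $I$-annihilated tempered distributions on $X\times G/K$, and conclude from the $\cV(I)$-sphericity of $\bfG/\bfK$ (Corollary \ref{cor:AbsSpher} plus Lemma \ref{lem:indSpher}) together with Theorem \ref{Thm:Dist}. The "bookkeeping" you defer is handled in the paper exactly as you anticipate, by realizing $\delta_G\ind_K^G\rho$ as $\Sc(G/K,\cE'')$ for a twisted algebraic equivariant bundle $\cE''$.
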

\begin{proof}
We have $\Sc( X,\cE)\in \Rep^{\infty}(G)$, and thus $\Sc( X,\cE)_I\in\Rep^{\infty}(G)$.
Thus, by the assumptions on $I$, it is enough to show that $\Sc( X,\cE)_I\in \cM(M)$.

Let $K\subset M$ denote a maximal  compact subgroup, and let $\bf K$ be the corresponding subgroup of $\bf M$.
Since $\Sc({ X},\cE)_I$ is $\fz(\fm)$-finite, by \cite[Corollary 5.8]{CasGlob} it is enough to show that it has finite multiplicities as a representation of $K$. Let $\cE'$ denote the tensor product of the dual bundle to $\cE$ with the bundle of densities on $X$. Then $(\Sc({ X},\cE)_I)^*\cong \Sc^*({ X},\cE')^I$.
Fix a $K$-type $\rho\in \widehat{K}$.

We have  $\Hom_K(\Sc({ X},\cE)_I,\rho^*)\into \Hom_K(\rho, \Sc^*({ X},\cE')^I)$, and by Lemma \ref{lem:Frob} we have
$$\Hom_K(\rho, \Sc^*({ X},\cE')^I)\cong \Hom_G(\delta_G\ind_{K}^G\rho, \Sc^*({ X},\cE')^I)$$
Let $\cE''$ be the twisted algebraic $G$-equivariant vector bundle on $G/K$ such that $\delta_G\ind_{K}^G\rho\cong \Sc(G/K,\cE'')$. By Lemmas \ref{lem:NF} and \ref{lem:SKT},
$$\Hom_G(\delta_G\ind_{K}^G\rho,\Sc^*({ X},\cE)^{I})\cong \Sc^*( G/K \times X,\cE''\boxtimes \cE')^{I,\Delta G}$$

Since $I\cap \cU(\fu)$ is cofinite in $\cU(\fu)$, we have $\cV(I)\subset \fm^*$.
Since $I\cap \fz(\fm)$ is cofinite in $\fz(\fm)$, we further have  $\cV(I)\subset \cN(\fm^*)$.
By Corollary \ref{cor:AbsSpher}, we obtain that $\bf M/K$ is a $\cV(I)$-spherical  $\bf M$-manifold, which by Lemma  \ref{lem:indSpher} implies that $\bf G/K$ is a $\cV(I)$-spherical  $\bf G$-manifold.
By Theorem \ref{Thm:Dist}, the space $\Sc^*( G/K\times X,\cE''\boxtimes \cE')^{I,\Delta G}$ is finite-dimensional.
\end{proof}
It is easy to see that the condition that $I\cap \fz(\fm)$ is cofinite in $\fz(\fm)$ does not depend on the choice of the Levi decomposition.

\begin{cor}\label{cor:fin}
Let $\Xi\sub \cN(\fm^*)$ be a closed conical subset.
Assume that $\bf X$ is $\Xi$-spherical, and let $\cE$ be a twisted algebraic $G$-equivariant vector bundle on $X$.
 Then any $\pi\in \cM_{\Xi}(G)$ has (at most) finite multiplicity in $\Sc(X,\cE\otimes \sigma)$, i.e. $\dim \Hom_G(\Sc(X,\cE),\pi)<\infty$.
\end{cor}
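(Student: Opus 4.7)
The plan is to deduce this directly from Theorem \ref{thm:fin} by choosing $I$ to be the annihilator of $\pi$ in $\cU(\fg)$, and then using that $\pi$ is annihilated by $I$ to factor the morphism through $\Sc(X,\cE)_I$, which Theorem \ref{thm:fin} will guarantee is admissible.

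More precisely, set $I:=\An(\pi)\subset \cU(\fg)$. Since $\pi\in \cM_{\Xi}(G)$, we have $\cV(I)\subset \Xi$, so $\bfX$ is $\cV(I)$-spherical (Definition \ref{def:Ospher} is monotone in the subset). I still need to verify the two cofiniteness hypotheses of Theorem \ref{thm:fin}. By Proposition \ref{prop:AdmStr}, $\pi$ has a finite filtration by $\pi_i\otimes \chi_i$ with $\pi_i\in \cM(\widetilde M)$; by Theorem \ref{thm:HCM} each $\pi_i$ is $\fz(\fm)$-finite, so $\pi$ is $\fz(\fm)$-finite and hence $I\cap \fz(\fm)$ is cofinite in $\fz(\fm)$. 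By Definition \ref{def:adm}, $\pi$ is $\cU(\fu)$-finite, so $I\cap \cU(\fu)$ is cofinite in $\cU(\fu)$. Theorem \ref{thm:fin} then gives $\Sc(X,\cE)_I\in \cM(G)$.

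Since $I$ annihilates $\pi$, every $G$-equivariant continuous map $\Sc(X,\cE)\to \pi$ vanishes on $\overline{I\cdot \Sc(X,\cE)}$ (the image is a closed $I$-invariant subspace of $\pi$ on which $I$ acts by zero). Hence the restriction map
\[
\Hom_G(\Sc(X,\cE)_I,\pi)\xrightarrow{\sim} \Hom_G(\Sc(X,\cE),\pi)
\]
is an isomorphism, so it suffices to bound the left-hand side.

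Finally, both $\Sc(X,\cE)_I$ and $\pi$ are admissible representations in $\cM(G)$, hence of finite length (by passing to Harish-Chandra modules and \cite[Ch. 11,12]{Wal}), so $\Hom_G$ between them is finite-dimensional by a standard Jordan--H\"older argument combined with Schur's lemma. The only subtle point is really the application of Theorem \ref{thm:fin}; everything else is formal. I do not expect any serious obstacle, since the two finiteness conditions on $I$ are precisely encoded in the definition of $\cM(G)$.
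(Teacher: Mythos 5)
Your proposal is correct and follows exactly the paper's route: the paper's entire proof is ``take $I:=\Ann(\pi)$ and apply Theorem \ref{thm:fin}'', and you have simply filled in the (correct) verifications of the two cofiniteness hypotheses, the factorization of morphisms through $\Sc(X,\cE)_I$, and the standard finite-length argument. No issues.
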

The corollary follows from Theorem \ref{thm:fin} by taking $I:=\Ann(\pi)$.

\begin{remark}
Corollary \ref{cor:fin} stays true under a weaker assumption on $\pi$: it works for any $\fg$-module $\pi$, such that $\cV(\pi)\sub \Xi$, and the action of $\fm$ on $\pi$ integrates to an admissible representation of $M$. Indeed, the argument in the proof of Theorem \ref{thm:fin} shows that the assumption $\cV(\pi)\sub \Xi$  implies that $\Sc(X,\cE)_{\Ann \pi}\in \cM(M)$.
\end{remark}

\begin{cor}\label{cor:AdmSpher}
If $\bfG$ is reductive and $\bfX$ is spherical then for every ideal $J\subset \fz(\cU(\fg))$ of finite codimension, $\Sc(X,\cE)_{J\cU(\fg)}\in \cM(G)$.
\end{cor}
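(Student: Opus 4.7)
The plan is to deduce this as an immediate application of Theorem~\ref{thm:fin} with $I:=J\cU(\fg)$. Since $J$ is central, $I$ is a two-sided ideal. Because $\bfG$ is reductive, we may take the Levi decomposition with ${\bf M}={\bf G}$ and ${\bf U}$ trivial; then $\cU(\fu)=\C$, so the condition that $I\cap \cU(\fu)$ is cofinite in $\cU(\fu)$ is vacuous, while the condition that $I\cap \fz(\fm)$ is cofinite in $\fz(\fm)=\fz(\cU(\fg))$ follows directly from the hypothesis on $J$, since $I\cap \fz(\cU(\fg))\supset J$.

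The main remaining step is to verify that $\bfX$ is $\cV(I)$-spherical, which I would do by first showing $\cV(I)\sub \cN(\fg^*)$. Under the PBW filtration one has $\gr(\fz(\cU(\fg)))=S(\fg)^{\bfG}$. Since $J$ has finite codimension in $\fz(\cU(\fg))$, there exist finitely many maximal ideals $\fm_1,\ldots,\fm_k\sub \fz(\cU(\fg))$ containing $J$ and an integer $n$ with $\fm_1^n\cdots\fm_k^n\sub J$. A short computation shows that for each maximal ideal $\fm_i$, the associated graded ideal $\gr(\fm_i)$ equals the augmentation ideal $S(\fg)^{\bfG}_+$ (lift Chevalley generators of $S(\fg)^{\bfG}$ to central elements $z_j$, then $\fm_i=(z_1-a_1,\ldots,z_r-a_r)$ for some constants $a_j$, and the leading terms of these generators are the $\gr(z_j)$). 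Hence $\gr(\fm_i\cU(\fg))\supset S(\fg)^{\bfG}_+\cdot S(\fg)$, whose zero locus in $\fg^*$ is precisely $\cN(\fg^*)$ by Kostant's theorem. It follows that $\cV(\fm_i\cU(\fg))\sub \cN(\fg^*)$, and thus $\cV(J\cU(\fg))\sub \bigcup_i\cV(\fm_i\cU(\fg))\sub\cN(\fg^*)$.

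Since $\bfG$ is reductive and $\bfX$ is spherical, Corollary~\ref{cor:AbsSpher} yields that $\bfX$ is $\cN(\fg^*)$-spherical, and in particular $\cV(I)$-spherical. With all hypotheses of Theorem~\ref{thm:fin} verified, I conclude that $\Sc(X,\cE)_{J\cU(\fg)}\in \cM(G)$. The only step requiring a non-formal input is the inclusion $\cV(J\cU(\fg))\sub \cN(\fg^*)$, which is standard and should present no significant obstacle; the rest of the argument is a matter of identifying the reductive case as a special case of the machinery already set up for Theorem~\ref{thm:fin}.
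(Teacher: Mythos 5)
Your proposal is correct and is exactly the intended argument: apply Theorem \ref{thm:fin} with $I=J\cU(\fg)$, noting that for reductive $\bfG$ the condition on $\cU(\fu)$ is vacuous, that $\cV(J\cU(\fg))\subset\cN(\fg^*)$ (a standard fact the paper already records in \S\ref{subsec:not}, which you re-derive correctly via Kostant's theorem), and that sphericity implies $\cN(\fg^*)$-sphericity by Corollary \ref{cor:AbsSpher}. No gaps.
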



\begin{cor}\label{cor:FinMult} Assume that $G$ is reductive and let $\Pi\in \Rep^{\infty}(G)$. Then the following are equivalent
\begin{enumerate}[(i)]
\item $\Pi$ has finite multiplicities \label{it:FinMult}
\item For any ideal $J\subset \fz(\cU(\fg))$ of finite codimension, $\Pi_{J\cU(\fg)}\in \cM(G)$. \label{it:AdmQuot}
\end{enumerate}
\end{cor}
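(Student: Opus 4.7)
I plan to handle (ii)$\Rightarrow$(i) by a direct factorization and (i)$\Rightarrow$(ii) by showing every $\fz$-cofinite quotient of $\Pi$ lies in $\cM(G)$. The first implication is short: any $\tau \in \cM(G)$ is $\fz(\cU(\fg))$-finite, so setting $J := \An(\tau) \cap \fz(\cU(\fg))$, every $G$-morphism $\Pi \to \tau$ factors through $\Pi_{J\cU(\fg)}$, giving $\Hom_G(\Pi,\tau) \cong \Hom_G(\Pi_{J\cU(\fg)},\tau)$; by (ii) this is a $\Hom$-space between two objects of $\cM(G)$, which via Casselman--Wallach identifies with $\Hom_{(\fg,K)}$ between finite-length Harish--Chandra modules and is thus finite-dimensional.

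For the converse, fix a cofinite $J \subset \fz(\cU(\fg))$, set $V := \Pi_{J\cU(\fg)} \in \Rep^{\infty}(G)$ (which is $\fz$-finite), and aim to show $V \in \cM(G)$. First I verify that $V$ inherits finite multiplicities from $\Pi$: for $\tau \in \cM(G)$ the $J\cU(\fg)$-annihilator $\tau^{J\cU(\fg)} := \{v \in \tau : J\cU(\fg) v = 0\}$ is a closed $G$-subrepresentation (since $J\cU(\fg)$ is two-sided), hence lies in $\cM(G)$; every $G$-map $V \to \tau$ factors through it, so $\Hom_G(V,\tau) = \Hom_G(\Pi, \tau^{J\cU(\fg)}) < \infty$ by (i).

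The substantive step is admissibility. Fix a maximal compact $K \subset G$ and an irreducible $K$-type $\rho$. Since $G/K$ is a spherical $\bfG$-variety, Corollary \ref{cor:AdmSpher} gives $\tau_\rho := \Sc(G/K,\cE_\rho)_{J\cU(\fg)} \in \cM(G)$, so its contragredient $\widetilde{\tau_\rho}$ also belongs to $\cM(G)$. Applying Lemma \ref{lem:Frob2} with $H = K$ and noting $\delta_K = \delta_G = 1$ (reductive $G$, compact $K$) yields
$$(V \otimes \rho^*)_K \cong (V \otimes \ind_K^G\rho^*)_G.$$
For compact $K$ the left-hand side equals $(V \otimes \rho^*)^K = \Hom_K(\rho, V|_K)$, of dimension $\dim V[\rho]/\dim\rho$. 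Dualizing the right-hand side, using the identification $(\ind_K^G\rho^*)^* \cong \Sc^*(G/K,\cE_\rho)$ together with the fact that $V$ is $J\cU(\fg)$-killed (so any $G$-map lands in the $J\cU(\fg)$-fixed distributions $\tau_\rho^*$, with image in the smooth vectors $\widetilde{\tau_\rho}$ by smoothness of $V$), I obtain
$$\dim V[\rho] = \dim\rho \cdot \dim \Hom_G(V, \widetilde{\tau_\rho}) < \infty$$
by the previous paragraph. Hence $V$ is admissible.

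An admissible $\fz$-finite Harish--Chandra module has finite length (by Harish--Chandra's finiteness of irreducibles with a given infinitesimal character combined with admissibility bounds on their multiplicities), so $V^{(K)}$ is of finite length and $V$ is its unique smooth moderate-growth Fr\'echet globalization by Casselman--Wallach, placing $V$ in $\cM(G)$. The main obstacle is the admissibility step, where ``finite multiplicities'' (a hypothesis on maps \emph{out of} $V$) must be translated into bounds on $K$-isotypics of $V$; the bridge is the tensor-product Frobenius reciprocity of Lemma \ref{lem:Frob2} combined with Corollary \ref{cor:AdmSpher} on the spherical symmetric space $G/K$, which recasts $\dim V[\rho]$ as $\dim \Hom_G(V,\widetilde{\tau_\rho})$ for an object $\widetilde{\tau_\rho}$ of $\cM(G)$.
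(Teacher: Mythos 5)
Your proof is correct, and it uses the same two key ingredients as the paper --- the tensor-product Frobenius reciprocity of Lemma \ref{lem:Frob2} and Corollary \ref{cor:AdmSpher} --- but deploys them differently. The paper writes $\sigma\otimes_K \Pi_I\cong (\ind_K^G\sigma)_I\otimes_G\Pi$ and then must prove that $(\ind_K^G\sigma)_I$ has \emph{finite length}; this forces a second round of the same reduction, in which Corollary \ref{cor:AdmSpher} is applied to $G$ viewed as a spherical $G\times G$-variety, via $\Sc(G\times G/\Delta G)_{I\times I}\in\cM(G\times G)$. You instead dualize $(V\otimes\ind_K^G\rho^*)_G$ directly into $\Hom_G(V,\widetilde{\tau_\rho})$ with $\tau_\rho=\Sc(G/K,\cE_\rho)_{J\cU(\fg)}$, applying Corollary \ref{cor:AdmSpher} once, to the spherical variety $\bf G/K$ (sphericity of the symmetric subgroup $\bf K$); this mirrors the proof of Theorem \ref{thm:fin} rather than the paper's proof of this corollary, and it buys you a shorter argument that avoids the intermediate finite-length claim, at the cost of the duality step $((V\otimes W)_G)^*\cong\Hom_G(V,W^*)$ (which is justified by Lemma \ref{lem:NF} and the Hausdorffness of coinvariants as defined in the paper, plus Dixmier--Malliavin to land in $\widetilde{W}$). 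Two small bookkeeping points, neither a gap: the dual action of $\cU(\fg)$ on distributions involves the antipode, so the image of your map is killed by $S(J)\cU(\fg)$ rather than $J\cU(\fg)$ --- harmless, since $S(J)$ is again a cofinite ideal of $\fz(\cU(\fg))$ and Corollary \ref{cor:AdmSpher} applies equally; and the identification $(\ind_K^G\rho^*)^*\cong\Sc^*(G/K,\cE_\rho)$ needs a twist by the density bundle, exactly as in the paper's $\cE'$ in the proof of Theorem \ref{thm:fin}. Your preliminary observation that $V=\Pi_{J\cU(\fg)}$ inherits finite multiplicities is correct but dispensable: $\Hom_G(V,\widetilde{\tau_\rho})$ embeds into $\Hom_G(\Pi,\widetilde{\tau_\rho})$, which is already finite by (i).
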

\begin{proof}
\eqref{it:AdmQuot} $\Rightarrow$ \eqref{it:FinMult} is obvious.
For the implication \eqref{it:FinMult}$\Rightarrow$ \eqref{it:AdmQuot} denote $I:=J\cU(\fg)$. By \cite[Corollary 5.8]{CasGlob} it is enough to show that any $K$-type has finite multiplicities in $\Pi_I$, since $\Pi_{I}$ is $\fz$-finite by construction. Let $\sigma \in \widehat{K}$. Then by Lemma \ref{lem:Frob2},
$$\sigma\otimes_K \Pi_{I}\cong \ind_K^G\sigma \otimes_G \Pi_{I}\cong (\ind_K^G\sigma)_{I} \otimes_G \Pi,$$
and to show that this is finite-dimensional it is enough to show that $(\ind_K^G\sigma)_I$ has finite length (since $\Pi$ has finite multiplicities). To show this it is again enough to show that every $K$-type has finite multiplicities in it.
Using the compactness of $K$ and Lemma \ref{lem:Frob2} we have
$$
\Hom_K(\rho,(\ind_K^G\sigma)_{I})\cong
(\rho^*\otimes(\ind_K^G\sigma)_{I})_{K}\cong
(\ind_K^G\rho^*\otimes(\ind_K^G\sigma)_{I})_G$$
Since $I$ is central, $I\times I$ acts on $(\ind_K^G\rho^*\otimes\ind_K^G\sigma)_G$, and   we have
$$(\ind_K^G\rho^*\otimes(\ind_K^G\sigma)_{I})_G\cong ((\ind_K^G\rho^*\otimes\ind_K^G\sigma)_G)_{I\times I} $$
Since $K$ is compact, we have
$$(\ind_K^G\rho^*\otimes\ind_K^G\sigma)_G\cong \Hom_{K\times K}(\rho\otimes \sigma^*,\Sc(G\times G/\Delta G))$$
Altogether, we have $\Hom_K(\rho,(\ind_K^G\sigma)_{I})\cong\Hom_{K\times K}(\rho\otimes \sigma^*,\Sc(G\times G/\Delta G)_{I\times I}),$
which is finite-dimensional since $\Sc(G\times G/\Delta G)_{I\times I}\in \cM(G\times G)$ by Corollary \ref{cor:AdmSpher}.
\end{proof}

%

\subsection{Applications to branching problems}\label{subsec:RepBranch}
Let $\bf H\sub G$ be an algebraic subgroup defined over $\R$.
Let $\bf L$ denote the quotient of $\bfH$ by its unipotent radical,
and let $p_{\bf L}:\bfH \to {\bf L}$ denote the projection.
\DimaH{Let $\widetilde{G}$ be a finite cover of an open subgroup of $G$, and let $\widetilde{H}$ be an open subgroup of the preimage of the real points of $\bf H$ in $\widetilde{G}$.}



\begin{prop}\label{prop:branch}
Let $\bfO_1\subset \fm^*$ and $\bfO_2\subset \fl^*$ be nilpotent orbits. Suppose that one of the following holds:
\begin{enumerate}[(a)]
\item \label{it:DSpher} $\bf G\times H / \Delta H$ is $\overline{\bfO_1}\times \overline{\bfO_2}$-spherical

\item \label{it:mO12} $\dim \bfO'_1\cap p_{\fh}^{-1}(\bfO'_2)\leq (\dim \bfO'_1+\dim\bfO'_2)/2$ for any $\bfO'_1\subset \overline{\bfO_1}$ and $\bfO'_2\subset \overline{\bfO_2}$.
\item \label{it:mO1}  $\bfO_1={\bf O_P}$ for some parabolic subgroup $\bf P\subset M$\DimaB{, $\bf M/P$ is an $\overline{\bfO_2}$-spherical $\bfH$-space, and either $\bfP$ is a Borel subgroup of $\bf M$ or $[\fm,\fm]$ is a product of Lie algebras of type A.}
\item \label{it:mO2} $\bfO_2=\bfO_\bfQ$ for some parabolic subgroup $\bf Q\subset L$\DimaB{, $\bf G/p_{\bf L}^{-1}(Q)$ is an $\overline{\bfO_1}$-spherical $\bf G$-space, and either $\bf Q$ is a Borel subgroup of $\bf L$ or $[\fl,\fl]$ is a product of Lie algebras of type A.}
\item \label{it:mRich} \DimaB{ $\bfO_1=\bf O_P$ for some parabolic subgroup $\bf P\subset G$,  $\bfO_2=\bf O_Q$ for some parabolic subgroup $\bf Q\subset H$, and the set of double cosets $\bf Q\backslash G/P$ is finite. }
\end{enumerate}

Then for every $\pi\in \cM_{\overline{\bfO_{1}}}({\widetilde{G}})$ and $\tau\in \cM_{\overline{\bfO_2}}({\widetilde{H}})$, we have $$\dim \Hom_{\widetilde{H}}(\pi|_{\widetilde{H}},\tau)< \infty \text{ and } \dim( (\pi|_{\widetilde{H}})\otimes_{\fh}\tau)< \infty $$
Moreover, \DimaH{there exist Zariski open dense subsets $U\sub \cK_M$ and $V\sub \cK_L$ such that for every $K\in U$ and $K'\in V$, and every finite cover $\widetilde{K}$ of $K$ and $\widetilde{K'}$\ of $K'$, every $A\in \cM_{\overline{\bfO_{1}}}(\fg,\widetilde{K})$
and $B\in \cM_{\overline{\bfO_{2}}}(\fh,\widetilde{K'})$, and every $i\geq 0$   we have $\dim \Tor^i_{\fh}(A|_{\fh},B)<\infty$. }
\end{prop}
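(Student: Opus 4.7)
My plan is to first reduce all of (a)--(e) to a single convenient hypothesis, then deduce the Hom/tensor/Tor statements by combining Frobenius reciprocity with the main results of \S 2 and \S 3.

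For the reductions, I would first observe that (a) $\iff$ (b) is a direct computation via Corollary \ref{cor:invO} applied to the subgroup $\Delta\bfH\sub\bfG\times\bfH$: the annihilator $(\Delta\fh)^\bot\subset(\fg\oplus\fh)^*$ consists of pairs $(\varphi,-\varphi|_\fh)$, so the first projection identifies $(\bfO'_1\times\bfO'_2)\cap(\Delta\fh)^\bot$ with $\bfO'_1\cap p_\fh^{-1}(-\bfO'_2)$, and since $-\bfO'_2$ has the same dimension as $\bfO'_2$ the sphericity inequality of Corollary \ref{cor:invO} is exactly (b). The implication (e)$\Rightarrow$(a) is Corollary \ref{cor:RichBran}. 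For (c)$\Rightarrow$(a), the parabolic $\bf PU\sub G$ is parabolic in the paper's sense with Richardson orbit $\bfO_\bfP\sub\fm^*\sub\fg^*$ and $\bf G/PU\cong M/P$, so Corollary \ref{cor:BranchSpher}\eqref{it:O1iff} yields (a) (the hypothesis on Borel or type A ensures the Richardson condition needed there). Similarly (d)$\Rightarrow$(a) via Corollary \ref{cor:BranchSpher}\eqref{it:O2iff}. After these reductions I may assume (a) holds.

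Under (a), I view $\bfX:=(\bfG\times\bfH)/\Delta\bfH\cong\bfG$ as a $(\bfG\times\bfH)$-manifold; by (a) it is $(\overline{\bfO_1}\times\overline{\bfO_2})$-spherical. The representation $\pi\boxtimes\widetilde\tau$ is admissible for $\widetilde G\times\widetilde H$ (in the sense of Definition \ref{def:adm}), with annihilator variety contained in $\overline{\bfO_1}\times\overline{\bfO_2}$. Applying Corollary \ref{cor:fin} to $\bfG\times\bfH$, $\bfX$, and this representation (together with a twisted algebraic bundle $\cE$ chosen to absorb the modular characters $\delta_{\widetilde H}\delta_{\widetilde G}^{-1}$) produces
\[
\dim\Hom_{\widetilde G\times\widetilde H}(\Sc(X,\cE),\pi\boxtimes\widetilde\tau)<\infty.
\]
Using the identification $\Sc(\widetilde G)\cong\ind_{\Delta\widetilde H}^{\widetilde G\times\widetilde H}(\delta_{\widetilde H}^{-1}\delta_{\widetilde G})$, Corollary \ref{cor:Frob} (applied to the pair $\Delta\widetilde H\subset\widetilde G\times\widetilde H$) then identifies the above Hom-space with $\Hom_{\widetilde H}(\pi|_{\widetilde H},\tau)$, yielding the first finiteness. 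For the tensor product $\dim((\pi|_{\widetilde H})\otimes_\fh\tau)<\infty$, I apply Lemma \ref{lem:Frob2} to convert it into the coinvariants $(\pi\otimes\ind_{\widetilde H}^{\widetilde G}\tau)_{\widetilde G}$, whose finite-dimensionality again follows from the holonomicity argument of Theorem \ref{Thm:Dist} applied to the same $(\overline{\bfO_1}\times\overline{\bfO_2})$-spherical manifold.

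For the $\Tor$ statement I form the Harish-Chandra $(\fg\oplus\fh,\widetilde K\times\widetilde K')$-module $A\boxtimes B$; its annihilator in $\cU(\fg\oplus\fh)\cong\cU(\fg)\otimes\cU(\fh)$ contains $\An(A)\otimes 1+1\otimes\An(B)$, so $\cV(\An(A\boxtimes B))\subset\overline{\bfO_1}\times\overline{\bfO_2}$. By (a), the pair $(\bfG\times\bfH,\Delta\bfH)$ satisfies the hypothesis of Corollary \ref{cor:Yam} with $\Xi=\overline{\bfO_1}\times\overline{\bfO_2}$. That corollary then produces open dense subsets $U\subset\cK_M$ and $V\subset\cK_L$ (since the maximal compacts of $G$ and $H$ agree with those of their reductive quotients $M$ and $L$) such that for $K\in U,\ K'\in V$, the module $A\boxtimes B$ is finitely generated over $\Delta\fh$, and hence $\dim\Tor^i_{\Delta\fh}(A\boxtimes B,\mathbf{1})<\infty$ for every $i$. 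The standard identification $\Tor^i_{\Delta\fh}(A\boxtimes B,\mathbf{1})\cong\Tor^i_\fh(A|_\fh,B)$ (via the anti-involution $h\mapsto -h$ converting one factor into a right $\fh$-module) delivers the claim.

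The main obstacle I anticipate is bookkeeping rather than conceptual: carefully matching modular characters in the Frobenius identification, and extending Corollaries \ref{cor:fin} and \ref{cor:Yam}, which are stated for reductive groups or their direct generalization in \S 4, to the product $\bfG\times\bfH$ of possibly non-reductive groups. Both issues are handled by invoking Proposition \ref{prop:AdmStr} to filter $\pi$ and $\tau$ by representations of the reductive quotients twisted by unitary characters (the characters contribute only central twists that do not affect associated varieties or finite-dimensionality), reducing every finiteness question to one on the reductive quotients $M\times L$, where the results of \S 2 and \S 3 apply directly.
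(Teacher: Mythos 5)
Your reductions of (a)--(e) to case (a) coincide with the paper's (Corollary \ref{cor:invO} for (a)$\iff$(b), Corollary \ref{cor:RichBran} for (e), Corollary \ref{cor:BranchSpher} for (c),(d)), and your treatment of the higher $\Tor$ statement is also the paper's argument: Proposition \ref{intprop:GeoYam} applied to $\Delta\bfH\sub\bfG\times\bfH$ plus Yamashita's theorem (Theorem \ref{thm:Yam}, valid without reductivity) gives finite generation of $A\otimes_{\C}B$ over $\Delta\fh$, hence $\dim\Tor^i_{\fh}(A|_{\fh},B)<\infty$. Where you diverge is in how the Hom and tensor finiteness for $\pi,\tau$ are extracted. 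The paper does \emph{not} route these through Corollary \ref{cor:fin} and Frobenius reciprocity at all: it simply takes $A=\pi^{(K)}$, $B=\tau^{(K')}$ in the Yamashita step to get $\dim((\pi|_{\widetilde H})\otimes_{\fh}\tau)<\infty$, and then obtains the Hom statement from the chain $\Hom_{\widetilde H}(\pi|_{\widetilde H},\tau)\cong(\pi\otimes_{\C}\widetilde\tau^{\,*})^{\widetilde H}\into((\pi|_{\widetilde H})\otimes_{\fh}\widetilde\tau)^*$. Your D-module route for the Hom space (Corollary \ref{cor:fin} applied to $\bfG$ as a $\bfG\times\bfH$-space, then Corollary \ref{cor:Frob}) is viable -- it is essentially the mechanism of Corollary \ref{cor:IndSpher} run in reverse -- but it forces you to carry out the admissibility and modular-character bookkeeping for $\widetilde G\times\widetilde H$ that the paper's shortcut avoids entirely.

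The one genuine gap is in your tensor-product step. Lemma \ref{lem:Frob2} and Theorem \ref{Thm:Dist} control the \emph{topological} group coinvariants $(\,\cdot\,)_{\widetilde G}$, i.e.\ the quotient by the \emph{closure} of the span of $gv-v$ (equivalently, the space whose dual is the invariant-distribution space). The quantity in the statement, $(\pi|_{\widetilde H})\otimes_{\fh}\tau$, is the algebraic Lie-algebra coinvariant space, with no closure taken. Finite-dimensionality of the Hausdorff quotient does not formally imply finite-dimensionality of the algebraic one; bridging the two is precisely the Hausdorffness problem for Lie algebra homology of Schwartz modules (cf.\ the reference [AGKL16] in the bibliography), and you cannot get it for free from the holonomicity argument. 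The repair is already contained in your own $\Tor$ paragraph: the case $i=0$ of $\dim\Tor^i_{\fh}(A|_{\fh},B)<\infty$ with $A=\pi^{(K)}$, $B=\tau^{(K')}$ is exactly the algebraic statement $\dim((\pi|_{\widetilde H})\otimes_{\fh}\tau)<\infty$, which is how the paper reads it; you should therefore derive the tensor statement (and then the Hom statement, via the duality embedding above) from the Yamashita step rather than from Lemma \ref{lem:Frob2}.
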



\begin{proof}
\DimaC{By \DimaE{Corollary \ref{cor:invO} and Lemma} \ref{lem:NonRed}, conditions \eqref{it:DSpher} and \eqref{it:mO12} are equivalent. By Corollary \ref{cor:RichBran}, \eqref{it:mRich} implies \eqref{it:DSpher}. By Corollary \ref{cor:BranchSpher}, conditions \eqref{it:mO1} and \eqref{it:mO2} also imply condition  \eqref{it:DSpher}, since in Lie algebras of type A all nilpotent orbits are Richardson.}
We will thus assume that \eqref{it:DSpher} holds.

\DimaH{
By Proposition \ref{intprop:GeoYam} this implies that there exist Zariski open dense subsets $U\sub \cK_G$ and $V\sub \cK_H$ such that for every $K\in U$ and $K'\in V$, 
$$\overline{\bfO_1}\times \overline{\bfO_2}\cap \Delta \fh^{\bot}\cap (\fk\times \fk')^{\bot}=\{0\}$$
By Theorem \ref{thm:Yam} (which is proven in \cite{Yam} without the assumption that $\fg$ is reductive)
this implies that every finite covers $\widetilde{K}$ of an open subgroup of $K$, and $\widetilde{K'}$ of an open subgroup of $K'$, every $A\in \cM_{\overline{\bfO_{1}}}(\fg,\widetilde{K})$
and $B\in \cM_{\overline{\bfO_{2}}}(\fh,\widetilde{K'})$, $A\otimes_\C B$ is finitely generated over $\Delta \fh$.

This implies that for any $i\geq 0$ we have 
$$\dim \Tor^i_{\Delta \fh} (A\otimes_{\C} B,\C)<\infty$$
and thus $\dim \Tor^i_{\fh}(A|_{\fh},B)<\infty$.

Now let $\pi\in \cM_{\overline{\bfO_{1}}}({\widetilde{G}})$ and $\tau\in \cM_{\overline{\bfO_2}}({\widetilde{H}})$. Taking $A=\pi^{(K)}$ and $B=\tau^{(K')}$ we obtain 
$ \dim ((\pi|_{\widetilde{H}})\otimes_{\fh}\tau)< \infty $. 
Now 
$$\Hom_{\widetilde{H}}(\pi|_{\widetilde{H}},\tau)\cong \Hom_{\widetilde{H}}(\pi|_{\widetilde{H}},\widetilde{\tau}^*)\cong (\pi\otimes_{\C}\widetilde{\tau}^*)^{\widetilde{H}}\into  ((\pi|_{\widetilde{H}})\otimes_{\fh}\widetilde{\tau})^*$$
 and thus $\dim \Hom_{\widetilde{H}}(\pi|_{\widetilde{H}},\tau) \leq \dim ((\pi|_{\widetilde{H}})\otimes_{\fh}\widetilde{\tau})< \infty $. }

\end{proof}





\begin{cor}
Let $\Xi\subset \cN(\fm^*)$ be a closed $\bfG$-invariant subset,  and suppose that $\bf G/H$ is $\Xi$-spherical. Then for any $\pi\in \cM_{\Xi}({\widetilde{G}})$ and any finite-dimensional $\tau\in \cM({\widetilde{H}})$ we have $$\dim \Hom_{\widetilde{H}}(\pi|_{\widetilde{H}},\tau)< \infty$$
\end{cor}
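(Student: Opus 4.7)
The plan is to deduce this directly from Proposition \ref{prop:branch} by taking $\bfO_2 = \{0\}$.

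First, I would verify that any finite-dimensional $\tau \in \cM(\widetilde{H})$ lies in $\cM_{\{0\}}(\widetilde{H})$. Since $\An(\tau) \subset \cU(\fh)$ has finite codimension, the symbol ideal is a cofinite ideal of $S(\fh) \cong \cO(\fh^*)$, so $\cV(\An(\tau)) = \{0\}$ inside $\fl^*$.

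Next, I would check hypothesis (b) of Proposition \ref{prop:branch} for $\bfO_2 = \{0\}$ and any nilpotent orbit $\bfO_1 \subset \Xi$. The only orbit $\bfO_2' \subset \overline{\bfO_2}$ is $\{0\}$, so $p_{\fh}^{-1}(\bfO_2') = \fh^{\bot}$ and the required bound collapses to
\[
\dim \bfO_1' \cap \fh^{\bot} \leq \tfrac{1}{2}\dim \bfO_1' \quad \text{for all orbits } \bfO_1' \subset \overline{\bfO_1}.
\]
Since $\Xi$ is closed and $\bfG$-invariant, $\bfO_1 \subset \Xi$ forces $\overline{\bfO_1} \subset \Xi$, so every such $\bfO_1'$ lies in $\Xi$; the inequality then follows from the $\Xi$-sphericity of $\bf G/H$ via Corollary \ref{cor:invO}.

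It remains to reduce from $\pi \in \cM_{\Xi}(\widetilde{G})$ to the situation $\pi \in \cM_{\overline{\bfO_1}}(\widetilde{G})$ that Proposition \ref{prop:branch} requires. For irreducible $\pi$, the Borho--Brylinski--Joseph result cited in the paper before Theorem \ref{thm:GS} asserts that $\cV(\An\pi) = \overline{\bfO(\pi)}$ for a single orbit $\bfO(\pi) \subset \Xi$, so $\pi \in \cM_{\overline{\bfO(\pi)}}(\widetilde{G})$ and Proposition \ref{prop:branch} applies directly. For reducible $\pi$, I would use that $\pi$ has finite length in $\cM(\widetilde{G})$, take a composition series $0 = \pi_0 \subset \cdots \subset \pi_n = \pi$, note that each irreducible subquotient has annihilator containing $\An(\pi)$ and hence still lies in $\cM_{\Xi}(\widetilde{G})$, and apply left-exactness of $\Hom_{\widetilde{H}}(-,\tau)$ on closed submodules to obtain
\[
\dim \Hom_{\widetilde{H}}(\pi|_{\widetilde{H}},\tau) \leq \sum_{i=1}^n \dim \Hom_{\widetilde{H}}((\pi_i/\pi_{i-1})|_{\widetilde{H}},\tau) < \infty.
\]

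The only mild subtlety is the composition-series step; but since $\tau$ is finite-dimensional and the filtration is by closed subrepresentations, the usual left-exactness of $\Hom$ in its first argument applies without any Fréchet-theoretic complication, so there is no substantive obstacle.
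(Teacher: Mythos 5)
Your derivation is correct and is essentially the intended one: the corollary is meant to follow from Proposition \ref{prop:branch} by taking $\bfO_2=\{0\}$, where condition (b) collapses, via Corollary \ref{cor:invO}, exactly to the $\Xi$-sphericity of $\bfG/\bfH$, and finite-dimensional $\tau$ indeed lies in $\cM_{\{0\}}(\widetilde{H})$. Your reduction from a general closed invariant $\Xi$ to a single orbit closure (irreducible case via Borho--Brylinski--Joseph, then a composition series and left-exactness of $\Hom(-,\tau)$) is a genuinely needed step that the paper glosses over, and you handle it correctly; the one point deserving a word is that the single-orbit-closure statement for irreducible $\pi$ is quoted in the paper for reductive groups, so for non-reductive $\bfG$ you should first invoke Proposition \ref{prop:AdmStr} to write $\pi=\pi_1\otimes\chi$ with $\pi_1$ an irreducible representation of the Levi quotient, noting that twisting by a character does not change the associated variety of the annihilator.
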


\begin{cor}\label{cor:IndSpher}
Suppose that $\bf G$ is reductive, and that the unipotent radical $\bf V$ of $\bf H$ equals the unipotent radical of a parabolic subgroup $\bf R\sub G$. Let $\bf Q \sub L$ be a parabolic subgroup, and suppose that $\bf Q$ is a spherical subgroup of $\bf R/V$.
Then for every $\tau\in  \cM_{\overline{\bfO_{\bfQ}}}({\widetilde{H}})$, the induction $\ind_{\widetilde{H}}^{\widetilde{G}}\tau$ has finite multiplicities.

\end{cor}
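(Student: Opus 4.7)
The strategy is to combine Corollary~\ref{cor:Frob} with Corollary~\ref{cor:fin}, applied to the diagonal $\bfG\times\bfH$-action on $\bfX:=\bfG\times\bfH/\Delta\bfH$ (identified with $\bfG$ via $(g,h)\Delta\bfH\mapsto gh^{-1}$). For any $\pi\in\cM(\widetilde G)$, Corollary~\ref{cor:Frob} gives
$$\Hom_{\widetilde G}(\ind_{\widetilde H}^{\widetilde G}\tau,\pi)\cong\Hom_{\widetilde G\times\widetilde H}(\Sc(\widetilde G),\pi\otimes\widetilde{\tau\delta_H}),$$
so it suffices to prove the right-hand side is finite-dimensional. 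The representation $\pi\otimes\widetilde{\tau\delta_H}$ lies in $\cM(\widetilde G\times\widetilde H)$ (the Levi of $\bfG\times\bfH$ being $\bfG\times\bfL$ and its unipotent radical $\{1\}\times\bfV$), and the associated variety of its annihilator is contained in $\cN(\fg^*)\times\overline{\bfO_{\bfQ}}$, using that tensoring with the character $\delta_H$ and passing to the contragredient do not enlarge this associated variety. Hence, by Corollary~\ref{cor:fin}, it is enough to verify that $\bfX$ is $\cN(\fg^*)\times\overline{\bfO_{\bfQ}}$-spherical as a $\bfG\times\bfH$-variety.

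For a Borel $\bfB\subset\bfG$ one has $\overline{\bfO_\bfB}=\cN(\fg^*)$, and since $\bfV$ acts trivially on $\fv^{\bot}\subset\fh^*$ (which we identify with $\fl^*$), the parabolic $\bfQ\bfV\subset\bfH$ has Richardson orbit $\bfO_{\bfQ\bfV}=\bfO_{\bfQ}$. Consequently $\cN(\fg^*)\times\overline{\bfO_\bfQ}=\overline{\bfO_\bfB}\times\overline{\bfO_{\bfQ\bfV}}$ is the closure of the Richardson orbit of the parabolic subgroup $\bfB\times\bfQ\bfV\subset\bfG\times\bfH$. Since Theorem~\ref{IntThm:Ri} is valid for non-reductive groups, it reduces the required sphericity to the finiteness of the double coset space $\bfB\backslash\bfG/\bfQ\bfV$.

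The main geometric task, and the principal obstacle, is thus to show that $\bfG/\bfQ\bfV$ is a spherical $\bfG$-variety—equivalently, that it has finitely many $\bfB$-orbits. For this I would consider the $\bfG$-equivariant projection $\bfG/\bfQ\bfV\onto\bfG/\bfR$ with fiber $\bfR/\bfQ\bfV\cong(\bfR/\bfV)/\bfQ$: the Borel $\bfB$ has an open orbit on $\bfG/\bfR$ by the Bruhat decomposition, and by hypothesis $\bfQ$ is a spherical subgroup of $\bfR/\bfV$, so the image of $\bfB\cap\bfR$ in $\bfR/\bfV$ (which is a Borel of $\bfR/\bfV$) acts with an open orbit on the fiber. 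Combining these two facts yields an open $\bfB$-orbit on $\bfG/\bfQ\bfV$, and hence only finitely many $\bfB$-orbits by the standard characterization of spherical varieties. Applying Corollary~\ref{cor:fin} then delivers the finite multiplicity.
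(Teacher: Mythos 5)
Your proof is correct, and it reaches the same geometric pivot as the paper's: that $\mathbf{Q}$ spherical in $\mathbf{R}/\mathbf{V}$ forces the parabolically induced subgroup $\mathbf{Q}\mathbf{V}$ to be spherical in $\bfG$, equivalently that $\mathbf{B}\backslash\bfG/\mathbf{Q}\mathbf{V}$ is finite. What differs is the order and the analytic engine. The paper first proves the restriction statement $\dim\Hom_{\widetilde{H}}(\pi|_{\widetilde{H}},\tau)<\infty$ via Proposition \ref{prop:branch}, condition \eqref{it:mRich} with $\mathbf{P}$ a Borel (whose proof runs through Proposition \ref{intprop:GeoYam} and Yamashita's Theorem \ref{thm:Yam} on finite generation of Harish-Chandra modules over $\Delta\fh$), and only then invokes Corollary \ref{cor:Frob}; you invoke Corollary \ref{cor:Frob} first and then feed $\Sc(\widetilde{G})$, viewed as the $\widetilde{G}\times\widetilde{H}$-space attached to $\bfG\times\bfH/\Delta\bfH\cong\bfG$, directly into Corollary \ref{cor:fin}, i.e.\ into the D-module theorem \ref{Thm:Dist}. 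Your identification $\cN(\fg^*)\times\overline{\bfO_{\bfQ}}=\overline{\bfO_{\mathbf{B}\times\mathbf{Q}\mathbf{V}}}$ together with the non-reductive form of Theorem \ref{IntThm:Ri} is exactly Corollary \ref{cor:RichBran}, which you could cite rather than rederive. The trade-off between the two engines is the one the paper itself points out: the D-module route additionally gives holonomicity of the relevant distributions, while the Yamashita route additionally gives finiteness of higher $\Tor$.

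Two points to tighten. First, you should justify in a sentence that $\pi\otimes\widetilde{\tau\delta_H}$ is admissible for $\widetilde{G}\times\widetilde{H}$ with annihilator variety inside $\cN(\fg^*)\times\overline{\bfO_{\bfQ}}$: the contragredient negates the associated variety of the annihilator and the twist by the character $\delta_H$ does not change it, and $\overline{\bfO_{\bfQ}}$ is stable under $-1$. Second, in the fibration argument the subgroup that must act with an open orbit on the fiber is the stabilizer in $\mathbf{B}$ of a point of the \emph{open} $\mathbf{B}$-orbit on $\bfG/\mathbf{R}$, not $\mathbf{B}\cap\mathbf{R}$ for an arbitrary $\mathbf{B}$: if $\mathbf{B}\subset\mathbf{R}$ then $\mathbf{B}\cap\mathbf{R}=\mathbf{B}$ stabilizes $e\mathbf{R}$, whose orbit is a point. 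The fix is standard: take $\mathbf{B}=\mathbf{B}_{\mathbf{L}_{\mathbf{R}}}\mathbf{V}^-$ with $\mathbf{V}^-$ the unipotent radical of the opposite parabolic; then $\mathbf{B}\cdot e\mathbf{R}$ is open, its stabilizer is $\mathbf{B}_{\mathbf{L}_{\mathbf{R}}}$, which maps onto a Borel of $\mathbf{R}/\mathbf{V}$, and sphericity of $\mathbf{Q}$ there gives the open orbit on the fiber. Since the paper asserts this parabolic-induction fact without proof, your sketch is adequate once the Borel is chosen this way.
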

\DimaC{
\begin{proof}
By the assumption $\bf Q$ is a spherical subgroup of $\bf R/V$.
This implies that the parabolically induced subgroup $\bf QV \sub G$ is also spherical. Thus that the condition \eqref{it:mRich} of Proposition \ref{prop:branch} is satisfied for any Borel subgroup $\bf P\sub G$ . Thus for every $\pi\in \cM({\widetilde{G}}) $ we have $\dim \Hom_{\widetilde{H}}(\pi|_{\widetilde{H}},\tau)< \infty $. The corollary follows now from Corollary \ref{cor:Frob} on Frobenius reciprocity.
\end{proof}

\begin{proof}[Proof of Corollary \ref{IntCor:branch2}]
In this corollary both $\bf G$ and $\bf H$ are reductive.
Thus part \eqref{int:FinMultInd} follows from the previous corollary by taking $\bf R=G$.
\DimaH{
By Corollary \ref{cor:RichBran}, the condition in part \eqref{int:FinMultRes}
implies that $\bf G/B_H$ is an $\overline{\bf O_P}$- spherical $\bf G$-variety, which is the condition in part \eqref{int:BoundMultResGen} for $\bf O=O_P$. Thus it is enough to prove part \eqref{int:BoundMultResGen}, {\it i.e.} that if $\bf G/B_H$ is an $\overline{\bfO}$- spherical $\bf G$-variety then 
  for every $\pi\in \cM_{\overline{\bfO}}({\widetilde{G}})$, the restriction $\pi|_{\widetilde{H}}$ has multiplicities.
By Corollary \ref{cor:Yam} applied to $\fb_{\fh}\sub \fg$, there exists a Zariski open dense $U\sub \cK_G$  such that for every $K \in U$, $\pi^{(K)}$ is finitely generated over $\fb_{\fh}$. By Lemma \ref{lem:BorBound}, this implies that $\pi|_{\widetilde{H}}$ has \DimaH{bounded} multiplicities.}
\end{proof}
}

\begin{proof}[Proof of Example \ref{Ex:HShal}]
Let $\bf G$ be $\GL_{2n}$,  $\bf L'\subset G$ be the Levi subgroup $\GL_n\times\GL_n$, $\bf R=L' V$ be the corresponding standard parabolic,
$\bf L:=\Delta GL_n \sub L'$, and $\bf H=LV$.
Let $\bf Q\sub L$ be the mirabolic subgroup. Then it is easy to see that $\bf Q$ is a spherical subgroup of $\bf L'$ (this is also shown in \cite{MWZ_GL} and \cite{Stem}). By Corollary \ref{cor:IndSpher} this implies that for every $\tau\in  \cM_{\overline{\bfO_{\bfQ}}}({\widetilde{H}})$, the induction $\ind_{\widetilde{H}}^G\tau$ has finite multiplicities.
\end{proof}


\appendix


\section{Proof of Proposition \ref{prop:ms}}\label{app:Pfms}
Let $\bfG$ be an algebraic group, $\bf U$ be its unipotent radical, and $\bf M=G/U$.

For the proof we will need  the following lemma.

\begin{lemma}\label{lem:non-nilp} Let $V$ be an $\fm$-module, and let $v\in V$ be a non-zero vector such that there exists a semi-simple $s\in \fm$ with $sv=v$. Then there exists a non-nilpotent $t\in \fm^*$ that is orthogonal to the stabilizer $\fm_v$ of $v$.
\end{lemma}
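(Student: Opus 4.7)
My plan is to realize $t$ as a matrix coefficient of $V$ and exploit the centralizer of $s$ to control its Jordan decomposition. Since $\bf M$ is reductive, so is $\fm$, and I may fix a non-degenerate invariant symmetric bilinear form $B$ on $\fm$. Using that $s$ is semisimple, decompose $\fm = \bigoplus_\lambda \fm^\lambda$ into $\ad s$-eigenspaces; note $s \in \fm^0$. A direct computation using $[s,x]=\lambda x$ and $sv=v$ gives
$$s(xv) = ([s,x]+xs)v = \lambda xv + xv = (\lambda+1)xv \quad \text{for all } x\in\fm^\lambda,$$
so $xv \in \ker(s-(\lambda+1))$.

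Next I would choose $\psi \in V^*$ with $\psi(v) = 1$ and $\psi|_{\ker(s-\mu)} = 0$ for every $\mu \neq 1$ (possible because distinct-eigenvalue eigenspaces of $s$ are linearly independent, so $v\in\ker(s-1)$ is independent from their sum), and define $t \in \fm^*$ by $t(x) := \psi(xv)$. Then $t|_{\fm_v} = 0$ is automatic, and the weight observation above shows $t|_{\fm^\lambda} = 0$ for every $\lambda \neq 0$. Let $\xi \in \fm$ be the element corresponding to $t$ via $B$, i.e., $B(\xi,\cdot)=t$. The $B$-orthogonality of $\fm^\lambda$ with $\fm^\mu$ for $\lambda+\mu \neq 0$ (a consequence of invariance) then forces $\xi \in \fm^0$.

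The main obstacle is proving $t$ is non-nilpotent, and the key step is that $\xi$ lies in $\fm^0$. Since $\fm^0$ is reductive (centralizer of a semisimple element in a reductive Lie algebra), $\xi$ admits an abstract Jordan decomposition $\xi = \xi_s + \xi_n$ with both summands in $\fm^0$. Since $\xi_n$ commutes with $s$, the operators $\ad \xi_n$ and $\ad s$ on $\fm$ commute, the first nilpotent and the second semisimple, hence their composition $\ad\xi_n\circ\ad s$ is nilpotent and therefore $B(\xi_n,s)=\tr_\fm(\ad\xi_n\cdot\ad s)=0$. Combining with
$$B(\xi,s)=t(s)=\psi(sv)=\psi(v)=1\neq 0$$
yields $B(\xi_s,s)\neq 0$, so in particular $\xi_s\neq 0$. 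Under the $B$-identification $\fm^*\cong\fm$, the paper's notion of nilpotency in $\fm^*$ (the coadjoint orbit closure containing $0$) corresponds exactly to $\xi_s=0$, so $t$ is non-nilpotent. The delicate point is engineering $\xi\in\fm^0$, which is precisely what the support condition on $\psi$ accomplishes.
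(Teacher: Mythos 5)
Your proof is correct in substance, and its core mechanism coincides with the paper's: both arguments localize to the centralizer $\fl:=\fm^{s}=\fm^{0}$ via the $\ad s$-eigenspace decomposition of $\fm$, and both use $s$ itself as the witness of non-nilpotency. The execution differs. The paper argues dually and non-constructively: it notes $\fm_v=\fl_v\oplus\fr_v$, reduces to finding a non-nilpotent element of $\fl^*$ killing $\fl_v$, and observes that if every such functional vanished on the center $\fc$ of $\fl$ one would have $\fc\subseteq\fl_v$, contradicting $s\in\fc$, $s\notin\fl_v$. You instead exhibit $t$ explicitly as the matrix coefficient $t(x)=\psi(xv)$; the weight computation $s(xv)=(\lambda+1)xv$ then places $t$ in $\fl^{*}$ automatically, and the evaluation $t(s)=\psi(v)=1$ replaces the paper's containment argument. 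Your final criterion ($\xi_s\neq 0$ under the $B$-identification) and the paper's (not vanishing on $\fc$) are equivalent ways of certifying non-nilpotency, since $s$ is a central semisimple element of $\fl$. So this is the constructive version of the same proof; it buys an explicit formula for $t$ at the cost of invoking the Jordan decomposition.

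One slip to repair: having fixed an arbitrary non-degenerate invariant form $B$ on $\fm$, you justify $B(\xi_n,s)=0$ by writing $B(\xi_n,s)=\Tr_{\fm}(\ad\xi_n\,\ad s)$, which silently identifies $B$ with the Killing form; the Killing form is degenerate whenever $\fm$ has a nontrivial center, so it is not an admissible choice of $B$ in general. The conclusion nevertheless holds for every invariant $B$: since $\xi_n$ is a nilpotent element of the reductive algebra $\fm^{0}$ it lies in $[\fm^{0},\fm^{0}]$, while $s$ is central in $\fm^{0}$, and invariance gives $B(s,[x,y])=B([s,x],y)=0$ for all $x,y\in\fm^{0}$. (Alternatively, decompose $[\fm,\fm]$ into simple ideals, on each of which $B$ is a scalar multiple of the Killing form, and apply your trace argument factor by factor.) With that one-line patch the proof is complete.
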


\begin{proof}
Let $\fl:=\fm^s\subset \fm$ denote the centralizer of $s$.
We have a decomposition $\fm=\fl\oplus \fr$, where $\fr$ is the direct sum of all eigenspaces of $ad(s)$ on $\fm$ corresponding to non-zero eigenvalues.
This defines a dual decomposition $\fm^*=\fl^*\oplus \fr^*$, with $\fl^*$ orthogonal to $\fr$. Under this decomposition, an element $t\in \fl^*$ is nilpotent as an element of $\fl^*$ if and only if it is nilpotent as an element of $\fm^*$. One can see this using an identification $\fm\cong \fm^*$ given by a non-degenerate invariant quadratic form.

Since the adjoint action of $s$ preserves $\fm_v$, we have a  decomposition $\fm_v=\fl_v\oplus \fr_v$.
Thus it is enough to find a non-nilpotent element $t\in \fl^*$ orthogonal to $\fl_v$.

Let $\fc$ denote the center of $\fl$.
 We will prove more: there exists $t\in \fl_v^{\bot}$ that does not lie in $\fc^{\bot}$. Suppose the contrary: $\fl_v^{\bot}\subset \fc^{\bot}$. But then $\fc\sub \fl_v$, which is not true since $s\in \fc$ and $s\notin \fl_v$.
\end{proof}


\begin{proof}[Proof of Proposition \ref{prop:ms}]
We will prove  the proposition by induction on the depth of $\fu$.
Let $\mathfrak{e}$ be the center of $\fu$.
By the induction hypothesis, it is enough to prove that  $\Xi\sub\ \mathfrak{e}^{\bot}$.
Assume the contrary, and let $\bfO\subset \Xi$ be an open orbit s.t. $\bfO\not\subset \mathfrak{e}^{\bot}$.
Let $p:\fg^*\to \mathfrak{e}^*$ denote the restriction.
We will consider the Lie algebra action of $\fg$ on $\fg^*$.
For $\alpha \in \fg$ and $\nu \in \fg^*$
we will denote the result of this action by $\alpha \cdot \nu$. Since $\bf U$ acts trivially on $\mathfrak{e}$ and on $\mathfrak{e}^*$,  the action of $\bfG$ on $\mathfrak{e}^*$ factors through $\bf M$.
\begin{enumerate}[Step 1.]
\item For any $y\in p(\bfO)$ there exists  $\mu_y\in \fm$ s.t. $\mu_y \cdot y=y$.

Let $x\in p^{-1}(y)\cap \bfO$, and let $\mathbf{\Lambda}:=\Span\{x\}$. Since $\Xi$ is conical and $\bfO$ is open in $\Xi$, $\mathbf{\Lambda} \cap \bfO$ is open in $\mathbf{\Lambda}$. Thus $x$ lies in the tangent space to $\bfO$ at $x$. Thus there exists some $\alp\in \fg$ s.t. $\alp \cdot x=x$. Since $p$ is $\bfG$-equivariant, we have $\alp\cdot y=y$, and thus $\mu_y\cdot y =y$, where $\mu_y$ is the projection of $\alp$ to $\fm\cong \fg/\fu$.

\item For any $y\in p(\bfO)$ there is a semi-simple $s_y\in \fm$ s.t. $s_y \cdot y=y$. By semi-simple we mean that $s_y$ acts semi-simply on any algebraic representation of $\bf M$.

Let $\mu_y$ be as in the previous step, and let $\mu_y = s_y + n_y$ be the Jordan decomposition of $\mu_y$. Since $n_y$ acts nilpotently on $\mathfrak{e}^*$, and the actions of $s_y$ and $n_y$ commute, we have $s_y \cdot y = y$.

\item 
For any $y\in p(\bfO)$, there exists a non-nilpotent
$t_y\in \fm^*$ such that
$\bfO\cap p^{-1}(y)$ is locally invariant w.r.t. shifts in  $t_y$, {\it i.e.} for any $x\in\bfO\cap p^{-1}(y),$ we have $t_{y}\in T_{x}(\bfO\cap p^{-1}(y))$.

Fix $y\in p(\bfO)$.  By  the previous step and Lemma \ref{lem:non-nilp} there exists a non-nilpotent $t_y\in \fm^*$ orthogonal to the stabilizer of $y$ in $\fm$.
Let $x\in\bfO\cap p^{-1}(y)$. It is enough to show that $\mathfrak{e}\cdot x \ni t_y$. Consider the map $\phi_x: \mathfrak{e} \to \fg^*$ defined by $\phi(\alp):=\alp\cdot x$. It is easy to see that $\Im(\phi)\sub \fm^*$. Thus we will consider $\phi$ as a map $\mathfrak{e}\to \fm^*$. Let $\psi=\phi^t:\fm\to \mathfrak{e}^*$. Then $\Im \phi=(\Ker\psi)^{\bot}$. It is easy to see that $\psi(\alp)=\alp\cdot y$. Thus $\Ker \psi = \fm_y$. Thus, $t_{y}\in  (\Ker\psi)^{\bot}=\Im \phi=\mathfrak{e}\cdot x$.

\item For any $y\in p(\bfO)$, there exists non-nilpotent $t_y\in \fm^*$ such that
 $\overline\bfO\cap p^{-1}(y)$ includes an affine line in the direction of $t_y$ through any point $x\in \bfO\cap p^{-1}(y)$.

We take  $t_y$ as in the previous case. Note that $\bfO\cap p^{-1}(y)$ is an orbit of $\bfG_y$ and thus  is smooth.
Let $N:=(\bfO\cap p^{-1}(y))(\C)$. We can consider $t_y$ as a constant vector filed on $N$. Let $x\in N$. The existence and uniqueness theorem for solutions of ODEs imply that $x+\C t_y\cap N$ is open in $x+\C t_y$ and thus it is Zariski dense in $x+\C t_y$. Let $L\subset p^{-1}(y)$ be the affine line through $x$ in direction $t_y$. We get that $L\cap \overline\bfO\cap p^{-1}(y)$ is Zariski dense in $L$. But it is also closed there.
Therefore  $L\cap \overline\bfO\cap p^{-1}(y)=L$.
\item There exists non-nilpotent
$t_0\in \overline\bfO\cap \fm^*$.

Take $y\in p(\bfO)$ and  $x\in \bfO\cap p^{-1}(y)$.
Let $\mathbf{\Lambda}:=\Span\{x\}$ and
$\mathbf{\Pi}:=\Span\{x,t_y\}$.
Since $\Xi$ is conical and $\bfO$ is open in $\Xi$,  $\mathbf{\Lambda}\cap \bfO$ is dense in $\mathbf{\Lambda}$. Together with the previous step, this implies that $\mathbf{\Pi}\cap\overline{\bfO}$ is dense in $\bf \Pi$. Therefore $\mathbf{\Pi}\sub \overline\bfO$, and thus $t_y\in \overline\bfO\cap \fm^*$.
\item Contradiction.

The fact that $\overline\bfO \cap \fm^*$ is conical and has finitely many orbits implies that it lies inside the nilpotent cone of $\fm^*$. This contradicts the previous step.
\end{enumerate}

\end{proof}

\section{Example of strict inequality in Definition \ref{def:Ospher} - by Ido Karshon}\label{app:Ido}

Let $W$ be a symplectic vector space of dimension $2n$.
Let $\bfG:=\Sp(W)\times \Sp(W\oplus W)$ and
$$\bfH:= \{(Y,\begin{pmatrix}Y & 0 \\
0 & Y \\
\end{pmatrix})\}\subset \bfG$$
Then
$$\fh^{\bot}:= \{(-(A+C),\begin{pmatrix}A & B \\
-B^* & C \\
\end{pmatrix})\, \vert\, A=-A^*,C=-C^*\}\subset \fg,$$
where $A^*,B^*,$ and $C^*$ denote the conjugate operators w.r. to the symplectic form.
Let $\bfO\subset \cN(\fg^*)$ be the product of minimal orbits of $\mathfrak{sp}^*(W)$ and $\mathfrak{sp}^*(W\oplus W)$.
We identify $\fg$ with $\fg^*$ using the Killing form, and
recall that the minimal orbit in the symplectic Lie algebra consists of rank one operators of the form $vv^*$, where $v$ is a vector. Thus
 $$\fh^{\bot}\cap \bfO= \{(-(aa^*+bb^*),\begin{pmatrix}aa^* & ab^* \\
ba^* & bb^* \\
\end{pmatrix})\,\vert\, a,b \in W \text{ are colinear} \}\subset \fg$$
Thus $\dim \fh^{\bot}\cap \bfO=\dim W +1$, while $\dim \bfO=3\dim W$.
Thus for $\dim W>2$ we have $\dim \fh^{\bot}\cap \bfO<\dim \bfO/2$, and thus, \DimaC{by \DimaE{Corollary \ref{cor:invO}}, $c_{\bf O}^{-1}({\bf G/H})<0.$}

%
%
%

\end{document}